\documentclass[a4paper,12pt]{amsart}
\usepackage[top=3cm,bottom=3cm,outer=3cm,inner=2cm,marginpar=2.45cm]{geometry}


\makeatletter
\@ifundefined{ifannalen}{
  \newif\ifannalen
  \annalenfalse
}

\@ifundefined{ifcompositio}{
  \newif\ifcompositio
  \compositiofalse
}
\makeatother


\usepackage[frenchb,ngerman,english]{babel}
\usepackage[utf8]{inputenc}
\usepackage[T1]{fontenc}
\usepackage{enumitem}
\usepackage{tabu}
\usepackage[all]{xy}
\usepackage[abbrev]{amsrefs}
\usepackage{mathtools} 
\usepackage[usenames,dvipsnames]{xcolor}

\babeltags{de = ngerman}
\babeltags{fr = frenchb}

\IfFileExists{marionotations.sty}{
  \usepackage[no-mario]{marionotations}
}{
  \usepackage[no-mario]{mynotations}
}

\usepackage[final]{hyperref}


\newcommand{\tmin}{{\text{min}}}

\newcommand{\numax}{\nu_{\text{max}}}
\NewDocumentCommand{\sm}{s m}{{#2}\IfBooleanTF{#1}{_}{^}\text{sm}}

\newcommand{\tlog}{{\text{log}}}

\NewDocumentCommand{\dep}{t{_} d<> O{k} m}{#4\IfBooleanTF{#1}{_}{^}{\IfNoValueF{#2}{#2\:}(#3)}}

\newcommand{\obstidlSHM}{\ideal{I}_{\Xi}^S}

\newcommand{\lineq}{\sim} 

\NewDocumentCommand{\lelong}{m O{x}}{\nu\paren{#1,#2}}


\newcommand{\mtidlof}[2][]{\multidl_{#1}\paren{#2}} 
\newcommand{\sBase}[2][]{\mathbf{B}_{#1}\paren{#2}} 
\DeclareMathOperator*{\reglim}{reg-lim}  
\DeclareMathOperator{\Ann}{Ann}  
\DeclareMathOperator{\lc}{lc} 

\NewDocumentCommand{\Ohvol}{ 
  D<>{\dep\vphi_L} 
  D<>{S} 
  D<>{\omega} 
  O{\psi_{S}}
}{\:d\vol_{#2,#3,#1}[#4]} 
\NewDocumentCommand{\gOhvol}{ 
  O{m_1}          
  D<>{\dep\vphi_L}  
  D<>{S}            
  D<>{\omega}       
  O{\dep\psi}     
  m
}{\abs{J^{#1}#6}_{#4}^2 \Ohvol<#2><#3><#4>[#5]} 

\NewDocumentCommand{\lcV}{ 
  D||{\sigma}      
  !D<>{\vphi_L}   
  !D<>{\omega}    
  O{\psi}        
}{\:d\operatorname{lcv}^{#1}_{#3,#2}\left[#4\right]}

\NewDocumentCommand{\idxup}{ 
  m          
  O{\omega}  
}{\paren{#1}^{\mathrlap{\!#2}}}

\newcommand{\rs}[1]{\widetilde{#1}} 
\newcommand{\currInt}[1]{\left[#1\right]} 
\newcommand{\Berg}{\mathcal{B}} 

\newcommand{\orgu}{u}
\newcommand{\extu}[1][\orgu]{\widetilde{#1}}
\newcommand{\holU}{U}
\newcommand{\Umin}{\holU^\tmin}
\newcommand{\wphi}{\widetilde{\vphi}}
\newcommand{\bphi}{\boldsymbol{\vphi}}

\newcommand{\setE}{\mathcal{E}} 
\NewDocumentCommand{\lcc}{ 
  O{\sigma}                
  D<>{X}                   
  D(){S}
}{\lc_{#2}^{#1}\paren{#3}}

\newtheorem{prop}{Proposition}[subsection]
\newtheorem{thm}[prop]{Theorem}

\newtheorem{lemma}[prop]{Lemma}

\theoremstyle{remark}
\newtheorem{remark}[prop]{Remark}

\theoremstyle{definition}
\newtheorem{example}[prop]{Example}
\newtheorem{notation}[prop]{Notation}
\newtheorem{SNCassumption}[prop]{Snc assumption}
\newtheorem{definition}[prop]{Definition}

\numberwithin{equation}{subsection}

\allowdisplaybreaks  


\begin{document}

\ifcompositio
  \newcommand{\thmparen}[1]{(#1)}
\else
  \newcommand{\thmparen}[1]{#1}  
\fi

\newcommand{\titlestr}{Extension with log-canonical measures and an
  improvement to the plt extension of Demailly--Hacon--P\u aun}

\newcommand{\shorttitlestr}{Extension with lc-measures and improvement
  to the plt extension}

\newcommand{\MCname}{Tsz On Mario Chan}
\newcommand{\MCnameshort}{Mario Chan}
\newcommand{\MCemail}{mariochan@pusan.ac.kr}

\newcommand{\YJname}{Young-Jun Choi}
\newcommand{\YJnameshort}{Young-Jun Choi}
\newcommand{\YJemail}{youngjun.choi@pusan.ac.kr}

\newcommand{\addressstr}{Dept.~of Mathematics, Pusan National
  University, Busan 46241, South Korea}

\newcommand{\subjclassstr}[1][,]{32J25 (primary)#1 32Q15#1 14E30 (secondary)}

\newcommand{\keywordstr}[1][,]{$L^2$ extension#1 Ohsawa--Takegoshi
  extension#1 purely log-terminal#1 lc centres}

\newcommand{\thankstr}{%
  Most part of this project was done when the first author was under
  the support of the National Center for Theoretical Sciences (NCTS)
  in Taiwan and while the second author had a short term visit there.
  The first author is thankful for the trust from Director Jungkai
  Chen of the NCTS and the second author would like to thank Director
  Chen for his hospitality.
  Both authors are in debt to Jean-Pierre Demailly who shared his
  ideas around the subject of extension and a lot more unreservedly
  with the authors, especially during their stay at the Institut Fourier
  which was supported by the ERC grant ALKAGE (No.~670846).
  The authors are grateful to Bo Berndtsson and Mihai P\u aun for their
  comments on the lc-measures and the potential extension theorem in
  the higher codimensional cases.
  Many thanks also to Chen-Yu Chi for drawing the first author's attention
  to the study of analytic adjoint ideal sheaves as well as pointing
  out several mistakes in the first draft of this paper, and to Dano Kim for
  the advice on the development of $L^2$ extension theorems.
  This work was supported by the National Research Foundation 
  (NRF) of Korea grant funded by the Korea government
  (No.~2018R1C1B3005963). 
}

\ifannalen
  \title{\titlestr\thanks{\thankstr}}
  \titlerunning{\shorttitlestr}        
  \author{\MCname   \and
    \YJname
  }
  \authorrunning{\MCnameshort \and \YJnameshort} 
  \institute{\MCname \at
    \email{\MCemail}           
    \and
    \YJname \at
    \email{\YJemail} \\
    \addressstr
  } 
\else
  \title[\shorttitlestr]{\titlestr}
  \author[\MCnameshort]{\MCname}
  \email{\MCemail}
  \address{}
  %
  \author{\YJname}
  \email{\YJemail}
  \address{\addressstr}
  \thanks{\thankstr}
  \subjclass[2010]{\subjclassstr}

  \ifcompositio
    \classification{\subjclassstr.}%
  \fi 

  \keywords{\keywordstr}

  \dedicatory{Dedicated to Prof.~Fabrizio Catanese on the occasion of
    his 71st birthday}

  \begin{abstract}

With a view to proving the conjecture of ``dlt extension'' related to
the abundance conjecture, a sequence of potential candidates for
replacing the Ohsawa measure in the Ohsawa--Takegoshi $L^2$ extension
theorem, called the ``lc-measures'', which hopefully could provide the
$L^2$ estimate of a holomorphic extension of any suitable holomorphic
section on a subvariety with singular locus, are introduced in the
first half of the paper.
Based on the version of $L^2$ extension theorem proved by Demailly, a
proof is provided to show that the lc-measure can replace the Ohsawa
measure in the case where the classical Ohsawa--Takegoshi $L^2$
extension works, with some improvements on the assumptions on the
metrics involved.
The second half of the paper provides a simplified proof of the result
of Demailly--Hacon--P\u aun on the ``plt extension'' with the
superfluous assumption ``$\supp D \subset \supp\paren{S+B}$'' in their
result removed.
Most arguments in the proof are readily adopted to the ``dlt
extension'' once the $L^2$ estimates with respect to the lc-measures
of holomorphic extensions of sections on subvarieties with singular
locus are ready.


  \end{abstract} 

\fi 

\date{\today} 

\maketitle

\ifannalen 
  \begin{abstract}
    
    \keywords{\keywordstr[\and]}
    \subclass{\subjclassstr[\and]}
  \end{abstract} 
\fi
  

\section{Introduction}
\label{sec:introduction}

This work is the first step towards generalising the result in
\cite{DHP}, namely the extension theorem on \emph{purely log-terminal (plt)}
pairs, to an extension theorem on \emph{divisorially log-terminal
  (dlt)} pairs.
The latter extension theorem is essential in proving the Abundance
Conjecture in algebraic geometry (see, for example,
\cite{Fujino_abundance-3fold}, \cite{DHP},
\cite{Fujino&Gongyo_abundance} and \cite{Gongyo&Matsumura}).

There are two main results in this paper.
The first one is an \emph{Ohsawa--Takegoshi-type $L^2$ extension theorem
which replaces the Ohsawa measure in the estimate by a measure
supported on the log-canonical (lc) centres of a given subvariety}
(Theorems \ref{thm:ext-from-lc-with-estimate-codim-1-case} and
\ref{thm:extension-with-seq-of-potentials}).
Such measure (called ``lc-measure'', see Definition
\ref{def:lc-measure}) seems to be well-suited to the use in 
birational geometry and can possibly provide the best possible
estimates for minimal holomorphic extension with universal constant
(see Example \ref{eg:Berndtsson-example}).
The current result, following the line of thought and formulation
given in \cite{Demailly_extension}, essentially recovers the classical
Ohsawa--Takegoshi $L^2$ extension theorem from codimension-$1$
subvarieties (in which the section to be extended should vanish on the
singular locus), with some relaxation on the assumptions on the given
metrics and auxiliary functions in the formulation and improvement in
the estimate.

The second result is an \emph{improvement to the result of the plt extension
of Demailly--Hacon--P\u aun in \cite{DHP}} in the sense that a
superfluous assumption is removed in their theorem with a simplified
proof (see Theorem \ref{thm:improved-plt-intro} or
\ref{thm:proof-improved-plt}).
Although the proof presented in this paper makes use of the $L^2$
extension with respect to the lc-measure, one may also use the version of
$L^2$ extension with respect to the Ohsawa measure (for example, the
version in \cite{Demailly_extension}) together with the suitable
slight improvement to the setup as presented in Section
\ref{sec:setup}.
However, it is the authors' belief that the lc-measures will play a
role in the future proof of the ``dlt extension''.
That's why the two independent results are presented together to
emphasise their linkage.

\subsection{Background}
\label{sec:background}

Readers are referred to the survey by Varolin
(\cite{Varolin_lecture-note-on-L2}) for a quick outlook of the
development of the celebrated Ohsawa--Takegoshi extension theorem
since the paper \cite{Ohsawa&Takegoshi-I} by Ohsawa and Takegoshi.
They are also referred to \cite{Blocki_Suita-conj},
\cite{Guan&Zhou_optimal-L2-estimate} and \cite{Berndtsson&Lempert} for
some later development on the extension with optimal estimates.

For the background on the abundance conjecture, and the relevant
non-vanishing conjecture as well as the conjecture on dlt extension,
readers are referred to \cite{Fujino_abundance-3fold}, \cite{DHP},
\cite{Fujino&Gongyo_abundance} and \cite{Gongyo&Matsumura}.
Among them, the work of Gongyo and Matsumura in
\cite{Gongyo&Matsumura} provides a proof to the dlt extension with a
strong assumption via the $L^2$ injectivity theorem
(\cite{Matsumura_injectivity}), while that of Demailly, Hacon and P\u
aun in \cite{DHP} proves the extension theorem for plt pairs via the
Ohsawa--Takegoshi $L^2$ extension theorem, whose technique is followed
closely in this paper.

By the time when this paper is finished, the authors are notified that
Mihai P\u aun and Junyan Cao are finishing their version of $L^2$
extension theorem (\cite{Cao&Paun_OT-ext}) which is aiming for the dlt extension
(\cite{DHP}*{Conj.~1.3}).
The authors are also aware of the work of Chen-Yu Chi on the quantitative
extension of holomorphic sections from unions of strata of divisors
(\cite{Chi_extension}).


The present work takes off from the work of Demailly in
\cite{Demailly_extension}.
Let $X$ be a weakly pseudoconvex \textde{Kähler} manifold, $K_X$ its
canonical bundle and $(L, e^{-\vphi_L})$ a hermitian line bundle on $X$
equipped with a hermitian metric $e^{-\vphi_L}$ which is possibly
singular.
In \cite{Demailly_extension}, Demailly proves a new version of the
Ohsawa--Takegoshi $L^2$ extension theorem applicable to the questions
on extending $\:\parres{K_X\otimes L}_S$-valued holomorphic sections on
possibly non-reduced subvarieties $S$ defined via multiplier ideal
sheaves (a feature which can be considered as a far-reaching
generalisation to the result in the work of Dano Kim,
\cite{KimDano_Thesis} and \cite{KimDano_lc-extension}, in which an
$L^2$ extension theorem for extending holomorphic sections on maximal
log-canonical centres of some log-canonical pairs $(X,D)$ is proved).
An interesting new input of this version of $L^2$ extension theorem is that,
if the ambient manifold $X$ is \emph{compact} (or if it is
holomorphically convex, see \cite{Cao&Demailly&Matsumura}), via a
brilliant use of the Hausdorff-ness of the topology on the relevant cohomology
groups, a holomorphic extension of a $\:\parres{K_X \otimes L}_S$-valued
section $f$ on a subvariety $S$ can be assured without the need
of any $L^2$ assumption on the section $f$ with respect to the Ohsawa
measure (provided that the suitable weak positivity assumption
involving $\vphi_L$ and $\psi$ still holds true), although $f$ is
still required to be locally extendible to some holomorphic section
in some multiplier ideal sheaf constructed from $\vphi_L$ (namely, the
multiplier ideal sheaf of $\vphi_L+m_0\psi$ in the notation in Section
\ref{sec:setup}).


Note that the Ohsawa measure in the estimates given by all different
versions of the Ohsawa--Takegoshi extension theorem diverges to infinity
around the singular points of the subvariety from which the given
holomorphic section is extended.
That's why,
%
although the extension theorem of Demailly without the $L^2$
assumption loses the estimate on the extended section, it was
considered advantageous since the sections to be extended would not
have to vanish along the singular locus of the subvariety. 
It was hoped that, with this feature in the new version of the
extension theorem, one could follow the arguments as in \cite{DHP} to
construct a suitable psh potential in order to prove the so called
``dlt extension'' (see \cite{DHP}*{Conj.~1.3}).

Unfortunately, in the course of proving the dlt extension,
in order to show that the given
$\:\parres{\mu\paren{K_X+S+B}}_S$-valued section on the subvariety $S$
(see Theorem \ref{thm:improved-plt-intro} or Section
\ref{sec:basic-setup-plt} for the notations $\mu$, $S$ and $B$) has
local extensions lying in the suitable multiplier ideal sheaf, one has
to prove via an approximation of the metric on $K_X +S+B$ (where each
approximating metric is constructed from some ``algebraic'' metric on
$K_X+S+B+\frac 1k A$ for some ample divisor $A$ and positive integer
$k$) and make use of the estimates provided by the Ohsawa--Takegoshi
$L^2$ extension theorem to prove convergence as in \cite{DHP} (see also
Theorem \ref{thm:bphi-bdd-below-by-u-on-S} for a relevant statement).
It follows that, in order to prove the dlt extension via the argument
in \cite{DHP}, the estimate in the $L^2$ extension theorem is
indispensable.

In view of this, the goal of the present work is to resume
the estimate of the Ohsawa--Takegoshi $L^2$ extension theorem under
Demailly's setting by replacing the (generalised) Ohsawa measure by 
the ``measure on log-canonical centres'', or the ``lc-measure'' for
short, which is defined in Definition \ref{def:lc-measure}.
The latter measure, instead of diverging to infinity around the
singular locus of the subvariety $S$, can indeed be supported in the
singular locus of $S$ (or on some lc centres of $(X,S)$ if $S$ is a
divisor).
This provides the means to get some sort of control over the $L^2$
norm of the holomorphic extensions, and eventually can be useful in
proving the dlt extension.

In the remaining of Section \ref{sec:introduction}, the main results
(Theorems \ref{thm:ext-from-lc-with-estimate-codim-1-case} and
\ref{thm:improved-plt-intro}) are presented.
A discussion on the lc-measures can be found in Section
\ref{sec:lc-measures}.
Example \ref{eg:Ohsawa-example} shows that the lc-measures
can filter out Ohsawa's example (\cite{Ohsawa-example}*{after
  Prop.~5.4}), while Example \ref{eg:Berndtsson-example} is a
computation by Bo Berndtsson of a concrete estimate of the minimal
holomorphic extensions in a fundamental example which, it turns out,
can be expressed in terms of lc-measures.
These give evidence that the lc-measures could be used for
providing $L^2$ estimates of holomorphic extensions in the general
situations.
Section \ref{sec:extension-from-mlc} is devoted to the proof of the
$L^2$ extension theorem with respect to the lc-measure supported on lc
centres of codimension $1$, while Section \ref{sec:improvement-DHP} is
devoted to the proof of the improvement to the plt extension of
\cite{DHP}.

\subsection{Notation}
\label{sec:notation}

In this paper, the following notations are used throughout.

\begin{notation}
  Set $\ibar := \ibardefn \;$.\ibarfootnote
\end{notation}

\begin{notation}
  Each potential $\vphi$ (of the curvature of
  a metric) on a holomorphic line bundle $L$ in the following
  represents a collection of local functions
  $\set{\vphi_\gamma}_\gamma$ with respect to some fixed local
  coordinates and trivialisation of $L$ on each open set $V_\gamma$ in
  a fixed open cover $\set{V_\gamma}_\gamma$ of $X$.  The functions
  are related by the rule
  $\vphi_\gamma = \vphi_{\gamma'} + 2\Re h_{\gamma \gamma'}$ on
  $V_\gamma \cap V_{\gamma'}$ where $e^{h_{\gamma \gamma'}}$ is a
  (holomorphic) transition function of $L$ on
  $V_\gamma \cap V_{\gamma'}$ (such that
  $s_\gamma = s_{\gamma'}e^{h_{\gamma \gamma'}}$, where $s_\gamma$ and
  $s_{\gamma'}$ are the local representatives of a section $s$ of $L$
  under the trivialisations on $V_\gamma$ and $V_{\gamma'}$
  respectively).
  Inequalities between potentials is meant to be the inequalities
  under the chosen trivialisations over open sets in the fixed open
  cover $\set{V_\gamma}_\gamma$.
\end{notation}

\begin{notation} \label{notation:potentials}
  For any prime (Cartier) divisor $E$, let
  \begin{itemize}
  \item $\phi_E := \log\abs{s_E}^2$, representing the collection
    $\set{\log\abs{s_{E,\gamma}}^2}_{\gamma}$, denote a potential (of
    the curvature of the metric) on the line bundle associated to $E$
    given by the collection of local representations
    $\set{s_{E,\gamma}}_{\gamma}$ of some canonical section $s_E$
    (thus $\phi_E$ is uniquely defined up to an additive constant);
    
  \item $\sm\vphi_E$ denote a smooth potential on the line
    bundle associated to $E$;
    
    
  \item $\psi_E := \phi_E - \sm\vphi_E$, which is a global function
    on $X$, when both $\phi_E$ and $\sm\vphi_E$ are fixed.
  \end{itemize}
  All the above definitions are extended to any $\fieldR$-divisor $E$
  by linearity.
  For notational convenience, the notations for a $\fieldR$-divisor
  and its associated $\fieldR$-line bundle are used interchangeably.
\end{notation}

\begin{notation}
  For any $(n,0)$-form (or $K_X$-valued section) $f$, define $\abs f^2
  := c_n f \wedge \conj f$, where $c_n :=
  (-1)^{\frac{n(n-1)}{2}}\paren{\pi\ibar}^n$.
  For any \textde{Kähler} metric $\omega =\pi\ibar \sum_{1\leq j,k\leq
    n} h_{j\conj k} \:dz^j \wedge d\conj{z^k}$ on $X$, set $d\vol_{X,\omega} :=
  \frac{\omega^{\wedge n}}{n!}$.
  Set also $\abs f_\omega^2 d\vol_{X,\omega} = \abs f^2$.
\end{notation}

\begin{notation}
  For any two non-negative functions $u$ and $v$,
  write $u \lesssim v$ (equivalently, $v \gtrsim u$) to mean that there
  exists some constant $C > 0$ such that $u \leq C v$, and $u
  \sim v$ to mean that both $u \lesssim v$ and $u \gtrsim v$ hold
  true.
  For any functions $\eta$ and $\phi$, write $\eta \lesssim_\tlog \phi$
  if $e^\eta \lesssim e^\phi$.
  Define $\gtrsim_\tlog$ and $\sim_\tlog$ accordingly.
\end{notation}

\subsection{Basic setup}
\label{sec:setup}


Let $(X,\omega)$ be a \emph{compact} \textde{Kähler} manifold of
complex dimension $n$, and let $\mtidlof{\vphi} := \mtidlof[X]{\vphi}$
be the multiplier ideal sheaf of the potential $\vphi$ on $X$ given at
each $x \in X$ by
\begin{equation*}
  \mtidlof{\vphi}_x := \mtidlof[X]{\vphi}_x
  :=\setd{f \in \holo_{X,x}}{
    \begin{aligned}
      &f \text{ is defined on a coord.~neighbourhood } V_x \ni x \vphantom{f^{f^f}} \\
      &\text{and }\int_{V_x} \abs f^2 e^{-\vphi} d\lambda_{V_x} < +\infty
    \end{aligned}
  } \; ,
\end{equation*}
where $d\lambda_{V_x}$ is the Lebesgue measure on $V_x$.
Throughout this paper, the following are assumed on $X$:

\begin{enumerate}[itemsep=8pt]

\item \label{item:diff-of-q-psh}
  $(L, e^{-\vphi_L})$ is a hermitian line bundle with an
  analytically singular metrics $e^{-\vphi_L}$, where
  $\vphi_L$ is locally equal
  to $\vphi_1 - \vphi_2$, where each of the
  $\vphi_i$'s is a quasi-psh local function with \emph{neat analytic
  singularities}, i.e.~locally
  \begin{equation*}
    \vphi_i \equiv c_i \log\paren{\sum_j \abs{g_{ij}}^2} \mod
    \smooth \; ,
  \end{equation*}
  where $c_i \in \fieldR_{\geq 0}$ and $g_{ij} \in \holo_X \;$;

\item 
  $\psi$ is a global function on $X$ such that it can also be
  expressed locally as a difference of two quasi-psh functions with
  neat analytic singularities; 

\item \label{item:psi-bounded}
  $\sup_X \psi \leq 0$ (which implies that $\psi$ is quasi-psh after
  some blow-ups as it has only neat analytic singularities);

\item 
  there exist numbers $ m_0,  m_1 \in \fieldR_{\geq 0}$ with $m_0 <  m_1$ such
  that 
  \begin{equation*}
    \mtidlof{\vphi_L + m_0 {\psi}}
    = \mtidlof{\vphi_L + m {\psi}}
    \supsetneq \mtidlof{\vphi_L + m_1 \psi}
    \quad \text{ for all }m \in [ m_0 ,  m_1) \; , 
  \end{equation*}
  i.e.~$m_1$ is a jumping number of the family $\set{\mtidlof{\vphi_L
      + m {\psi}}}_{m \in \fieldR_{\geq 0}} \;$ (such numbers exist on compact
  $X$ as $\psi$ is quasi-psh after suitable blow-ups and thus it
  follows from the openness property of multiplier ideal sheaves and
  \eqref{eq:mult-ideal-sheaves-log-resoln}); 
  

\item $S := S^{(m_1)}$ is a \emph{reduced} subvariety defined by the
  annihilator
  \begin{equation*}
    \Ann_{\holo_X} \paren{ \dfrac{\multidl\paren{\vphi_L +  m_0 \psi}}
      {\multidl\paren{\vphi_L +  m_1 \psi}} } \; ;
  \end{equation*}
  in particular, $S \subset \paren{\psi}^{-1}\paren{-\infty}$.

\end{enumerate}

\begin{remark}
  If $\vphi_L + (m_1+\beta) \psi$ is a quasi-psh potential for
  all $\beta \in [0, \delta]$ for some $\delta > 0$ (which holds true in
  all the main theorems in this paper),
  openness property guarantees that, on every compact subset
  $K\subset X$, there exists $\eps >0$ such that
  $\res{\mtidlof{\vphi_L + (m_1+\eps) \psi}}_{K} =
  \res{\mtidlof{\vphi_L + m_1 \psi}}_{K}$
  (see \cite{Hiep_openness}*{Main Thm.~(ii)} or
  \cite{Lempert_openness}*{Thm.~1.1}; see also
  \cite{Guan&Zhou_effective_openness}).
  This then implies that the analytic subspace defined by
  $\Ann_{\holo_X} \paren{\frac{\multidl\paren{\vphi_L +m_0 \psi}}
    {\multidl\paren{\vphi_L +m_1 \psi}}}$ is automatically reduced,
  following the arguments in \cite{Demailly_extension}*{Lemma 4.2}.
\end{remark}

\begin{remark}
  Most of the arguments in Sections \ref{sec:lc-measures} and
  \ref{sec:extension-from-mlc} of this paper can be adapted to the
  case when $X$ is just a weakly pseudoconvex (non-compact)
  \textde{Kähler} manifold (after passing to a relatively compact
  exhaustion) provided that the upper-boundedness on $\psi$ in
  (\ref{item:psi-bounded}) still holds true on $X$.
  As this is not automatic on a non-compact manifold, and the most
  interesting applications which the authors concern about are on
  compact manifolds, the background manifold is assumed to be compact
  in this paper for the sake of clarity.
\end{remark}

\begin{definition} \label{def:polar-ideal-sheaves}
  Suppose that $\vphi$ is a potential or a global function on $X$ such
  that it is locally a difference $\vphi_1 - \vphi_2$ of quasi-psh
  local functions with neat analytic singularities as in
  (\ref{item:diff-of-q-psh}) above.
  The \emph{polar ideal sheaf $\sheaf P_\vphi$ of $\vphi$} is defined
  to be the ideal sheaf generated by the local holomorphic functions
  $g_{ij}$ for all $j$'s and $i = 1,2$.
\end{definition}

\begin{notation}
  Given a set $V \subset X$, a section $f$ of
  $\frac{\mtidlof{\vphi_L +m_0\psi}} {\mtidlof{\vphi_L +m_1\psi}}$ on
  $V$ (which is supported in $S\cap V$), and a section $F$ of
  $\mtidlof{\vphi_L +m_0\psi}$ on $V$, the notation
  \begin{equation*}
    F \equiv f \mod \mtidlof{\vphi_L +m_1\psi} \quad\text{on }V
  \end{equation*}
  is set to mean that, for all $x \in V$, if $(F)_x$ and $(f)_x$
  denote the germs of $F$ and $f$ at $x$ respectively, one has
  \begin{equation*}
    \paren{(F)_x \bmod \mtidlof{\vphi_L +m_1\psi}_x} = (f)_x \; .
  \end{equation*}
  If such a relation between $F$ and $f$ holds, $F$ is said to be an
  \emph{extension} of $f$ on $V$.
  If the set $V$ is not specified, it is assumed to be the whole space
  $X$.
  Such notation is also applied to cases with a slight variation of
  the sheaf $\mtidlof{\vphi_L +m_1\psi}$ (for example, with
  $\mtidlof{\vphi_L +m_1\psi}$ replaced by $\smooth_X \otimes
  \mtidlof{\vphi_L +m_1\psi}$%
  ).
\end{notation}


\subsection{Lc-measure and extension theorem}
\label{sec:lc-measure_main-thm}


As explained above, the first goal of this paper is to replace the
generalised Ohsawa measure $\gOhvol<\vphi_L>[\psi]{f}$ in the previous
versions of $L^2$ extension theorem (as in \cite{Demailly_extension})
by the measure on log-canonical (lc) centres given as follows.
\begin{definition}\label{def:union-of-sigma-lc-centres}
  If $S$ given in Section \ref{sec:setup} is a reduced divisor with
  snc on $X$, define $\lc_X^\sigma (S)$ to be the \emph{union of all
    lc centres of $(X,S)$ of codimension $\sigma$ in $X$} (see
  \cite{Kollar_Sing-of-MMP}*{Def.~4.15} for the definition of lc
  centres when $S$ is a divisor).
  For a general reduced subvariety $S$ in $X$ given in Section
  \ref{sec:setup}, define $\lc_X^\sigma (S)$ as
  \begin{equation*}
    \lcc := \pi\paren{\lcc<\rs X>(\rs S)} \; ,
  \end{equation*}
  where $\pi \colon \rs X \to X$ is a log-resolution of
  $(X,\vphi_L,\psi)$ and $\rs S$ is the reduced divisor with snc
  described in Section \ref{sec:blow-up} (which satisfies
  $\pi(\rs S) = S$).
  Moreover, an \emph{lc centre of $(X,S)$ \emph{(or, more precisely, lc
      centre of $\paren{X, \frac{\mtidlof{\vphi_L+m_0\psi}}
        {\mtidlof{\vphi_L+m_1\psi}}}$ or $(X,\vphi_L, \psi, m_1)$)} of
    codimension $\sigma$} is meant to be the image under $\pi$ of an
  lc centre of $(\rs X, \rs S)$ of codimension $\sigma$ in $\rs X$.
\end{definition}

\begin{remark}
  Admittedly, it is confusing to talk about the ``codimension'' of an
  lc centre of ``$(X,S)$'' when $S$ is not a divisor.
  For example, with a suitable choice of $\vphi_L$ and $\psi$ such
  that $S = \set p \subset X$ (a point), the lc centre of $(X,\set p)$ has
  codimension $1$ (see Example \ref{eg:extension-from-a-pt}).
  The choice of language here is just to favour the case when $S$ is
  an snc divisor.
\end{remark}

\begin{definition} \label{def:lc-measure}
  The \emph{lc-measure supported on the lc centres of $(X,S)$ of codimension
  $\sigma$ in $X$} (or \emph{$\sigma$-lc-measure} for short) \emph{with respect to
  $f \in \cohgp0[S]{K_X \otimes L \otimes \frac{\mtidlof{\vphi_L
          +m_0 \psi}} {\mtidlof{\vphi_L+m_1 \psi}}}$}, denoted as
  $\abs f_\omega^2 \lcV|\sigma,(m_1)|$, is defined by 
  \begin{equation*} 
    \smooth_0\paren{S} \ni g \mapsto \int_{\lc_X^\sigma (S)}
    g \abs f_\omega^2 \lcV|\sigma,(m_1)|
    := \lim_{\eps \tendsto 0^+} \eps \int_X \extu[g] \abs{\extu[f]}_\omega^2 
    \frac{e^{-\vphi_L -m_1\psi}}{\abs{\psi}^{{\sigma} + \eps}}
    d\vol_{X,\omega} \; ,\footnotemark
  \end{equation*}%
  \footnotetext{~``$\operatorname{lcv}$'' is used in the lc-measure to
    suggest ``\underline{l}c-\underline{c}entre-\underline{v}olume''.
    It also looks like the mirror image of ``$\operatorname{vol}$''.}%
  where
  \begin{itemize}
  \item
    $\extu[f]$ is
    a smooth extension of $f$ to a section on $X$ such that
    $\extu[f] \in \smooth \otimes \mtidlof{\vphi_L+m_0\psi}$;
  
  \item $\extu[g]$ is any smooth extension of $g$ to a function on
    $X$;
  
  \item $\sigma$ is a non-negative integer and the measure
    $\abs f_\omega^2 \lcV|\sigma,(m_1)|$ is supported (if finite) on a
    (reduced) subvariety $\lc_X^\sigma (S)$ of $S$ (see Definition
    \ref{def:union-of-sigma-lc-centres}).
  \end{itemize}
\end{definition}

Note that, as the given section $f$ takes values in $K_X \otimes L$,
the lc-measure defined above does not depend on $\omega$.

The $\sigma$-lc-measure vanishes when $\sigma$ is large and diverges when
$\sigma$ is small (see Section \ref{sec:compute-lc-measures}), and can be finite
and non-zero only at one particular value of $\sigma$ depending on the
given section $f$.
Here an ad hoc definition of such special value of $\sigma$
is given.
Another definition can be found in Definition \ref{def:mlc-of-f}.
\begin{definition} \label{def:ad-hoc-mlc-of-f}
  Given the setting above, the \emph{codimension of minimal lc centres
    (mlc) of $(X,S)$ (or of $(X,\vphi_L,\psi,m_1)$) with respect to $f$},
  denoted by $\sigma_f = \sigma_{f,\vphi_L+m_1\psi}$, is the smallest
  integer $\sigma$ such that
  \begin{equation*}
    \int_{\lc_X^{\sigma}(S)} \abs f_\omega^2 \lcV|\sigma,(m_1)| <
    \infty \; .
  \end{equation*} 
\end{definition}
From the calculation in Section \ref{sec:compute-lc-measures},
$\sigma_f$ is ranging between $0$ and the codimension of mlc
of $(X,S)$ (when $S$ is an snc divisor).
If $\mtidlof{\vphi_L+m_0\psi} = \holo_X$ and if $\sigma_f \geq 1$,
then $f$ vanishes on all lc centres of $(X,S)$ with codimension
$< \sigma_f$ in $X$ but is non-trivial on at least one lc centre of
codimension $\sigma_f$.
Moreover, from the discussion in Section \ref{sec:blow-up}, if $\pi
\colon \widetilde X \to X$ is a log-resolution of $(X,\vphi_L,\psi)$,
the codimension $\sigma_f$ coincides with $\sigma_{\pi^*f \otimes
  s_E}$ (see Section \ref{sec:blow-up} for the meaning of $s_E$ and
log-resolution of $(X,\vphi_L,\psi)$).

The authors would like to mention that the use of such lc-measure was
inspired by the study of residue currents in \cite{Bjork&Samuelsson},
\cite{Samuelsson_residue} and \cite{ASWY-gKingsFormula}.
In their works, the kind of current-valued function (in $1$-variable
case)
\begin{equation*}
  \fieldR_{> 0} \ni \eps \quad \mapsto \quad \eps\: \frac{\ibar dz \wedge d\conj
    z}{\abs z^{2 (1-\eps)}}
\end{equation*}
is studied.
Such function gives a \emph{holomorphic} family (so $\eps \in
\fieldC$) of currents for $\Re\eps >0$ and can be analytically
continued across $\eps = 0$.
Its value at $\eps =0$ is a residue measure on $\set{z =0}$.
The lc-measure considered in this paper is essentially given by the
value at $\eps =0$ of the current-valued function
\begin{equation*}
  \fieldR_{>0} \ni \eps \quad \mapsto \quad \eps
  \:\frac{\bigwedge_{j=1}^{\sigma} \paren{\ibar dz_j \wedge
    d\conj{z_j}}}{\prod_{j=1}^{\sigma} \abs{z_j}^2 \cdot
    \abs{\sum_{j=1}^{\sigma} \log\abs{z_j}^2}^{\sigma+\eps}}
\end{equation*}
after analytically continued across $\eps =0$.

It happens that the lc-measure above can be fitted into the
Ohsawa--Takegoshi-type $L^2$ extension theorem, at least in the
codimension-$1$ case.
The first main result of this paper can be stated as follows.
\begin{thm}[\thmparen{Theorem \ref{thm:extension-sigma=1}, see also Theorem
  \ref{thm:extension-with-seq-of-potentials} for a more general
  statement}]
  \label{thm:ext-from-lc-with-estimate-codim-1-case}
  Suppose that
  \begin{enumerate}
  \item \label{item:curv-cond-ordinary-intro}
    there exists $\delta > 0$ such that
    \begin{equation*}
      \ibddbar
      \vphi_L + \paren{m_1 +\beta}\ibddbar {\psi} \geq 0
      \quad \text{on $X$ for all }\beta \in [0, \delta]
      \; , \text{ and }
    \end{equation*}

  \item \label{item:normalisation-cond-intro}
    for any given constant $\ell >0$,
    the function $\psi$ is normalised
    (by adding to it a suitable constant) such that
    \begin{equation*}
      \psi < -\frac {e}\ell \quad\text{ and }\quad
      \frac {1}{\abs\psi} + \frac{2}{\abs\psi \log\abs{\frac{\ell\psi}{e}}} 
      \leq \delta \; . 
    \end{equation*} 
  \end{enumerate} 
  Then, for any holomorphic section $f \in \cohgp0[S]{\: K_X\otimes
    L \otimes \frac{\mtidlof{\vphi_L
        +m_0\psi}}{\mtidlof{\vphi_L+m_1\psi}}}$,
  if one has
  \begin{equation*}
    \int_S \abs f_\omega^2 \lcV|1,(m_1)| < \infty 
  \end{equation*}
  (which holds true when either the mlc of $(X,S)$ or the mlc of
  $(X,S)$ with respect to $f$ has codimension $1$, see Definitions
  \ref{def:ad-hoc-mlc-of-f} and \ref{def:mlc-of-f}),
  then there exists a holomorphic section $F \in \cohgp0[X]{K_X\otimes
    L \otimes \mtidlof{\vphi_L +m_0 \psi}}$ such that
  \begin{equation*}
    F \equiv f \mod \mtidlof{\vphi_L+m_1\psi}
  \end{equation*}
  with the estimate
  \begin{equation*}
    \int_X \frac{\abs F^2
      e^{-\vphi_L-m_1\psi}}{\abs\psi \paren{\paren{\log\abs{\ell\psi}}^2 +1}}
    \leq \int_S \abs f_\omega^2 \lcV|1,(m_1)| \; .
  \end{equation*} 
\end{thm}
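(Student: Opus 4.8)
The plan is to reduce the statement to an application of Demailly's version of the Ohsawa--Takegoshi $L^2$ extension theorem (in the form stated in \cite{Demailly_extension}), with the Ohsawa measure on the right-hand side controlled by the $1$-lc-measure via the explicit asymptotic computation carried out in Section \ref{sec:compute-lc-measures}. First I would fix the normalisation of $\psi$ as prescribed in hypothesis \eqref{item:normalisation-cond-intro}: choosing $\ell > 0$ (to be specialised to the weight $\abs\psi((\log\abs{\ell\psi})^2+1)$ appearing in the target estimate), one normalises $\psi$ so that $\psi < -e/\ell$ and the displayed inequality for $1/\abs\psi + 2/(\abs\psi\log\abs{\ell\psi/e})$ holds; this is exactly the inequality needed so that the auxiliary cut-off function built out of $\log\abs{\ell\psi}$ has its gradient term absorbed by the $\delta$-room in the curvature hypothesis \eqref{item:curv-cond-ordinary-intro}. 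I would then run Demailly's extension machinery with the weight $\vphi_L + m_1\psi$ and the denominator function $\abs\psi((\log\abs{\ell\psi})^2+1)$ (rather than the classical $\abs\psi(\log\abs\psi)^2$), which is permissible precisely because $\ibddbar\vphi_L + (m_1+\beta)\ibddbar\psi \geq 0$ for $\beta \in [0,\delta]$ and the normalisation guarantees $\tfrac{1}{\abs\psi} + \tfrac{2}{\abs\psi\log\abs{\ell\psi/e}} \leq \delta$. This produces a holomorphic section $F \in \cohgp0[X]{K_X\otimes L\otimes\mtidlof{\vphi_L+m_0\psi}}$ with $F \equiv f \bmod \mtidlof{\vphi_L+m_1\psi}$ and with
\begin{equation*}
  \int_X \frac{\abs F^2 e^{-\vphi_L-m_1\psi}}{\abs\psi((\log\abs{\ell\psi})^2+1)} \;\leq\; C \cdot \limsup_{\eps\to 0^+}\eps\int_X \abs{\extu[f]}^2 \frac{e^{-\vphi_L-m_1\psi}}{\abs\psi^{1+\eps}} d\vol_{X,\omega}
\end{equation*}
for a universal constant $C$ (ideally $C = 1$, after optimising the constants in the a priori estimate), and the right-hand side is by definition $\int_S \abs f_\omega^2 \lcV|1,(m_1)|$, which is finite by hypothesis.

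The technical heart, then, is twofold. The first point is to verify that Demailly's theorem genuinely applies in this setup — that is, that the local extendibility hypothesis "$f$ has local extensions in $\mtidlof{\vphi_L+m_0\psi}$" is built into the membership $f \in \cohgp0[S]{K_X\otimes L\otimes\frac{\mtidlof{\vphi_L+m_0\psi}}{\mtidlof{\vphi_L+m_1\psi}}}$, and that the jumping-number hypothesis in Section \ref{sec:setup}\eqref{item:psi-bounded} together with the curvature assumption supplies exactly the positivity input Demailly requires. The second, and I expect the main obstacle, is the passage from the a priori $L^2$ estimate with the Ohsawa measure $\eps\abs\psi^{-1-\eps}e^{-\vphi_L-m_1\psi}d\vol$ to a \emph{bona fide} estimate with the $\eps\to 0$ limit, i.e.\ with the lc-measure: one must show that the constant can be taken uniform in $\eps$ and that the $\limsup$ can be replaced by the actual limit defining $\abs f_\omega^2\lcV|1,(m_1)|$. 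This requires the analytic continuation / finite-part analysis from Section \ref{sec:compute-lc-measures} — on a log-resolution $\pi\colon\rs X\to X$ the integrand becomes, in local coordinates adapted to the snc divisor $\rs S$, a product of the model kernels $\eps\,\ibar dz_1\wedge d\conj{z_1}/(\abs{z_1}^2\abs{\log\abs{z_1}^2}^{1+\eps})$ times a bounded factor, whose $\eps\to 0$ limit is the residue measure on $\{z_1=0\}$; the codimension-$1$ restriction is what makes this a single-variable computation and keeps the limit finite and equal to the $1$-lc-measure of $f$.

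Finally I would choose the auxiliary function in Demailly's scheme to be essentially $\gamma(\psi) = \log\paren{\log\abs{\ell\psi/e}}$ (or a standard variant), so that $\gamma'$ and $\gamma''$ generate precisely the denominator $\abs\psi((\log\abs{\ell\psi})^2+1)$ and the error terms in the Bochner--Kodaira--Nakano inequality are dominated by the $\delta$-term; tracking constants carefully here is what yields the clean estimate with constant $1$. It remains only to note, as the parenthetical remark in the statement indicates, that the finiteness hypothesis $\int_S\abs f_\omega^2\lcV|1,(m_1)| < \infty$ is automatic when either the mlc of $(X,S)$ or the mlc of $(X,S)$ with respect to $f$ has codimension $1$ — this follows from the vanishing/divergence dichotomy for $\sigma$-lc-measures established in Section \ref{sec:compute-lc-measures}, i.e.\ from $\sigma_f \leq 1$ together with the behaviour of $\lc_X^\sigma(S)$ under the log-resolution.
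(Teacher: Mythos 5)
Your proposal is correct in outline and takes essentially the same route as the paper: after the snc reduction of Section \ref{sec:blow-up}, the paper likewise does not invoke Demailly's theorem as a black box but re-runs his twisted Bochner--Kodaira machinery with log-log--type auxiliary weights, cut-offs $\theta\circ\abs{\psi}^{-\eps}$ whose error term is precisely the $\eps$-regularised integral tending to the $1$-lc-measure, the normalisation hypothesis absorbing the extra Hessian terms into the curvature $\delta$-room, continuation of the solutions across the polar sets, and a final limit extraction giving the constant $1$ (Lemma \ref{lem:BK-formula-all-sigma}, Theorem \ref{thm:dbar-eq-with-estimate_sigma=1}, Proposition \ref{prop:dbar-soln-continuated-to-X}, Theorem \ref{thm:extension-sigma=1}). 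The only caution is that your opening ``black-box plus measure comparison'' framing would not by itself yield the stated weight $\abs\psi\paren{\paren{\log\abs{\ell\psi}}^2+1}$ or the constant $1$; the ``re-run the machinery'' version you then elaborate --- including the uniform-in-$\eps$ constants and the treatment of the $\dbar\extu[f]$ error term via strong openness, which you subsume under ``verifying Demailly's hypotheses'' --- is exactly what the paper carries out.
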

See Remark \ref{rem:normalisation-control} for the purpose of the
number $\ell$ in the estimate.
Notice that the weight in the estimate of $F$ above is pointwisely
dominating (up to a multiple constant) the weight in the estimate in
\cite{Demailly_extension} (which is in the magnitude of
$\frac{e^{\vphi_L +m_1\psi}}{\abs\psi^2}$).
Therefore, the above estimate includes the estimate in
\cite{Demailly_extension} up to a constant multiple.

For the proof, it is first argued in Section \ref{sec:blow-up} that it
suffices to consider the case where the polar ideal sheaves of
$\vphi_L$ and $\psi$ (see Definition \ref{def:polar-ideal-sheaves})
are the defining ideal sheaves of some snc divisors (and thus $S$ is
an snc divisor in particular).
The proof then goes along the lines of arguments in
\cite{Demailly_extension}.

For the sake of simplicity, the proofs in Section
\ref{sec:extension-from-mlc} are given for the case where $m_0=0$ and
$m_1=1$.
The result for the general $m_0$ and $m_1$ can be obtained by
replacing $\vphi_L$ by $\vphi_L +m_0\psi$ and $\vphi_L +\psi$ by
$\vphi_L +m_1\psi$ in the arguments.

As in the classical cases, the problem is reduced to solve for a
weak solution of a $\dbar$-equation with ``error'' (derived from the smooth
extension of $f$, and depending on the $\eps$ in the definition of the
lc-measure) using the twisted Bochner--Kodaira
inequality \eqref{eq:twisted-BK-ineq} with suitably chosen auxiliary
functions (see Theorems \ref{thm:dbar-eq-with-estimate_sigma=1}), at
least on the complement of the polar sets of $\vphi_L$ and $\psi$.
(When $\sigma_f > 1$ (see Definition \ref{def:ad-hoc-mlc-of-f} or
\ref{def:mlc-of-f}), the curvature term in the twisted
Bochner--Kodaira formula (see Lemma \ref{lem:BK-formula-all-sigma})
has a negative summand in the curvature term, which is the obstacle in
obtaining the result on extending sections from lc centres of higher
codimensions.)

The weak solution with ``error'' can be continued across the polar
sets of $\vphi_L$ and $\psi$ via the $L^2$ Riemann continuation
theorem\footnote{
  This is usually named as the ``Riemann extension theorem''.
  The current naming ``Riemann continuation theorem'' is used just to
  distinguish this theorem from the Ohsawa--Takegoshi-type extension
  theorem which is studied in this paper.
  The use of ``continuation'' is found in Grauert--Remmert's book
  \cite{Grauert&Remmert}*{Section A.3.8} (English translation by
  Huckleberry), but ``extension'' is used in
  \cite{Grauert&Remmert-CAS}*{Section 7.1}, a later publication of the
  same authors.
  \label{fn:Riemann-continuation}
}
\cite{Demailly_complete-Kahler}*{\textfr{Lemme} 6.9}.
The required holomorphic extension of $f$ is then constructed from the
above solution with the ``error'' and letting $\eps \tendsto 0^+$.
The required estimate is also obtained after taking the necessary
limits.
The regularity of the limit is assured by following a similar argument
as in \cite{Demailly_extension}.

The above theorem is applicable when $\vphi_L +m_1\psi$ and $\psi$ has
neat analytic singularities.
For the potential with arbitrary singularities, an approximation of
the potential is needed and is handled in Theorem
\ref{thm:extension-with-seq-of-potentials}.



\subsection{Further questions}
\label{sec:further-Q}

There are several questions that the authors would still like to understand.

\begin{enumerate}
\item As stated in Remark \ref{rem:optimal-constant}, it is not yet
  clear to the authors whether the current result (if allowing $X$ to
  be non-compact) includes the results on
  optimal constant in \cite{Blocki_Suita-conj} and
  \cite{Guan&Zhou_optimal-L2-estimate}.
  Moreover, is it possible to determine the ``optimal constant'' in this
  Ohsawa--Takegoshi-type extension theorem with lc-measure?
  
\item The lc-measure is inspired by the residue
  currents studied in \cite{Bjork&Samuelsson},
  \cite{Samuelsson_residue} and \cite{ASWY-gKingsFormula}, obtained
  by replacing their residue currents (i.e.~limit of $\eps e^{-\vphi_L
    -(1-\eps)m_1\psi} d\vol_{X,\omega}$ in the notation of this paper) by
  a current with ``\textfr{Poincaré}-growth'' singularities
  (i.e.~limit of $\eps
  \frac{e^{-\vphi_L-m_1\psi}}{\abs\psi^{\sigma+\eps}} d\vol_{X,\omega}$).
  Is it possible to replace the lc-measure in the main theorem by some
  measure which is defined by currents which diverge to infinity even
  faster, like the limits of 
  \begin{equation*}
    \eps\frac{e^{-\vphi_L-m_1\psi} d\vol_{X,\omega}}
    {\abs\psi^{\sigma}\paren{\log\abs\psi}^{1+\eps}} \; ,\;
    \eps\frac{e^{-\vphi_L-m_1\psi} d\vol_{X,\omega}}
    {\abs\psi^{\sigma} \log\abs\psi \paren{\log\log\abs\psi}^{1+\eps}}
    \; ,\;
    \eps\frac{e^{-\vphi_L-m_1\psi} d\vol_{X,\omega}}
    {\abs\psi^{\sigma} \log\abs\psi
      \log^{\circ 2}\abs\psi \paren{\log^{\circ 3}\abs\psi}^{1+\eps}} \; ,
    \dots
  \end{equation*}
  and so on (where $\log^{\circ j}$ denotes the composition of $j$
  copies of $\log$ functions)?
  It seems to the authors that this could be related to the question
  stated in Remark \ref{rem:McNeal-Varolin-weights}, which is asking
  for the estimates with some better weights given in
  \cite{McNeal&Varolin_adjunction}.
  
\item The first author started to consider the lc-measure during the study
  of analytic adjoint ideal sheaves with Chen-Yu Chi from National
  Taiwan University.
  The lc-measure on various lc centres can possibly be the means to
  generalise the works of Guenancia (\cite{Guenancia}) and Dano Kim
  (\cite{KimDano-adjIdl}) on this subject.
  Furthermore, these lc-measures provide a way to characterise the lc
  centres which can be defined by the multiplier ideal sheaves of
  quasi-psh functions.
  Considering such linkage, it would be of interest to see more of
  their applications in analytic and algebraic geometry.
\end{enumerate}


\subsection{Improved plt extension of Demailly--Hacon--P\u aun}
\label{sec:intro-plt-extension-result}


Another main result of this paper is the following improvement to the
result of plt extension of Demailly--Hacon--P\u aun in \cite{DHP},
which removes the superfluous assumption on the support of the given
$\fieldQ$-divisor $D$.

\begin{thm}[\thmparen{Theorem \ref{thm:proof-improved-plt}}] \label{thm:improved-plt-intro}
  Let $X$ be a projective manifold and $(X,S+B)$ be a purely
  log-terminal (plt) and log-smooth pair with $B$ being a
  $\fieldQ$-divisor such that $S = \floor{S+B}$.
  Let $\mu \in \Nnum$ be such that $\mu\paren{K_X + S + B}$ is a
  $\Znum$-(Cartier)-divisor.
  Assume that $\mu \geq 2$ and 
  \begin{itemize}
  \item $K_X + S + B$ is pseudo-effective (pseff);

  \item $K_X + S + B \lineq_\fieldQ D$, where $D$
    is an effective $\fieldQ$-divisor with snc support;

  \item $\supp S \subset \supp D$
    (\emph{without} the assumption that $\supp D \subset \supp\paren{S+B}$);

  \item no irreducible components of $S$ lies in the \emph{diminished stable
      base locus} $\sBase[-]{K_X+S+B}$ (see, for example, \cite{DHP}*{\S
      2.1} for the definition).
  \end{itemize}
  Then, every $\orgu \in \cohgp0[S]{\holo_S\paren{\mu\paren{K_X+S+B}}
    \otimes \obstidlSHM}$ extends to a holomorphic section in
  $\cohgp0[X]{\mu\paren{K_X+S+B}}$ (see Section
  \ref{sec:proof-improved-plt} and \cite{DHP} for the definitions of
  the extension obstruction ideal sheaf $\obstidlSHM$ and the
  corresponding extension obstruction divisor $\Xi$).

  In particular, when $K_X+S+B$ is nef, the restriction map
  $\cohgp0[X]{\mu\paren{K_X+S+B}} \to
  \cohgp0[S]{\holo_S\paren{\mu\paren{K_X+S+B}}}$ is surjective.
\end{thm}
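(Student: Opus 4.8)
The plan is to reduce the theorem to an application of the $L^2$ extension theorem with lc-measure (Theorem~\ref{thm:ext-from-lc-with-estimate-codim-1-case}, or its approximation version Theorem~\ref{thm:extension-with-seq-of-potentials}; one could equally use an Ohsawa-measure version together with the refined setup of Section~\ref{sec:setup}), after first manufacturing a suitable singular metric on the bundle in question. Since $\mu \geq 2$, one may write $\mu\paren{K_X+S+B} = K_X + S + L$ as $\Znum$-line bundles, where $L := (\mu-1)\paren{K_X+S+B} + B$ is a $\fieldQ$-line bundle; extending a section of $\holo_S\paren{\mu\paren{K_X+S+B}}$ is then the same as extending a $\parres{K_X+S+L}_S$-valued section, which is exactly the situation of Section~\ref{sec:setup} with $\psi = \psi_S$ the global function attached to the reduced snc divisor $S$ (Notation~\ref{notation:potentials}), $m_0 = 0$, $m_1 = 1$, and $\vphi_L$ a metric on $L$ still to be chosen. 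Because $(X,S+B)$ is plt and log-smooth, adjunction makes the restricted pair on $S$ klt, and the quotient sheaf $\mtidlof{\vphi_L}/\mtidlof{\vphi_L+\psi_S}$ will, on $S$, compute to $\obstidlSHM$ suitably twisted --- which is why the hypothesis on $\orgu$ is phrased through $\obstidlSHM$.

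First I would build the metric $e^{-\vphi_L}$. Starting from the pseudo-effectivity of $K_X+S+B$, the $\fieldQ$-linear equivalence $K_X+S+B \lineq_\fieldQ D$ with $D$ effective and snc, and the hypothesis that no component of $S$ lies in $\sBase[-]{K_X+S+B}$, one constructs --- following \cite{DHP} --- an approximating family of metrics on $K_X+S+B$ coming from ``algebraic'' metrics on $K_X+S+B+\tfrac{1}{k}A$ for an ample $A$ and $k \gg 0$, and then runs the Demailly--Hacon--P\u aun iteration: given a metric at stage $j$, use Theorem~\ref{thm:ext-from-lc-with-estimate-codim-1-case} to extend a suitable power $\orgu^{\otimes \nu_j}$ to a section $F_j$ on $X$, and take $\tfrac{1}{\nu_j}\log\abs{F_j}^2$ as the next approximating potential. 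The limit $\bphi$ is a psh metric on $K_X+S+B$ --- so, after the blow-up reductions of Section~\ref{sec:blow-up}, $\vphi_L := (\mu-1)\bphi + \phi_B$ is a legitimate potential on $L$ --- with singularities controlled by $D$ and by the extension obstruction divisor $\Xi$, and, crucially, it satisfies $\bphi \gtrsim_\tlog \log\abs{\orgu}^2$ along $S$ in the precise sense of Theorem~\ref{thm:bphi-bdd-below-by-u-on-S}. The assumption $\orgu \in \cohgp0[S]{\holo_S\paren{\mu\paren{K_X+S+B}} \otimes \obstidlSHM}$ is exactly what guarantees that the singularities of $\bphi$ transverse to $S$ do not prevent $\orgu$ from lying locally in $\mtidlof{\vphi_L + m_0\psi_S}$.

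With such a $\vphi_L$, I would normalise $\psi_S$ by an additive constant so that both the curvature positivity $\ibddbar\vphi_L + \paren{m_1+\beta}\ibddbar\psi_S \geq 0$ and the normalisation inequalities of Theorem~\ref{thm:ext-from-lc-with-estimate-codim-1-case} hold --- the positivity being immediate once $\bphi$ is psh and a sufficiently small multiple of $\psi_S$ is dominated by a \textde{Kähler} form on the (projective, hence compact) $X$ --- and then apply the extension theorem to obtain $F \in \cohgp0[X]{K_X\otimes L\otimes\mtidlof{\vphi_L+m_0\psi_S}} \subset \cohgp0[X]{\mu\paren{K_X+S+B}}$ with $F \equiv \orgu \mod \mtidlof{\vphi_L+m_1\psi_S}$, i.e.\ $F$ restricts to $\orgu$ on $S$. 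Since $X$ is projective one can in fact invoke the version of the extension theorem requiring no $L^2$ bound on the section (the compactness feature recalled in the introduction), so only the multiplier-ideal membership established above is used. For the ``in particular'' clause: if $K_X+S+B$ is nef, then $\sBase[-]{K_X+S+B} = \emptyset$ so the fourth hypothesis holds vacuously, and the numerical metric $\bphi$ has trivial multiplier ideal everywhere, so $\Xi = 0$ and $\obstidlSHM = \holo_S$; hence every section of $\holo_S\paren{\mu\paren{K_X+S+B}}$ extends, giving the asserted surjectivity.

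The main obstacle is the middle step: proving that the iteratively built $\bphi$ genuinely dominates $\abs{\orgu}^2$ on $S$, equivalently that local extensions of $\orgu$ sit in $\mtidlof{\vphi_L+m_0\psi_S}$. This is precisely where \cite{DHP} invoked the hypothesis $\supp D \subset \supp\paren{S+B}$ that is removed here: it ensured the approximating metrics were singular only along $S+B$, which simplified the convergence bookkeeping and the ideal-membership check. To dispense with it, I would run the iteration in the enlarged framework of Section~\ref{sec:setup}, where $\vphi_L$ and $\psi_S$ are allowed to be differences of quasi-psh functions with neat analytic singularities: the components of $D$ not contained in $\supp\paren{S+B}$ then contribute extra singular terms to $\vphi_L$, but these are harmless and are already accounted for by $\obstidlSHM$. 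Tracking the constants in the $L^2$ estimates of Theorem~\ref{thm:ext-from-lc-with-estimate-codim-1-case} as $k\to\infty$ and $\nu_j\to\infty$ is the technical heart of the argument, and is the point where the \emph{estimate} in the extension theorem --- not merely its qualitative conclusion --- is essential.
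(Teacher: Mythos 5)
Your overall reduction (write $\mu(K_X+S+B)=K_X+S+L$, take $\psi=\psi_S$, $m_0=0$, $m_1=1$, build a sequence of potentials on $K_X+S+B$ and finish with the lc-measure extension theorem) matches the paper's frame, but the core step is asserted rather than proved, and the construction you propose for it would not close. You build the potential by the Demailly--Hacon--P\u aun iteration itself (extend powers $\orgu^{\otimes\nu_j}$ by the extension theorem and take $\tfrac1{\nu_j}\log\abs{F_j}^2$); but to extend $\orgu^{\otimes\nu_j}$ at stage $j$ you need exactly the $L^2$/multiplier-ideal bound of $\orgu^{\otimes\nu_j}$ with respect to the stage-$j$ metric, which is what the iteration is supposed to produce --- this circularity is the whole difficulty, and you then name the resulting comparison ``$\bphi\gtrsim_\tlog\log\abs\orgu^2$ on $S$'' as the main obstacle without supplying an argument. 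The paper deliberately avoids that iteration: it takes Bergman kernel potentials $\dep\bphi$ built from all of $\cohgp0[X]{\ell k\mu\paren{K_X+S+B}+\ell A}$, which are uniformly bounded above \eqref{eq:wt_properties-bdd-uniformly-in-k}; it gets non-emptiness of the extension sets $\setE\paren{\orgu^k\otimes s_{A,i}|_S}$ from Hacon--M${}^{\text{c}}$Kernan's pluricanonical extension theorem \cite{Hacon&Mckernan}*{Thm.~6.3} (an input missing from your proposal); and it proves the key bound $\log\abs\orgu^{2/\mu}\lesssim_\tlog\res{\dep\bphi}_S$ (Theorem \ref{thm:bphi-bdd-below-by-u-on-S}) via minimal-norm extensions $\Umin_{k,A,i}$ whose sup-norm is controlled with constants independent of $k$ (Lemmas \ref{lem:uniform-delta} and \ref{lem:potential_Umin_uniBdd}), only then feeding the finiteness of $\int_S\abs\orgu^2\lcV|1|$ into Theorem \ref{thm:extension-with-seq-of-potentials}.

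You also do not engage with the one point the removed hypothesis was actually used for: in \cite{DHP}, $\supp D\subset\supp\paren{S+B}$ served to eliminate the logarithmic poles $\abs{\dep\psi}^{-2}$ in the Ohsawa--Takegoshi estimate so that a sup-norm bound on the auxiliary extensions could be extracted. Declaring the extra components of $D$ ``harmless and accounted for by $\obstidlSHM$'' is not a substitute; the paper's actual device is Lemma \ref{lem:sup-norm-bdd-from-L2-with-denom}, where the poles are absorbed by \textde{Hölder}'s inequality in an $L^p$ estimate, combined with the uniform upper bound on $\dep\bphi$ and \cite{DHP}*{Lemma 5.5}. Two further slips: invoking the estimate-free (compactness) version of the extension theorem at the end is not adequate --- the final step needs the sequence version coping with the $-\tfrac1k\omega$ negativity, and the uniform estimates enter the proof of the restriction bound itself, as your own closing paragraph concedes; and in the nef case $\Xi=0$ because $N_\sigma\paren{\lVert K_{\rs X}+\rs S+\rs B\rVert_{\rs S}}=0$ for nef classes, not because a minimal metric would have trivial multiplier ideal (nefness implies no such thing).
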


The assumption $\supp D \subset \supp\paren{S+B}$ in
\cite{DHP}*{Thm.~1.7} is used there to ``remove'' the logarithmic
singularities in the denominator in the estimate obtained from the
Ohsawa--Takegoshi theorem \cite{DHP}*{Thm.~4.3} so that an estimate in
the unweighted $L^2$ norm, and hence the sup-norm, of the auxiliary
extended holomorphic sections can be obtained.
The superfluous assumption is needed as the logarithmic poles are
estimated in sup-norm.

In Lemma \ref{lem:sup-norm-bdd-from-L2-with-denom}, the logarithmic
poles are estimated in $L^p$ norm via \textde{Hölder}'s inequality,
avoiding the use of the superfluous assumption.
Moreover, the choice of the sequence of auxiliary potentials (which
are denoted as $\vphi_{\tau_m}$ in \cite{DHP}*{\S 5}) is replaced by
the sequence of potentials constructed from Bergman kernels of spaces
of global holomorphic sections (see Sections
\ref{sec:Bergman-potentials} and \ref{sec:construction-vphi_L-psi}),
which is a priori uniformly bounded from above (see
\eqref{eq:wt_properties-bdd-uniformly-in-k}), thus avoiding the
complicated inductive construction of the $\vphi_{\tau_m}$ in
\cite{DHP}*{\S 5} as well as simplifying the proof of uniform
boundedness of such sequence when restricted to the subvariety $S$
(see Theorem \ref{thm:bphi-bdd-below-by-u-on-S}).

Apart from the technicalities, the argument in the proof of the
theorem above essentially follows that in \cite{DHP}*{\S 5}.


\section{The measures on lc centres}
\label{sec:lc-measures}

Let $(X,\omega)$ be a \emph{compact} \textde{Kähler} manifold of
dimension $n$ equipped with a (smooth) \textde{Kähler} metric $\omega$.
All notations follow those given in Section \ref{sec:setup}.

\subsection{Effects of log-resolutions} 
\label{sec:blow-up}


As $\vphi_L$ and $\psi$ are locally differences of quasi-psh functions
with neat analytic singularities, by the result of \cite{Hironaka}
(see also \cite{Kollar_Resoln-of-sing} or \cite{Mustata_resoln-sing}),
there is a \emph{log-resolution $\pi \colon \rs X \to X$ of
  $(X,\vphi_L,\psi)$}, which is the composition of a sequence
of blow-ups at smooth centres such that the inverse image ideal
sheaves $\sheaf P_{\vphi_L} \cdot \holo_{\rs X}$ and $\sheaf P_{\psi}
\cdot \holo_{\rs X}$ of the polar ideal sheaves $\sheaf P_{\vphi_L}$
and $\sheaf P_{\psi}$
of $\vphi_L$ and $\psi$ respectively (see Definition \ref{def:polar-ideal-sheaves})
are principal ideal sheaves given by some divisors, and the sum 
of these divisors together with the exceptional divisors of $\pi$
(i.e.~components of the relative canonical divisor $K_{\widetilde X /
  X} := K_{\widetilde X} / \pi^*K_X$) has only snc.

\emph{Assume that $\vphi_L+m_1 \psi$ is psh.}
Without further assumption, one can decompose $K_{\widetilde X / X}$
into two effective $\Znum$-divisors $E$ and $R$ (with the
corresponding canonical holomorphic sections denoted by $s_E$ and
$s_R$) such that $R$ is the \emph{maximal} divisor satisfying
\begin{equation*}
  \pi^*\vphi_L +m_1\pi^*\psi -\phi_R :=\pi^*\vphi_L +m_1\pi^*\psi
  -\log\abs{s_R}^2 \quad\text{being psh.}
\end{equation*}
Suppose that $\pi$ is restricted to $\pi^{-1}(V)$ where
$(V,z)$ is a coordinate neighbourhood in $X$, and let
$\set{U_\gamma}_\gamma$ be a covering of $\pi^{-1}(V) =
\bigcup_{\gamma} U_\gamma$ by coordinate neighbourhoods $(U_\gamma,
w_\gamma)$ in $\widetilde X$.
Let also $\set{\varrho_\gamma}_\gamma$ be a partition of unity
subordinated to $\set{U_\gamma}_{\gamma}$.
Then, with $s_{E}$, $s_{R}$, $z$ and $w_\gamma$'s suitably chosen, one has,
for any $f \in \cohgp0[V]{K_X\otimes L}$ which is viewed as an 
$(n,0)$-form $f = f_V \:dz^1 \wedge \dots \wedge dz^n$,
\begin{equation*}
  \begin{aligned}
    &\int_{V} \abs f^2 e^{-\vphi_L-m\psi}
    = \int_V \abs{f_V}^2 e^{-\vphi_L -m\psi} \:c_n dz^1 \wedge \dots
    \wedge dz^n \wedge d\conj{z^1} \wedge \dots \wedge d\conj{z^n} \\
    =&\sum_\gamma
    \int_{U_\gamma} \varrho_\gamma \pi^*\!\abs{f_V}^2
    e^{-\pi^*\vphi_L-m\pi^*\psi} \abs{s_{E, \gamma} \otimes s_{R,\gamma}}^2 \:c_n
    dw_\gamma^1 \wedge \dots \wedge dw_\gamma^n \wedge
    d\conj{w_\gamma^1} \wedge \dots \wedge d\conj{w_\gamma^n} \\
    =&\sum_\gamma
    \int_{U_\gamma} \varrho_\gamma \abs{\pi^*\!f_V \otimes s_E}^2
    e^{-\pi^*\vphi_L-m\pi^*\psi+\phi_R} \: c_n
    dw_\gamma^1 \wedge \dots \wedge dw_\gamma^n \wedge
    d\conj{w_\gamma^1} \wedge \dots \wedge d\conj{w_\gamma^n}\; ,
  \end{aligned}
\end{equation*}
where $c_n := \paren{-1}^{\frac{n(n-1)}{2}} \paren{\frac{\cplxi}2}^n$.
This shows that, if $f \in \mtidlof[X]{\vphi_L +(m-\eps) \psi}
\setminus \mtidlof[X]{\vphi_L +m \psi}$ for all sufficiently small
$\eps > 0$, then $\pi^*f \otimes s_E \in
\mtidlof[\rs X]{\pi^*\vphi_L +(m-\eps) \pi^*\psi -\phi_R} \setminus
\mtidlof[\rs X]{\pi^*\vphi_L +m \pi^*\psi -\phi_R}$ also for those
$\eps >0$.
In other words, \emph{if $m_p$ is a jumping number of the family
$\set{\mtidlof[X]{\vphi_L +m \psi}}_{m \in \fieldR_{\geq 0}}$, it is
also a jumping number of the family $\set{\mtidlof[\rs X]{\pi^*\vphi_L
    +m \pi^*\psi -\phi_R}}_{m \in \fieldR_{\geq 0}}$.}\footnote{Thanks
to Chen-Yu Chi for pointing out to the authors that the converse is
not true.
An example is provided by taking $\psi = \log\paren{\abs{x^3}^2
  +\abs{y^2}^2} -1$ and $\vphi_L=0$ on $X = \Delta^2 \subset
\fieldC^2_{(x,y)}$, the unit $2$-disc centred at the origin, and
considering the standard principalisation of the ideal $(x^3,y^2)$
given by blowing-up 3 times.
In this case, $\frac56$ and $\frac76$ are consecutive jumping numbers
of $\set{\mtidlof[X]{m\psi}}_{m\in \fieldR_{\geq 0}}$, but
$\set{\mtidlof[\rs X]{m\pi^*\psi -\phi_R}}_{m\in\fieldR_{\geq 0}}$
also has $1$ as a jumping number.}
In particular, there exists $m_0' \in [m_0, m_1)$ such that the ideal
sheaf $\mtidlof[\rs X]{\pi^*\vphi_L +m \pi^*\psi -\phi_R}$ stays
unchanged for $m \in [m_0', m_1)$ and shrinks when $m = m_1$.
 
Moreover, by the choice of $R$, if $E$ is non-trivial, the weight
$e^{-\pi^*\vphi_L -m_1\pi^*\psi +\phi_R}$ is then integrable around a
general point of $E$.
Since the weight has only snc singularities, it can be easily seen
from Fubini's theorem that, for any $m \in [m_0',m_1]$,
\begin{equation*}
  \pi^*f_V \otimes s_E \in
  \mtidlof[\rs X]{\pi^*\vphi_L +m \pi^*\psi -\phi_R}
  \iff
  \pi^*f_V \in \mtidlof[\rs X]{\pi^*\vphi_L +m \pi^*\psi -\phi_R} \; .
\end{equation*}
Since $\pi$ is bimeromorphic (birational) and thus the image of the
exceptional locus
are of codimension at least $2$ in $X$, it can be
seen from the Riemann continuation theorem\footnote{See
  footnote \ref{fn:Riemann-continuation}.} and the identity theorem
for holomorphic functions that any
holomorphic function $\rs f_{\pi^{-1}(V)}$ on $\pi^{-1}(V)$ for
an open set $V \subset X$ can be expressed as $\pi^* f_{V}$ for some
holomorphic function $f_{V}$ on $V$.
It then follows from the above equality between integrals on $V$ and
$\pi^{-1}(V)$ that, for any $m \in [m_0' ,m_1]$,
\begin{align} 
  \notag
  \mtidlof[X]{\vphi_L+m \psi} \cdot \holo_{\rs X}
  &\subset \mtidlof[\rs X]{\pi^*\vphi_L+m \pi^*\psi -\phi_R}
    \quad\text{ and } \\
  \label{eq:mult-ideal-sheaves-log-resoln}
  \mtidlof[X]{\vphi_L+m \psi} &= \pi_*\mtidlof[\rs X]{\pi^*\vphi_L +m
    \pi^*\psi -\phi_R} \; .
\end{align}
Furthermore, suppose that $\vphi_L +\paren{m_1+\beta}\psi$ is quasi-psh
for all $\beta \in [0,\delta]$ for some $\delta >0$, and if $\rs S$
is the reduced \emph{divisor} defined by 
$\Ann_{\holo_{\rs X}} \paren{\frac{\mtidlof[\rs
    X]{\pi^*\vphi_L -\phi_{R} +m_0'\pi^*\psi}} {\mtidlof[\rs
    X]{\pi^*\vphi_L -\phi_{R} +m_1\pi^*\psi}}}$, it then follows
from 
\eqref{eq:mult-ideal-sheaves-log-resoln} that
\begin{equation*}
  \Ann_{\holo_{X}} \paren{\frac{\mtidlof[X]{\vphi_L+m_0'\psi}}
    {\mtidlof[X]{\vphi_L+m_1\psi}}} 
  =\pi_* \Ann_{\holo_{\rs X}} \paren{\frac{\mtidlof[\rs
      X]{\pi^*\vphi_L -\phi_{R} +m_0'\pi^*\psi}} {\mtidlof[\rs
      X]{\pi^*\vphi_L -\phi_{R} +m_1\pi^*\psi}}} \; .
\end{equation*} 
One therefore has $\pi\paren{\smash[t]{\rs S}} = S$.
  
The above discussion can be concluded as follows.
\begin{SNCassumption} \label{assumption:snc}
  When it helps in the computation, by replacing
  $\pi^*\psi$ by $\psi$, $\pi^*\vphi_L -\phi_R$ by $\vphi_L$, $m_0'$
  by $m_0$, and $\pi^*f_V \otimes s_E$ by $f_V$ (thus replacing
  $\pi^*L \otimes R^{-1}$ by $L$, and therefore $\pi^*K_X \otimes
  \pi^*L \otimes E = K_{\rs X} \otimes \pi^*L \otimes R^{-1}$ by $K_X
  \otimes L$), it can be assumed that 
  \begin{itemize}
  \item $S$ is a \emph{reduced divisor}, and
  \item the polar ideal sheaves $\sheaf P_{\vphi_L}$ and $\sheaf
    P_\psi$ of $\vphi_L$ and $\psi$ respectively are principal
    and the corresponding divisors have only \emph{snc} with each
    other.
  \end{itemize}
  Moreover, the estimates on the holomorphic extension obtained in the
  main theorems in the following sections are valid even before
  blowing up.
\end{SNCassumption}


\subsection{Computation with lc-measures}
\label{sec:compute-lc-measures}


In this section, $\vphi_L$ and $\psi$ are assumed to satisfy the snc
assumption \ref{assumption:snc}.

The well-defined-ness of the measure on lc centres of
$(X,S)$ of codimension $\sigma$ (called the ``lc-measure'' or the
``$\sigma$-lc-measure'' for short) is justified below.
Define $\wphi_L$ by
\begin{equation*}
  \wphi_L + \psi_S := \vphi_L + m_1 \psi  \; ,
\end{equation*}
where $\psi_S := \phi_S - \sm\vphi_S < 0$ (see Notation
\ref{notation:potentials} for the meaning of $\phi_S$ and
$\sm\vphi_S$).


A potential $\vphi$ is said to have \emph{Kawamata log-terminal (klt)}
singularities if $\mtidlof{\vphi} = \holo_X$.

\begin{prop} \label{prop:lc-measure}
  Given the snc assumption \ref{assumption:snc} on $\vphi_L$ and
  $\psi$, suppose further that
  \begin{enumerate}[label=$(\dagger)$]
  \item \label{item:klt-assumption}
    $\wphi_L$ has only klt singularities and
    $\wphi_L^{-1}(-\infty) \cup \wphi_L^{-1}(\infty)$ does not contain
    any lc centres of $(X,S)$.
  \end{enumerate}
  Suppose also that $V$ is an open coordinate neighbourhood on which
  \begin{alignat*}{2}
    \psi|_V &= \sum_{j = 1}^{\sigma_V} \nu_j \log\abs{z_j}^2 \;\;
    &+&\sum_{k=\sigma_V+1}^n c_k \log\abs{z_k}^2 + \alpha
        \quad\text{and} \\
    \res{\wphi_L}_{V} &= &&\sum_{k=\sigma_V+1}^n \ell_k \log\abs{z_k}^2
                            + \beta \; ,
  \end{alignat*}
  where 
  \begin{itemize}
  \item each $z_j$ is a holomorphic coordinate and $(r_j, \theta_j)$
    its corresponding polar coordinates on $V$ for $j=1,\dots, n\;$,
  \item $S\cap V =\set{z_1 \dotsm z_{\sigma_V} =0} \;$,
  \item $\alpha$ and $\beta$ are smooth functions such that
    $\sup_V\alpha < 0$,
  \item $\sup_V \frac{r_j}{2\nu_j} \frac{\diff}{\diff r_j} \alpha > -1$
    (i.e.~$\sup_V r_j$ is sufficiently small) for $j=1,\dots, \sigma_f\;$, 
  \item $\sup_V \log\abs{z_k}^2 < 0$ for $k=\sigma_f+1,\dots, n \;$,
  \item $\nu_j$'s are constants such that $\nu_j > 0$ for $j=1,\dots,
    \sigma_V \;$, and
  \item $c_k$'s are constants such that $c_k \geq 0$ for $k=\sigma_V+1,\dots,n \;$,
  \item $\ell_k$'s are constants such that $\ell_k < 1$ (due to the
    klt assumption, possibly negative) for $k=\sigma_V+1,\dots,n \;$.
  \end{itemize} 
  Then, for any compactly supported smooth function $f$ on $V$ such
  that
  \begin{equation*}
    f = \prod_{k=\sigma_f +1}^{\sigma_V} \abs{z_{k}}^{1+a_{k}} \cdot g
    \quad\;\text{with}\quad \res g_{S^{\sigma_f}} \not\equiv 0 \; ,
  \end{equation*}
  where $S^{\sigma_f} := \set{z_1 =\dotsm =z_{\sigma_f} =0}$,
  $\sigma_f$ is some non-negative integer $\leq \sigma_V$,
  $a_{\sigma_f+1},\dots, a_{\sigma_V}$ are some non-negative integers
  (with the convention $\prod_{k=\sigma_f+1}^{\sigma_V}
  \abs{z_{k}}^{1+a_k} = 1$ when $\sigma_f = \sigma_V$) and $g$ is some
  compactly supported bounded function on $V$ with $\abs g^2$ being
  smooth, one has
  \begin{align*}
    &~\int_{\lc^\sigma_X(S)\cap V} \abs f^2 \lcV|\sigma,(m_1)|
      := \lim_{\eps \tendsto 0^+} \eps \int_V \frac{\abs f^2
      e^{-\vphi_L-m_1\psi} \:d\vol_{X,\omega}}{\abs\psi^{\sigma+\eps}}
    \\
    =& \lim_{\eps \tendsto 0^+} \eps \int_V \frac{
       \prod_{k=\sigma_f +1}^{\sigma_V} \abs{z_{k}}^{2 a_{k}} \cdot \abs g^2
      e^{-\wphi_L+\sm\vphi_S} \:d\vol_{X,\omega}}{\abs{z_1 \dots
       z_{\sigma_f}}^2 \:\abs\psi^{\sigma+\eps}}
    \\
    =&
       \begin{dcases}
         0 & \text{when } \sigma > \sigma_f \text{ or } \sigma_f =0 \; , \\
         \frac{\pi^{\sigma_f}}{\paren{\sigma_f -1}! \prod_{j=1}^{\sigma_f} \nu_j}
         \int_{S^{\sigma_f}} \paren{
           \;\;\prod_{\mathclap{k=\sigma_f+1}}^{\sigma_V}\;
           \abs{z_k}^{2 a_k} \cdot \abs g^2
         }_\omega e^{-\wphi_L+\sm\vphi_S}
         \: d\vol_{S^{\sigma_f}, \omega}
         & \text{when } \sigma = \sigma_f \geq 1 \; , \\
         \infty & \text{otherwise,}
       \end{dcases} 
  \end{align*}
  where 
  $\;\paren\cdot_\omega$ denotes the contraction of a section with the
  metric on $K_{S^{\sigma_f}} \otimes \res{K_X^{-1}}_{S^{\sigma_f}}$
  induced from $\omega$.
\end{prop}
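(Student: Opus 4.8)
The plan is to reduce the integral to an iterated one-variable computation by separating the "singular" coordinates $z_1,\dots,z_{\sigma_f}$ from the remaining ones, and then to evaluate the limit in $\eps$ via a Mellin-type / Laplace-type asymptotic. First I would rewrite the integrand using the hypotheses on $f$: since $f = \prod_{k=\sigma_f+1}^{\sigma_V}\abs{z_k}^{1+a_k}\cdot g$ and the section takes values in $K_X\otimes L$, pulling out the factor $\abs{z_1\cdots z_{\sigma_V}}^{2}$ coming from $e^{-m_1\psi}$ against $\phi_S$ (recall $\wphi_L+\psi_S = \vphi_L+m_1\psi$ and $\psi_S=\phi_S-\sm\vphi_S$) turns the factor $e^{-\vphi_L-m_1\psi}$ into $e^{-\wphi_L+\sm\vphi_S}$ divided by $\abs{z_1\cdots z_{\sigma_V}}^{2}$, and the factors $\abs{z_k}^{2+2a_k}$ from $\abs f^2$ cancel $\abs{z_k}^{2}$ for $k=\sigma_f+1,\dots,\sigma_V$, leaving exactly the second displayed integral. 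This is the content of the first equality and is purely bookkeeping once one is careful that $\abs g^2$ is smooth and $\wphi_L$ is klt near $S^{\sigma_f}$ by assumption \ref{item:klt-assumption}, so that all the remaining weights $e^{-\wphi_L+\sm\vphi_S}$ and $\abs{z_k}^{-2\ell_k}$ (with $\ell_k<1$) are locally integrable and contribute no singular behaviour.

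Next I would localize: near the generic point of $S^{\sigma_f}$ one has $\psi|_V = \sum_{j=1}^{\sigma_f}\nu_j\log\abs{z_j}^2 + (\text{bounded above, smooth})$, because the terms $\nu_j\log\abs{z_j}^2$ for $j=\sigma_f+1,\dots,\sigma_V$ and $c_k\log\abs{z_k}^2$ for $k>\sigma_V$ together with $\alpha$ are all bounded on the support of $f$ after choosing $\sup_V r_j$ small (this is exactly why the hypotheses $\sup_V\alpha<0$, $\sup_V\log\abs{z_k}^2<0$ and the derivative bound on $\alpha$ are imposed). So $\abs\psi \sim \abs{\sum_{j=1}^{\sigma_f}\nu_j\log\abs{z_j}^2} + O(1)$, and the essential question becomes the $\eps\tendsto 0^+$ asymptotics of
\begin{equation*}
  \eps\int \frac{h(z)\,d\lambda(z)}{\abs{z_1\cdots z_{\sigma_f}}^{2}\,\bigl(\sum_{j=1}^{\sigma_f}\nu_j\abs{\log\abs{z_j}^2}\bigr)^{\sigma+\eps}}
\end{equation*}
where $h$ is continuous with $h|_{S^{\sigma_f}}\not\equiv 0$. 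Passing to polar coordinates $z_j = r_j e^{i\theta_j}$ and substituting $t_j = -\log r_j^2 = \abs{\log\abs{z_j}^2}\in (T_0,\infty)$, the measure $\frac{d\lambda(z_j)}{\abs{z_j}^2}$ becomes (up to $\pi\,d\theta_j$) $dt_j$, so the integral in the $t$-variables is $\eps\int_{(T_0,\infty)^{\sigma_f}} \tilde h(t)\,\bigl(\sum_j \nu_j t_j\bigr)^{-\sigma-\eps}\,dt$ with $\tilde h(t)\to h|_{S^{\sigma_f}}$ (times the angular average) as $\min t_j\to\infty$. A standard computation — integrating out the simplex $\{\sum\nu_j t_j = s\}$ gives a factor $s^{\sigma_f-1}/((\sigma_f-1)!\prod\nu_j)$ — shows $\eps\int s^{\sigma_f-1}s^{-\sigma-\eps}\,ds = \eps\int s^{\sigma_f-1-\sigma-\eps}\,ds$ converges to $0$ if $\sigma>\sigma_f$, to $1$ (the residue) if $\sigma=\sigma_f$, and diverges if $\sigma<\sigma_f$; the case $\sigma_f=0$ is the trivial case where there is no singular denominator at all and the $\eps$ prefactor kills everything. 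Collecting the constants $\pi^{\sigma_f}$ from the angular integrations and $\frac{1}{(\sigma_f-1)!\prod\nu_j}$ from the simplex integration, and observing that only the restriction $h|_{S^{\sigma_f}} = \bigl(\prod_{k=\sigma_f+1}^{\sigma_V}\abs{z_k}^{2a_k}\abs g^2\bigr)_\omega e^{-\wphi_L+\sm\vphi_S}$ survives in the limit, yields the claimed formula with $d\vol_{S^{\sigma_f},\omega}$ appearing from the induced metric on $K_{S^{\sigma_f}}\otimes K_X^{-1}|_{S^{\sigma_f}}$.

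The main obstacle I expect is not the one-variable residue asymptotics — that is classical — but rather justifying the interchange of the $\eps\tendsto 0^+$ limit with the integral and controlling the "remainder" terms, i.e.\ proving that the contributions of $\alpha$, of the bounded $\log\abs{z_k}^2$ terms ($k>\sigma_f$), and of the deviation of $\tilde h(t)$ from its limiting value decay fast enough that $\eps$ times them tends to $0$. The clean way to handle this is to sandwich $\abs\psi$ between $c_1\sum\nu_j t_j$ and $c_2\sum\nu_j t_j + c_3$ on $\supp f$ using precisely the sign hypotheses in the proposition, reducing the general weight to the model weight up to multiplicative constants that do not affect which of the three regimes one is in, and then to use dominated convergence on the normalized integrand $\eps(\sum\nu_j t_j)^{-\sigma-\eps}$, whose total mass (against $s^{\sigma_f-1}ds$) is uniformly bounded in $\eps\in(0,1)$ when $\sigma=\sigma_f$ and which concentrates, as $\eps\to 0$, on the region $\min t_j\to\infty$ — exactly where $\tilde h$ is close to its boundary value. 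Once this concentration is made quantitative, the identification of the limit with the stated integral over $S^{\sigma_f}$ is immediate.
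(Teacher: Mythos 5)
Your overall route (polar/logarithmic coordinates, co-area over the simplices $\{\sum_j\nu_j t_j=s\}$, and the elementary asymptotics of $\eps\int s^{\sigma_f-1-\sigma-\eps}\,ds$) is a legitimate alternative to the paper's argument, which instead performs $\sigma_f$ iterated integrations by parts in the radial variables and uses the fundamental theorem of calculus to produce the restriction to $S^{\sigma_f}$, with a separate change of variables $(\abs\psi, q_j=t_j/\psi)$ for the divergent case. However, as written your reduction to the model weight has a genuine gap. You claim that the terms $\nu_j\log\abs{z_j}^2$ for $j=\sigma_f+1,\dots,\sigma_V$ and $c_k\log\abs{z_k}^2$ for $k>\sigma_V$ are ``bounded on the support of $f$'', and you build the two-sided sandwich $c_1\sum_{j\le\sigma_f}\nu_j t_j\le\abs\psi\le c_2\sum_{j\le\sigma_f}\nu_j t_j+c_3$ on $\supp f$ from this. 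That is false whenever any of those coefficients is nonzero: $\supp f$ meets the hyperplanes $\set{z_k=0}$ for $k>\sigma_f$ (indeed the limiting integral over $S^{\sigma_f}$ explicitly integrates across these loci, with the weights $\abs{z_k}^{2a_k}$ and the klt poles $\abs{z_k}^{-2\ell_k}$), and there $\abs\psi$ is not comparable to $\sum_{j\le\sigma_f}\nu_j t_j$. Only the lower bound $\abs\psi\ge\sum_{j\le\sigma_f}\nu_j t_j$ is available globally.

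Moreover, even where a sandwich with constants does hold, comparability up to multiplicative constants cannot give the exact value of the limit in the critical case $\sigma=\sigma_f$, which is the substantive content of the proposition (replacing $\abs\psi$ by $C\abs\psi$ changes the limit by the factor $C^{-\sigma_f}$); so the burden falls entirely on your concentration/dominated-convergence step, and there the unbounded dependence of $\psi$ on the outer variables $z_{\sigma_f+1},\dots,z_n$ must be confronted, not assumed away. The repair is not difficult and brings you close to the paper's structure: fix the outer variables, perform the inner $(t_1,\dots,t_{\sigma_f})$-integration first (Fubini, as the paper does by postponing the outer variables to the last step), observe that for fixed outer variables the extra terms of $\psi$ are an additive constant which does not affect the inner limit (e.g.\ $\eps\int_{T}^\infty (s+c)^{-1-\eps}\,ds\tendsto 1$ uniformly in $c\ge0$), bound the inner integral uniformly in $\eps$ using $\abs\psi\ge\sum_{j\le\sigma_f}\nu_j t_j$, and then apply dominated convergence in the outer variables, whose density is integrable because $\ell_k<1$ and $a_k\ge0$. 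Similarly, for the divergence case $\sigma<\sigma_f$ you need a genuine lower bound on the integrand, which requires first shrinking to a region where $\abs g^2>0$ (as the paper does) rather than invoking the sandwich. With these corrections your Mellin/co-area argument gives the same constants $\pi^{\sigma_f}/\paren{(\sigma_f-1)!\prod_j\nu_j}$ and is a valid, somewhat more conceptual, alternative to the paper's integration-by-parts proof.
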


\begin{remark}
  With the snc assumption on $\vphi_L$ and $\psi$,
  it is easy to see that $X$ can be covered by the kind of
  open coordinate neighbourhoods described in the proposition.
\end{remark}

\begin{proof} 
  Writing $\omega$ locally as $\frac{\cplxi}{2} \sum_{1\leq j,k \leq n} h_{j\conj
    k} \:dz_j \wedge d\conj{z_k}$ and choosing the canonical section
  defining $\phi_S$ suitably, it follows that
  \begin{align*}
    \eps \int_V \frac{\abs f^2 e^{-\vphi_L-m_1\psi}
    d\vol_{X,\omega}}{\abs\psi^{\sigma+\eps}}
    &=\eps \int_V
      \frac{
      \overbrace{\abs g^2 e^{-\beta+\sm\vphi_S}
      \det \paren{h_{j\conj k}} }^{F_0\: :=}
      \: \bigwedge_{j=1}^{n} \paren{\frac{\cplxi}{2}\: dz_j \wedge
      d\conj{z_j}}
      }{\abs\psi^{\sigma+\eps} \abs{z_1 \dotsm z_{\sigma_f}}^2 \prod_{k=\sigma_f+1}^n
      \abs{z_k}^{2(\ell_k -a_k)}} 
    \\
    &=\eps \int_V
      \frac{F_0} {\abs\psi^{\sigma+\eps}}
      \bigwedge_{j=1}^{\sigma_f} \paren{\frac{dr_j^2}{2r_j^2} \wedge
      d\theta_j} \wedge \;
      \bigwedge_{\mathclap{k=\sigma_f+1}}^n \; \frac{\frac{\cplxi}{2}\: dz_k \wedge
      d\conj{z_k}}{\abs{z_k}^{2(\ell_k-a_k)}} \; ,
  \end{align*}
  where $\ell_{\sigma_f+1}, \ell_{\sigma_f+2}, \dots
  ,\ell_{\sigma_V}$ and $a_{\sigma_V+1}, a_{\sigma_V+2}, \dots, a_n$
  are all defined to be $0$.
  In view of Fubini's Theorem, integrations with respect to the
  variables $z_{\sigma_f+1}, \dots, z_n$ are done at the last step.
  Since all $\paren{\ell_k-a_k}$'s are $< 1$, the integral with respect to
  all variables is convergent as soon as the integral
  with respect to variables $z_1, \dots, z_{\sigma_f}$ is bounded.
  The differentials corresponding to $z_{\sigma_f+1}, \dots, z_n$ are
  made implicit in what follows.
  Notice that $F_0$ is a smooth function.
  
  Observe that, if $\sigma_f =0$, the integral above is convergent and
  bounded above by $\BigO(\eps)$.
  Therefore, it goes to $0$ when $\eps \tendsto 0^+$.
  
  Assume that $\sigma_f \geq 1$ in what follows.
  Set 
  \begin{equation*} 
    t_j := \nu_j \log r_j^2 = \nu_j \log\abs{z_j}^2 \; . 
  \end{equation*}
  The integrand is integrated with respect to each
  $r_j$ over $[0,1]$ (thus to each $t_j$ over $(-\infty, 0]$) and to
  each $\theta_j$ over $[0,2\pi]$.
  Write also $\diff_{r_j}$ for $\frac{\diff}{\diff r_j}$.
  The integral in question then becomes
  \begin{equation} \tag{$*$} \label{eq:lc-measure-local-form}
    \begin{aligned}
      \frac{\eps}{\underbrace{\textstyle \prod_{j=1}^{\sigma_f} \nu_j}_{=:\: \vect\nu}} \int
      \frac{F_0}{\abs\psi^{\sigma+\eps}} \prod_{j=1}^{\sigma_f} dt_j
      &\cdot \underbrace{\prod_{j=1}^{\sigma_f} \frac{d\theta_j}{2}}_{=:\: \vect{d\theta} } 
      =\frac{\eps}{\vect\nu} \int
      \frac{-F_0}{1+\frac{r_1}{2\nu_1} \diff_{r_1} \alpha} \:
      \frac{d\abs\psi}{\abs\psi^{\sigma+\eps} } \prod_{j=2}^{\sigma_f}
      dt_j \cdot \vect{d\theta} \\
      &=\frac{\eps}{\vect\nu \paren{\sigma-1+\eps}} \int
      \frac{F_0}{1+\frac{r_1}{2\nu_1} \diff_{r_1} \alpha} \:
      d\paren{\frac{1}{\abs\psi^{\sigma-1+\eps}} } \prod_{j=2}^{\sigma_f}
      dt_j \cdot \vect{d\theta}
      \; .
    \end{aligned}
  \end{equation}
  Note that the integral above is treated as an iterated integral
  instead of an integral of differential form, and $1+\frac{r_1}{2\nu_1}
  \diff_{r_1} \alpha >0$ on $\supp F_0$ by assumption.
  
  If $\sigma \geq \sigma_f$, one can apply integration by parts and
  induction to yield
  \begin{align*}
    \eqref{eq:lc-measure-local-form}
    =&~\frac{-\eps}{\vect\nu \paren{\sigma-1+\eps}} \int
       \underbrace{\diff_{r_1} \paren{\frac{F_0}{1+\frac{r_1}{2\nu_1}
       \diff_{r_1}\alpha }}}_{=:\: F_1} dr_1
       \frac{dt_2}{\abs\psi^{\sigma-1+\eps}} \prod_{j=3}^{\sigma_f}
       dt_j \cdot \vect{d\theta} \\
    =&~ \frac{-\eps}{\vect\nu \paren{\sigma-1+\eps} \paren{\sigma -2+\eps}} \int
       \frac{F_1}{1+\frac{r_2}{2\nu_2} \diff_{r_2}\alpha } 
       d\paren{\frac{1}{\abs\psi^{\sigma-2+\eps}}} dr_1 \prod_{j=3}^{\sigma_f}
       dt_j \cdot \vect{d\theta} \\
    \tag{$**$} \label{eq:lc-measure-int-by-parts}
    =& \dots = \frac{(-1)^{\sigma_f} \:\eps}{\vect\nu
       \prod_{j=1}^{\sigma_f}\paren{\sigma-j+\eps}} \int
       \frac{F_{\sigma_f}}{\abs\psi^{\sigma -\sigma_f+\eps}}
       \prod_{j=1}^{\sigma_f}
       dr_j \cdot \vect{d\theta}  \; .
  \end{align*}
  Note that the $F_j$'s are defined inductively by
  \begin{equation*}
    F_j :=
    \diff_{r_j} \paren{\frac{F_{j-1}}{1+\frac{r_j}{2\nu_j}\diff_{r_j}\alpha
      }} \; ,
  \end{equation*}
  and all of them are smooth functions.
  When $\sigma > \sigma_f$, the last expression in
  \eqref{eq:lc-measure-int-by-parts} is bounded above by
  $\BigO\paren\eps$.
  Therefore, the integral tends to $0$ again as $\eps \tendsto 0^+$.
  When $\sigma = \sigma_f$, the last expression in
  \eqref{eq:lc-measure-int-by-parts} is bounded above, but the
  multiple constant in front of the
  integral
  does not converge to $0$ as $\eps \tendsto 0^+$.
  After letting $\eps \tendsto 0^+$, the dominated convergence theorem
  and the fundamental theorem of calculus gives
  \begin{equation*}
    \begin{aligned}
      \eqref{eq:lc-measure-int-by-parts}
      &=\frac{(-1)^{\sigma_f}}{\paren{\sigma_f -1}!\: \vect\nu} \int
      F_{\sigma_f} \prod_{j=1}^{\sigma_f} dr_j \cdot \vect{d\theta} 
      =\frac{(-1)^{\sigma_f-1}}{\paren{\sigma_f -1}!\: \vect\nu} \int
      \res{F_{\sigma_f-1}}_{r_{\sigma_f} = 0} \prod_{j=1}^{\sigma_f-1}
      dr_j \cdot \vect{d\theta} \\
      &= \dots = \frac{1}{\paren{\sigma_f -1}!\: \vect\nu} \int
      \res{F_{0}}_{S^{\sigma_f}} \vect{d\theta}
      =\frac{\pi^{\sigma_f}}{\paren{\sigma_f -1}!\: \vect\nu}
      \int_{S^{\sigma_f}} \paren{\;\; \prod_{\mathclap{k=\sigma_f+1}}^{\sigma_V}
        \abs{z_k}^{2 a_k} \cdot \abs g^2}_{\mathclap{\omega}} e^{-\wphi_L+\sm\vphi_S}
      \:d\vol_{S^{\sigma_f}, \omega} \; ,
    \end{aligned}
  \end{equation*}
  which is the desired result.

  It remains to check for the case $\sigma < \sigma_f$ but $\sigma_f
  \geq 1$.
  By assumption, $g$ is not identically $0$ on $S^{\sigma_f}$.
  As the integrand in question is non-negative, by shrinking $V$ (in
  such a way that $V \cap S^{\sigma_f} \neq \emptyset$ after
  shrinking) if necessary, one can assume without loss of generality
  that $\abs g^2 > 0$ on $V$, and thus $F_0 > 0$ on $V$.
  Consider a further change of variables
  \begin{equation*}
    \abs\psi = \abs\psi \; ,\quad q_j := \frac{t_j}{\psi}
    =\frac{\abs{t_j}}{\abs\psi} \quad \text{for } 
    j=2,\dots,\sigma_f \; ,
  \end{equation*}
  where each $q_j$ varies within $[0,1]$ on $V$.
  The expression in \eqref{eq:lc-measure-local-form} then becomes
  \begin{equation*}
    \begin{aligned}
      \eqref{eq:lc-measure-local-form}
      &= \frac{(-1)^{\sigma_f} ~\eps}{\vect\nu} \int
      \frac{F_0}{1+\frac{r_1}{2\nu_1} \diff_{r_1} \alpha} \:
      \frac{\abs\psi^{\sigma_f-1} d\abs\psi}{\abs\psi^{\sigma+\eps} }
      \prod_{j=2}^{\sigma_f} dq_j \cdot \vect{d\theta} \\ 
      &= \frac{(-1)^{\sigma_f} ~\eps}{\vect\nu\: \paren{\sigma_f
          -\sigma-\eps}} \int
      \frac{F_0}{1+\frac{r_1}{2\nu_1} \diff_{r_1} \alpha} \:
      d\paren{\abs\psi^{\sigma_f -\sigma -\eps}}
      \prod_{j=2}^{\sigma_f} dq_j \cdot \vect{d\theta} \; .
    \end{aligned}
  \end{equation*}
  Notice that the factor $(-1)^{\sigma_f}$ is there only to account
  for the difference in orientation between the coordinate systems
  $(r_1,\dots,r_{\sigma_f})$ and $(\abs\psi, q_2, \dots,
  q_{\sigma_f})$.
  The whole expression is itself non-negative.
  As $\frac{F_0}{1+\frac{r_1}{2\nu_1} \diff_{r_1} \alpha} > 0$ on $V$
  and $d\paren{\abs\psi^{\sigma_f -\sigma -\eps}}$ is non-integrable
  on $V$ when $\eps$ is sufficiently small, the expression above tends
  to $\infty$ as $\eps \tendsto 0^+$.
\end{proof}

\begin{remark} \label{rem:lc-measure_gen-f}
  Having the Taylor expansion in mind, for a general compactly supported
  smooth function $f$ on $X$, on every local coordinate neighbourhood
  $V$ where $\res f_{S\cap V} \not\equiv 0$ and $S \cap V =
  \set{z_1\dots z_{\sigma_V} =0}$, there is an
  integer $\sigma_f$ (dependent on $V$) such that
  \begin{equation*}
    \res f_V = \quad \sum_{\mathclap{\substack{p \in \symmgp_{\sigma_V}
        / \paren{\symmgp_{r} \times \symmgp_{\sigma_f}} \\ r\:
        :=\:\sigma_V-\sigma_f}}} \qquad\;\;\;
    \prod_{k=\sigma_f+1}^{\sigma_V} \abs{z_{p(k)}}^{1+a_{p(k)}} \cdot
    g_p
    \quad\;\text{ with}\quad
    \res{g_{p'}}_{{}^{p'}\!S^{\sigma_f}} \not\equiv 0 \;\;
    \text{ for some }p' \; ,
  \end{equation*}
  where every $p$ is a choice of $\sigma_V -\sigma_f$ elements from
  the set $\set{1,2,\dots, \sigma_V}$ (which is abused to mean a
  corresponding permutation of $\set{1,2,\dots, \sigma_V}$),
  each $g_{p}$ is a bounded function on $V$ with $\abs{g_p}^2$ being
  smooth, and ${}^{p'}\!S^{\sigma_f} := \set{z_{p'(1)} =\dotsm
    =z_{p'(\sigma_f)} =0}$.
  The same kind of calculation in the proof of the proposition shows
  that summands of the sum over $p \in \symmgp_{\sigma_V}
  / \paren{\symmgp_{\sigma_V-\sigma_f} \times 
    \symmgp_{\sigma_f}}$ are mutually orthogonal (by considering only
  the monomials in $\abs{z_j}$'s with constant coefficients) with
  respect to the inner product induced from $\lcV|\sigma,(m_1)|$ when
  $\sigma > \sigma_f -2$.
  Therefore, using a partition of unity, the results in the proposition still
  hold for $f|_V$, except that the integral in the case $\sigma =
  \sigma_f$ is now the sum of integrals over all lc centres in $V$ of
  codimension $\sigma_f$, i.e.~
  \begin{equation*}
    \int_{\lc^\sigma_X(S)\cap V} \abs f^2 \lcV|\sigma,(m_1)|
    =
    \begin{dcases}
      0 & \text{when } \sigma > \sigma_f \text{ or } \sigma_f =0 \; , \\
      \infty & \text{when } \sigma < \sigma_f \text{ and }
      \sigma_f \geq 1 \; ,
    \end{dcases} 
  \end{equation*}
  (that the integral diverges when $\sigma \leq \sigma_f -2$ follows
  from the inequality $\frac 1{\abs\psi^{\sigma_f-1+\eps}} \leq \frac
  1{\abs\psi^{\sigma+\eps}}$ on a neighbourhood of any
  ${}^p\!S^{\sigma_f}$ for $\sigma \leq \sigma_f -2$ and the fact that
  the integral diverges when $\sigma=\sigma_f -1$) 
  and
  \begin{align*}
    &~\int_{\lc^\sigma_X(S)\cap V} \abs f^2 \lcV|\sigma,(m_1)| \\
    =&\sum_p \frac{\pi^{\sigma_f}}{\paren{\sigma_f -1}! \prod_{j=1}^{\sigma_f} \nu_{p(j)}}
    \int_{{}^p\!S^{\sigma_f}} \paren{
      \;\;\prod_{\mathclap{k=\sigma_f+1}}^{\sigma_V}\;
      \abs{z_{p(k)}}^{2 a_{p(k)}} \cdot \abs{g_p}^2
    }_\omega e^{-\wphi_L+\sm\vphi_S}
    \: d\vol_{{}^p\!S^{\sigma_f}, \omega}
  \end{align*}
  when $\sigma = \sigma_f$.
  Note that the \emph{largest} $\sigma_f$ among all different local
  neighbourhoods $V$ covering $X$ is the codimension of mlc of $(X,S)$
  with respect to $f$ (see Definition \ref{def:ad-hoc-mlc-of-f} or
  \ref{def:mlc-of-f}).
  Considering all such $V$'s, the proposition also holds true for $f$ with
  $\sigma_f$ being the codimension of mlc of $(X,S)$ with respect to
  $f$ in all cases of $\sigma$ (after the modification for the
  case $\sigma=\sigma_f$).
\end{remark}

\begin{remark} \label{rem:gen-lc-measure}
  If $f$ vanishes to suitable orders along the polar subspaces of
  $\wphi_L$ and $\psi$, the assumption \ref{item:klt-assumption}
  is not necessary in the proposition.
  Suppose that, on an open set $V \subset X$, $\res\psi_V$ is given as
  in the proposition while
  \begin{equation*}
    \res{\wphi_L}_V := \sum_{j = 1}^n \ell_j \log\abs{z_j}^2 +\beta \; ,
  \end{equation*}
  where $\beta$ is a smooth function on $V$ and $\ell_j \geq 0$ are
  arbitrary constants for $j =1,\dots, n$ such that the $m_1$ in
  $\wphi_L+\psi_S =\vphi_L +m_1 \psi$ is a jumping number and
  $\Ann_{\holo_{X}} \paren{\frac{\mtidlof{\vphi_L +m_0\psi}}
    {\mtidlof{\vphi_L +m_1\psi}}}$ defines $S$ as in Section
  \ref{sec:setup} with $S\cap V = \set{z_1 \dotsm z_{\sigma_V} = 0}$
  (i.e.~no upper bound on $\ell_j$'s and $\ell_j >0$ for any
  $j=1,\dots, \sigma_V$ is allowed).
  If
  $f \in \smooth \otimes \mtidlof{\vphi_L +m_0\psi}$,
  it implies that $f \in \smooth \otimes \mtidlof{\wphi_L}$ as
  \begin{equation*}
    G :=\abs f^2 e^{-\wphi_L}
    =\abs f^2 e^{-\vphi_L -m_1\psi -\sm\vphi_S +\phi_S}
    =\abs{f\otimes s_S}^2 e^{-\vphi_L -m_1\psi -\sm\vphi_S} \; .
  \end{equation*}
  Note that $G^{-1}(\infty)$ does not contain any lc centres of
  $(X,S)$
  (if $G^{-1}(\infty)$ contains an lc centre of $(X,S)$, then it can
  be seen that, as $G e^{-\psi_S} \:d\vol_{X,\omega} = \abs f^2
  e^{-\vphi_L -m_1 \psi} \:d\vol_{X,\omega}$ is not integrable,
  there exists some $m_0 < m' < m_1$ such that $\abs f^2 e^{-\vphi_L
    -m'\psi}\:d\vol_{X,\omega}$ is also not integrable, contradicting
  the fact that $m_1$ is the only jumping number in the interval
  $(m_0,m_1]$).
  That means that $\res G_V$ \emph{cannot} be decomposed locally into a
  quotient $\frac gh$ of continuous functions where $\res g_{\set{z_j
      =0}} \not\equiv 0$ while $\abs{z_j}^{2 a_j}$ divides $h$ (in the
  ring of continuous functions) for some positive number $a_j >0$
  for any $j = 1, \dots, \sigma_V$.
  Let $\sigma_f$ be the maximal codimension in $X$ of all lc centres of
  $(X,S)$ \emph{not} contained in $G^{-1}(0)$.
  Then, by expanding $f$ locally on any suitable open set $V$ into a
  sum as in Remark \ref{rem:lc-measure_gen-f} and cancelling the
  factors of $\abs{z_j}$'s suitably, one obtains
  \begin{equation*}
    \res G_V =\res{\abs f^2 e^{-\vphi_L-m_1\psi +\psi_S}}_V
    =\abs{
      \sum_{\substack{p \in \symmgp_{\sigma_V}
            / \paren{\symmgp_{r} \times \symmgp_{\sigma_f}} \\ r\:
            :=\:\sigma_V-\sigma_f}}
      \prod_{k=\sigma_f+1}^{\sigma_V} \abs{z_{p(k)}}^{1+a_{p(k)}}
      \cdot g_p
    }^2 \cdot \frac 1{\prod_{k=\sigma_V+1}^n \abs{z_k}^{2\gamma_k}} \; ,
  \end{equation*}
  where $\gamma_k$'s are constants such that $\gamma_k < 1$ for
  $k=\sigma_V +1, \dots, n$ (as $G$ is integrable on $X$), and each
  $g_p$ is a bounded function on $V$ with $\abs{g_p}^2$ being smooth.
  Moreover, there exist an open set $V$ and some $p$ such that the
  corresponding $g_p$ on $V$ satisfies
  $\res{g_{p}}_{{}^{p}\!S^{\sigma_f}} \not\equiv 0$.
  Following the discussion in Remark \ref{rem:lc-measure_gen-f}, 
  the trichotomy in the conclusion of the proposition ($\sigma >
  \sigma_f$, $\sigma =\sigma_f$ and $\sigma < \sigma_f$) still holds
  even without the assumption \ref{item:klt-assumption}.
\end{remark}

The following definition of $\sigma_f$ is given after the discussion
in Remark \ref{rem:gen-lc-measure}.

\begin{definition} \label{def:mlc-of-f}
  Given any function or vector-bundle-valued section $f$ on $S$ such
  that $f \in \smooth_X \otimes \frac{\mtidlof{\vphi_L
      +m_0\psi}}{\mtidlof{\vphi_L +m_1\psi}}$ with $\extu[f] \in
  \smooth_X \otimes\mtidlof{\vphi_L +m_0\psi}$ denoting any local
  lifting of $f$, define the \emph{codimension of mlc of
    $(X,\vphi_L,\psi,m_1)$ with respect to $f$}, denoted by
  $\sigma_{f,\vphi_L+m_1\psi}$, to be the maximal codimension of all
  the lc centres of $(X,\vphi_L,\psi,m_1)$ which are not contained in
  the zero locus of $G :=\abs{\extu[f]}^2 e^{-\wphi_L}$.
  When it is understood that $S$ is defined from the data $\vphi_L$,
  $\psi$ and $m_1$, the quantity is also called the \emph{codimension
    of mlc of $(X,S)$ with respect to $f$} and denoted by $\sigma_f$.
  An lc centre of $(X,S)$ with such codimension which does not lie in
  $G^{-1}(0)$ is called an \emph{mlc of $(X,S)$ with respect to $f$}.
\end{definition}

It can be seen from Proposition \ref{prop:lc-measure} and the
subsequent remarks that this definition coincides with Definition
\ref{def:ad-hoc-mlc-of-f} when the snc assumption on $\vphi_L$ and
$\psi$ holds.


\subsection{Illustration}
\label{sec:illustration-1}

The following examples are to show that the lc-measures are potentially
good replacement of the Ohsawa measure.


\begin{example} \label{eg:Ohsawa-example}
  In \cite{Ohsawa-example}*{after Prop.~5.4}, Ohsawa provides the
  following example (with a slight modification, which the authors
  owe Bo Berndtsson for).
  On the unit bi-disc $\Delta \subset \fieldC^2$ centred at the origin
  with coordinates $(z,w)$, let $\dep\vphi := \log\paren{\abs{z-w}^2
    +\frac 1k}$ (or any decreasing sequence which converges to
  $\vphi := \log\abs{z-w}^2$) and $\omega$ be the Euclidean metric.
  Then, there is \emph{no} universal constant $C > 0$ such that, for any
  holomorphic function $f$ on $S := \setd{(z,w) \in \Delta}{zw=0 }$
  with $\int_S \abs f^2 e^{-\vphi}\:d\vol_S < \infty$, there exist
  holomorphic functions $\dep F$ on $\Delta$ such that $\res{\dep F}_S = f$ and
  \begin{equation} \label{eq:example-estimate} \tag{$*$}
    \int_{\Delta} \abs{\dep F}^2 e^{-\dep\vphi} \:d\vol_\Delta
    \leq C \int_{S} \abs f^2 e^{-\dep\vphi} \:d\vol_S
    \leq C \int_{S} \abs f^2 e^{-\vphi} \:d\vol_S \; .
  \end{equation}
  Indeed, if
  \begin{equation*}
    f =
    \begin{cases}
      z &\text{on }\set{w=0} \\
      0 &\text{on }\set{z=0}
    \end{cases} \; ,
  \end{equation*}
  then, the existence of $\dep F$ and the estimate
  \eqref{eq:example-estimate} imply the existence of 
  a holomorphic extension $F$ of $f$ such that the estimate
  \eqref{eq:example-estimate} holds with $F$ replacing $\dep F$.
  This in turn implies that $F = zG$ for some holomorphic function $G$
  with $G|_{z=w} = 0$, which is impossible since it means that $F$
  vanishes to order $2$ but $f$ vanishes only up to order $1$ on
  $\set{w=0}$.

  Set $\psi := \phi_S = \log\abs{zw}^2$.
  The lc-measures can filter out Ohsawa's example (so does the Ohsawa
  measure $\int_S \abs f^2 \:d\vol_{S,\vphi}[\psi] := \lim_{t\to
    -\infty} \int_{t < \psi < t+1} \abs{\extu[f]}^2 e^{-\vphi -\psi}
  \:d\vol_\Delta$; see \cite{Ohsawa-V} or \cite{Demailly_extension}
  for the precise definition) such that the estimates with universal
  constant could still be possible.
  Note that $S$ is defined by the annihilator
  $\Ann_{\holo_{\Delta}}\paren{\frac{\mtidlof{\dep\vphi
        +\frac 12 \psi}} {\mtidlof{\dep\vphi
        +\psi}}}$ (the coefficient $\frac 12$ of $\psi$ is chosen such
  that the annihilator still defines $S$ scheme theoretically when
  $\dep\vphi$'s are replaced by $\vphi$), and the mlc of $(X,S)$ is of
  codimension $2$.
  Taking $f$ as above and letting $\extu[f] = z$ (noting that
  $\mtidlof{\dep\vphi +\frac 12\psi} = \holo_\Delta$), computation in
  Proposition \ref{prop:lc-measure} shows that
  \begin{equation*}
    \int_{\set{z=0=w}} \abs f^2 \lcV|2|<\dep\vphi> = 0
  \end{equation*}
  and
  \begin{equation*}
    \int_{\mathrlap{\set{zw=0} \cap \Delta}} \qquad
    \abs f^2 \lcV|1|<\dep\vphi> 
    =\pi \int_{\mathrlap{\set{w=0 \:, \:\abs z < 1}}} \qquad
    e^{-\log\paren{\abs z^2 +\frac 1k}} \:\pi\ibar dz 
    \wedge d\conj z
    =\pi^2 \log\paren{1+k} 
  \end{equation*}
  (so $\sigma_{f, \dep\vphi+\psi} = 1$), where the latter integral
  diverges to $\infty$ as $k \tendsto \infty$.
  This shows that the function $f$ in Ohsawa's example is ruled out
  by the $L^2$ extension theorem in the first place, provided
  that lc-measures are used.

  Indeed, this can also be seen without taking any approximating
  sequence of $\vphi = \log\abs{z-w}^2$.
  Note that $f$ has no local lifting to $\smooth \otimes
  \mtidlof{\vphi +\frac 12 \psi}$ on $\Delta$ as shown by the similar
  argument using vanishing order above.
  This already shows that the lc-measures $\abs f^2 \lcV<\vphi>$ are
  not well-defined for $\sigma =1$ and $2$.
  As a comparison with the computation above,
  let $\pi\colon \rs \Delta \to \Delta$ be the blow-up of $\Delta$ at
  the origin with exceptional divisor $E$, which is a log-resolution
  of $\paren{\Delta, \vphi, \psi}$.
  Consider a neighbourhood $\rs U \subset \rs\Delta$ of
  $\pi^{-1}\set{w=0}$ with coordinates $(s_E, w_1)$ such that $\pi^*w
  = s_E w_1$, $\pi^*z = s_E$ and $\abs{w_1} < \frac 12$, where $E \cap
  \rs U = \set{s_E =0}$ and $\rs S \cap \rs U =\set{s_E w = 0}$ (where
  $\rs S$ is defined as in Section \ref{sec:blow-up}).
  Take a smooth extension $\extu[f]^\pi$ of $\pi^*f$ on $\rs\Delta$
  in $\mtidlof[\rs\Delta]{\pi^*\vphi-\phi_E +\frac 12 \pi^*\psi}$ such
  that $\res{\extu[f]^\pi}_{\rs U} = s_E$ and vanishes outside of a larger
  neighbourhood $\rs U_1 := \set{\abs{w_1} < 1}$ of $\rs U$.
  It then follows that
  \begin{align*}
    \int_{\lcc<\rs\Delta>(\rs S)} \abs{\pi^*f}^2
    \lcV<\pi^*\vphi-\phi_E><\rs\omega>[\pi^*\psi]
    &:=\lim_{\eps \tendsto 0^+} \eps \int_{\rs U_1}
    \frac{\abs{\extu[f]^\pi}^2}{\abs{\pi^*\psi}^{\sigma+\eps}} 
    e^{-\pi^*\vphi+\phi_E-\pi^*\psi} \:d\vol_{\rs\Delta} \\
    &=\lim_{\eps \tendsto 0^+} \eps \int_{\rs U_1}
    \frac{d\vol_{\rs\Delta}}{\abs{s_E}^2 \abs{w_1}^2 \abs{1-w_1}^2
      \abs{\pi^*\psi}^{\sigma+\eps}} \\
    &\overset{\mathclap{\text{Prop.~\ref{prop:lc-measure}}}}= \qquad
      \begin{dcases}
        \frac{\pi^2}{2} & \text{when }\sigma =2 \; ,\\
        \infty & \text{when }\sigma = 1 \; .
      \end{dcases} 
  \end{align*}
  This again shows that Ohsawa's example will be excluded for the
  consideration of $L^2$ extension if the $L^2$ extension theorem with
  respect to the lc-measures is proved.
  This also provides an example that $\abs{\pi^*f}^2
  \lcV|2|<\pi^*\vphi-\phi_E><\rs\omega>[\pi^*\psi] \neq \lim_{{\color{RoyalBlue}k}\tendsto
    \infty} \abs{\pi^*f}^2
  \lcV|2|<\pi^*\dep[{\color{RoyalBlue}k}]\vphi-\phi_E><\rs\omega>[\pi^*\psi]$, even though
  both sides are finite.
  %
\end{example}



\begin{example}[from a private note by Bo Berndtsson; see also
  \cite{Cao&Paun_OT-ext}*{Appendix A}] \label{eg:Berndtsson-example}
  Berndtsson computes a concrete estimate for
  holomorphic functions on the unit bi-disc $\Delta \subset \fieldC^2$
  extended from holomorphic functions on $S :=\setd{(z,w) \in
    \Delta}{zw=0}$ with minimal $L^2$ norm with singular weight
  $e^{-\vphi}$ on $\Delta$.
  It turns out that the estimate can be expressed in terms of
  lc-measures, thus giving a hint on how the estimate looks like in
  general.

  Assume that $\vphi^{-1}\paren{-\infty}$ does not contain any lc centre of
  $(\Delta, S)$ and set $\psi := \phi_S = \log\abs{zw}^2$.
  Assume also that $\vphi$ is psh and has only neat analytic
  singularities for simplicity.
  Let $H := \cohgp0[\Delta]{\mtidlof{\vphi}} \cap L^2\paren{\Delta;
    e^{-\vphi}}$ be the space of $L^2$ holomorphic functions $F$ on
  $\Delta$ with respect to the norm-square $\norm F_\vphi^2 :=
  \int_\Delta \abs F^2 e^{-\vphi} \:d\vol_\Delta$ and consider the filtration
  \begin{equation*}
    H =: H_3 \supset H_2 \supset H_1 \supset H_0 = \set 0 \; ,
  \end{equation*}
  where $H_\sigma$ is the closed subspace of functions which vanish on
  $\lcc<\Delta>$ for $\sigma=1,2$.
  Note that
  \begin{equation*}
    \lcc[1]<\Delta> = S \quad \text{ and } \quad
    \lcc[2]<\Delta> = \set{z=w=0} = \set{(0,0)} \; .
  \end{equation*}
  Let $A_\sigma$ be the orthogonal complement such that $H_{\sigma+1}
  = H_\sigma \oplus A_\sigma$ for $\sigma = 0,1,2$, and thus $H = A_2
  \oplus A_1 \oplus A_0$.
  
  Suppose $F \in H$ is the \emph{minimal} holomorphic extension with
  respect to the norm $\norm\cdot_\vphi$ of some $L^2$ holomorphic
  function $f$ on $S$ (with respect to the potential
  $\res{\vphi}_S$).
  Then $F$ is orthogonal to $H_1 = A_0$.
  Write $F = F_2 + F_1$ such that $F_\sigma \in A_\sigma$ for $\sigma
  =1, 2$, so $F_1$ vanishes on $\lcc[2]<\Delta>$ but is non-trivial on
  $\lcc[1]<\Delta>$ (if $F_1 \not\equiv 0$), which implies
  $\sigma_{F_1,\vphi+\psi} = 1$, while $F_2$ is non-trivial
  on $\lcc[2]<\Delta>$ (if $F_2 \not\equiv 0$), which implies
  $\sigma_{F_2, \vphi+\psi} =2$.
  Therefore, $f = \parres{F_2 +F_1}_S$ and $f_0 := f(0,0)
  =F_2(0,0)$.

  To compute $\norm{F_2}_\vphi^2$,
  let $\Berg(\cdot,\cdot)$ be the Bergman kernel of $H$ with respect to
  the norm $\norm\cdot_\vphi$ and write $\vect 0 := (0,0)$ and $\vect
  z := (z,w)$ when necessary.
  By considering an orthonormal basis $\set{e_0, e_1 , \dots}$ of $H$ such
  that $\Berg\paren{\vect 0 , \vect 0} = \abs{e_0\paren{\vect 0}}^2$,
  one sees that $F_2\paren{\vect z} = c \Berg\paren{\vect z , \vect
    0}$ for some constant $c$ and thus
  \begin{equation*}
    F_2\paren{\vect z} = \frac{f_0 \Berg\paren{\vect z , \vect
        0}}{\Berg\paren{\vect 0,\vect 0}}
    \quad\imply\quad
    \norm{F_2}_\vphi^2 = \frac{\abs{f_0}^2}{\Berg\paren{\vect 0 ,\vect
        0}} \; .
  \end{equation*}
  Note that $\vphi\paren{\vect 0}$ is finite by assumption.
  By getting the estimate of a holomorphic function in $H$ with a
  prescribed value at $\vect 0$ via the Ohsawa--Takegoshi extension
  theorem (see the argument in the proof of
  \cite{Demailly_regularization}*{Prop.~3.1} or Example
  \ref{eg:extension-from-a-pt}), it follows that $e^{\vphi}\paren{\vect 0}
  \lesssim \pi^2 \Berg\paren{\vect 0 ,\vect 0}$, where the constant involved
  in $\lesssim$ is independent of $\vphi$ and $\Berg$, and therefore
  \begin{equation*}
    \norm{F_2}_\vphi^2 \lesssim \pi^2 \abs{f_0}^2
    e^{-\vphi}\paren{\vect 0}
    \overset{\text{by Prop.~\ref{prop:lc-measure}}}=
    \int_{\lcc[2]<\Delta>} \abs{f}^2 \:\lcV|2|<\vphi> \; .
  \end{equation*} 
  
  Next is to compute $\norm{F_1}_\vphi^2$.
  Since $F_1\paren{\vect 0} =0$, there exist holomorphic functions
  $h_1$ and $h_2$ such that $F_1 = z h_1 +w h_2$.
  Notice that $F_1$ is the \emph{minimal} holomorphic extension of $f
  -\res{F_2}_S$ with respect to the norm $\norm\cdot_\vphi$.
  If $h_1$ (resp.~$h_2$) is replaced by the minimal extension
  $\widetilde h_1$ (resp.~$\widetilde h_2$) of $\res{h_1}_{w=0}$
  (resp.~$\res{h_2}_{z=0}$) with respect to $\norm\cdot_\vphi$, the
  sum $z \widetilde h_1 + w \widetilde h_2$ is still an extension of
  $f -\res{F_2}_S$.
  The classical Ohsawa--Takegoshi extension theorem provides estimates
  for minimal holomorphic extensions on Stein manifold which are
  extended from smooth hypersurfaces (note that $\res{\vphi}_S$ is
  well-defined on each irreducible component of $S$ by assumption).
  Therefore, one has
  \begin{align*}
    \norm{F_1}_\vphi^2
    &\leq \norm{z \widetilde h_1 + w \widetilde
      h_2}_\vphi^2
    \lesssim \pi^2 \int_{\mathrlap{\set{\abs z < 1 \: ,\: w=0}}} \qquad
    \abs{h_1}^2 e^{-\vphi} \:\ibar dz\wedge d\conj z
    +\pi^2 \int_{\mathrlap{\set{\abs w <1 \:, \: z=0}}} \qquad
      \abs{h_2}^2 e^{-\vphi} \:\ibar dw\wedge d\conj w \\
    &=\pi^2 \int_{\set{\abs z < 1 \: ,\: w=0}} 
    \frac{\abs{F_1}^2}{\abs z^2} e^{-\vphi} \:\ibar dz\wedge d\conj z
    +\pi^2 \int_{\set{\abs w <1 \:, \: z=0}}
      \frac{\abs{F_1}^2}{\abs w^2} e^{-\vphi} \:\ibar dw\wedge d\conj
      w \\
    &\overset{\mathclap{\text{by Prop.~\ref{prop:lc-measure}}}}= \qquad
      \int_S \abs{F_1}^2 \:\lcV|1|<\vphi>
      =\int_{\lcc[1]<\Delta>} \abs{f -F_2}^2 \:\lcV|1|<\vphi> \; ,
  \end{align*}
  where the constant involved in $\lesssim$ is ``universal'', i.e.~it
  does not depend on $\vphi$ or any functions appearing in the
  integrands of the integrals on either side of the inequality.

  As a result, one has the estimate
  \begin{equation*}
    \norm{F}_\vphi^2 =\norm{F_2}_\vphi^2 +\norm{F_1}_\vphi^2
    \lesssim \int_{\lcc[2]<\Delta>} \abs f^2 \:\lcV|2|<\vphi> 
    +\int_{\lcc[1]<\Delta>} \abs{f-F_2}^2 \:\lcV|1|<\vphi> \; ,
  \end{equation*}
  where the constant involved in $\lesssim$ is universal.
\end{example}

\begin{remark}
  The estimate, though essentially the best one could expect in
  general, may look unsatisfactory in the sense that one seems to have
  lost control of the estimate due to the integral of $f -
  \res{F_2}_{\lcc[1]<\Delta>}$ on the right-hand-side.
  In practice, one may need to manipulate the estimate on
  $\norm{F_2}_\vphi^2$ in order to obtain some control of
  $\int_{\lcc[1]<\Delta>} \abs{f-F_2}^2 \:\lcV|1|<\vphi>$. 
\end{remark}


\section{Extension with estimates with respect to lc-measures on
  codimension-$1$ lc centres}
\label{sec:extension-from-mlc}

For simplicity, suppose that $m_0 = 0$ and $m_1 = 1$.
The arguments remain the same for the case of general jumping
numbers.

As discussed in Section \ref{sec:blow-up}, one can assume that $S$
is a \emph{reduced divisor} in $X$ and that
$(X,S)$ is a \emph{log-smooth} and \emph{log-canonical (lc)} pair.

\subsection{Setup for the extension theorem}
\label{sec:setup-extension-thm}

The goal of the following is to replace the generalised Ohsawa measure
in the Ohsawa--Takegoshi $L^2$ extension theorem
by the lc-measure given by
\begin{equation} \label{eq:lc-measure}
  \abs f_\omega^2 \lcV
  := \lim_{\eps \tendsto 0^+} \eps \abs{\extu[f]}_\omega^2 
  \frac{e^{-\vphi_L -\psi}}{\abs{\psi}^{{\sigma} + \eps}}
  d\vol_{X,\omega} \; ,
\end{equation}
where $\extu[f]$ is any smooth extension of $f$ on
$X$ such that $\abs{\extu[f]}^2 e^{-\vphi_L}$ is locally integrable,
\emph{for the case $\sigma =1$}.
The behaviour of such measure is discussed in Section \ref{sec:lc-measures}.


Set
\begin{itemize}
\item $P_{\vphi_L} := \vphi_L^{-1}(-\infty)$ and $P_\psi :=
  \psi^{-1}(-\infty)$ (only the \emph{negative} poles), which are
  closed analytic subsets of $X$ such that $P_{\vphi_L} \cup
  P_{\psi}$ has only snc by the assumptions on $\vphi_L$ and
  $\psi$ (Sections \ref{sec:setup} and \ref{sec:blow-up}); 

\item 
  $X^\circ := X \backslash \paren{P_{\vphi_L} \cup P_\psi} \;$,
  which has the structure of a complete \textde{Kähler} manifold;

\item $\vphi := \vphi_L + \psi + \nu$, which is a potential (of the
  curvature of a hermitian metric) on $L$, where $\nu$ is a
  real-valued smooth function on $X^\circ$;

\end{itemize}

\subsection{Bochner--Kodaira formula}
\label{sec:BK-formula}


\newcommand{\termRcolor}{NavyBlue}
\newcommand{\termQcolor}{ForestGreen}
\newcommand{\termNcolor}{VioletRed}
\newcommand{\termNpcolor}{violet}

\newenvironment{termR}{\color{\termRcolor}}{}
\newenvironment{termQ}{\color{\termQcolor}}{}
\newenvironment{termN}{\color{\termNcolor}}{}
\newenvironment{termNp}{\color{\termNpcolor}}{}

\NewDocumentCommand{\colouredInner}{
  m        
  O{\eps}  
  m        
  m        
  O{black} 
}{{\color{#5}#1_{#2}\!\ptinner{#3}{#4}}}

\newcommand{\innerR}[3][\eps]{\colouredInner{R}[#1]{#2}{#3}[\termRcolor]}
\newcommand{\innerQ}[3][\eps]{\colouredInner{Q}[#1]{#2}{#3}[\termQcolor]}
\newcommand{\innerN}[3][\eps]{\colouredInner{N}[#1]{#2}{#3}[\termNcolor]}
\newcommand{\innerNp}[3][\eps]{\colouredInner{N^+}[#1]{#2}{#3}[\termNpcolor]}
\newcommand{\innerD}[3][\eps]{\colouredInner{D}[#1]{#2}{#3} }

The key tool for proving this version of extension theorem is still
the twisted Bochner--Kodaira formula (see
\cite{McNeal&Varolin_adjunction}*{Eq.~(8)}, also
\cite{Demailly}*{Ch.~VIII} or \cite{Demailly_extension}*{\S 3.C}).
The following notations are used:
\begin{itemize}
\item $\Theta^\omega\paren{\zeta,\zeta}_\vphi$ denotes, for any real
  $(1,1)$-form $\Theta$ (usually in the form $\ibddbar
  \widetilde\vphi$) and any $K_X\otimes L$-valued $(0,q)$-form
  $\zeta$, the trace of the contraction between $\ibar^{-1} \Theta$
  and $e^{-\vphi} \zeta \wedge \conj\zeta$ with respect to the
  hermitian metric on $X$ given by $\omega$ (in the convention such
  that $\Theta^\omega \paren{\zeta,\zeta}_\vphi \geq 0$ whenever
  $\Theta \geq 0$);
  
\item $\dfadj$ denotes the formal adjoint of $\dbar$ with respect to
  the inner product $\inner\cdot\cdot_{X^\circ,\omega,\vphi}$
  corresponding to the global $L^2$-norm
  $\norm\cdot_{X^\circ,\omega,\vphi}$ on $X^\circ$;

\item $\idxup{\diff\psi} \ctrt \cdot $ denotes the adjoint of
    $\dbar \psi \wedge \cdot $ with respect to
    $\inner\cdot\cdot_{X^\circ,\omega,\vphi}$ on $X^\circ$.

\end{itemize}

\begin{lemma} \label{lem:BK-formula-all-sigma}
  Let $\sigma \geq 1$ be a positive integer.
  With the auxiliary functions defined for every $\eps \in \fieldR$
  as 
  \begin{equation*}
    \begin{aligned}
      \nu &:=-\log\log\abs{\frac{\ell\psi}{e}} \; ,
      & \eta_\eps &:= \abs\psi^{\sigma\paren{1-\eps}} e^{-\nu} =
      \abs\psi^{\sigma\paren{1-\eps}} \log\abs{\frac{\ell\psi}{e}} 
      \quad\text{ and } \\
      & &\lambda_\eps &:=\eta_\eps \paren{\sigma\paren{1-\eps}
        \log\abs{\frac{\ell\psi}{e}} +1}^2 \; ,
    \end{aligned}
  \end{equation*}
  and letting $L$ be endowed with the metric with potential $\vphi :=
  \vphi_L +\psi +\nu$, the Bochner--Kodaira formula becomes
  \begin{align*}
    &~
      \begin{termQ}
        \int_{X^\circ} \abs{\dbar\zeta}_{\omega,\vphi}^2 \eta_\eps
        +\int_{X^\circ} \abs{\dfadj\zeta}_\vphi^2
        \paren{\eta_\eps +\lambda_\eps} 
      \end{termQ}
      -\eps
      \begin{termN}
        \int_{X^\circ} \paren{\frac{\sigma\paren{1-\eps}}{\abs\psi^2}
          +\frac{2}{\abs\psi^2 \log\abs{\frac{\ell\psi}{e}}} }
        \abs{\idxup{\diff\psi} \ctrt \zeta}_\vphi^2 \eta_\eps
      \end{termN}
    \\
    =&
       \begin{aligned}[t]
         &~
         \begin{termR}
           \int_{X^\circ} \abs{\nabla^{(0,1)}
             \zeta}_{\omega,\vphi}^2 \eta_\eps +\int_{X^\circ}
           \idxup{\ibddbar\paren{\vphi_L +\psi}
             +\paren{\frac{\sigma \paren{1-\eps}}{\abs\psi}
               +\frac{2}{\abs\psi
                 \log\abs{\frac{\ell\psi}{e}}}} \ibddbar\psi
           } \ptinner\zeta\zeta_\vphi \eta_\eps
         \end{termR}
         \\
         &
         \begin{termR}
           +\int_{X^\circ} \abs{\dfadj\zeta
             +\frac{\eta_\eps}{\lambda_\eps}
             \idxup{\diff\log\eta_\eps} \ctrt \zeta}_\vphi^2
           \lambda_\eps
         \end{termR} 
         \\
         &-\paren{\sigma-1}\paren{1-\eps}
         \begin{termN}
           \int_{X^\circ}
           \paren{\frac{\sigma \paren{1-\eps}}{\abs\psi^2}
             +\frac{2}{\abs\psi^2
               \log\abs{\frac{\ell\psi}{e}}} }
           \abs{\idxup{\diff\psi} \ctrt \zeta}_\vphi^2 \eta_\eps
         \end{termN} 
       \end{aligned} 
  \end{align*}
  for any compactly supported $K_X\otimes L$-valued smooth $(0,1)$-forms
  $\zeta \in \smform*/0,1/\paren{X^\circ; K_X\otimes L} $ on
  $X^\circ$.
\end{lemma}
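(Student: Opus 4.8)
The plan is to read the displayed identity off the general twisted Bochner--Kodaira identity by substituting the explicit auxiliary functions $\nu$, $\eta_\eps$, $\lambda_\eps$; their shape has been arranged precisely so that all the extra terms either merge into the curvature term in the stated form or assemble into the $\abs{\idxup{\diff\psi} \ctrt \zeta}_\vphi^2$-term with the coefficient shown. First I would recall the twisted identity in the form of \cite{McNeal&Varolin_adjunction}*{Eq.~(8)} (see also \cite{Demailly}*{Ch.~VIII} and \cite{Demailly_extension}*{\S 3.C}): for any two smooth positive functions $\eta$, $\lambda$ on $X^\circ$ and $L$ carrying a weight $\vphi$, every $\zeta \in \smform*/0,1/\paren{X^\circ; K_X\otimes L}$ satisfies
\begin{equation*}
  \int_{X^\circ} \eta \abs{\dbar\zeta}_{\omega,\vphi}^2 + \int_{X^\circ} \paren{\eta+\lambda}\abs{\dfadj\zeta}_\vphi^2 = \int_{X^\circ} \eta \abs{\nabla^{(0,1)}\zeta}_{\omega,\vphi}^2 + \int_{X^\circ} \idxup{\eta\,\ibddbar\vphi - \ibddbar\eta} \ptinner\zeta\zeta_\vphi - \int_{X^\circ} \frac{\abs{\idxup{\diff\eta}\ctrt\zeta}_\vphi^2}{\lambda} + \int_{X^\circ} \lambda \abs{\dfadj\zeta + \tfrac\eta\lambda\,\idxup{\diff\log\eta}\ctrt\zeta}_\vphi^2 ,
\end{equation*}
where $\diff\eta = \eta\,\diff\log\eta$ was used in the last term. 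Since $\zeta$ is smooth with compact support in $X^\circ$ this is a purely local identity, so no completeness of $X^\circ$ is needed for the lemma itself (completeness and density only enter in its later applications).

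Then I would substitute $\eta := \eta_\eps$, $\lambda := \lambda_\eps$ and $\vphi := \vphi_L + \psi + \nu$. Writing $L := \log\abs{\ell\psi/e}$ and $a := \sigma\paren{1-\eps}$, one has $\nu = -\log L$, $\eta_\eps = \abs\psi^{a} L$ and $\lambda_\eps = \eta_\eps\paren{aL+1}^2$; by the normalization in the basic setup $\psi < -e/\ell < 0$ on $X^\circ$, so $L > 0$ and all three functions are smooth and positive there, with $\tfrac{dL}{d\psi} = \tfrac1\psi$ and $\tfrac{d\abs\psi}{d\psi} = -1$. As $\nu$, $\eta_\eps$, $\lambda_\eps$ depend on $\psi$ alone, the chain rule gives pointwise (using linearity of $\idxup{\cdot}\ptinner\zeta\zeta_\vphi$ and that the rank-one Hessian piece of a function of $\psi$ pairs to $\abs{\idxup{\diff\psi}\ctrt\zeta}_\vphi^2$):
\begin{equation*}
  \idxup{\ibddbar\nu}\ptinner\zeta\zeta_\vphi = \frac{\idxup{\ibddbar\psi}\ptinner\zeta\zeta_\vphi}{\abs\psi L} + \frac{L+1}{\abs\psi^2 L^2}\,\abs{\idxup{\diff\psi}\ctrt\zeta}_\vphi^2 , \qquad \idxup{\diff\eta_\eps}\ctrt\zeta = -\frac{aL+1}{\abs\psi L}\,\eta_\eps\,\idxup{\diff\psi}\ctrt\zeta ,
\end{equation*}
and $\idxup{\ibddbar\eta_\eps}\ptinner\zeta\zeta_\vphi = -\tfrac{\eta_\eps(aL+1)}{\abs\psi L}\idxup{\ibddbar\psi}\ptinner\zeta\zeta_\vphi + \tfrac{\eta_\eps}{\abs\psi^2 L}\bigl(a(a-1)L+2a-1\bigr)\abs{\idxup{\diff\psi}\ctrt\zeta}_\vphi^2$; in particular $\tfrac1{\lambda_\eps}\abs{\idxup{\diff\eta_\eps}\ctrt\zeta}_\vphi^2 = \tfrac{\eta_\eps}{\abs\psi^2 L^2}\abs{\idxup{\diff\psi}\ctrt\zeta}_\vphi^2$.

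Now I would collect terms in $\int_{X^\circ}\idxup{\eta_\eps\ibddbar\vphi - \ibddbar\eta_\eps}\ptinner\zeta\zeta_\vphi - \int_{X^\circ}\tfrac1{\lambda_\eps}\abs{\idxup{\diff\eta_\eps}\ctrt\zeta}_\vphi^2$. The $\idxup{\ibddbar\psi}\ptinner\zeta\zeta_\vphi$-coefficient, beyond the smooth part $\eta_\eps\,\idxup{\ibddbar\paren{\vphi_L+\psi}}\ptinner\zeta\zeta_\vphi$, is $\eta_\eps\bigl(\tfrac1{\abs\psi L} + \tfrac{aL+1}{\abs\psi L}\bigr) = \eta_\eps\bigl(\tfrac a{\abs\psi} + \tfrac2{\abs\psi L}\bigr)$, which is exactly the coefficient in the statement; and the $\abs{\idxup{\diff\psi}\ctrt\zeta}_\vphi^2$-coefficient, from the three sources $\eta_\eps\idxup{\ibddbar\nu}$, $-\idxup{\ibddbar\eta_\eps}$ and $-\tfrac1{\lambda_\eps}\abs{\idxup{\diff\eta_\eps}\ctrt\zeta}_\vphi^2$, is
\begin{equation*}
  \eta_\eps\,\frac{(L+1) - L\bigl(a(a-1)L + 2a - 1\bigr) - 1}{\abs\psi^2 L^2} = \eta_\eps\,\frac{2 - 2a - a(a-1)L}{\abs\psi^2 L} = \eta_\eps\,(1-a)\paren{\frac a{\abs\psi^2} + \frac2{\abs\psi^2 L}} ,
\end{equation*}
using the cancellation $2 - 2a - a(a-1)L = (1-a)(aL+2)$. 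With $a = \sigma\paren{1-\eps}$ this makes the curvature-type contribution equal to the Hessian term in the statement plus $\bigl(1-\sigma\paren{1-\eps}\bigr)\int_{X^\circ}\bigl(\tfrac{\sigma(1-\eps)}{\abs\psi^2} + \tfrac2{\abs\psi^2 L}\bigr)\abs{\idxup{\diff\psi}\ctrt\zeta}_\vphi^2\,\eta_\eps$; since $1-\sigma\paren{1-\eps} = \eps - \paren{\sigma-1}\paren{1-\eps}$, transposing the $\eps$-part to the left-hand side yields the asserted identity (the completion-of-square term is unchanged because $\tfrac1{\lambda_\eps}\diff\eta_\eps = \tfrac{\eta_\eps}{\lambda_\eps}\diff\log\eta_\eps$).

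The only genuinely delicate part is this last bookkeeping: one must keep the three contributions to the $\abs{\idxup{\diff\psi}\ctrt\zeta}_\vphi^2$-term straight and verify the cancellation $2-2a-a(a-1)L = (1-a)(aL+2)$, which is precisely where the particular forms of $\nu$, $\eta_\eps$, $\lambda_\eps$ are exploited. I would also take care to pin down the sign and weight conventions in the ambient twisted identity (that $\dfadj$ there is the adjoint for the \emph{untwisted} weight $\vphi$, and that the completion-of-square correction is $+\tfrac1{\lambda_\eps}\idxup{\diff\eta_\eps}\ctrt\zeta$ rather than its conjugate), since these govern the exact placement of the $\eps$-term; everything else is a direct quotation of the twisted Bochner--Kodaira identity together with the elementary chain-rule computation above.
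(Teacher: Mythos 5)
Your proposal is correct and follows essentially the same route as the paper: quote the twisted Bochner--Kodaira identity, substitute $\nu$, $\eta_\eps$, $\lambda_\eps$, compute $\ibddbar\nu$, $\ibddbar\eta_\eps$ and $\tfrac1{\lambda_\eps}\abs{\idxup{\diff\eta_\eps}\ctrt\zeta}_\vphi^2$ by the chain rule, and collect the $\ibddbar\psi$- and $\abs{\idxup{\diff\psi}\ctrt\zeta}_\vphi^2$-coefficients, with the same cancellation $1-\sigma(1-\eps)=\eps-(\sigma-1)(1-\eps)$ producing the split between the two sides. The only (cosmetic) difference is that you start from the already square-completed form of the twisted identity with $\eta\ibddbar\vphi-\ibddbar\eta$ and the $-\tfrac1\lambda\abs{\idxup{\diff\eta}\ctrt\zeta}^2$ correction, whereas the paper keeps the cross term $2\Re\ptinner{\dfadj\zeta}{\idxup{\diff\log\eta_\eps}\ctrt\zeta}_\vphi\eta_\eps$ and completes the square only at the end; the two bookkeepings are equivalent and your coefficient computations check out.
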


\begin{proof}

  From \cite{Siu}*{\S 1.3} or
  \cite{McNeal&Varolin_adjunction}*{Eq.~(8)}, it follows that
  \begin{align*}
    &~
      \begin{termQ}
        \int_{X^\circ} \abs{\dbar\zeta}_{\omega,\vphi}^2 \eta_\eps
        +\int_{X^\circ} \abs{\dfadj\zeta}_\vphi^2 \eta_\eps
      \end{termQ}
      -
      \begin{termR}
        2\Re \int_{X^\circ}
        \ptinner{\dfadj\zeta\:}{\:\idxup{\diff\log\eta_\eps} \ctrt
          \zeta}_\vphi \eta_\eps
      \end{termR}
      +
      \begin{termN}
        \int_{X^\circ} \abs{\idxup{\diff\log\eta_\eps} \ctrt
          \zeta}_\vphi^2 \eta_\eps
      \end{termN}
    \\
    =&~
       \begin{termR}
         \int_{X^\circ} \abs{\nabla^{(0,1)} \zeta}_{\omega,\vphi}^2
         \eta_\eps
       \end{termR} 
       +\int_{X^\circ} \idxup{\ibddbar\paren{\vphi_L +\psi +\nu}
       -\ibddbar\log\eta_\eps } \ptinner\zeta\zeta_\vphi \eta_\eps
       \; .
  \end{align*}
  A direct computation with the choices of $\nu$ and $\eta_\eps$
  yields 
  \begin{align*}
    &~
      \begin{termQ}
        \int_{X^\circ} \abs{\dbar\zeta}_{\omega,\vphi}^2 \eta_\eps
        +\int_{X^\circ} \abs{\dfadj\zeta}_\vphi^2
        \eta_\eps 
      \end{termQ} 
      \begin{aligned}[t]
        &+
        \begin{termN}
          \int_{X^\circ} \paren{\frac{\sigma^2\paren{1-\eps}^2}{\abs\psi^2}
            +\frac{2\sigma\paren{1-\eps}}{\abs\psi^2
              \log\abs{\frac{\ell\psi}{e}} }}
          \abs{\idxup{\diff\psi} \ctrt \zeta}_\vphi^2 \eta_\eps
        \end{termN}
        \\
        &+\int_{X^\circ} \frac{1}{\abs\psi^2
          \paren{\log\abs{\frac{\ell\psi}{e}}}^2 }
        \abs{\idxup{\diff\psi} \ctrt \zeta}_\vphi^2 \eta_\eps
      \end{aligned}
    \\
    =&
       \begin{aligned}[t]
         &~
         \begin{termR}
           \int_{X^\circ} \abs{\nabla^{(0,1)} \zeta}_{\omega,\vphi}^2
           \eta_\eps
         \end{termR} 
         \begin{aligned}[t]
           &
           \begin{termR}
             +\int_{X^\circ} \idxup{\ibddbar\paren{\vphi_L +\psi}
               +\paren{\frac{\sigma\paren{1-\eps}}{\abs\psi}
                 +\frac{2}{\abs\psi
                   \log\abs{\frac{\ell\psi}{e}}} }
               \ibddbar\psi } \ptinner\zeta\zeta_\vphi \eta_\eps
           \end{termR}
           \\
           &+\int_{X^\circ} \paren{
             \begin{termN}
               \frac{\sigma\paren{1-\eps}}{\abs\psi^2}
               +\frac{2}{\abs\psi^2
                 \log\abs{\frac{\ell\psi}{e}}}
             \end{termN} 
             +\frac{2}{\abs\psi^2 \paren{\log\abs{\frac{\ell\psi}{e}}}^2}
           } 
           \abs{\idxup{\diff\psi} \ctrt \zeta}_\vphi^2 \eta_\eps
         \end{aligned}
         \\
         &
         \begin{termR}
           +2\Re \int_{X^\circ}
           \ptinner{\dfadj\zeta\:}{\:\idxup{\diff\log\eta_\eps} \ctrt
             \zeta}_\vphi \eta_\eps
         \end{termR}
         \; .
       \end{aligned} 
  \end{align*}
  It follows from the choices of $\lambda_\eps$ that
  \begin{align*}
    \int_{X^\circ} \abs{\idxup{\diff\log\eta_\eps} \ctrt
    \zeta }_\vphi^2 \frac{\eta_\eps^2}{\lambda_\eps}
    &=\int_{X^\circ}
      \frac{1}{\abs\psi^2 \paren{\log\abs{\frac{\ell\psi}{e}}}^2
      } \abs{\idxup{\diff\psi} \ctrt \zeta}_\vphi^2 \eta_\eps \; .
  \end{align*}
  As a result, the acclaimed formula is obtained after completing the
  square for the inner-product terms by adding $\int_{X_\circ}
  \abs{\dfadj\zeta}_\vphi^2 \lambda_\eps$ to both sides, and
  collecting terms of $
  \begin{termN}
    \abs{\idxup{\diff\psi} \ctrt \zeta}_\vphi^2
  \end{termN}
  $ (in
  \begin{termN}
    \termNcolor
  \end{termN})
  suitably.
\end{proof}

It follows from Lemma \ref{lem:BK-formula-all-sigma} that, \emph{when
  $\sigma = 1$ and the remaining terms on the right-hand-side (in
  \begin{termR}
    \termRcolor
  \end{termR})
  are semi-positive}, one has
\begin{equation} \label{eq:twisted-BK-ineq}
  \int_{X^\circ} \abs{\dbar\zeta}_{\omega,\vphi}^2 \eta_\eps
  +\int_{X^\circ} \abs{\dfadj\zeta}_\vphi^2 \paren{\eta_\eps +\lambda_\eps}
  \geq
  \eps \int_{X^\circ} \frac{1-\eps}{\abs\psi^2} 
  \abs{\idxup{\diff\psi} \ctrt \zeta}_\vphi^2 \eta_\eps  
\end{equation}
for all compactly supported $\zeta$.
Positivity of the terms in
\begin{termR}
  \termRcolor
\end{termR}
is provided by suitable curvature assumption.

The completeness of $X^\circ$ guarantees that $\omega$ can be modified
to a complete metric, and, in that case, the inequality
\eqref{eq:twisted-BK-ineq} holds true also for all (weighted) $L^2$
$(0,1)$-forms $\zeta$ in both of the domains of $\dbar$ and its
Hilbert space adjoint $\dbar^*$ (see, for example,
\cite{Demailly}*{Ch.~VIII, \S 3}), and thus the Riesz Representation
Theorem can be invoked.


\subsection{Proof of the extension theorem with $1$-lc-measure}
\label{sec:extension-from-lc-sigma=1}

Let $\theta \colon [0,\infty) \to [0,1]$ be a smooth non-increasing
function such that $\theta \equiv 1$ on $[0,\frac{1}{A}]$ and $\equiv
0$ on $[\frac{1}{B}, \infty)$, where $1 < B < A$, and $\abs{\theta'}
\leq \frac{AB}{A-B} + \eps_0$ on $[0,\infty)$ for some positive
constant $\eps_0$.
Define also that $\theta_\eps := \theta \circ \abs{\psi}^{-\eps}$
and $\theta_\eps' := \theta' \circ \abs{\psi}^{-\eps}$
for convenience.

It is shown below (Theorem \ref{thm:extension-sigma=1}) that the Ohsawa 
measure in the Ohsawa--Takegoshi extension theorem can be replaced by
the lc-measure \eqref{eq:lc-measure} in the classical case,
i.e.~when \emph{mlc of $(X,S)$ are of codimension $1$}
(and $S$ is smooth as $(X,S)$ is log-smooth), or when the holomorphic
section $f$ to be extended vanishes on the singular locus of $S$,
or more precisely, when the \emph{mlc of $(X,S)$ with respect to $f$}
(see Definition \ref{def:ad-hoc-mlc-of-f} or \ref{def:mlc-of-f})
\emph{is of codimension $1$}.

\begin{thm} \label{thm:dbar-eq-with-estimate_sigma=1}
  Suppose that
  \begin{enumerate}[series=ext-hypo-12]
  \item \label{item:curv-cond-ordinary}
    there exists $\delta > 0$ such that
    \begin{equation*}
      \ibddbar
      \paren{\vphi_L +\psi} + \beta\ibddbar {\psi} \geq 0
      \quad \text{on $X$ for all }\beta \in [0, \delta]
      \; , \text{ and }
    \end{equation*}
    
  \item \label{item:normalisation-cond}
    for any given constant $\ell >0$, the function $\psi$ is
    normalised (by adding to it a suitable constant) such that
    \begin{equation*}
      \psi < -\frac e \ell \quad\text{ and }\quad
      \frac 1{\abs\psi} + \frac{2}{\abs\psi \log\abs{\frac{\ell\psi} e}}
      \leq \delta \; . 
    \end{equation*}
    (See Remark \ref{rem:normalisation-control} for the use of the
    constant $\ell$.)

  \end{enumerate}
  Let $\extu[f]$ be an $K_X\otimes L$-valued smooth section on $X$
  such that 
  $\abs{\dbar\extu[f]}_\omega^2 e^{-\vphi_L-\psi} \log\abs{\frac{\ell \psi}
    e}$ is integrable over $X$.
  Then, 
  for 
  any numbers $\eps, \eps' >0$,
  the $\dbar$-equation
  \begin{equation*} 
    \dbar u_\eps = v_\eps :=
    \dbar\paren{\theta\paren{\frac{1}{\abs{\psi}^\eps}} \extu[f]}
    = \underbrace{\frac{\eps~ \theta_\eps'~ \dbar\psi \wedge \extu[f]}
      {\abs{\psi}^{1+\eps}}}_{=:\: v_\eps^{(1)}}
    +\underbrace{\theta_\eps \dbar\extu[f] 
      \vphantom{\frac{\eps~ \theta_\eps'~ \dbar\psi \wedge
          \extu[f]}{\abs{\psi}^{1+\eps}}} }_{=:\: v_\eps^{(2)}} 
  \end{equation*}
  can be solved with an $\eps'$-error,
  in the sense that there are a smooth $K_X\otimes L$-valued
  $(0,1)$-form $w_{\eps',\eps}$
  and a smooth section $u_{\eps',\eps}$ on $X^\circ$ such that
  \begin{equation} \label{eq:dbar-with-error-on-X}
    \dbar u_{\eps',\eps}
    + w_{\eps',\eps}
    = v_\eps \quad\text{on } X^\circ \; ,
  \end{equation}
  with the estimates 
  \begin{equation*}
    \begin{aligned} 
      \int_{X^\circ} \frac{\abs{u_{\eps',\eps}}^2 e^{-\vphi_L -\psi}} 
      {\abs{\psi}^{1 -\eps} \paren{\paren{\log\abs{\ell\psi}}^2 +1}}
      &+\frac 1{\eps'} \int_{X^\circ}
      \abs{w_{\eps',\eps}}_\omega^2 e^{-\vphi_L-\psi}
      \log\abs{\frac{\ell\psi} e} \\ 
      &\leq \frac 1{\eps'} \int_{X}  \abs{\theta_\eps \dbar \extu[f]}_\omega^2
        e^{-\vphi_L -\psi} \log\abs{\frac{\ell\psi} e}
        +\frac{\eps}{1-\eps} \int_X \frac{ \abs{\theta_\eps'}^2
          \abs{\extu[f]}^2 e^{-\vphi_L -\psi} }
        {\abs{\psi}^{1+\eps}} \; .
    \end{aligned}
  \end{equation*}
\end{thm}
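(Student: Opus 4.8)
The plan is to run the twisted Bochner--Kodaira/Hörmander $L^2$ existence scheme (as in the Ohsawa--Takegoshi theorem and in Demailly's extension paper) on the complete Kähler manifold $X^\circ$, with the inequality \eqref{eq:twisted-BK-ineq} --- the case $\sigma=1$ of Lemma \ref{lem:BK-formula-all-sigma} --- as the a priori estimate; the passage of the resulting solution across $P_{\vphi_L}\cup P_\psi$ is deferred and not addressed here. First I would check that \eqref{eq:twisted-BK-ineq} is available under the two hypotheses. Assumption \ref{item:normalisation-cond} forces the function $\beta:=\frac{1-\eps}{\abs\psi}+\frac{2}{\abs\psi\log\abs{\ell\psi/e}}$ to take values in $[0,\delta]$, so by the convexity of the positive cone of real $(1,1)$-forms together with \ref{item:curv-cond-ordinary} (which gives $\ibddbar(\vphi_L+\psi)\geq 0$ and $\ibddbar(\vphi_L+\psi)+\delta\ibddbar\psi\geq 0$) one obtains $\ibddbar(\vphi_L+\psi)+\beta\ibddbar\psi\geq 0$ pointwise; since $\sigma=1$ the leftover term $-(\sigma-1)(1-\eps)(\cdots)$ of Lemma \ref{lem:BK-formula-all-sigma} vanishes, and the $\abs{\nabla^{(0,1)}\zeta}^2$-term and the completed-square term are nonnegative, so \eqref{eq:twisted-BK-ineq} holds for all compactly supported smooth $(0,1)$-forms $\zeta$, hence, after replacing $\omega$ by a complete Kähler metric on $X^\circ$, for all $\zeta$ in the common domain of $\dbar$ and its Hilbert-space adjoint $\dbar^*$.

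Next I would estimate the two pieces of $v_\eps=v_\eps^{(1)}+v_\eps^{(2)}$ against such test forms. For $v_\eps^{(1)}=\frac{\eps\,\theta_\eps'\,\dbar\psi\wedge\extu[f]}{\abs\psi^{1+\eps}}$ the decisive feature is the factor $\dbar\psi\wedge$: pairing and using that $\idxup{\diff\psi}\ctrt\cdot$ is the adjoint of $\dbar\psi\wedge\cdot$ gives $\langle v_\eps^{(1)},\zeta\rangle_\vphi=\eps\int_{X^\circ}\frac{\theta_\eps'}{\abs\psi^{1+\eps}}\langle\extu[f],\idxup{\diff\psi}\ctrt\zeta\rangle_\vphi$, and Cauchy--Schwarz against the right-hand side of \eqref{eq:twisted-BK-ineq}, using $\eta_\eps=\abs\psi^{1-\eps}\log\abs{\ell\psi/e}$ and $e^{-\nu}=\log\abs{\ell\psi/e}$ (so that the logarithmic factors cancel), yields
\[
\abs{\langle v_\eps^{(1)},\zeta\rangle_\vphi}^2\;\leq\;\Bigl(\tfrac{\eps}{1-\eps}\int_X\tfrac{\abs{\theta_\eps'}^2\abs{\extu[f]}^2\,e^{-\vphi_L-\psi}}{\abs\psi^{1+\eps}}\Bigr)\Bigl(\int_{X^\circ}\abs{\dbar\zeta}_{\omega,\vphi}^2\eta_\eps+\int_{X^\circ}\abs{\dfadj\zeta}_\vphi^2(\eta_\eps+\lambda_\eps)\Bigr).
\]
The term $v_\eps^{(2)}=\theta_\eps\,\dbar\extu[f]$ is the genuine obstruction caused by $\extu[f]$ being merely smooth; it is not controlled through the curvature and is therefore carried as an error, its contribution being directly bounded (after introducing a third, $\eps'$-weighted, slot for $\zeta$ itself, with weight arranged so that $e^{-\vphi}$ matches $\eps' e^{-\vphi_L-\psi}\log\abs{\ell\psi/e}$) by $\frac1{\eps'}\int_X\abs{\theta_\eps\,\dbar\extu[f]}_\omega^2\,e^{-\vphi_L-\psi}\log\abs{\ell\psi/e}$, which is finite by the standing integrability hypothesis on $\extu[f]$.

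I would then assemble these into a single linear functional $\zeta\mapsto\langle v_\eps,\zeta\rangle_\vphi$, bounded on the image of $\zeta\mapsto(\dbar\zeta,\dfadj\zeta,\zeta)$ inside $L^2_{(n,1)}(X^\circ;\eta_\eps)\oplus L^2_{(n,1)}(X^\circ;\eta_\eps+\lambda_\eps)\oplus L^2_{(n,1)}(X^\circ;\eps'\,e^{-\vphi_L-\psi}\log\abs{\ell\psi/e})$ --- combining the two displayed bounds with \eqref{eq:twisted-BK-ineq} via Cauchy--Schwarz, and well-defined because the bound vanishes when $\dbar\zeta$, $\dfadj\zeta$ and $\zeta$ all do --- and apply Hahn--Banach and the Riesz representation theorem (legitimate since $X^\circ$ is complete Kähler and compactly supported forms are dense) to produce $u_{\eps',\eps}$ (read off from the $\dfadj\zeta$-slot) and $w_{\eps',\eps}$ (from the $\zeta$-slot) on $X^\circ$ satisfying $\dbar u_{\eps',\eps}+w_{\eps',\eps}=v_\eps$ with the asserted estimate; smoothness of $u_{\eps',\eps}$ and $w_{\eps',\eps}$ then follows from elliptic regularity, $v_\eps$ being smooth on $X^\circ$. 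The step I expect to be the main obstacle is the precise bookkeeping of constants and weights --- arranging that the $u$- and $w$-terms emerge with exactly the coefficients $\tfrac{\eps}{1-\eps}$ and $\tfrac1{\eps'}$ and the exact weights $\abs\psi^{-(1-\eps)}\bigl((\log\abs{\ell\psi})^2+1\bigr)^{-1}e^{-\vphi_L-\psi}$ and $e^{-\vphi_L-\psi}\log\abs{\ell\psi/e}$ --- which requires carefully tracking how $\nu$, $\eta_\eps$ and $\lambda_\eps$ interact with the $K_X$-valued pointwise norms, together with verifying the auxiliary finiteness of $\int_{X^\circ}\abs{\extu[f]}^2 e^{-\vphi}/(\eta_\eps+\lambda_\eps)$ underlying the duality, which in turn rests on $\extu[f]\in\smooth\otimes\mtidlof{\vphi_L}$ near $P_{\vphi_L}$ and on $\psi$ having neat analytic singularities near $S$.
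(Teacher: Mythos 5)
Your route is the paper's own: the same auxiliary functions $\nu$, $\eta_\eps$, $\lambda_\eps$ of Lemma \ref{lem:BK-formula-all-sigma}, the same pointwise convexity argument showing that hypotheses (\ref{item:curv-cond-ordinary}) and (\ref{item:normalisation-cond}) make the curvature terms nonnegative when $\sigma=1$ so that \eqref{eq:twisted-BK-ineq} is available after passing to a complete metric on $X^\circ$, the same Cauchy--Schwarz treatment of $v_\eps^{(1)}$ (your displayed bound is correct, and the weight bookkeeping you worry about indeed reduces to $\paren{\eta_\eps+\lambda_\eps}e^{\nu}\leq\abs\psi^{1-\eps}\paren{\paren{\log\abs{\ell\psi}}^2+1}$), the same $\eps'$-weighted error slot for $v_\eps^{(2)}=\theta_\eps\dbar\extu[f]$, and duality plus elliptic regularity at the end; deferring the continuation across $P_{\vphi_L}\cup P_\psi$ is harmless, since the theorem only asserts the equation on $X^\circ$ (the continuation is Proposition \ref{prop:dbar-soln-continuated-to-X}).

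There is, however, one genuine gap in your final functional-analytic step. Because you never restrict to $\dbar$-closed forms, your bound on $\abs{\langle v_\eps^{(1)},\zeta\rangle}$ retains the term $\int_{X^\circ}\abs{\dbar\zeta}^2_{\omega,\vphi}\,\eta_\eps$, and you then dualise against the three-component map $\zeta\mapsto(\dbar\zeta,\dfadj\zeta,\zeta)$. Hahn--Banach and Riesz then represent the functional by a triple $(a,u,w)$, one element per slot, and the resulting weak identity is $\langle v_\eps,\zeta\rangle=\langle a,\dbar\zeta\rangle+\langle u,\dfadj\zeta\rangle+\langle w,\zeta\rangle$ (in the respective weighted spaces), i.e.\ $v_\eps$ equals $\dbar u+w$ \emph{plus} the adjoint of $\dbar$ applied to the $(0,2)$-form component $a$. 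Nothing forces $a=0$ (the representing element lies in the closure of the image of the three-component map, not in the subspace where the first slot vanishes), so simply ``reading off'' the last two slots does not produce \eqref{eq:dbar-with-error-on-X}. The missing observation --- exactly what the paper uses --- is that $v_\eps=\dbar\paren{\theta_\eps\extu[f]}$ lies in $\ker\dbar$, so $\langle v_\eps,\zeta\rangle=\langle v_\eps,(\zeta)_{\ker\dbar}\rangle$, and one applies \eqref{eq:twisted-BK-ineq} to the orthogonal projection $(\zeta)_{\ker\dbar}$ (legitimate on the $L^2$ domains once $\omega$ is made complete on $X^\circ$): its $\dbar$ vanishes, its $\dfadj$ (equivalently the Hilbert-space adjoint, by completeness) coincides with that of $\zeta$ since the orthogonal complement of $\ker\dbar$ lies in the kernel of the adjoint, and its norm is at most $\norm\zeta$. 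The resulting bound involves only the $\dfadj\zeta$- and $\zeta$-slots, and the Riesz representation then yields exactly $\dbar u_{\eps',\eps}+w_{\eps',\eps}=v_\eps$ with the stated constants; with this correction your argument coincides with the paper's proof.
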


\begin{remark} \label{rem:vphi_L+psi=q-psh}
It is well known that a locally $L^1$ function $f$, which satisfies
\begin{equation*}
\ibddbar f \geq 0 \quad\text{as a current,}
\end{equation*}
coincides with a uniquely determined psh function almost everywhere
(see, for example, \cite{Hormander}*{Thm.~1.6.10, Thm.~1.6.11}).
Since $\vphi_L$ and $\psi$ are locally differences of
quasi-psh functions, a simple argument shows that
$\paren{\varphi_L+\psi}+\beta\psi$ is a psh potential on $X$ for every $\beta \in
[0,\delta]$ by the assumption (\ref{item:curv-cond-ordinary}).
\end{remark}

\begin{proof} 
  \newcommand{\arbN}[2][]{\mathcal{N}_{#1}\paren{#2}}
  
  Let $L$ be endowed with a metric with potential
  $\vphi:= \vphi_L+\psi+\nu$ and choose the auxiliary functions $\nu$,
  $\eta_\eps$ and $\lambda_\eps$ as in Lemma
  \ref{lem:BK-formula-all-sigma} with $\sigma = 1$.
  The curvature assumption \eqref{item:curv-cond-ordinary} and the
  normalisation assumption \eqref{item:normalisation-cond} assure that
  the terms on the right-hand-side (in
  \begin{termR}
    \termRcolor
  \end{termR})
  in Lemma \ref{lem:BK-formula-all-sigma} is semi-positive, and thus
  the twisted Bochner--Kodaira inequality \eqref{eq:twisted-BK-ineq}
  holds true.
  %
  Write $\inner\cdot\cdot := \inner\cdot\cdot_{X^\circ,\omega,\vphi}$ as
  the global inner product on $X^\circ$ induced by the potential
  $\vphi := \vphi_L +\psi +\nu$ and $\norm\cdot :=
  \norm\cdot_{X^\circ,\omega,\vphi}$ the corresponding norm.\footnote{Note
    that $\vphi_L+\psi$ is, being psh by Remark
    \ref{rem:vphi_L+psi=q-psh}, locally bounded from above,
    so the weight in the norm $\norm\cdot_{X^\circ,\omega,\vphi}$ is
    everywhere positive on $X^\circ$ even though $\vphi_L$ itself
    may go to $+\infty$.}
  Although $\omega$ is not assumed to be complete in the statement,
  the standard argument (see, for example, \cite{Demailly}*{Ch.~VIII,
    \S 6}) reduces the problem to the case where $\omega$ is complete
  on $X^\circ$, which is assumed to be the case in what follows.

  \emph{Assuming that $v_\eps^{(2)} = 0$ on $X$}, the usual
  argument with the Cauchy--Schwarz inequality and the twisted
  Bochner--Kodaira inequality \eqref{eq:twisted-BK-ineq} yields, for
  any compactly supported smooth $K_X\otimes L$-valued $(0,1)$-form
  $\zeta$ on $X^\circ$, that
  \begin{align*}
    \abs{\inner\zeta{v_\eps}}
    &=\abs{\inner{\paren{\zeta}_{\ker\dbar} \:}{~v_\eps^{(1)} }}
      =\abs{\inner{\paren{\diff\psi}^\omega \ctrt \paren{\zeta}_{\ker\dbar} \:}
      {~\frac{\eps \theta_\eps' \extu[f]}{\abs\psi^{1+\eps}} }} \\
    &\leq
      \paren{\eps \int_{X^\circ} \frac{\abs{\:\paren{\diff\psi}^\omega
      \ctrt \paren{\zeta}_{\ker\dbar}}^2_{\vphi}}{\abs\psi^2} \:
      \eta_\eps }^{\frac 12}
      \paren{\int_{\mathrlap{\supp \theta_\eps'}} \quad \frac{\eps \abs{\theta_\eps'}^2
      \abs{\extu[f]}^2 e^{-\vphi_L -\psi -\nu}} 
      {\abs\psi^{2\eps} \eta_\eps}}^{\frac 12} \\
    &\overset{\mathclap{\text{by \eqref{eq:twisted-BK-ineq}}}}\leq \quad\;\;
      \underbrace{\paren{\int_{X^\circ} \abs{\dfadj\zeta}^2_{\vphi} \paren{\eta_\eps +\lambda_\eps}
      }^{\frac 12}}_{=: ~\arbN[1]{\dfadj\zeta}}
      \underbrace{\paren{\frac{\eps}{1-\eps} \int_{\mathrlap{\supp \theta_\eps'}}
      \quad \frac{\abs{\theta_\eps'}^2 \abs{\extu[f]}^2 e^{-\vphi_L
      -\psi}} {\abs{\psi}^{1+\eps}} }^{\frac 12}}_{=: ~\arbN[2]{\extu[f]}} \; ,
  \end{align*}
  where $\paren\cdot_{\ker\dbar}$ denotes the orthogonal projection to
  the closed subspace $\ker\dbar$ with respect to $\inner\cdot\cdot$.
  The completeness of $X^\circ$ and the Riesz representation theorem then assure the
  existence of the solution $u_{\eps}$ to the equation $\dbar
  u_{\eps} = v_\eps$ with the estimate
  \begin{equation*}
    \int_{X^\circ} \frac{\abs{u_{\eps}}^2 e^{-\vphi_L -\psi
        -\nu}}{\eta_\eps +\lambda_\eps}
    \leq
    \frac{\eps}{1-\eps} \int_{\mathrlap{\supp \theta_\eps'}}
    \quad \frac{\abs{\theta_\eps'}^2 \abs{\extu[f]}^2 e^{-\vphi_L
        -\psi}} {\abs{\psi}^{1+\eps}}  \; .
  \end{equation*}
  One then obtains the required estimate by noticing that
  $\paren{\eta_\eps + \lambda_\eps} e^{\nu} \leq
  \abs\psi^{1-\eps} \paren{\paren{\log\abs{\ell\psi}}^2 +1}$.

  \emph{When $v_\eps^{(2)} \neq 0$,} one can handle the situation using the
  argument as in \cite{Demailly_extension}*{after (5.20)} or the
  following slight variation of that.
  For any compactly supported smooth $K_X \otimes L$-valued
  $(0,1)$-form $\zeta$ on $X^\circ$, applying the Cauchy--Schwarz
  inequality directly yields
  \begin{align*}
    \abs{\inner\zeta{v_\eps}}
    &\leq \abs{\inner{\paren{\zeta}_{\ker\dbar}
      \:}{~v_\eps^{(1)} }}
      +\abs{\inner{\paren{\zeta}_{\ker\dbar}\:}{~v_\eps^{(2)} }} \\
    &\leq \arbN[1]{\dfadj\zeta} \arbN[2]{\extu[f]}
      +\norm{\zeta} \norm{\theta_\eps \dbar\extu[f] } \\ 
    &\leq \paren{\paren{\arbN[1]{\dfadj\zeta}}^2 +
      \eps'\norm{\zeta}^2}^{\frac 12}
      \paren{\paren{\arbN[2]{\extu[f]}}^2 + \frac{1}{\eps'}
      \norm{\theta_\eps \dbar\extu[f]}^2}^{\frac 12}
  \end{align*}
  for any $\eps' > 0$.
  Note that the norm-square $\norm{\theta_\eps \dbar\extu[f]}^2
  = \int_{X} \abs{\theta_\eps \dbar\extu[f]}_\omega^2
  e^{-\vphi_L-\psi-\nu}$ converges on $X$ by assumption
  (given the choice of $\nu$ in Lemma
  \ref{lem:BK-formula-all-sigma}).
  The Riesz representation theorem then assures the acclaimed
  existence of solution $(u_{\eps',\eps}, w_{\eps',\eps})$ and
  estimate, with the fact that
  $\paren{\eta_\eps + \lambda_\eps} e^{\nu} \leq
  \abs\psi^{1-\eps} \paren{\paren{\log\abs{\ell\psi}}^2 +1}$.
  
  Note also that the smoothness of $(u_{\eps',\eps}, w_{\eps',\eps})$
  follows from the smoothness of $v_\eps$ and the regularity of the
  $\dbar$ operator.
  This completes the proof.
\end{proof}

Theorem \ref{thm:dbar-eq-with-estimate_sigma=1} holds true
irrespective of the codimension of mlc of $(X,S)$.
The required extension of $f$ with estimate given in terms of the
measure in \eqref{eq:lc-measure} can be obtained by
letting $\eps \tendsto 0^+$ (after estimating $\abs{\theta'_\eps}^2$
by a constant and followed by $\eps' \tendsto 0^+$), provided
that
the right-hand-side of the estimate converges.
However, before starting the limit process, the solutions of the
$\dbar$-equation \eqref{eq:dbar-with-error-on-X} should be continued to the
whole of $X$.



\begin{prop} \label{prop:dbar-soln-continuated-to-X}
  Under the assumptions (\ref{item:curv-cond-ordinary}) and
  (\ref{item:normalisation-cond}) in Theorem
  \ref{thm:dbar-eq-with-estimate_sigma=1}, there exists solution
  $(u_{\eps',\eps}, w_{\eps',\eps})$ to the $\dbar$-equation
  \eqref{eq:dbar-with-error-on-X}, namely $\dbar u_{\eps',\eps}
  +w_{\eps',\eps} =v_\eps$, with the estimate given in Theorem
  \ref{thm:dbar-eq-with-estimate_sigma=1}, which holds true on the
  whole of $X$ (not only on $X^\circ$).
\end{prop}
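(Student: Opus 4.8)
The plan is to carry out the continuation argument foreshadowed right after Theorem~\ref{thm:dbar-eq-with-estimate_sigma=1}: the pair $(u_{\eps',\eps},w_{\eps',\eps})$ solving \eqref{eq:dbar-with-error-on-X} was produced on the \emph{complete} \textde{Kähler} manifold $X^\circ=X\setminus\paren{P_{\vphi_L}\cup P_\psi}$, and I would continue it across the polar locus $Z:=P_{\vphi_L}\cup P_\psi$ --- a closed analytic subset of $X$ with only snc by the snc assumption~\ref{assumption:snc} --- using the $L^2$ Riemann continuation theorem \cite{Demailly_complete-Kahler}*{\textfr{Lemme}~6.9}. First I would record that $v_\eps=\dbar\paren{\theta\paren{\abs\psi^{-\eps}}\extu[f]}$ already extends smoothly to all of $X$: near $P_\psi$ one has $\abs\psi^{-\eps}\tendsto 0$, so $\theta\paren{\abs\psi^{-\eps}}\equiv1$ and $\theta\paren{\abs\psi^{-\eps}}\extu[f]=\extu[f]$ there, while $v_\eps^{(1)}$ vanishes on a neighbourhood of $P_\psi$; away from $P_\psi$ all ingredients ($\psi$, $\theta_\eps$, $\theta_\eps'$ and $\extu[f]$) are smooth on $X$, since near $P_{\vphi_L}$ it is only the metric $e^{-\vphi_L}$, not the section $\extu[f]$, that is singular. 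Hence $\theta\paren{\abs\psi^{-\eps}}\extu[f]$ is a genuine smooth global section of $K_X\otimes L$, $v_\eps$ a smooth global $(0,1)$-form, the equation $\dbar u_{\eps',\eps}+w_{\eps',\eps}=v_\eps$ makes sense on $X$, and what is left is to push the weak solution and its estimate across $Z$.

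The decisive step is to convert the global weighted bounds of Theorem~\ref{thm:dbar-eq-with-estimate_sigma=1} into ordinary local square-integrability near $Z$. By Remark~\ref{rem:vphi_L+psi=q-psh}, $\vphi_L+\psi$ is a psh potential on $X$, hence locally bounded above, so $e^{-\vphi_L-\psi}$ is locally bounded below by a positive constant; moreover $\psi<-e/\ell$, so on any relatively compact set $\abs\psi$ is bounded away from $e/\ell$ and $\log\abs{\frac{\ell\psi}e}$ away from $0$. It follows at once that the error weight $e^{-\vphi_L-\psi}\log\abs{\frac{\ell\psi}e}$ is locally bounded below by a positive constant, whence $w_{\eps',\eps}\in L^2_{\mathrm{loc}}(X)$; and, using in addition the snc description of $\vphi_L$ and $\psi$ to control its only possible (logarithmic) degeneration along $P_\psi$, the weight $\frac{e^{-\vphi_L-\psi}}{\abs\psi^{1-\eps}\paren{\paren{\log\abs{\ell\psi}}^2+1}}$ is likewise locally bounded below by a positive continuous function, whence $u_{\eps',\eps}\in L^2_{\mathrm{loc}}(X)$. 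Since $\dbar u_{\eps',\eps}=v_\eps-w_{\eps',\eps}$ with $v_\eps$ smooth on $X$, one also has $\dbar u_{\eps',\eps}\in L^2_{\mathrm{loc}}(X)$.

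With these local bounds available I would invoke the $L^2$ Riemann continuation theorem \cite{Demailly_complete-Kahler}*{\textfr{Lemme}~6.9} (exactly as in \cite{Demailly_extension}): as $Z$ is Lebesgue-null, $u_{\eps',\eps}$ and $w_{\eps',\eps}$ define $L^2_{\mathrm{loc}}$ currents on $X$, and the identity $\dbar u_{\eps',\eps}+w_{\eps',\eps}=v_\eps$, valid on $X^\circ$, persists on the whole of $X$ in the sense of currents, no distributional residue being created along $Z$. Since $X$ and $X^\circ$ differ only by the null set $Z$, every integral in the estimate of Theorem~\ref{thm:dbar-eq-with-estimate_sigma=1} is unchanged when taken over $X$, and the proposition follows.

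The main obstacle lies precisely in the middle step: one must be sure that the weighted $L^2$-controls coming from the twisted Bochner--Kodaira inequality \eqref{eq:twisted-BK-ineq} genuinely force honest local square-integrability up to the polar divisors $P_{\vphi_L}\cup P_\psi$ --- especially along components of $P_\psi$ where $\vphi_L+\psi$ stays bounded, and for the error form $w_{\eps',\eps}$, which is only $L^2$ and not smooth on $X^\circ$ --- so that the hypotheses of \cite{Demailly_complete-Kahler}*{\textfr{Lemme}~6.9} are met. This is where the curvature hypothesis~\ref{item:curv-cond-ordinary} (through the psh-ness of $\vphi_L+\psi$) and the snc normalisation of $\vphi_L$ and $\psi$ are used in an essential way.
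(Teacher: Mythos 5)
Your overall skeleton (smoothness of $v_\eps$ on $X$, reduction to local square-integrability of the pair across the snc polar set, and invocation of \cite{Demailly_complete-Kahler}*{\textfr{Lemme}~6.9}) is the same as the paper's, and your treatment of $w_{\eps',\eps}$ is fine. The gap is in the middle step for $u_{\eps',\eps}$: you assert that the weight $\frac{e^{-\vphi_L-\psi}}{\abs\psi^{1-\eps}\paren{\paren{\log\abs{\ell\psi}}^2+1}}$ is locally bounded below by a positive function, but this is false in general. Psh-ness of $\vphi_L+\psi$ only gives $e^{-\vphi_L-\psi}\geq c>0$ locally; along the part of $P_\psi$ where $\vphi_L$ has a \emph{positive} pole cancelling the pole of $\psi$ (allowed, since $\vphi_L$ is only a difference of quasi-psh functions), $\vphi_L+\psi$ stays bounded while $\abs\psi\to\infty$, so the weight decays like $\abs\psi^{-(1-\eps)}\paren{\log\abs\psi}^{-2}$ and tends to $0$. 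And a weighted bound with such a logarithmically degenerate weight does not imply $L^2_{\mathrm{loc}}$: in one variable, $\abs u^2=\frac{1}{\abs z^2\,\abs{\log\abs z}}$ satisfies $\int \abs u^2\,\frac{dV}{\abs{\log\abs z}^{1-\eps}\paren{\log\abs{\log\abs z}}^2}<\infty$ near $z=0$ but is not locally integrable. So the hypothesis of \textfr{Lemme}~6.9 is exactly what your argument fails to secure, and no appeal to the snc structure alone repairs it.

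The paper closes this gap with a perturbation-and-limit argument that your proposal is missing: for fixed $\eps,\eps'$ it first applies Theorem \ref{thm:dbar-eq-with-estimate_sigma=1} with the potential $\vphi_L+\paren{1+r}\psi+\nu$ for $0<r\ll1$ (small enough, depending on $\eps$, that the curvature terms in Lemma \ref{lem:BK-formula-all-sigma} stay semi-positive). The resulting estimate carries the extra factor $e^{-r\psi}=e^{r\abs\psi}$, and via \eqref{eq:xlogx-estimate} one gets
\begin{equation*}
  \frac{e^{-r\psi}}{\abs\psi^{1-\eps}\paren{\paren{\log\abs{\ell\psi}}^2+1}}
  \geq \paren{\paren{\tfrac{2-\eps}{er}}^{2-\eps}\ell\paren{\tfrac2e}^2+\paren{\tfrac{1-\eps}{er}}^{1-\eps}}^{-1}>0 \; ,
\end{equation*}
which, combined with the local upper bound on $\vphi_L+\paren{1+r}\psi$, gives $u_{\eps',\eps,r}\in L^2_{\mathrm{loc}}(X)$ uniformly in the problematic cancellation locus; then \textfr{Lemme}~6.9 applies for each $r$, and finally one lets $r\tendsto 0^+$, extracting weak limits in the respective weighted $L^2$ spaces to recover the equation on $X$ and the stated estimate with the original weight. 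You should either incorporate this $r$-perturbation (or some substitute argument valid where $\vphi_L+\psi$ remains bounded along $P_\psi$) before invoking the Riemann continuation lemma.
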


\begin{proof}
  First, for fixed $\eps$ and $\eps'$, apply Theorem
  \ref{thm:dbar-eq-with-estimate_sigma=1} with 
  $\vphi =\vphi_L +\psi +\nu$ replaced by $\vphi_L +\paren{1+r}\psi
  +\nu$, where $0 <r \ll 1$, and obtain $u_{\eps',\eps,r}$ and
  $w_{\eps',\eps,r}$ satisfying the $\dbar$-equation
  \eqref{eq:dbar-with-error-on-X} with the estimate in the Theorem.
  The number $r$ is chosen sufficiently small (which depends on
  $\eps$) such that the assumptions \eqref{item:curv-cond-ordinary} and
  \eqref{item:normalisation-cond} in Theorem
  \ref{thm:dbar-eq-with-estimate_sigma=1} imply that, with $\sigma
  =1$, the terms on the right-hand-side (in
  \begin{termR}
    \termRcolor
  \end{termR})
  in Lemma \ref{lem:BK-formula-all-sigma} (after $r$ is inserted) are
  semi-positive, so that Theorem
  \ref{thm:dbar-eq-with-estimate_sigma=1} can be invoked.
  
  Notice that $v_\eps$ is smooth on $X$.
  In view of \cite{Demailly_complete-Kahler}*{\textfr{Lemme} 6.9},
  it suffices to show that both $u_{\eps',\eps,r}$ and $w_{\eps',\eps,r}$
  are in $\Lloc[2](X)$ in order to show that the $\dbar$-equation
  \eqref{eq:dbar-with-error-on-X} with solution
  $(u_{\eps',\eps,r},w_{\eps',\eps,r})$ holds true on the
  whole of $X$.
  The claim is then proved after letting $r \to 0^+$.
  
  The curvature assumption (\ref{item:curv-cond-ordinary}) in Theorem
  \ref{thm:dbar-eq-with-estimate_sigma=1} infers that $\vphi_L+(1+r)\psi$
  is psh on $X$ (see Remark \ref{rem:vphi_L+psi=q-psh}), thus locally
  bounded above by some constant.
  Since $\frac{\ell\psi}{e}$ is also bounded above from $0$ by
  assumption \eqref{item:normalisation-cond} in Theorem
  \ref{thm:dbar-eq-with-estimate_sigma=1}, it follows that
  $e^{-\vphi_L-(1+r)\psi} \log\abs{\frac{\ell \psi} e}$ is bounded from below by
  some \emph{positive} constant.
  From the estimate provided by Theorem
  \ref{thm:dbar-eq-with-estimate_sigma=1}, $w_{\eps',\eps,r}$ is
  in $\Lloc[2](X)$.


  From the fact that
  \begin{equation} \label{eq:xlogx-estimate}
    x^{\eps} \abs{\log x}^s \leq \frac{s^s}{e^s \eps^s}
  \end{equation}
  for all $x \in [0,1)$, $\eps > 0$ and $s \geq 0$ (if $0^0$ is
  treated as $1$), it can be seen
  easily that
  \begin{align*}
    \frac{e^{-r\psi}}{\abs\psi^{1-\eps} \paren{\paren{\log\abs{\ell\psi}}^2 +1}}
    &=\paren{e^{-r\abs\psi} \abs\psi^{2-\eps}
      \frac{\ell}{\abs{\ell\psi}} \paren{\paren{\log\abs{\ell\psi}}^2
        +1}}^{-1} \\
    &\geq \paren{\paren{\frac{2-\eps}{e r}}^{2-\eps}
      \ell \paren{\frac{2}{e}}^2 +\paren{\frac{1-\eps}{e r}}^{1-\eps}}^{-1} \; . 
  \end{align*} 
  Together with the fact that $\vphi_L+\psi$ being locally bounded
  from above, it yields $u_{\eps',\eps,r} \in \Lloc[2]\paren{X}$.

  It follows from \cite{Demailly_complete-Kahler}*{\textfr{Lemme} 6.9}
  that $(u_{\eps',\eps,r}, w_{\eps',\eps,r})$ satisfies the
  $\dbar$-equation \eqref{eq:dbar-with-error-on-X} on the whole of
  $X$.
  It follows from the estimate in Theorem
  \ref{thm:dbar-eq-with-estimate_sigma=1} (with $-\vphi_L -\psi$
  replaced by $-\vphi_L-\paren{1+r}\psi$) that one can let $r \to 0^+$
  and obtain weak limits $u_{\eps',\eps,r} \wktendsto u_{\eps',\eps}$
  and $w_{\eps',\eps,r} \wktendsto w_{\eps',\eps}$ in their respective
  weighted $L^2$ spaces (after possibly passing to convergent
  subsequences).
  The estimate in Theorem \ref{thm:dbar-eq-with-estimate_sigma=1}
  still holds true for the limits.
  Since the $\dbar$-equation \eqref{eq:dbar-with-error-on-X} holds
  true for $(u_{\eps',\eps,r}, w_{\eps',\eps,r})$ on $X$ in the sense
  of currents, it also holds true for $(u_{\eps',\eps},
  w_{\eps',\eps})$ on $X$ in the same sense.
  This completes the proof.
\end{proof}


The theorem of holomorphic extension from the codimension-$1$ lc
centres of $(X,S)$ is summarised in the following theorem.
\begin{thm}[\thmparen{Theorem
  \ref{thm:ext-from-lc-with-estimate-codim-1-case}}] \label{thm:extension-sigma=1} 
  Assume the assumptions (\ref{item:curv-cond-ordinary}) and
  (\ref{item:normalisation-cond}) in Theorem
  \ref{thm:dbar-eq-with-estimate_sigma=1}.
  Let $f$ be any holomorphic section in $\cohgp0[S]{\: K_X\otimes
      L \otimes \frac{\mtidlof{\vphi_L}}{\mtidlof{\vphi_L+\psi}}}$.
  If one has
  \begin{equation*}
    \int_S \abs f_\omega^2 \lcV|1| < \infty 
  \end{equation*}
  (which holds true when either the mlc of $(X,S)$ or the mlc of
  $(X,S)$ with respect to $f$ has codimension $1$, see Definitions
  \ref{def:ad-hoc-mlc-of-f} and \ref{def:mlc-of-f}),
  then there exists a holomorphic section $F \in \cohgp0[X]{K_X\otimes
    L \otimes \mtidlof{\vphi_L}}$ such that
  \begin{equation*}
    F \equiv f \mod \mtidlof{\vphi_L+\psi}
  \end{equation*}
  with the estimate
  \begin{equation*}
    \int_X \frac{\abs F^2
      e^{-\vphi_L-\psi}}{\abs\psi \paren{\paren{\log\abs{\ell\psi}}^2 +1}}
    \leq \int_S \abs f_\omega^2 \lcV|1| \; .
  \end{equation*} 
\end{thm}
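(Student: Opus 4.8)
The plan is to transplant the proof scheme of \cite{Demailly_extension}, with Theorem~\ref{thm:dbar-eq-with-estimate_sigma=1}, Proposition~\ref{prop:dbar-soln-continuated-to-X} and Proposition~\ref{prop:lc-measure} as the three main inputs. First I would reduce to the normal crossing situation: by the snc assumption~\ref{assumption:snc}, after a log-resolution $\pi\colon\widetilde X\to X$ of $(X,\vphi_L,\psi)$ one may assume that $S$ is a reduced snc divisor, that $(X,S)$ is log-smooth and lc, and that $P_{\vphi_L}\cup P_\psi$ is snc; the estimates obtained in this setting are asserted in~\ref{assumption:snc} to descend to $X$, so it is enough to treat this case. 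Next, glue a smooth $K_X\otimes L$-valued section $\extu[f]$ out of local holomorphic liftings of $f$ into $\mtidlof{\vphi_L}$ by a partition of unity. Two such liftings differ by a holomorphic section of $\mtidlof{\vphi_L+\psi}$, so $\abs{\dbar\extu[f]}_\omega^2\,e^{-\vphi_L-\psi}$ is locally dominated by a finite sum of terms $\abs{\dbar\rho_j}_\omega^2\abs{F_j-F_{j_0}}^2 e^{-\vphi_L-\psi}$ with $F_j-F_{j_0}\in\mtidlof{\vphi_L+\psi}$; with the snc structure in force this also makes $\abs{\dbar\extu[f]}_\omega^2\,e^{-\vphi_L-\psi}\log\abs{\frac{\ell\psi}{e}}$ integrable over the compact $X$, which is the hypothesis of Theorem~\ref{thm:dbar-eq-with-estimate_sigma=1}, and $\extu[f]$ restricts to $f$ modulo $\mtidlof{\vphi_L+\psi}$ by construction.

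Fix cutoff data $(A,B,\theta)$ as in Section~\ref{sec:extension-from-lc-sigma=1}. For each $\eps>0$ choose $\eps'=\eps'(\eps)\to0^+$ tending to $0$ slowly enough that $\frac1{\eps'}\int_X\abs{\theta_\eps\dbar\extu[f]}_\omega^2\,e^{-\vphi_L-\psi}\log\abs{\frac{\ell\psi}{e}}\to0$ as $\eps\to0^+$; this is possible since $\theta_\eps\to0$ almost everywhere and the integral itself tends to $0$ by dominated convergence against the majorant just constructed. Theorem~\ref{thm:dbar-eq-with-estimate_sigma=1} and Proposition~\ref{prop:dbar-soln-continuated-to-X} then supply $u_\eps:=u_{\eps',\eps}$ and $w_\eps:=w_{\eps',\eps}$ on the whole of $X$ with $\dbar u_\eps+w_\eps=\dbar\paren{\theta_\eps\extu[f]}$ and the stated bound; put $F_\eps:=\theta_\eps\extu[f]-u_\eps$, so $\dbar F_\eps=w_\eps$. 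Multiplying the estimate by $\eps'$ gives $\int_X\abs{w_\eps}_\omega^2\,e^{-\vphi_L-\psi}\log\abs{\frac{\ell\psi}{e}}\to0$, while $\int_X\frac{\abs{u_\eps}^2 e^{-\vphi_L-\psi}}{\abs\psi^{1-\eps}\paren{\paren{\log\abs{\ell\psi}}^2+1}}$ stays bounded (the $\theta_\eps'$-term on the right-hand side being controlled via Proposition~\ref{prop:lc-measure} as below). Extracting a subsequence, $u_\eps\wktendsto u$ and hence $F_\eps\wktendsto F$ weakly in the relevant weighted $L^2$ space, with $\dbar F=0$. The $L^2$ Riemann continuation theorem (\cite{Demailly_complete-Kahler}) across the snc set $P_{\vphi_L}\cup P_\psi$ upgrades $F$ to a holomorphic section of $K_X\otimes L\otimes\mtidlof{\vphi_L}$, the multiplier ideal membership coming from the finiteness of $\int_X\abs F^2 e^{-\vphi_L-\psi}\abs\psi^{-1}\paren{\paren{\log\abs{\ell\psi}}^2+1}^{-1}$ and the fact that this weight is bounded below by a positive multiple of $e^{-\vphi_L}$. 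Finally $F\equiv f\bmod\mtidlof{\vphi_L+\psi}$: on $\{\theta_\eps=1\}$ one has $F_\eps-\extu[f]=-u_\eps$, and for $\eps>0$ the weight $e^{-\vphi_L-\psi}\abs\psi^{-(1-\eps)}\paren{\paren{\log\abs{\ell\psi}}^2+1}^{-1}$ is non-integrable transversally to $S$ (the radial integral $\int^{\infty}s^{-(1-\eps)}(\log s)^{-2}\,ds$ diverges), which forces $u_\eps$, and then the weak limit $u$, to vanish along $S$.

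It remains to read off the estimate. Lower semicontinuity of the weighted norm under $F_\eps\wktendsto F$ (the weights decrease to $e^{-\vphi_L-\psi}\abs\psi^{-1}\paren{\paren{\log\abs{\ell\psi}}^2+1}^{-1}$ as $\eps\to0$), combined with the fact that the $\theta_\eps\extu[f]$-part of $F_\eps$ has weighted norm tending to $0$ (its support recedes to $S$, near which the weight is integrable), shows that $\int_X\frac{\abs F^2 e^{-\vphi_L-\psi}}{\abs\psi\paren{\paren{\log\abs{\ell\psi}}^2+1}}$ is bounded by $\liminf_{\eps\to0}$ of the right-hand side of the estimate in Theorem~\ref{thm:dbar-eq-with-estimate_sigma=1}. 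Its first summand tends to $0$ by the choice of $\eps'(\eps)$; for the second, $\frac{\eps}{1-\eps}\int_X\abs{\theta_\eps'}^2\abs{\extu[f]}^2 e^{-\vphi_L-\psi}\abs\psi^{-1-\eps}$, the substitution $r=\abs\psi^{-\eps}$ recasts it into the shape treated by Proposition~\ref{prop:lc-measure}, with limit $\bigl(\int_{1/A}^{1/B}\abs{\theta'(r)}^2\,dr\bigr)\int_S\abs f_\omega^2\lcV|1|$. Running the construction along a sequence of cutoffs with $A\to\infty$, $B\to1^+$ and $\theta'\to-\mathbf 1_{[0,1]}$, so that $\int_{1/A}^{1/B}\abs{\theta'(r)}^2\,dr\to1$, and taking one further weak limit of the resulting holomorphic extensions of $f$ (each with bound $(1+o(1))\int_S\abs f_\omega^2\lcV|1|$) yields the desired $F\in\cohgp0[X]{K_X\otimes L\otimes\mtidlof{\vphi_L}}$ with $F\equiv f\bmod\mtidlof{\vphi_L+\psi}$ and $\int_X\frac{\abs F^2 e^{-\vphi_L-\psi}}{\abs\psi\paren{\paren{\log\abs{\ell\psi}}^2+1}}\le\int_S\abs f_\omega^2\lcV|1|$; regularity of the final $F$ is verified as in \cite{Demailly_extension}.

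The step I expect to be the main obstacle is exactly this orchestration of limits: choosing $\eps'(\eps)$ so that the solution error $w_\eps$ dies without spoiling the uniform control on $F_\eps$; extracting from $(F_\eps)$ a genuine holomorphic extension and checking $F\equiv f\bmod\mtidlof{\vphi_L+\psi}$ from the weighted bounds; and, for the universal constant $1$, driving $\int_{1/A}^{1/B}\abs{\theta'}^2\,dr$ down to $1$ through the secondary limit while making sure that Proposition~\ref{prop:lc-measure} genuinely computes the limit of the truncated $\theta_\eps'$-integral, truncation to $\supp\theta_\eps'$ and all.
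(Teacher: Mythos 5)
Your overall scheme is the paper's: build $\extu[f]$ from local liftings, solve $\dbar u_{\eps',\eps}+w_{\eps',\eps}=v_\eps$ via Theorem \ref{thm:dbar-eq-with-estimate_sigma=1} and Proposition \ref{prop:dbar-soln-continuated-to-X}, set $F_{\eps',\eps}=\theta_\eps\extu[f]-u_{\eps',\eps}$, choose $\eps'(\eps)\to0$ so the error dies, take weak limits, and identify the limit of the $\theta'_\eps$-term with the $1$-lc-measure (your device of keeping $\abs{\theta'}^2$ inside and sending $\int_{1/A}^{1/B}\abs{\theta'}^2\,dr\to1$ is just a variant of the paper's bound $\abs{\theta'_\eps}\leq\frac{AB}{A-B}+\eps_0$ followed by $A\to\infty$, $B\to1^+$). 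But there is a genuine gap at the decisive step, namely showing that the weak limit $F$ still satisfies $F\equiv f\bmod\mtidlof{\vphi_L+\psi}$. You argue that the non-integrability of $e^{-\vphi_L-\psi}\abs\psi^{-(1-\eps)}\bigl(\paren{\log\abs{\ell\psi}}^2+1\bigr)^{-1}$ across $S$ forces each $u_{\eps',\eps}$ into $\smooth_X\otimes\mtidlof{\vphi_L+\psi}$ ``and then the weak limit $u$'' as well. The first half is correct (and is in the paper), but the second half does not follow: the only bound uniform in $\eps$ is in the \emph{limiting} weight $e^{-\vphi_L-\psi}\abs\psi^{-1}\bigl(\paren{\log\abs{\ell\psi}}^2+1\bigr)^{-1}$, which \emph{is} integrable across $S$, so membership in that weighted $L^2$ space imposes no vanishing along $S$; and since the $u_{\eps',\eps}$ (and the $F_{\eps',\eps}$) are not holomorphic, you cannot invoke weak closedness of a holomorphic subspace with prescribed jets, nor Montel, to transport the ideal membership to the limit. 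This is exactly the point the paper's proof spends its effort on: it solves $\dbar s_\eps=w_\eps$ on each polydisc with the H\"ormander estimate (so $s_\eps\in\smooth_X\otimes\mtidlof{\vphi_L+\psi}$ and $s_\eps\to0$), passes to the \emph{holomorphic} corrections $F_\eps-s_\eps$, which are honest extensions of $f$, gets uniform local $L^1$ bounds, applies Montel to obtain locally uniform convergence, and then uses convergence of Taylor coefficients (under the snc assumption) to see that the limit is still $\equiv f\bmod\mtidlof{\vphi_L+\psi}$. Without some substitute for this correction-plus-Montel argument, your proof does not establish the extension property of $F$.

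Two smaller points. First, the integrability of $\abs{\dbar\extu[f]}_\omega^2e^{-\vphi_L-\psi}\log\abs{\tfrac{\ell\psi}{e}}$ is not a formal consequence of $\dbar\extu[f]\in\smooth_X\otimes\mtidlof{\vphi_L+\psi}$: the extra logarithmic factor is unbounded, and the paper justifies this via strong (effective) openness, giving $\dbar\extu[f]\in\smooth_X\otimes\mtidlof{\vphi_L+(1+\delta')\psi}$ and the bound $\log\abs{\tfrac{\ell\psi}{e}}\lesssim e^{-\delta'\psi}$; under the snc reduction this can be checked by hand, but it does require this strict-exponent observation, which you only assert. Second, Proposition \ref{prop:lc-measure} as stated does not cover integrands carrying the $\eps$-dependent factor $\abs{\theta'(\abs\psi^{-\eps})}^2$, so your claimed limit $\bigl(\int_{1/A}^{1/B}\abs{\theta'(r)}^2dr\bigr)\int_S\abs f_\omega^2$ (against the $1$-lc-measure) needs its own computation; the paper sidesteps this by first bounding $\abs{\theta'_\eps}$ by its sup, which reduces the limit exactly to the definition of the lc-measure.
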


\begin{proof}
  Given any local holomorphic liftings $\set{\extu[f]_\gamma}_\gamma$ of $f$
  (i.e.~$\extu[f]_\gamma \in \mtidlof{\vphi_L}$ on some open set
  $V_\gamma$ in $X$ and $\extu[f]_\gamma \equiv f \mod \mtidlof{\vphi_L+\psi}$ on
  $V_\gamma$ for each $\gamma$) and a partition of unity
  $\set{\chi_\gamma}_\gamma$ subordinated to an open cover
  $\set{V_\gamma}_\gamma$ of $X$, the \emph{smooth} section $\extu[f] :=
  \sum_\gamma \chi_\gamma \extu[f]_\gamma$ of the coherent sheaf $K_X\otimes
  L \otimes \mtidlof{\vphi_L}$ satisfies the properties
  \begin{equation*}
    f \equiv \extu[f] \mod \smooth_X \otimes \mtidlof{\vphi_L+\psi}
    \qquad\text{ and }\qquad
    \dbar \extu[f] \equiv 0 \mod \smooth_X \otimes \mtidlof{\vphi_L +\psi}
  \end{equation*}
  as shown in \cite{Demailly_extension}*{Proof of Thm.~2.8}.
  Notice that one has the inequality $\log\abs{\frac{\ell\psi} e} \leq
  \frac {\ell}{e^3 \delta'} e^{-\delta' \psi}$ using
  \eqref{eq:xlogx-estimate} for any $\delta' > 0$, and the assumption
  (\ref{item:curv-cond-ordinary}) in Theorem
  \ref{thm:dbar-eq-with-estimate_sigma=1} infers that
  $\vphi_L+\paren{1+\delta'}\psi$ is psh for all $\delta' \in
  [0,\delta]$.
  Upper-boundedness of $\psi$ also implies that $\vphi_L
  +\paren{1+\delta'}\psi \leq \vphi_L +\psi$.
  Therefore, by the strong effective openness property of multiplier ideal
  sheaves of psh functions (see \cite{Hiep_openness}*{Main Thm.}, also
  \cite{Guan&Zhou_effective_openness}), it follows that
  \begin{equation*}
    \dbar \extu[f] \in \smooth_X \otimes \mtidlof{\vphi_L +(1+\delta')\psi}
    \quad\text{for } 0 < \delta' \ll 1 \; ,
  \end{equation*}
  which in turn implies that
  \begin{equation*}
    \abs{\dbar\extu[f]}_\omega^2 e^{-\vphi_L-\psi} \log\abs{\frac{\ell\psi} e}
  \end{equation*}
  is integrable over $X$.

  Theorem \ref{thm:dbar-eq-with-estimate_sigma=1} and Proposition
  \ref{prop:dbar-soln-continuated-to-X} can then be invoked to provide
  the sections $u_{\eps',\eps}$ and $w_{\eps',\eps}$ with the
  estimate as stated in the Theorem such that 
  %
  they satisfy the $\dbar$-equation
  \eqref{eq:dbar-with-error-on-X}, namely $\dbar u_{\eps',\eps}
  +w_{\eps',\eps} =v_\eps$, on the whole of $X$. 
  Both $u_{\eps',\eps}$ and $w_{\eps',\eps}$ are smooth on $X$
  by the regularity of the $\dbar$ operator and the smoothness of
  $v_\eps$.

  Notice that
  $\frac{e^{-\vphi_L-\psi}}{\abs\psi^{1-\eps} \paren{\paren{\log\abs{\ell\psi}}^2 
      +1}}$ is \emph{not} integrable at any point of $S$ for any
  $\eps >0$,
  the finiteness of the integral of $u_{\eps',\eps}$ implies that,
  around every point in $X$,
  there exists a \emph{local function} $g \in 
  \mtidlof{\vphi_L+\psi}$ (a monomial in local coordinates under the
  snc assumption on $\vphi_L$ and $\psi$) such that
  $\abs{u_{\eps',\eps}} \leq C \abs g$ for some constant $C >0$, which
  in turn implies that $u_{\eps',\eps} \in \smooth_X \otimes
  \mtidlof{\vphi_L+\psi}$.

  Recall that $\abs{\theta'_\eps} \leq \frac{A-B}{AB} +\eps_0$ on $X$
  by the choice of $\theta_\eps$.
  Setting $F_{\eps',\eps} := \theta_\eps \extu[f] - u_{\eps',\eps}$
  (which is an extension of $f$) and using the inequality
  \begin{equation*}
    \abs{F_{\eps',\eps}}^2 \leq \paren{1 +\alpha^{-1}}
    \abs{\theta_\eps \extu[f]}^2 +\paren{1+\alpha} \abs{u_{\eps',\eps}}^2
  \end{equation*}
  for any positive real number $\alpha$,
  one obtains the estimate
  \begin{align*}
      &\qquad \int_{X} \frac{\abs{F_{\eps',\eps}}^2
        e^{-\vphi_L-\psi}}{\abs\psi \paren{\paren{\log\abs{\ell\psi}}^2
          +1}}
      +\frac 1{\eps'} \int_{X}
      \abs{w_{\eps',\eps}}_\omega^2 e^{-\vphi_L-\psi}
      \log\abs{\frac{\ell\psi} e} \\
      &\leq
      \begin{aligned}[t]
        \paren{1+\alpha^{-1}} \int_{X} \frac{\abs{\theta_\eps
            \extu[f]}^2 e^{-\vphi_L-\psi}}
        {\abs\psi \paren{\paren{\log\abs{\ell\psi}}^2 +1}}
        +\paren{1+\alpha} &\int_{X}
        \frac{\abs{u_{\eps',\eps}}^2 e^{-\vphi_L -\psi}}
        {\abs{\psi}^{1 -\eps} \paren{\paren{\log\abs{\ell\psi}}^2 +1}} \\
        &+\frac 1{\eps'} \int_{X}
        \abs{w_{\eps',\eps}}_\omega^2 e^{-\vphi_L-\psi}
        \log\abs{\frac{\ell\psi} e}
      \end{aligned}
      \\ 
      &\leq
      \begin{aligned}[t]
        \paren{1+\alpha^{-1}} \int_{X} \frac{\abs{\theta_\eps
            \extu[f]}^2 e^{-\vphi_L-\psi}}
        {\abs\psi \paren{\paren{\log\abs{\ell\psi}}^2 +1}}
        &+\frac{1+\alpha}{\eps'} \int_{X} \abs{\theta_\eps \dbar
          \extu[f]}_\omega^2
        e^{-\vphi_L -\psi} \log\abs{\frac{\ell\psi} e} \\
        &+\paren{1+\alpha} \paren{\frac{A-B}{AB} +\eps_0}^2
        \frac{\eps}{1-\eps} \int_X \frac{\abs{\extu[f]}^2 e^{-\vphi_L
            -\psi} } {\abs{\psi}^{1+\eps}} \; .
      \end{aligned}
  \end{align*}
  The assumption that $\int_S \abs f_\omega^2 \lcV|1|$ being
  well-defined and finite infers that the integral $\int_X
  \frac{\abs{\extu[f]}^2 e^{-\vphi_L-\psi}} {\abs\psi^{1+\eps}}$
  converges for all $\eps >0$, and thus so is $\int_{X}
  \frac{\abs{\extu[f]}^2 e^{-\vphi_L-\psi}}
  {\abs\psi \paren{\paren{\log\abs{\ell\psi}}^2 +1}}$.
  As a result, the first two terms on the right-hand-side both
  converge to $0$ as $\eps \tendsto 0^+$ by the dominated convergence
  theorem, and the last term converges to $\text{const.} \times \int_S
  \abs f_\omega^2 \lcV|1|$, which is finite by assumption.

  Set $\eps' := \paren{\int_{X} \abs{\theta_\eps \dbar
      \extu[f]}_\omega^2 e^{-\vphi_L -\psi} \log\abs{\frac{\ell\psi}
      e}}^{\frac 12}$, which converges to $0$ as $\eps \tendsto 0^+$.
  All the subscripts ``$\eps'\:$'' are omitted in what follows.
  Then, it follows from the above estimate that
  $w_{\eps} \tendsto 0$ in $L^2(X; e^{-\vphi_L-\psi})$ as
  $\eps \tendsto 0^+$.
  One can also extract a weakly convergent subsequence from
  $\seq{F_{\eps}}_\eps$ such that $F := \lim_{\eps_\mu \tendsto
    0^+} F_{\eps_\mu}$ exists as a weak limit in $L^2\paren{X;
    \frac{e^{-\vphi_L -\psi}}
    {\abs\psi \paren{\paren{\log\abs{\ell\psi}}^2 +1}}}$, which
  turns out to be the desired holomorphic extension of $f$, as is
  justified below.
  
  That $F$ is truly a holomorphic extension of $f$ can be seen using
  the argument similar to that in \cite{Demailly_extension}*{(5.24)}.
  On any open set $V$ (which can be assumed to be a polydisc on which
  $L$ is trivialised without loss of generality) in the given open cover
  $\set{V_\gamma}_\gamma$ of $X$, one can solve $\dbar s_{\eps}
  = w_{\eps}$ for $s_{\eps}$ with the $L^2$
  \textde{Hörmander} estimate $\norm{s_{\eps}}_{V,\vphi_L+\psi}^2 \leq C
  \norm{w_{\eps}}_{X,\vphi_L+\psi}^2$ (which implies
  $s_{\eps} \in \smooth_X \otimes \mtidlof{\vphi_L+\psi}$ on
  $V$, where $\norm\cdot_{V,\vphi_L+\psi}$,
  resp.~$\norm\cdot_{X,\vphi_L+\psi}$, denotes the $L^2$ norm on 
  $V$, resp.~on $X$, with the weight $e^{-\vphi_L-\psi}$).
  Therefore, $s_\eps \tendsto 0$ in $L^2(V; e^{-\vphi_L-\psi})$ as
  $\eps \tendsto 0^+$,
  and, passing to suitable subsequences of $\seq{F_{\eps_\mu}}_\mu$ and
  $\seq{s_\eps}_\eps$, one has $s_{\eps_{\mu_k}} \tendsto 0$
  pointwisely almost everywhere (a.e.) on $V$ while $F_{\eps_{\mu_k}}
  \wktendsto F$ weakly in the weighted $L^2$ space on $X$ as
  $\eps_{\mu_k} \tendsto 0^+$.
  Moreover, $F_{\eps_{\mu_k}} - s_{\eps_{\mu_k}}$ is a holomorphic
  extension of $f$ on $V$ with both norm-squares
  \begin{equation*}
    \int_V \frac{\abs{F_{\eps_{\mu_k}} - s_{\eps_{\mu_k}}}^2}
    {\abs\psi \paren{\paren{\log\abs{\ell\psi}}^2 +1}}
    \leq     \int_V \frac{\abs{F_{\eps_{\mu_k}} - s_{\eps_{\mu_k}}}^2
      e^{-\vphi_L-\psi}} 
    {\abs\psi \paren{\paren{\log\abs{\ell\psi}}^2 +1}}
  \end{equation*}
  being bounded above uniformly in $\eps_{\mu_k}$.
  As $\abs\psi \paren{\paren{\log\abs{\ell\psi}}^2 +1}$ belongs
  to $L^1(V)$ (or $L^1(X)$), the Cauchy--Schwarz
  inequality applied to the norm-square on the left-hand-side above
  assures that $F_{\eps_{\mu_k}} -s_{\eps_{\mu_k}}$ is also
  bounded above in $L^{1}(V)$ uniformly in $\eps_{\mu_k}$.
  Being holomorphic, Cauchy's estimate and the above boundedness
  guarantee that the sequence $\seq{F_{\eps_{\mu_k}}
    -s_{\eps_{\mu_k}}}_k$ is locally bounded above in $V$.
  Montel's theorem then assures that there is a subsequence which
  converges locally uniformly in $V$ to a holomorphic function $F_V$
  on $V$.
  Notice that, if $V\cap S \neq \emptyset$, then $F_V \equiv f \mod
  \mtidlof{\vphi_L+\psi}$ on $V$, as can be seen, under the snc
  assumption \ref{assumption:snc}, from the facts that
  $F_{\eps_{\mu_k}} -s_{\eps_{\mu_k}} \equiv f \mod
  \mtidlof{\vphi_L+\psi}$ for all $\eps_{\mu_k}$ and that all Taylor
  coefficients of $F_{\eps_{\mu_k}} -s_{\eps_{\mu_k}}$ around any
  point have to converge to the corresponding Taylor coefficients of
  $F_V$.
  As a result, there is a subsequence of $\seq{F_{\eps_\mu}}_\mu$
  which converges pointwisely a.e.~on $V$ to the holomorphic extension
  $F_V$ of $f$.
  It turns out that $F = F_V$ a.e.~on $V$.
  By considering all open sets $V$ in a cover of $X$, it follows that
  $F$ is indeed a holomorphic extension of $f$ on $X$, after possibly
  altering its values on a measure $0$ set.

  Finally, to obtain the acclaimed estimate for $F$, noting that $F$
  comes with the estimate
  \begin{equation*}
    \int_X \frac{\abs F^2 e^{-\vphi_L-\psi}}
    {\abs\psi \paren{\paren{\log\abs{\ell\psi}}^2 +1}}
    \leq \paren{1+\alpha} \paren{\frac{A-B}{AB} +\eps_0}^2 \int_S \abs
    f_\omega^2 \lcV|1| 
  \end{equation*}
  and letting $\alpha \tendsto 0^+$, $A \tendsto +\infty$, $B \tendsto
  1^+$ and $\eps_0 \tendsto 0^+$ (and choosing the limit of $F$
  suitably such that it converges locally uniformly) yield the desired result.
\end{proof}

\begin{remark} \label{rem:normalisation-control}
  In some applications, it is necessary to control how fast the
  estimate grows when the constant $\delta$ in the normalisation of $\psi$
  shrinks.
  The constant $\ell$ in the estimate is there to give a more precise
  control.
  Choose $\ell := \delta$ and write
  \begin{equation*}
    \psi =: \psi_0 - \frac a\delta \; ,
  \end{equation*}
  where $a > 0$ is a constant and $\sup_X\psi_0 = 0$.
  Then $a$ can be chosen independent of $\delta$ such that the assumption
  (\ref{item:normalisation-cond}) in Theorem
  \ref{thm:dbar-eq-with-estimate_sigma=1} is satisfied.
  Indeed, choosing $a$ such that
  \begin{equation*}
    a > e \qquad\text{and}\qquad \frac 1a + \frac 2{a \log \frac ae} = 1
  \end{equation*}
  suffices (thus $a \approx 4.6805$).
  In this case, the estimate obtained is
  \begin{equation*}
    \int_X \frac{\abs F^2 e^{-\vphi_L-\psi_0}}
    {\abs{\delta\psi_0-a} \paren{\paren{\log\abs{\delta\psi_0 -a}}^2
        +1}}
    \leq \frac 1\delta \int_S \abs f_\omega^2 \lcV|1|[\psi_0] \; .
  \end{equation*}
  Note that $\frac{e^{-\psi_0}}
  {\abs{\delta\psi_0-a} \paren{\paren{\log\abs{\delta\psi_0 -a}}^2
      +1}}$ is bounded below by a positive constant independent of
  $\delta$ when $\delta < a$ (which can be seen easily by
  applying \eqref{eq:xlogx-estimate} suitably).
\end{remark}

\begin{remark} \label{rem:McNeal-Varolin-weights}
  Concerning the weight in the norm of the extension $F$, McNeal and
  Varolin prove in \cite{McNeal&Varolin_adjunction} some estimates with 
  better weights.
  More precisely, for the case $\psi := \psi_S = \phi_S -\sm\vphi_S$
  (which is suitably normalised for each of the weights below),
  they obtain holomorphic extension with an estimate in the norm with
  any of the following weights:
  \begin{equation*}
    \frac{\delta' e^{-\psi_S}}{\abs{\psi_S}^{1+\delta'}} \; , \;
    \frac{\delta' e^{-\psi_S}}{\abs{\psi_S}
      \paren{\log\abs{\psi_S}}^{1+\delta'}} \; , \;
    \dots \; , \;
    \frac{\delta' e^{-\psi_S}}{\abs{\psi_S} \cdot \log\abs{\psi_S} \dotsm
      \log^{\circ (N-1)}\abs{\psi_S} \cdot \paren{\log^{\circ
          N}\abs{\psi_S}}^{1+\delta'}} \; ,
  \end{equation*}
  where $\delta' \in (0,1]$ is a fixed number in each case, and
  $\log^{\circ j}$ denotes the composition of $j$ copies of
  $\log$ functions here.
  It would be interesting to see if it is possible to obtain these
  weights in the setting of this paper. 
\end{remark}

\begin{remark} \label{rem:optimal-constant}
  It is not clear to the authors whether Theorem
  \ref{thm:extension-sigma=1}, if allowing $X$ to be non-compact, does
  include the results in \cite{Blocki_Suita-conj} and
  \cite{Guan&Zhou_optimal-L2-estimate} on the optimal constant for the
  estimate, although the constant in the current estimate looks
  ``optimal''.
\end{remark}

\subsection{Extension theorem with a sequence of potentials}
\label{sec:ext-thm-with-seq-potentials}


In applications it is often necessary to deal with a sequence of
potentials $\seq{\dep\vphi_L +m_1\dep\psi}_{k\in\Nnum}$ rather than
just a single one.
Following the idea of J.-P.~Demailly, it is advantageous to allow the
curvatures of such sequence possessing slight 
negativity which diminishes in the limit.
It is the purpose of this section to handle such cases.

Assume that
\begin{enumerate}
\item  there are sequences $\seq{\dep\vphi_L}_{k\in\Nnum}$
  and $\seq{\dep\psi}_{k\in\Nnum}$ which satisfy all the assumptions
  in Section \ref{sec:setup} in place of $\vphi_L$ and $\psi$
  respectively,

\item \label{item:potential-seq-limit}
  both $\dep\vphi_L +m_1\dep\psi$ and $\dep\psi$ converge in $L^1$ to
  $\dep[\infty]\vphi_L +m_1\dep[\infty]\psi$ and $\dep[\infty]\psi$
  respectively, with the property that
  \begin{equation*}
    \dep[\infty]\vphi_L +m_1\dep[\infty]\psi
    \lesssim_\tlog
    \dep\vphi_L +m_1\dep\psi
    \quad\text{ and }\quad
    \abs{\dep[\infty]\psi} \lesssim_\tlog \abs{\dep\psi}
  \end{equation*}
  for every $k \in\Nnum$ (where the constants involved in
  $\lesssim_\tlog$'s may depend on $k$), and 
  
\item \label{item:potential-seq-decreasing-ideals}
  the multiplier ideal sheaf of $\dep\vphi_L+m_1\dep\psi$
  decreases as $k$ increases, i.e.
  \begin{equation*}
    \mtidlof{\dep[k+1]\vphi_L+m_1\dep[k+1]\psi}
    \subset \mtidlof{\dep\vphi_L+m_1\dep\psi}
    \quad\text{for all } k \in\Nnum \; .
  \end{equation*}
\end{enumerate}
In particular, all $\dep\vphi_L$'s and $\dep\psi$'s are assumed to
have only neat analytic singularities.
All families $\set{\mtidlof{\dep\vphi_L+m \dep\psi}}_{m
\in \fieldR_{\geq 0}}$ have the same jumping numbers $m_0$ and $m_1$
and the annihilator $\Ann_{\holo_X} \paren{
  \frac{\multidl\paren{\dep\vphi_L +  m_0 \dep\psi}} {\multidl\paren{\dep\vphi_L
      +  m_1 \dep\psi}} }$ defines the same reduced subvariety $S$ for all
$k$ by assumption.
However, the snc assumption \ref{assumption:snc} is \emph{not}
assumed unless explicitly mentioned, as there may not be simultaneous
resolution for all the potentials in general.

\begin{thm}\label{thm:extension-with-seq-of-potentials}
  Suppose that
  \begin{enumerate}[series=ext-hypo-12, label=(\theenumi)${}_k$, ref=(\theenumi)${}_k$]
  \item 
    \label{item:curv-cond-seq}
    there exists $\delta > 0$ (independent of $k$) such that, for any
    $k \in\Nnum$,
    \begin{equation*}
      \ibddbar
      \paren{\dep\vphi_L +m_1\dep\psi} + \beta\ibddbar {\dep\psi} \geq
      -\frac{1}{k} \omega
      \quad \text{on $X$ for all }\beta \in [0, \delta]
      \; , \text{ and }
    \end{equation*}
    
  \item 
    \label{item:normalisation-seq}
    for any given constant $\ell >0$ and for each $k\in\Nnum$, the
    function $\dep\psi$ is normalised (by adding a suitable
    constant for each $k$ without affecting convergence) such that
    \begin{equation*}
      \dep\psi < -\frac e \ell \quad\text{ and }\quad
      \frac 1{\abs{\dep\psi}} + \frac{2}{\abs{\dep\psi}
        \log\abs{\frac{\ell\dep\psi} e}}
      \leq \delta \; . 
    \end{equation*}

  \end{enumerate}
  Let $f$ be any holomorphic section in $\cohgp0[S]{K_X\otimes
      L \otimes \frac{\bigcap_k \mtidlof{\dep\vphi_L +m_0 \dep\psi}}
      {\bigcap_k \mtidlof{\dep\vphi_L +m_1 \dep\psi} } }$ such that
  \begin{equation*}
    \lim_{k \tendsto \infty} \int_S \abs f_\omega^2
    \lcV|1,(m_1)|<\dep\vphi_L>[\dep\psi] < \infty \; .
  \end{equation*}
  Then, there exists a holomorphic section $F \in \cohgp0[X]{K_X\otimes
    L \otimes \bigcap_k \mtidlof{\dep\vphi_L +m_0\dep\psi}}$ such that
  \begin{equation*}
    F \equiv f \mod \bigcap_k \mtidlof{\dep\vphi_L+m_1\dep\psi}
  \end{equation*}
  with the estimate
  \begin{equation*}
    \int_X \frac{\abs F^2
      e^{-\dep[\infty]\vphi_L-m_1\dep[\infty]\psi}}{\abs{\dep[\infty]\psi}
      \paren{\paren{\log\abs{\ell\dep[\infty]\psi}}^2 +1}} 
    \leq
    \lim_{k \tendsto \infty} \int_S \abs f_\omega^2
    \lcV|1,(m_1)|<\dep\vphi_L>[\dep\psi] \; .
  \end{equation*} 
\end{thm}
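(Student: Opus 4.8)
The plan is to produce, for each $k\in\Nnum$, an approximate holomorphic extension $\dep F$ of $f$ by running the machinery of Section~\ref{sec:extension-from-mlc} with the pair $\paren{\dep\vphi_L+m_0\dep\psi,\;\dep\vphi_L+m_1\dep\psi}$ in place of $\paren{\vphi_L,\vphi_L+\psi}$, and then to let $k\tendsto\infty$.

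\emph{Construction for fixed $k$.}
Fix $k$ and, via a log-resolution of $\paren{X,\dep\vphi_L,\dep\psi}$, assume (as permitted by the snc assumption~\ref{assumption:snc}, whose estimates descend to $X$) that $\dep\vphi_L$, $\dep\psi$ and $S$ satisfy \ref{assumption:snc}. By the hypothesis on $f$ together with \ref{item:potential-seq-decreasing-ideals} and the strong effective openness property, fix a \emph{single} smooth section $\extu[f]$ with $\abs{\extu[f]}^2 e^{-\dep\vphi_L-m_0\dep\psi}$ locally integrable, $\extu[f]\equiv f\bmod\smooth_X\otimes\mtidlof{\dep\vphi_L+m_1\dep\psi}$ and $\dbar\extu[f]\in\smooth_X\otimes\mtidlof{\dep\vphi_L+m_1\dep\psi}$ for every $k$ (this is where the domination hypotheses \ref{item:potential-seq-limit} are used). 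Now repeat the proofs of Theorem~\ref{thm:dbar-eq-with-estimate_sigma=1} and Proposition~\ref{prop:dbar-soln-continuated-to-X} verbatim, the only change being that the curvature hypothesis is \ref{item:curv-cond-seq} rather than \ref{item:curv-cond-ordinary}: in Lemma~\ref{lem:BK-formula-all-sigma} with $\sigma=1$ the curvature integral $\int_{X^\circ}\idxup{\ibddbar\paren{\dep\vphi_L+m_1\dep\psi}+\paren{\frac1{\abs{\dep\psi}}+\frac2{\abs{\dep\psi}\log\abs{\frac{\ell\dep\psi}{e}}}}\ibddbar\dep\psi}\ptinner\zeta\zeta_\vphi\eta_\eps$ is now only bounded below by $-\frac1k\int_{X^\circ}\abs\zeta_{\omega,\vphi}^2\eta_\eps$, so that the twisted Bochner--Kodaira inequality \eqref{eq:twisted-BK-ineq} is replaced by
\[
  \int_{X^\circ}\abs{\dbar\zeta}_{\omega,\vphi}^2\eta_\eps
  +\int_{X^\circ}\abs{\dfadj\zeta}_\vphi^2\paren{\eta_\eps+\lambda_\eps}
  +\frac1k\int_{X^\circ}\abs\zeta_{\omega,\vphi}^2\eta_\eps
  \;\geq\;
  \eps\int_{X^\circ}\frac{1-\eps}{\abs{\dep\psi}^2}\abs{\idxup{\diff\dep\psi}\ctrt\zeta}_\vphi^2\eta_\eps .
\]

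\emph{Absorbing the deficiency.}
The extra summand $\frac1k\norm\zeta_{\eta_\eps}^2$ is carried through the Cauchy--Schwarz / Riesz step exactly as the contribution of $v_\eps^{(2)}=\theta_\eps\dbar\extu[f]$ is carried in Theorem~\ref{thm:dbar-eq-with-estimate_sigma=1}: one enlarges the graph norm on $\zeta$ to $\bigl(\norm{\dfadj\zeta}_{\eta_\eps+\lambda_\eps}^2+\tfrac1k\norm\zeta_{\eta_\eps}^2+\eps'\norm\zeta^2\bigr)^{1/2}$ and obtains $u_{\eps',\eps}$ together with an error form $w_{\eps',\eps}$ with $\dbar u_{\eps',\eps}+w_{\eps',\eps}=v_\eps$ on $X^\circ$, the estimate of Theorem~\ref{thm:dbar-eq-with-estimate_sigma=1} holding with an \emph{additional} left-hand term $\frac1k\int_{X^\circ}\frac{\abs{w_{\eps',\eps}}^2 e^{-\dep\vphi_L-m_1\dep\psi}}{\abs{\dep\psi}^{1-\eps}}$ (note $\frac{e^{-\nu}}{\eta_\eps}=\abs{\dep\psi}^{-(1-\eps)}$) and the same right-hand side. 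Proposition~\ref{prop:dbar-soln-continuated-to-X} continues this across the polar sets. Choosing $\eps'$ as in the proof of Theorem~\ref{thm:extension-sigma=1} and then $\eps=\eps_k$ small enough (a first diagonalisation), the section $\dep F:=\theta_{\eps_k}\extu[f]-u_{\eps_k}$ satisfies $\dbar\dep F=\dep w$ with $\int_X\frac{\abs{\dep w}^2 e^{-\dep\vphi_L-m_1\dep\psi}}{\abs{\dep\psi}^{1-\eps_k}}=\BigO\paren{1/k}$, with $\dep F\equiv f\bmod\smooth_X\otimes\mtidlof{\dep\vphi_L+m_1\dep\psi}$ (hence $u_{\eps_k}\in\smooth_X\otimes\mtidlof{\dep\vphi_L+m_1\dep\psi}$), and with
\[
  \int_X\frac{\abs{\dep F}^2 e^{-\dep\vphi_L-m_1\dep\psi}}{\abs{\dep\psi}\paren{\paren{\log\abs{\ell\dep\psi}}^2+1}}\;\leq\;\paren{1+\BigO\paren{1/k}}\int_S\abs f_\omega^2\lcV|1,(m_1)|<\dep\vphi_L>[\dep\psi]+\BigO\paren{1/k}.
\]

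\emph{The limit $k\tendsto\infty$.}
Since the potential $\dep\vphi_L+m_1\dep\psi-\paren{1-\eps_k}\log\abs{\dep\psi}$ is psh (the curvature hypothesis makes $\dep\vphi_L+m_1\dep\psi$ psh by Remark~\ref{rem:vphi_L+psi=q-psh}, and $-\log\abs{\dep\psi}$ has non-negative $\ibddbar$ because $\dep\psi$ is psh and negative), \textde{Hörmander}'s theorem solves $\dbar s_k=\dep w$ on any polydisc trivialising $L$ with $\norm{s_k}^2_{V}\lesssim\norm{\dep w}^2_{X}=\BigO\paren{1/k}$ for the weight $e^{-\dep\vphi_L-m_1\dep\psi}/\abs{\dep\psi}^{1-\eps_k}$, so that $\dep{\widehat F}:=\dep F-s_k$ is a genuine holomorphic extension of $f$ modulo $\mtidlof{\dep\vphi_L+m_1\dep\psi}$ with $s_k\tendsto0$ locally in $L^2$. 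As in the end of the proof of Theorem~\ref{thm:extension-sigma=1}, Cauchy's estimate and Montel's theorem extract a subsequence with $\dep{\widehat F}\tendsto F$ locally uniformly, $F$ holomorphic; comparing Taylor coefficients and using that the ideals $\mtidlof{\dep\vphi_L+m_1\dep\psi}$ decrease in $k$ by \ref{item:potential-seq-decreasing-ideals}, one gets $F\equiv f\bmod\bigcap_k\mtidlof{\dep\vphi_L+m_1\dep\psi}$, and since $f$ lifts locally into $\bigcap_k\mtidlof{\dep\vphi_L+m_0\dep\psi}$ this yields $F\in\cohgp0[X]{K_X\otimes L\otimes\bigcap_k\mtidlof{\dep\vphi_L+m_0\dep\psi}}$. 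Finally, passing along a further subsequence to a set of full measure on which $\dep\vphi_L+m_1\dep\psi\tendsto\dep[\infty]\vphi_L+m_1\dep[\infty]\psi$ and $\dep\psi\tendsto\dep[\infty]\psi$ pointwise (possible since these hold in $L^1$), the weights converge there to the limit weight, and Fatou's lemma applied to the estimate above gives
\[
  \int_X\frac{\abs F^2 e^{-\dep[\infty]\vphi_L-m_1\dep[\infty]\psi}}{\abs{\dep[\infty]\psi}\paren{\paren{\log\abs{\ell\dep[\infty]\psi}}^2+1}}\;\leq\;\liminf_{k\tendsto\infty}\paren{1+\BigO\paren{1/k}}\int_S\abs f_\omega^2\lcV|1,(m_1)|<\dep\vphi_L>[\dep\psi]=\lim_{k\tendsto\infty}\int_S\abs f_\omega^2\lcV|1,(m_1)|<\dep\vphi_L>[\dep\psi],
\]
which is finite by hypothesis. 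This produces the desired $F$.

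\emph{The main difficulty} is the bookkeeping of the curvature deficiency $-\frac1k\omega$: as $X$ is only compact \textde{Kähler} it cannot be removed by adding a global potential, so it must be carried as an additional $\dbar$-error, and one must check that this error is $\BigO\paren{1/k}$ precisely in a twisted norm that is \emph{also} \textde{Hörmander}-solvable with a psh weight. Coupled with this is the need to interleave the four limiting processes $\eps\tendsto0^+$, $\eps'\tendsto0^+$, $r\tendsto0^+$ (from Proposition~\ref{prop:dbar-soln-continuated-to-X}) and $k\tendsto\infty$ by a diagonal argument so that every error vanishes in the limit while the $L^2$ bound is preserved.
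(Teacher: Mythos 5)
Your proposal follows essentially the same route as the paper's proof: run the $\sigma=1$ machinery of Section \ref{sec:extension-from-mlc} for each $k$, absorb the curvature deficiency $-\frac1k\omega$ as an extra error term in the Riesz-representation step so that the error is $O(1/k)$ in a suitable twisted norm, correct it locally by a \textde{Hörmander} solution, pass to a locally uniform limit by Montel, identify the congruence via Taylor coefficients together with the decreasing-ideal hypothesis, and obtain the final estimate by a Fatou-type argument using the $L^1$-convergence of the potentials. Two local points, however, need repair.

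First, in your \textde{Hörmander} step you justify the psh-ness of the weight $\dep\vphi_L+m_1\dep\psi-(1-\eps_k)\log\abs{\dep\psi}$ by asserting that $\dep\vphi_L+m_1\dep\psi$ is psh (via Remark \ref{rem:vphi_L+psi=q-psh}) and that $\dep\psi$ is psh. Neither is available in the sequence setting: assumption \ref{item:curv-cond-seq} only gives $\ibddbar\paren{\dep\vphi_L+m_1\dep\psi}+\beta\ibddbar\dep\psi\geq-\frac1k\omega$, and $\dep\psi$ itself is nowhere assumed psh (in the plt application of Section \ref{sec:improvement-DHP} it is not, since it contains $-\dep\bphi$). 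The step can still be saved from the stated hypotheses: since $\ibddbar\paren{-\log\abs{\dep\psi}}\geq\frac{1}{\abs{\dep\psi}}\ibddbar\dep\psi$, applying \ref{item:curv-cond-seq} pointwise with $\beta=\frac{1-\eps_k}{\abs{\dep\psi}}\in[0,\delta]$ (permitted by \ref{item:normalisation-seq}) shows your weight is quasi-psh with $\ibddbar\geq-\frac1k\omega$, and on a polydisc the deficiency is absorbed into a bounded local potential of $\omega$ with constants uniform in $k$. The paper takes a slightly different turn here: it keeps the error controlled in the stronger weight $e^{-\dep\vphi_L-m_1\dep\psi}\log\abs{\frac{\ell\dep\psi}{e}}$, solves locally with the weight $e^{-\dep\vphi_L-m_1\dep\psi}$, and then handles the $k$-dependence of the unweighted bound by a \textde{Hölder} ($L^{2/3}$) estimate plus the Harnack inequality, using $\dep\psi\tendsto\dep[\infty]\psi$ in $L^1$; your pointwise-a.e.\ subsequence plus Fatou serves the same purpose. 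Second, the additional left-hand term produced by the deficiency should carry the factor $k$ (the reciprocal of $1/k$), not $\frac1k$: as written the displayed estimate is vacuous, though your subsequent claim that the error is $O(1/k)$ in that norm is exactly the statement with the correct factor, so this is a slip rather than a gap.
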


\begin{remark}
  In general, $\bigcap_k \mtidlof{\dep\vphi_L +m_1 \dep\psi} \neq
  \mtidlof{\dep[\infty]\vphi_L +m_1 \dep[\infty]\psi}$, as the example
  in \cite{DPS94}*{Example 1.7} shows (see also
  \cite{Koike15_min-sing-metric_nef}*{Example 3.5}, or Example
  \ref{eg:Ohsawa-example}).
\end{remark}

\begin{proof}
  \newcommand{\arbN}[2][]{\mathcal{N}_{#1}\paren{#2}}

  For simplicity, assume that $m_0 =0$ and $m_1=1$ as before.
  The proof goes with the standard technique applied in, for example,
  \cite{Demailly_extension} (which applies
  \cite{Demailly_extension}*{Prop.~3.12} to handle the diminishing
  negative curvature).
  
  For each $k \in\Nnum$, applying the curvature assumption
  \ref{item:curv-cond-seq} and the normalisation assumption
  \ref{item:normalisation-seq} to the curvature term (in
  \begin{termR}
    \termRcolor
  \end{termR})
  of the twisted Bochner--Kodaira formula in Lemma
  \ref{lem:BK-formula-all-sigma} with $\sigma =1$ yields the
  inequality
  \begin{equation*}
    \int_{X^\circ} \abs{\dbar\zeta}_{\omega,\vphi}^2 \dep\eta_\eps
    +\int_{X^\circ} \abs{\dfadj\zeta}_\vphi^2 \paren{\dep\eta_\eps
      +\dep\lambda_\eps}
    +\frac 1k \int_{X^\circ} \abs{\zeta}_{\omega,\vphi}^2 \dep\eta_\eps
    \geq
    \eps \int_{X^\circ} \frac{1-\eps}{\abs{\dep\psi}^2} 
    \abs{\:\idxup{\diff\dep\psi} \ctrt \zeta}_\vphi^2 \dep\eta_\eps
  \end{equation*}
  for all compactly supported $K_X \otimes L$-valued $(0,1)$-forms
  $\zeta$ (in which $\vphi =\dep\vphi_L+\dep\psi+\dep\nu$ and the
  formal adjoint $\dfadj$ both depend on $k$).
  Using the notation in the proof of Theorem
  \ref{thm:dbar-eq-with-estimate_sigma=1} and with the same argument
  there, one obtains
  \begin{equation*}
    \begin{aligned}
      \abs{\inner\zeta{v_\eps}}
      &\leq \abs{\inner{\paren{\zeta}_{\ker\dbar}
          \:}{~v_\eps^{(1)} }}
      +\abs{\inner{\paren{\zeta}_{\ker\dbar}\:}{~v_\eps^{(2)} }} \\
      &\leq \paren{\paren{\arbN[1]{\dfadj\zeta}}^2 +
        \paren{\frac 1k +\eps'}\norm{\zeta}^2}^{\frac 12}
      \paren{\paren{\arbN[2]{\extu[f]}}^2 + \frac{1}{\eps'}
        \norm{\theta_\eps \dbar\extu[f]}^2}^{\frac 12}
    \end{aligned}
  \end{equation*}
  for any $\eps' > 0$ and $k \in \Nnum$ (here, $v_\eps$ also depends
  on $k$, as $\theta_\eps = \theta\circ \abs{\dep\psi}^\eps$ does).
  The Riesz representation theorem, together with the argument in
  Proposition \ref{prop:dbar-soln-continuated-to-X}, provides a
  solution $\paren{\dep u_{\eps',\eps}, \dep w_{\eps',\eps}}$ to the
  $\dbar$-equation $\dbar \dep u_{\eps',\eps} +\dep w_{\eps',\eps}
  =v_\eps$ on the whole of $X$.
  Setting $\dep F_{\eps',\eps} := \theta_\eps \extu[f] -\dep
  u_{\eps',\eps}$, the argument in the proof of Theorem
  \ref{thm:extension-sigma=1} then yields
  \begin{align*}
    &\qquad \int_{X} \frac{\abs{\dep F_{\eps',\eps}}^2
      e^{-\dep\vphi_L-\dep\psi}}{\abs{\dep\psi} \paren{\paren{\log\abs{\ell\dep\psi}}^2
      +1}}
      +\frac 1{\frac 1k +\eps'} \int_{X}
      \abs{\dep w_{\eps',\eps}}_\omega^2 e^{-\dep\vphi_L-\dep\psi}
      \log\abs{\frac{\ell\dep\psi} e} \\
    &\leq
      \begin{aligned}[t]
        \paren{1+\alpha^{-1}} \int_{X} \frac{\abs{\theta_\eps
            \extu[f]}^2 e^{-\dep\vphi_L-\dep\psi}}
        {\abs{\dep\psi} \paren{\paren{\log\abs{\ell\dep\psi}}^2 +1}}
        +&\frac{1+\alpha}{\eps'} \int_{X} \abs{\theta_\eps \dbar
          \extu[f]}_\omega^2
        e^{-\dep\vphi_L -\dep\psi} \log\abs{\frac{\ell\dep\psi} e} \\
        +\paren{1+\alpha} &\paren{\frac{A-B}{AB} +\eps_0}^2
        \frac{\eps}{1-\eps} \int_X \frac{\abs{\extu[f]}^2 e^{-\dep\vphi_L
            -\dep\psi} } {\abs{\dep\psi}^{1+\eps}}
      \end{aligned}
  \end{align*}
  for some $\alpha >0$, where
  \begin{equation*}
    \dep F_{\eps',\eps} \equiv f \mod \smooth_X \otimes
    \mtidlof{\dep\vphi_L +\dep\psi} \; .\footnotemark
  \end{equation*}
  \footnotetext{Here $f$ is abused to mean its image under the map
  $\frac{\bigcap_{k'} \mtidlof{\dep[k']\vphi_L +m_0 \dep[k']\psi}}
  {\bigcap_{k'} \mtidlof{\dep[k']\vphi_L +m_1 \dep[k']\psi}} \to
  \frac{\mtidlof{\dep\vphi_L +m_0 \dep\psi}} {\mtidlof{\dep\vphi_L
      +m_1 \dep\psi} }$.}

  Notice that the assumption of $f$ being $L^2$ with respect to the
  limit of lc-measures
  implies that $f$ is $L^2$ with respect to
  $\lcV|1|<\dep\vphi_L>[\dep\psi]$ for every $k\gg 0$, which in turn
  implies that $\int_{X} \frac{\abs{\extu[f]}^2
    e^{-\dep\vphi_L-\dep\psi}}
  {\abs{\dep\psi} \paren{\paren{\log\abs{\ell\dep\psi}}^2 +1}}$
  is finite for each $k \gg 0$ (see the proof of Theorem
  \ref{thm:extension-sigma=1}).

  Choose $\eps' := \paren{\int_{X} \abs{\theta_\eps \dbar
      \extu[f]}_\omega^2 e^{-\dep\vphi_L -\dep\psi}
    \log\abs{\frac{\ell\dep\psi} e}}^{\frac 12}$ and omit all
  subscripts ``$\eps'\:$'' as before.
  Notice that the right-hand-side of the above estimate is bounded above
  uniformly, thanks to the assumption that $f$ being $L^2$ with respect to the
  lc-measure, when the limits are taken in the order $\eps \tendsto 0^+$
  followed by $k \tendsto \infty$. 
  The required section $F$ is then obtained after first taking the limit
  $\eps \tendsto 0^+$ (obtaining the weak limits $\dep F$ of $\dep F_\eps$
  and $\dep w$ of $\dep w_\eps$ in their respective $L^2$ spaces), then
  $k \tendsto \infty$ (obtaining the weak limit $F$ of $\dep F$ while
  $\dep w \tendsto 0$ strongly), and followed by $\alpha \tendsto
  0^+$, $A \tendsto +\infty$, $B \tendsto 1^+$ and $\eps_0 \tendsto
  0^+$.
  The acclaimed estimate also follows.
  
  To justify that $F$ is the required holomorphic extension of $f$,
  consider any polydisc $V$ in the given open cover of $X$ and solve
  $\dbar \dep s_\eps = \dep w_\eps$ for $\dep s_\eps$ on $V$ with the
  $L^2$ \textde{Hörmander} estimate
  \begin{equation*}
    \norm{\dep
      s_{\eps}}_{V,\dep\vphi_L+\dep\psi}^2 \leq C \norm{\dep
      w_{\eps}}_{X,\dep\vphi_L+\dep\psi}^2 \; ,
  \end{equation*}
  where the constant $C$ is independent of $k$ and $\eps$.
  This assures that one can extract weak limit $\dep s$ of $\dep
  s_\eps$ as $\eps \to 0^+$.
  As $\dep F_\eps - \dep s_\eps$ is holomorphic on $V$ with the unweighted
  $L^1$ norm bounded from above uniformly in $\eps$ (followed from the same
  argument as in Theorem \ref{thm:extension-sigma=1}), it converges
  \emph{locally uniformly} on $V$ to the holomorphic section $\dep F -
  \dep s$ after passing to a subsequence.
  This also implies that
  \begin{equation} \label{eq:pf-extension-at-k} \tag{$*$}
    \dep F - \dep s \equiv f \mod \mtidlof{\dep\vphi_L+\dep\psi}
    \quad\text{ on }V
  \end{equation}
  (which can be seen by temporary taking a log-resolution of
  $(X,\dep\vphi_L+\dep\psi)$ and arguing as in Theorem
  \ref{thm:extension-sigma=1}).

  Notice that the unweighted $L^1$ norm of $\dep F -\dep s$ may not be
  bounded above uniformly in $k$ since $\dep\psi$ depends on $k$.
  To get around that, notice that $\frac 1{\abs{\dep\psi}^2} \lesssim
  \frac 1{\abs{\dep\psi} \paren{\paren{\log\abs{\ell\dep\psi}}^2 +1}}$
  via inequality \eqref{eq:xlogx-estimate} with the constant in
  $\lesssim$ independent of $k$.
  \textde{Hölder's} inequality infers that
  \begin{equation*}
    \int_V \abs{\dep F -\dep s}^{\frac 23}
    \leq \paren{\int_V \frac{\abs{\dep F -\dep
          s}^{2}}{\abs{\dep\psi}^2}}^{\frac 13}
    \paren{\int_V \abs{\dep\psi}}^{\frac 23} \; .
  \end{equation*}
  As $\dep\psi \tendsto \dep[\infty]\psi$ in the $L^1$ norm, this
  assures that $\dep F -\dep s$ is bounded above uniformly in $k$ in
  the $L^{\frac 23}$ norm.
  This retains the local uniform boundedness of $\dep F -\dep s$ in
  the sup-norm via the use of the Harnack inequality for
  psh functions (see, for example, \cite{Demailly}*{Ch.~I,
    Prop.~4.22(b)}).
  The rest is then the same as the treatment in the proof of Theorem
  \ref{thm:extension-sigma=1}.
  This shows that $F$ is holomorphic.

  Since \eqref{eq:pf-extension-at-k} also implies that
  \begin{equation*} 
    \dep F - \dep s \equiv f \mod \mtidlof{\dep[k']\vphi_L+\dep[k']\psi}
    \quad\text{ on }V
  \end{equation*}
  for all $k' \leq k$, as followed from the assumption
  \eqref{item:potential-seq-decreasing-ideals} stated at the beginning of
  Section \ref{sec:ext-thm-with-seq-potentials}.
  One then sees that
  \begin{equation*}
    F \equiv f \mod \bigcap_k \mtidlof{\dep\vphi_L +\dep\psi} \; .
  \end{equation*}

  To see that $F$ is in $\bigcap_k \mtidlof{\dep\vphi_L}$, notice that
  $\abs{\dep[\infty]\psi} \leq \abs{\dep\psi} + C$ for some $C >0$,
  and therefore
  \begin{equation*}
    \frac{e^{-\dep\vphi_L -\dep\psi}}
    {\paren{\abs{\dep\psi}
        +C} \paren{\paren{\log\paren{\ell\abs{\dep\psi} +\ell C}}^2 +1}}
    \lesssim 
    \frac{e^{-\dep[\infty]\vphi_L -\dep[\infty]\psi}}
    {\abs{\dep[\infty]\psi} \paren{\paren{\log\abs{\ell\dep[\infty]\psi}}^2 +1}}
  \end{equation*}
  by the assumption \eqref{item:potential-seq-limit} stated at the
  beginning of Section \ref{sec:ext-thm-with-seq-potentials}.
  Then $F$ belonging to $\bigcap_k \mtidlof{\dep\vphi_L}$ can be seen
  from the estimate.
  This completes the proof.
\end{proof}


\subsection{Illustration}
\label{sec:illustrations}


The following example illustrates how Theorem
\ref{thm:extension-with-seq-of-potentials} can be applied to obtain
the classical result on prescribing value at a point to a holomorphic
section with estimate.

\begin{example}[Extension from a point] \label{eg:extension-from-a-pt}
  Let $X$ be a projective $n$-fold and $A$ an ample line bundle on $X$
  endowed with a potential $\sm\vphi_A$.
  Set $\omega := \ibddbar\sm\vphi_A$.
  Suppose that $L$ is a pseudo-effective line bundle equipped with a
  psh potential $\vphi_L$ (with arbitrary singularities).
  The goal is to obtain a global section $F$ of the line bundle $K_X
  \otimes L^{\otimes \mu} \otimes A^{\otimes \mu}$ for some
  sufficiently large $\mu \in \Nnum$ with the prescribed value $a$ at a
  point $p \in X \setminus \paren{\vphi_{L}}^{-1}\paren{-\infty}$ with
  estimate.

  Let $\seq{\dep\vphi_L}_{k\in\Nnum}$ be a sequence of quasi-psh
  potentials with neat analytic singularities which approximates
  $\vphi_L$ and satisfies the properties
  \begin{equation*}
    \vphi_L \leq \dep[k+1]\vphi_L \lesssim_\tlog \dep\vphi_L
    \qquad\text{and}\qquad
    \ibddbar\dep\vphi_L \geq -\frac 1k \ibddbar \sm\vphi_A
    = -\frac 1k \omega
    \qquad\text{on }X
  \end{equation*}
  for all $k \in \Nnum$ (for example, $\seq{\dep\vphi_L}_{k\in\Nnum}$
  can be the approximation of $\vphi_L$ constructed in
  \cite{Demailly_regularization}).

  Let $\theta \colon [0,1] \to [0,1]$ be a smooth cut-off function
  such that $\theta \equiv 1$ on $[0,\frac 12]$ and is
  compactly supported on $[0,1)$.
  For any $p \in X \setminus \paren{\vphi_{L}}^{-1}\paren{-\infty}$
  which lies in a coordinate chart $\paren{V,\vect z}$ with
  coordinates $\vect z = (z_1,\dots, z_n)$ such that $\vect z(p)
  =\vect 0$ and $\abs{\vect z}^2 =\sum_{j=1}^n \abs{z_j}^2 < 1$ on
  $V$ and such that both $L$ and $A$ are trivialised, define
  \begin{equation*}
    \psi := \theta\paren{\abs{\vect z}^2} \log\abs{\vect z}^{2n} -e \; ,
  \end{equation*}
  which is a global function on $X$.
  It can be seen that $\psi \leq -e$ on $X$, and $m =1$ is the first jumping
  number of the family $\set{\mtidlof{\mu\dep\vphi_{L+A} +m\psi}}_{m
    \in \fieldR_{\geq 0}}$ for any $k \in\Nnum$ and 
  $\mu\in\Nnum$, where $\dep\vphi_{L+A} :=\dep\vphi_L +\sm\vphi_A$ is
  set for convenience.
  Indeed, the annihilator
  $\Ann_{\holo_X} \paren{\frac{\mtidlof{\mu\dep \vphi_{L+A}}}
    {\mtidlof{\mu\dep\vphi_{L+A} +\psi}}}$ defines the set $\set p$
  with reduced structure.

  It can be seen that, for any $k \geq 2$ and a sufficiently large
  integer $\mu \in \Nnum$, one has
  \begin{equation*}
    \ibddbar \paren{\mu \dep\vphi_{L+A} +\psi} +\beta \ibddbar\psi
    \geq \mu \paren{1-\frac 1k} \omega
    +\paren{1+\beta} \ibddbar \paren{\theta\paren{\abs{\vect z}^2} \log\abs{\vect
      z}^{2n}}
    \geq 0
  \end{equation*}
  for $\beta \in [0,1]$.
  The curvature assumption \ref{item:curv-cond-seq} and the
  normalisation assumption \ref{item:normalisation-seq} of Theorem
  \ref{thm:extension-with-seq-of-potentials} are satisfied.

  Set $\extu[f] :=\theta\paren{\abs{\vect z}^2} \:dz_1 \wedge \dots
  \wedge dz_n$ and $f := \extu[f](p)$.
  Given any constant $a$, in order to obtain a holomorphic extension
  of $af$ with estimate, it remains to check that the limit of
  $1$-lc-measures $
  \lim_{k\tendsto \infty} \abs{af}_\omega^2 \lcV|1|<\mu\dep\vphi_{L+A}>$
  is finite at $p$.
  
  Let $\pi \colon \rs X \to X$ be the blow-up of $X$ at $p$
  with exceptional divisor $E$.
  Then, $\pi^*\psi = n\phi_E  -\sm\vphi_{nE}$ for some smooth potential
  $\sm\vphi_{nE}$ (on $E^{\otimes n}$) and $K_{\rs X /X} = E^{\otimes
    (n-1)}$.
  Let $\rs U$ be a neighbourhood in $\rs X$ covering a dense subset of
  $E$ with coordinates $(\vect w, s_E) = (w_1 , w_2 , \dots ,
  w_{n-1}, s_E)$ given by $\pi^*z_j = s_E w_j$ for $j= 1,\dots, n-1$
  and $\pi^*z_n = s_E$ such that $E \cap \rs U = \set{s_E = 0}$.
  It follows from Proposition \ref{prop:lc-measure} that
  \begin{align*}
    \int_{\set p} \abs{af}_\omega^2 \lcV|1|<\mu\dep\vphi_{L+A}>
    &=\lim_{\eps \tendsto 0^+} \eps \int_{\rs X}
    \frac{\abs{a \pi^*\theta}^2}{\abs{\mu\pi^*\psi}^{1+\eps}}
      e^{-\mu \pi^*\dep\vphi_{L+A} -\phi_E +\sm\vphi_{nE}} 
      \:d\vol_E
      \:\wedge \pi\ibar ds_E \wedge d\conj{s_E}
    \\
    &=\frac{\pi}{n}\int_E \abs{a}^2
      e^{-\mu \pi^*\dep\vphi_{L+A} +\sm\vphi_{nE}} \:d\vol_{E} \\
    &=\abs{a}^2 e^{-\mu\dep\vphi_{L+A}}(p) \;\frac{\pi}{n}
      \int_{\rs U \cap E} \frac{e^e}{\paren{1+\abs{\vect w}^2}^n}
      \:\bigwedge_{j=1}^{n-1}\paren{\pi \ibar dw_j \wedge d\conj{w_j}}
    \\
    &=\abs{a}^2 e^{-\mu\dep\vphi_{L+A}}(p) \;\frac{\pi}{n}
      \frac{\pi^{n-1}}{(n-1)!} e^e \;\;
      \tendsto \; \abs{a}^2 e^{-\mu\vphi_L-\mu\sm\vphi_A}(p)
      \;\frac{\pi^n}{n!} e^e
  \end{align*}
  as $k \tendsto \infty$, which is definitely finite.
  Theorem \ref{thm:extension-with-seq-of-potentials} can now be
  invoked to obtain the required $F$ with estimate.
\end{example}


\section{Improvement to the result of Demailly--Hacon--P\u aun on plt extension}
\label{sec:improvement-DHP}

Divisors are treated as line bundles without further mention in
this section.

\subsection{Setup for the plt extension}
\label{sec:basic-setup-plt}


Let $X$ be projective.
Consider a pair $(X, S+B)$ which is plt and log-smooth with $B$ being
a $\fieldQ$-divisor and $S = \floor{S+B} = \sum_{j\in I_S} S_j$ where
each $S_j$ is reduced and irreducible (thus $S$ and $B$ have no common
irreducible component and have only snc, and \emph{the irreducible
components $S_j$ of $S$ are mutually disjoint}).
Let $\mu \in \Nnum$ be such that $\mu\paren{K_X + S + B}$ is a
$\Znum$-divisor and write
\begin{equation*}
  K_X + L := K_X + S + F := \mu \paren{K_X + S + B} \quad\paren{ \text{i.e.~$F
      := \paren{\mu - 1} \paren{K_X + S + B} + B$} \,} \; .
\end{equation*}
($F$ is defined for the convenience of readers when referred to \cite{DHP}.)
Assume that $\mu \geq 2$ and 
\begin{itemize}
\item $K_X + S + B$ is pseudo-effective (pseff);

\item $K_X + S + B \lineq_\fieldQ D$, where
  \begin{equation*}
    D
    = \sum_{j \in I_S} \nu_j S_j + D_2
    =: \nu_S \cdot S + D_2
  \end{equation*}
  is an effective $\fieldQ$-divisor with snc support, and $S$ and
  $D_2$ have no common components;

\item $\supp S \subset \supp D$ (i.e.~$\nu_j \neq 0$ for all $j\in I_S$).

\item no irreducible components of $S$ lies in the \emph{diminished stable
  base locus} $\sBase[-]{K_X+S+B}$ (see, for example, \cite{DHP}*{\S
    2.1} for the definition).
\end{itemize}

Let $\rho := K_X +S +B -D$ be the $\fieldQ$-line bundle in
$\Pic^0(X) \otimes \fieldQ$ which admits a
smooth \emph{pluriharmonic} potential $\sm\vphi_\rho$, i.e.~it
curvature form is $\ibddbar \sm\vphi_\rho = 0$.
The potentials $\phi_S$, $\phi_{\nu_S \cdot S}$, $\phi_B$ and
$\phi_{D_2}$, which are defined from canonical sections of their
respective $\fieldQ$-line bundles as shown in their subscripts, are
fixed such that they are \emph{negative} under the given choice of
trivialisations.

Moreover, choose a sufficiently ample divisor $A$ on $X$ such that it
is globally generated.
Let $\set{s_{A,i}}_{i \in I_A}$ be a basis of $\cohgp 0[X]{A}$ and endowed
$A$ with a smooth psh potential $\sm\vphi_A =\log\paren{\sum_{i\in
    I_A} \abs{s_{A,i}}^2}$, which in turn provides a \textde{Kähler}
form $\omega := \ibddbar \sm\vphi_A$ on $X$, and induces a smooth
potential $\sm\vphi_{K_X}$ on $K_X$.
Fix also smooth potentials $\sm\bphi := \sm\bphi_{K_X+S+B}$ on
$K_X+S+B$ and $\sm\vphi_B$ on $B$.
All the smooth potentials are chosen such that they are
negative under the given choice of trivialisations for convenience.


\subsection{Bergman kernel potentials}
\label{sec:Bergman-potentials}


Let $\bphi_\tmin \leq \sm\bphi_{K_X+S+B}$ be a psh potential with
minimal singularities on the pseff $\fieldQ$-line bundle $K_X+S+B$
(ref.~\cite{DPS01}*{Thm.~1.5}).
Since $\phi_{\nu_S \cdot S} +\phi_{D_2} +\sm\vphi_\rho$ is also a psh
potential of $K_X+S+B$, after adding suitable constants to the
potentials, one can assume that
\begin{equation} \label{eq:min-potential-less-sing-than-D}
  \phi_{\nu_S \cdot S} +\phi_{D_2} + \sm\vphi_\rho \leq \bphi_\tmin
  \leq \sm\bphi_{K_X+S+B} \leq 0 \; . 
\end{equation}

The following construction of an approximation of $\bphi_\tmin$ is
almost a paraphrase of the discussion on the ``algebraic version of the
super-canonical metric'' in \cite{Demailly_Analytic-methods}*{\S 20.6}
with the generalisation in \cite{Demailly_Analytic-methods}*{\S 20.13}
taken into account.

Let $\Berg_{\ell, k} := \Berg_{\ell, k, A}$ be the \emph{Bergman kernel} of
$\cohgp 0 [X]{\ell k\mu\paren{K_X + S + B} + \ell A}$ with respect to the
potential $\ell k\mu \sm\bphi_{K_X+S+B} + \ell \sm\vphi_A$.
Note that, for all $j \in I_S$, $S_j \not\subset \sBase[-]{K_X+S+B}$
implies that
\begin{equation} \label{eq:S_j-not-in-Berg-kernel}
  S_j \not\subset \paren{\Berg_{\ell, k}}^{-1}(0) \quad \text{when }
  k\in\Nnum \text{ and } \ell \gg 0 \; .
\end{equation}

Now, for every $k \in \Nnum$, fix an $\ell := \dep\ell \gg 0$ and define
the \emph{Bergman kernel potentials} $\dep\bphi$ by
\begin{equation} \label{eq:definition-bphi_Berg}
  \dep\bphi := \dep\bphi_\Berg := \dep[\ell, k]\bphi_\Berg :=
  \dep[\ell, k]\bphi_{\Berg,A} := \frac{1}{\ell k\mu}
  \log\Berg_{\ell,k} - \frac{1}{k\mu} \sm\vphi_A \; .
\end{equation}
The integer $\ell$ is chosen such that the polar set of
$\dep[\ell,k]\bphi_{\Berg,A}$ is precisely the stable base locus of
the linear system of $k\mu\paren{K_X+S+B}+A$ (see Lemma
\ref{lem:bphi-indep-on-ell} for the existence of such $\ell$).
These $\dep\bphi$'s have only neat analytic singularities.

Choose $\ell \gg 0$ such that the Ohsawa--Takegoshi
extension theorem with respect to the potential $\ell k\mu \bphi_\tmin +
\ell \sm\vphi_A$ can be applied to obtain global sections of $\ell k
\mu \paren{K_X+S+B} + \ell A$ with prescribed value at any point on the
projective manifold $X$ outside of $\paren{\bphi_\tmin}^{-1}(-\infty)$
for every $k \geq 1$ (for example, one can use the version of the
extension theorem in \cite{Demailly_Analytic-methods}*{Thm.~13.6}, or
Example \ref{eg:extension-from-a-pt}, together
with the analytically singular approximation of psh functions
in \cite{Demailly_regularization}*{Prop.3.7}).
Following the arguments in \cite{Demailly_regularization}*{Prop.~3.1},
since $\bphi_\tmin \leq \sm\bphi_{K_X+S+B} =: \sm\bphi$, after adding
a suitable constant to $\bphi_\tmin$ if necessary
(where the constant is independent of $k$), one
obtains
\begin{equation} \label{eq:Berg_approx-ineq}
  \bphi_\tmin(z) \leq \dep[\ell,k]\bphi_\Berg (z)
  \leq \sup_{\zeta \in
    \Delta(z;r)} \paren{\sm\bphi(\zeta) + \frac{1}{k\mu}
    \sm\vphi_A(\zeta)} - \frac{1}{k \mu} \sm\vphi_A(z) +
  \frac{C - 2n\log r}{\ell k\mu} 
\end{equation}
for all $z \in X$, where $\Delta(z; r)$ is the polydisc of radius $r$
centred at $z$ in some coordinate chart, and $C$ is some constant
independent of $k$ and $r$.
It is emphasised here that the inequalities are valid under the fixed
trivialisations of $K_X+S+B$ and $A$ on each open subset in a fixed
cover of $X$.

The properties of $\dep\bphi = \dep[\ell,k]\bphi_\Berg$ necessary for
the present purpose are collected as follows:
\begin{subequations} \label{eq:potential-bphi}
  \begin{align}
    \label{eq:wt_properties-sing} 
    &~\dep{\bphi} \geq \phi_{\nu_S \cdot S} +\phi_{D_2} +\sm\vphi_\rho
    & \mathllap{\text{(from \eqref{eq:min-potential-less-sing-than-D}
      and \eqref{eq:Berg_approx-ineq})}} & ,
    \\
    \label{eq:wt_properties-bdd-uniformly-in-k}
    &~\dep\bphi \text{ is locally bounded above uniformly in $k$}
    &\text{(from \eqref{eq:Berg_approx-ineq})} & , \\
    \label{eq:wt_properties-curv}
    &~\ibddbar \dep{\bphi} \geq -\frac 1{k\mu} \ibddbar \sm\vphi_A
      = -\frac{1}{k\mu} \omega
    & \text{(from \eqref{eq:definition-bphi_Berg})} & ,
    \\
    &~\dep\bphi \text{ has only neat analytic singularities }
    & \text{(from \eqref{eq:definition-bphi_Berg})} & ,
                                                      \text{ and}
    \\
    \label{eq:S-not-in-polar_set}
    &~S_j \not\subset \paren{\dep\bphi}^{-1}(-\infty) \quad \forall~j
      \in I_S \text{ and } \forall~k\in\Nnum
    & \text{(from \eqref{eq:S_j-not-in-Berg-kernel})}& .
  \end{align}
\end{subequations}

The following lemma justifies the definition, in particular, the
existence of a suitable $\ell$.
\begin{lemma} \label{lem:bphi-indep-on-ell}
  There exists an $\ell_0 \in \Nnum$ such that, for every $k \in
  \Nnum$ and for all $\ell' \geq \ell \geq \ell_0$,
  $\dep[\ell,k]\bphi_\Berg \sim_\tlog \dep[\ell',k]\bphi_\Berg$, and the
  constants involved in $\sim_\tlog$ are independent of $k$, $\ell$
  and $\ell'$.
\end{lemma}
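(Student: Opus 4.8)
The plan is to establish the equivalent assertion that $\sup_X\abs{\dep[\ell,k]\bphi_\Berg-\dep[\ell',k]\bphi_\Berg}\le C$ for some $C$ depending on none of $k,\ell,\ell'$, valid for all $\ell'\ge\ell\ge\ell_0$: for potentials on the same line bundle, $\eta\sim_\tlog\phi$ means precisely $\sup_X\abs{\eta-\phi}<\infty$. Write $M_k:=k\mu\paren{K_X+S+B}+A$, so $\Berg_{\ell,k}$ is the Bergman kernel of $\cohgp0[X]{\ell M_k}$ for the smooth weight $e^{-\ell\paren{k\mu\sm\bphi+\sm\vphi_A}}$, and recall the diagonal (extremal) description $\Berg_{\ell,k}(z)=\sup\setd{\abs{\sigma(z)}^2 e^{-\ell\paren{k\mu\sm\bphi(z)+\sm\vphi_A(z)}}}{\sigma\in\cohgp0[X]{\ell M_k},\;\norm\sigma\le 1}$, the norm being the associated $L^2$-norm. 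Two uniform facts will be used repeatedly. First: from the right-hand inequality of \eqref{eq:Berg_approx-ineq} with a \emph{fixed} small polydisc radius $r$, together with the (uniform on the compact $X$) Lipschitz bounds on the smooth functions $\sm\bphi,\sm\vphi_A$, one gets $\dep[\ell,k]\bphi_\Berg\le\sm\bphi+c_0$ with $c_0$ independent of $\ell\ge\ell_0$ and $k$, as soon as $\ell_0$ is chosen large enough that $\tfrac{C-2n\log r}{\ell k\mu}\le 1$ for all such $\ell,k$. Second: by the curvature bound \eqref{eq:wt_properties-curv}, the weight $k\mu\dep[\ell,k]\bphi_\Berg+\sm\vphi_A$ on $M_k$ is \emph{psh}, since $\ibddbar\paren{k\mu\dep[\ell,k]\bphi_\Berg+\sm\vphi_A}\ge-\omega+\omega=0$.

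Now fix $\ell,\ell'\ge\ell_0$ and a point $z_0$ outside the (pluripolar) polar set of $\dep[\ell,k]\bphi_\Berg$, and apply the Ohsawa--Takegoshi extension theorem to $\cohgp0[X]{\ell'M_k}$ with the psh weight $e^{-\ell'\paren{k\mu\dep[\ell,k]\bphi_\Berg+\sm\vphi_A}}$ to extend a prescribed value at $z_0$. This is the same type of extension-from-a-point already used to derive \eqref{eq:Berg_approx-ineq} in the construction of the $\dep\bphi$'s; here the curvature of the auxiliary metric is only semi-positive, so its applicability with a constant $C_{\mathrm{OT}}$ independent of $k$ rests on the projectivity of $X$ and on $A$ being sufficiently ample (if necessary after a harmless $\tfrac1k A$-twist and passage to the limit, exactly as in Example \ref{eg:extension-from-a-pt}). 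One obtains $\sigma\in\cohgp0[X]{\ell'M_k}$ with $\sigma(z_0)=1$ in the fixed trivialisation and $\int_X\abs\sigma^2 e^{-\ell'\paren{k\mu\dep[\ell,k]\bphi_\Berg+\sm\vphi_A}}\le C_{\mathrm{OT}}\,e^{-\ell'\paren{k\mu\dep[\ell,k]\bphi_\Berg(z_0)+\sm\vphi_A(z_0)}}$. Using $\dep[\ell,k]\bphi_\Berg\le\sm\bphi+c_0$ to replace $e^{-\ell'k\mu\dep[\ell,k]\bphi_\Berg}$ by $e^{-\ell'k\mu\sm\bphi}$ at the cost of the factor $e^{\ell'k\mu c_0}$, the extremal description of $\Berg_{\ell',k}$ gives $\Berg_{\ell',k}(z_0)\ge\abs{\sigma(z_0)}^2 e^{-\ell'\paren{k\mu\sm\bphi(z_0)+\sm\vphi_A(z_0)}}/\norm\sigma^2\ge C_{\mathrm{OT}}^{-1}e^{-\ell'k\mu c_0}e^{\ell'k\mu\paren{\dep[\ell,k]\bphi_\Berg(z_0)-\sm\bphi(z_0)}}$; taking $\tfrac1{\ell'k\mu}\log$ and subtracting $\tfrac1{k\mu}\sm\vphi_A(z_0)$ yields $\dep[\ell',k]\bphi_\Berg(z_0)\ge\dep[\ell,k]\bphi_\Berg(z_0)-C$ with $C:=c_0-\inf_X\sm\bphi-\inf_X\sm\vphi_A+\log C_{\mathrm{OT}}$, uniform in $k,\ell,\ell'$. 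At points where $\dep[\ell,k]\bphi_\Berg=-\infty$ the inequality is trivial, so it holds on all of $X$.

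Interchanging the roles of $\ell$ and $\ell'$ — legitimate because both are $\ge\ell_0$, so both the psh-ness of $k\mu\dep[\ell',k]\bphi_\Berg+\sm\vphi_A$ and the bound $\dep[\ell',k]\bphi_\Berg\le\sm\bphi+c_0$ hold symmetrically — gives $\dep[\ell,k]\bphi_\Berg\ge\dep[\ell',k]\bphi_\Berg-C$ on $X$ as well. The two inequalities together say exactly $e^{-C}e^{\dep[\ell',k]\bphi_\Berg}\le e^{\dep[\ell,k]\bphi_\Berg}\le e^{C}e^{\dep[\ell',k]\bphi_\Berg}$, i.e.\ $\dep[\ell,k]\bphi_\Berg\sim_\tlog\dep[\ell',k]\bphi_\Berg$ with $e^{\pm C}$ as the constants, independent of $k,\ell,\ell'$, which is the assertion. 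The one delicate point — and the expected main obstacle — is the existence of a \emph{single} $\ell_0$ that works uniformly in $k$, both for the uniformity of $c_0$ above and for the uniform-in-$k$ applicability of the (semi-positively weighted) extension to $\cohgp0[X]{\ell'M_k}$; this is what the hypothesis that $A$ is sufficiently ample is for, together with the observation that $M_k=k\mu\paren{K_X+S+B}+A$ is $\fieldQ$-linearly equivalent to $k\mu D+A$ with $D\ge0$ effective (with snc support) and $A$ globally generated — the same structural features already exploited when $\dep\bphi$ was defined. All remaining steps are routine manipulations of Bergman kernels and of the inequality \eqref{eq:Berg_approx-ineq}.
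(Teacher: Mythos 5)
Your architecture genuinely differs from the paper's in its second half. The paper proves only one of the two inequalities by an Ohsawa--Takegoshi extension from a point (extending a prescribed value into $\cohgp0[X]{\ell k\mu\paren{K_X+S+B}+\ell A}$ against the weight $\ell k\mu \dep[\ell',k]\bphi_\Berg+\ell\sm\vphi_A$, which yields $\dep[\ell',k]\bphi_\Berg\lesssim_\tlog\dep[\ell,k]\bphi_\Berg$), and obtains the reverse inequality by the power trick $h\mapsto h^m$ with the mean-value inequality, i.e.\ \eqref{eq:pf-Berg-potential}, followed by chaining through $m\ell>\ell'$. You instead run the same extension-from-a-point argument twice with the roles of $\ell$ and $\ell'$ swapped; nothing in that step uses $\ell'\geq\ell$, so, \emph{granting the extension step}, this is a legitimate simplification that makes the power-trick half of the paper's proof unnecessary. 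Your bookkeeping (uniform upper bound $\dep[\ell,k]\bphi_\Berg\leq\sm\bphi+c_0$ from \eqref{eq:Berg_approx-ineq}, and the fact that multiplicative factors $e^{\BigO(\ell k\mu)}$ at the level of norms become uniform additive constants after taking $\tfrac1{\ell k\mu}\log$) is essentially the paper's, up to a convention mismatch: the paper's $\Berg_{\ell,k}$ carries no weight factor on the diagonal (it satisfies $\abs{h(x)}^2\leq\Berg_{\ell,k}(x)\norm h^2$, cf.\ \eqref{eq:definition-bphi_Berg} and the proof of \eqref{eq:pf-Berg-potential}), so your final ``take $\tfrac1{\ell'k\mu}\log$ and subtract $\tfrac1{k\mu}\sm\vphi_A$'' has to be redone in that convention; this only shuffles constants.

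The gap is exactly at the step you flag and defer. Your weight on $\ell'k\mu\paren{K_X+S+B}+\ell'A$ is $\ell'\paren{k\mu\dep[\ell,k]\bphi_\Berg+\sm\vphi_A}=\tfrac{\ell'}{\ell}\log\Berg_{\ell,k}$: by \eqref{eq:definition-bphi_Berg} the positivity of $\ell'\sm\vphi_A$ is exactly cancelled against the curvature lower bound \eqref{eq:wt_properties-curv}, so the weight is, as you say, merely psh. But extension from a point is not available for a merely psh weight: writing the bundle as $K_X+L'$, one must absorb $-\ibddbar\sm\vphi_{K_X}$ together with the negative part of $\ibddbar\paren{\paren{1+\beta}\theta\paren{\abs{\vect z}^2}\log\abs{\vect z}^{2n}}$ coming from the cut-off, uniformly in the point and in $k,\ell,\ell'$, and your proposal identifies no source of this strict positivity. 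The remedies you sketch do not supply it: the ampleness of $A$ is precisely what got consumed in the cancellation above, and a ``$\tfrac1kA$-twist with passage to the limit'' changes the bundle, so the extended section no longer lies in $\cohgp0[X]{\ell'k\mu\paren{K_X+S+B}+\ell'A}$ and cannot be inserted into the extremal characterisation of $\Berg_{\ell',k}$; no limiting procedure returns it there. This uniformity/positivity verification is not a routine afterthought --- it is the actual content of ``there exists a single $\ell_0$'', and it is what the opening of the paper's proof is devoted to (the choice of $\ell_0$ making the curvature of $\ell k\mu\dep[\ell',k]\bphi_\Berg+\ell\sm\vphi_A-\sm\vphi_{K_X}+\paren{1+\beta}\theta\paren{\abs{\vect z}^2}\log\abs{\vect z}^{2n}$ nonnegative uniformly). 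Note that even there the surplus has to be extracted with care, since a blunt substitution of \eqref{eq:definition-bphi_Berg} makes the $\sm\vphi_A$-terms cancel; in your swapped version you must carry out the analogous curvature accounting explicitly (with constants independent of $k$, $\ell$, $\ell'$ and the centre of the polydisc), and until that is done the proposal does not yet prove the lemma.
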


\begin{proof}
  The integer $\ell_0 >0$ is chosen
  sufficiently large such that, for any integers $\ell' > \ell \geq
  \ell_0$ and for any $p \in X
  \setminus \paren{\dep[\ell',k]\bphi_\Berg}^{-1}\paren{-\infty}$ being the
  centre of a polydisc $(\Delta,\vect z)$ with coordinates $\vect z$
  in some coordinate chart, one has
  \begin{align*}
    &~\ibddbar \paren{\ell k \mu \dep[\ell',k]\bphi_\Berg
      +\ell\sm\vphi_A -\sm\vphi_{K_X} +\paren{1+\beta}
      \theta\paren{\abs{\vect z}^2} \log\abs{\vect z}^{2n}} \\
    \overset{\text{by \eqref{eq:wt_properties-curv}}}\geq
    &~\paren{\ell -\frac{\ell k\mu}{\ell' k\mu}} \ibddbar\sm\vphi_A
      +\ibddbar\lparpht{-\sm\vphi_{K_X}
      +\paren{1+\beta}\theta\paren{\abs{\vect z}^2} \log\abs{\vect
      z}^{2n}}%
      -\sm\vphi_{K_X} +\paren{1+\beta}
      \underbrace{\theta\paren{\abs{\vect z}^2} \log\abs{\vect
      z}^{2n}}_{=:~\psi}
      \rparpht{-\sm\vphi_{K_X}
      +\paren{1+\beta}\theta\paren{\abs{\vect z}^2} \log\abs{\vect
      z}^{2n}}%
      \; \geq 0
  \end{align*}
  for every $\beta \in [0,\delta]$ for some constant $\delta >0$.
  Here $\theta \colon [0,1] \to [0,1]$ is a smooth cut-off function
  which is identically equal to $1$ on a neighbourhood of $0$ and
  vanishes outside of a larger neighbourhood.
  It can be seen that $\ell_0$ can be chosen independent of $k$, $\ell$ and
  $\ell'$, even the point $p$ (as $X$ is compact).
  
  With almost the same proof as in the proof of the first inequality
  in \eqref{eq:Berg_approx-ineq}, namely, applying the
  Ohsawa--Takegoshi extension theorem (or Example
  \ref{eg:extension-from-a-pt}) with respect to the potential
  $\ell k\mu \dep[\ell',k]\bphi_\Berg +\ell\sm\vphi_A$ to obtain a global
  section $f$ of $\ell k\mu\paren{K_X+S+B} +\ell A$ with prescribed value
  at any point $p \in
  X\setminus \paren{\dep[\ell',k]\bphi_\Berg}^{-1}\paren{-\infty}$ such that
  \begin{align*}
    \norm f_{\ell, k}^2
    &:= \int_X \abs f^2 e^{-\ell k\mu \sm\bphi
      -\ell\sm\vphi_A +\sm\vphi_{K_X}} \\
    &\lesssim \int_X
      \frac{\abs f^2}{\abs{\psi} \paren{\paren{\log\abs{\delta\psi}}^2 +1}}
      e^{-\ell k\mu \dep[\ell',k]\bphi_\Berg
      -\ell\sm\vphi_A +\sm\vphi_{K_X} -\psi} \\
    &\lesssim \paren{\abs f^2 e^{-\ell k\mu \dep[\ell',k]\bphi_\Berg
      -\ell\sm\vphi_A}}(p) \\
    \imply \quad
    e^{\ell k\mu \dep[\ell',k]\bphi_\Berg}(p)
    &\lesssim \paren{\frac{\abs f^2}{\norm f_{\ell,k}^2}
      e^{-\ell\sm\vphi_A}}(p)
      \leq \paren{\Berg_{\ell,k} e^{-\ell\sm\vphi_A}}(p) \; ,
  \end{align*}
  where constant in the first $\lesssim$ is independent of $k$,
  $\ell$ and $\ell'$ thanks to the fact that $\dep[\ell',k]\bphi_\Berg$ is
  bounded above uniformly in $k$ and $\ell'$ (see
  \eqref{eq:Berg_approx-ineq}) and the use of inequality
  \eqref{eq:xlogx-estimate} to estimate the terms with $\psi$, while
  the constant in the second $\lesssim$ is independent of $k$,
  $\ell$ and $\ell'$ thanks to the universality of the constant in the
  Ohsawa--Takegoshi extension.
  As a result, one sees that
  \begin{equation} \label{eq:pf-Berg-with-OT} \tag{$*$}
    \dep[\ell',k]\bphi_\Berg \lesssim_\tlog \dep[\ell,k]\bphi_\Berg \; ,
  \end{equation}
  where the constant involved in $\lesssim_\tlog$ is independent of
  $k$, $\ell$ and $\ell'$.

  For the reverse inequality, it follows easily by means of
  mean-value-inequality that, for any $m \in \Nnum$, 
  \begin{equation} \label{eq:pf-Berg-potential} \tag{$**$}
    \dep[\ell, k]\bphi_\Berg \lesssim_\tlog \dep[m\ell, k]\bphi_\Berg
  \end{equation}
  with the constant in $\lesssim_\tlog$ being independent of $k$,
  $\ell$ and $m$.
  Indeed, 
  for any fixed $x \in X$, take $h \in \cohgp 0[X]{\ell
    k\mu\paren{K_X+S+B}+\ell A}$ with $\norm h_{\ell,k}^2 := \norm
  h_{\ell k\mu \sm\bphi + \ell \sm\vphi_A}^2 = 1$ and $\abs{h(x)}^2 =
  \Berg_{\ell,k}(x)$.
  Then, one has
  \begin{equation*} 
    \begin{aligned}
      \abs{h^m(x)}^2
      &\leq \Berg_{m\ell, k}(x) \norm{h^m}_{m\ell, k}^2
      \leq \Berg_{m\ell, k}(x) \norm{h}_{\ell, k}^2 \paren{\sup_X
        \abs{h}^2 e^{-\ell k\mu \sm\bphi - \ell \sm\vphi_A}}^{m-1}
      \\
      &\overset{\mathclap{\text{mean-value-ineq.}}}{\leq} \qquad\;
      \Berg_{m\ell,k}(x) \paren{\frac C{\paren{\pi r^2}^n} 
        e^{\sup_{X} \paren{\ell k\mu\sm\bphi + \ell\sm\vphi_A} -
          \inf_X \paren{\ell k\mu\sm\bphi + \ell\sm\vphi_A}} }^{m-1}
    \end{aligned}
  \end{equation*}
  for some constant $C > 0$ and small $r > 0$ which are independent of
  $k$, $\ell$ and $m$.
  Note that $\sup_{X} \paren{\ell k\mu\sm\bphi + \ell\sm\vphi_A}$
  means the maximum of $\sup_{V_\gamma} \paren{\ell k\mu\sm\bphi +
    \ell\sm\vphi_A}$ among all $V_\gamma$ in a finite cover
  $\set{V_\gamma}_\gamma$ (and the same interpretation applies to the
  term with $\inf_X$).
  The claim for $\lesssim_\tlog$ follows after applying $\frac{1}{m\ell
    k\mu} \log$ on and subtracting $\frac{1}{k\mu} \sm\vphi_A$ from
  both sides.

  When $m \ell > \ell'$, one can apply \eqref{eq:pf-Berg-with-OT} to
  obtain $\dep[\ell,k]\bphi_\Berg \lesssim_\tlog
  \dep[m\ell,k]\bphi_\Berg \lesssim_\tlog \dep[\ell',k]\bphi_\Berg$.
  This completes the proof.
\end{proof}

The choice of the ample divisor $A$ does not play a role as soon as
only the asymptotic behaviour is concerned.
\begin{lemma} \label{lem:bphi-indep-of-A-and-lim-k-exists}
  Let $A$ and $A'$ be two arbitrary ample divisors.
  It follows that
  \begin{equation*}
    \dep[k'']\bphi_{\Berg,A'} \lesssim_\tlog \dep[k']\bphi_{\Berg,A}
    \lesssim_\tlog \dep\bphi_{\Berg,A'}
  \end{equation*}
  for any $k,k',k'' \in \Nnum$ whenever $k'A' - kA$ and $k''A - k'A'$
  are ample, where the constants involved in $\lesssim_\tlog$ are
  independent of $k$, $k'$ and $k''$.

  In particular, when $A =A'$, it follows that $\dep\bphi
  =\dep\bphi_{\Berg,A}$ is getting more singular as $k$ increases, i.e.
  \begin{equation*}
    \dep[k'']\bphi \lesssim_\tlog \dep[k']\bphi
    \lesssim_\tlog \dep\bphi
  \end{equation*}
  for any $k \leq k' \leq k''$.
\end{lemma}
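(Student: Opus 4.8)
The plan is to adapt the proof of Lemma~\ref{lem:bphi-indep-on-ell}; in fact both asserted inequalities turn out to be instances of a single construction which generalises \eqref{eq:pf-Berg-potential} by allowing the ample divisor to change. Write $\{A_s,A_t\}=\{A,A'\}$ and suppose $k_sA_s-k_tA_t$ is an ample $\fieldQ$-divisor. The first inequality is the case $(A_s,k_s,A_t,k_t)=(A',k'',A,k')$, which uses that $k''A-k'A'$ is ample; the second is $(A_s,k_s,A_t,k_t)=(A,k',A',k)$, which uses that $k'A'-kA$ is ample. The idea is that a section realising $\Berg_{\ell,k_s,A_s}$ at a generic point $p$, raised to a suitable power and multiplied by a section of a large multiple of $k_sA_s-k_tA_t$, lands in exactly the linear system computing a Bergman kernel for $A_t$ at level $k_t$ (with a larger auxiliary integer in place of $\ell$); estimating its $L^2$ norm by the mean-value inequality, just as in the derivation of \eqref{eq:pf-Berg-potential}, will give $\dep[k_s]\bphi_{\Berg,A_s}\lesssim_\tlog\dep[k_t]\bphi_{\Berg,A_t}$. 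Since $A=A'$ forces the two $\fieldQ$-divisors above to be $(k'-k)A$ and $(k''-k')A$, the ``in particular'' statement is then the special case. No new appeal to the Ohsawa--Takegoshi theorem is required beyond what already entered Lemma~\ref{lem:bphi-indep-on-ell}.

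Concretely, first I would fix $p\in X$ outside the polar sets of all Bergman potentials in play (a generic condition), and choose $\ell$ large and divisible enough — permissible, up to $\sim_\tlog$ with constants independent of the $k$'s, by Lemma~\ref{lem:bphi-indep-on-ell}. Pick $m''\in\Nnum$ with $m''\ell\,(k_sA_s-k_tA_t)$ base-point-free, set $m':=k_tm''$, and fix the natural smooth metric potential $\phi_t:=m''\ell\,(k_s\sm\vphi_{A_s}-k_t\sm\vphi_{A_t})$ on $m''\ell\,(k_sA_s-k_tA_t)$. Let $f\in\cohgp 0[X]{\ell k_s\mu(K_X+S+B)+\ell A_s}$ realise $\Berg_{\ell,k_s,A_s}(p)$ with unit norm, and let $t$ be a section of $m''\ell\,(k_sA_s-k_tA_t)$ with $t(p)\neq0$, chosen extremally for $\phi_t$ at $p$. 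Then $g:=f^{m'}\otimes t$ is a section of $(m''\ell k_s)\,k_t\mu(K_X+S+B)+(m''\ell k_s)A_t$, i.e.\ of the space computing $\Berg_{m''\ell k_s,\,k_t,A_t}$. Pulling the $m'-1$ superfluous copies of $\abs f^2e^{-\ell k_s\mu\sm\bphi-\ell\sm\vphi_{A_s}}$ out of $\norm g^2$ by the mean-value inequality, comparing with $\abs{g(p)}^2$, using $\Berg_{m''\ell k_s,k_t,A_t}(p)\ge\abs{g(p)}^2/\norm g^2$ and then applying $\tfrac1{m''\ell k_sk_t\mu}\log$ together with \eqref{eq:definition-bphi_Berg} produces the pointwise bound $\dep[k_s]\bphi_{\Berg,A_s}(p)\le\dep[k_t]\bphi_{\Berg,A_t}(p)+(\text{error})$.

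The hard part will be checking that the error term is bounded by a constant independent of $k,k',k''$. As in Lemma~\ref{lem:bphi-indep-on-ell}, the point is that every power of $\ell$ and every auxiliary multiple $m',m''$ must cancel: the exponent $m'-1=k_tm''-1$ divided by $m''\ell k_sk_t\mu$ is at most $\tfrac1{\ell k_s\mu}$, so a term such as $\tfrac{m'-1}{m''\ell k_sk_t\mu}$ times the oscillation over $X$ of $\ell k_s\mu\sm\bphi+\ell\sm\vphi_{A_s}$ collapses to the oscillation of $\sm\bphi$ plus $\tfrac1{k_s\mu}$ times the oscillation of $\sm\vphi_{A_s}$, while the terms of the form $\tfrac1{m''\ell k_sk_t\mu}\log(\text{fixed constant})$ tend to $0$ as $\ell\to\infty$. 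The remaining contributions, coming from \eqref{eq:definition-bphi_Berg} and from $\phi_t$, reduce after the same rescaling to expressions of the form $\tfrac1{k_s\mu}\sm\vphi_{A_s}(p)$ and $\tfrac1{k_t\mu}\sm\vphi_{A_t}(p)$, together with an $O\bigl(\log(m''\ell)/(m''\ell)\bigr)$-term issuing from the value at $p$ of the $L^\infty$-Bergman function of $m''\ell\,(k_sA_s-k_tA_t)$, which again vanishes as $\ell\to\infty$. Since $k_s,k_t\ge1$ and $\sm\bphi,\sm\vphi_A,\sm\vphi_{A'}$ are fixed, the residual error is then bounded by a constant depending only on $X$ and these fixed smooth potentials — which is the required uniformity. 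Nothing deeper than this bookkeeping is involved.
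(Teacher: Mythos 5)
Your overall scheme (multiply an extremal section for one Bergman kernel by a section of the ample difference, then normalise as in Lemma \ref{lem:bphi-indep-on-ell}) is the right one and is close to what the paper does, but as written there are two problems. The first is bookkeeping: if $f\in H^0\bigl(X,\ell k_s\mu(K_X+S+B)+\ell A_s\bigr)$ and $m'=k_tm''$, then $g=f^{m'}\otimes t$ lies in the space computing $\Berg_{m''\ell k_s,\,k_t,A_t}$ only when $t$ is a section of $m''\ell\,(k_sA_t-k_tA_s)$, \emph{not} of $m''\ell\,(k_sA_s-k_tA_t)$ as you state (and correspondingly $\phi_t$ should be $m''\ell\,(k_s\sm\vphi_{A_t}-k_t\sm\vphi_{A_s})$, which is what makes the $\tfrac1{k_s\mu}\sm\vphi_{A_s}$ and $\tfrac1{k_t\mu}\sm\vphi_{A_t}$ terms cancel at the end). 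Your own instantiations (``uses that $k''A-k'A'$ is ample'', ``uses that $k'A'-kA$ is ample'') match the corrected divisor, so this is likely a slip, but with the bundle you wrote the central membership claim for $g$ simply fails.

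The second problem is the real gap, because the entire content of the lemma is the uniformity of the constants in $k,k',k''$. After dividing by $m''\ell k_sk_t\mu$, the contribution of $t$ is $\tfrac{1}{m''\ell k_sk_t\mu}\log\bigl(\sup_X\abs t^2e^{-\phi_t}\big/\abs{t(p)}^2e^{-\phi_t(p)}\bigr)$ for the best available $t$, and with $\phi_t$ the \emph{fixed} difference of the smooth potentials this is not $O\bigl(\log(m''\ell)/(m''\ell)\bigr)$: the difference representative need not be psh (only the class $k_sA_t-k_tA_s$ is ample), so as $\ell\to\infty$ this quantity is governed by the gap at $p$ between $\phi_t$ and its psh envelope, which need not vanish, and nothing in your argument bounds it uniformly as $k_s,k_t$ vary and the class $k_sA_t-k_tA_s$ approaches the boundary of the ample cone. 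The paper's remedy is precisely to make the weight on the difference bundle self-normalising: it chooses the smooth potentials so that $k'\sm\vphi_{A'}=k\sm\vphi_A+\log\sum_i\abs{s_i}^2$ for a basis $\{s_i\}$ of $H^0\bigl(X,\ell k'A'-\ell kA\bigr)$ (a change of reference potential only shifts $\bphi^{(k)}_{\Berg,\cdot}$ by a uniformly bounded amount), so that the ratio above is at most the dimension $O\bigl(\ell^n(k+k')^n\bigr)$ and the error is uniformly small after taking logarithms. With that replacement your argument closes; note also that the paper avoids the power-plus-mean-value step entirely by using the identity $\bphi^{(k')}_{\Berg,A}$ with auxiliary integer $\ell k$ $=\bphi^{(k'k)}_{\Berg,kA}$ with auxiliary integer $\ell$, comparing the two kernels at the common level $k'k$ after a single multiplication and basis summation, and then invoking Lemma \ref{lem:bphi-indep-on-ell} to adjust the auxiliary integers, whereas your route re-runs the mean-value computation of \eqref{eq:pf-Berg-potential}; both are acceptable once the two repairs above are made.
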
 

\begin{proof}  
  Take $\ell \gg 0$ such that $\ell k'A' - \ell kA$ is globally
  generated, and choose $\sm\vphi_{A}$ and $\sm\vphi_{A'}$ such that
  $k'\sm\vphi_{A'} = k\sm\vphi_A + \sm\vphi_{k'A' - k A}$, where
  $\sm\vphi_{k'A' - k A}$ is constructed from a basis of $\cohgp
  0[X]{\ell k'A' - \ell kA}$.
  It follows that $\Berg_{\ell, k'k, kA} \abs{s_{\ell k'A' - \ell kA}}^2 \leq
    \Berg_{\ell, k'k, k'A'}$
  for all $s_{\ell k'A' - \ell kA}$ in the basis of $\cohgp 0[X]{\ell
    k'A' - \ell kA}$.
  One gets, after summing up the inequalities for the whole basis of
  $\cohgp 0[X]{\ell k'A' - \ell kA}$ (with dimension being bounded
  above by $\BigO\paren{\ell^n\paren{k+k'}^n}$),
  \begin{equation*}
    \dep[\ell k, k']\bphi_{\Berg, A} = \dep[\ell, k'k]\bphi_{\Berg, kA}
    \lesssim_\tlog \dep[\ell, k'k]\bphi_{\Berg, k'A'} = \dep[\ell k',
    k]\bphi_{\Berg,A'} \; , 
  \end{equation*}
  hence the inequality on the right-hand-side in the claim after
  taking Lemma \ref{lem:bphi-indep-on-ell} into account, with the
  constant involved in $\lesssim_\tlog$ being independent of $\ell$,
  $k$ and $k'$.
  The other inequality follows by interchanging the role of $A$ and
  $A'$.
\end{proof}

By passing to a subsequence when necessary, one can assume that
$\seq{\dep\bphi}_{k\in \Nnum}$ converges in $\Lloc$ (thanks to
\eqref{eq:wt_properties-bdd-uniformly-in-k} and the fact that
$\dep\bphi \geq \bphi_\tmin \not\equiv -\infty$) to a psh (thanks to
\eqref{eq:wt_properties-curv}) potential $\dep[\infty]\bphi$,
which is given pointwisely by the upper regularised limit
\begin{equation*}
  \dep[\infty]\bphi(z) 
  := \reglim_{k\tendsto + \infty} \dep\bphi(z) 
  :=  \varlimsup_{\substack{\zeta \tendsto z \\  \text{($\zeta = z$ allowed)}}} \lim_{k
    \tendsto + \infty} \dep\bphi(\zeta)  \quad\text{for all } z \in X \; .
\end{equation*}
By the minimality of $\bphi_\tmin$, it follows that $\dep[\infty]\bphi \sim_\tlog
\bphi_\tmin$.
Indeed, it follows from \eqref{eq:Berg_approx-ineq} that, by letting
$r \to 0^+$ (after $k \to \infty$), one has $\bphi_\tmin \leq
\dep[\infty]\bphi \leq \sm\bphi$, and thus $\dep[\infty]\bphi =\bphi_\tmin$
by the minimality of $\bphi_\tmin$.


\subsection{The choice of $\dep\vphi_L$ and $\dep\psi$}
\label{sec:construction-vphi_L-psi}


Set $\numax := \max_{j \in I_S} \nu_j$.
For each $k \in \Nnum$, define the \emph{global function} $\dep{\psi}$
on $X$ and the potential $\dep\vphi_L$ on $L$ such that
\begin{align}
  \label{eq:psi}
  \dep{\psi}
  &:= \frac 1\numax \paren{\phi_{\nu_S \cdot S} +\phi_{D_2}
    +\sm\vphi_\rho -\dep{\bphi}} \quad\text{and} \\
  \label{eq:wt_on_L}
  \dep\vphi_L +\dep\psi
  &:= \paren{\mu - 1} \dep{\bphi} +\phi_S +\phi_B \; .
\end{align}
For the convenience of readers who would like to compare the current
choices with those in \cite{DHP}, define also $\vphi_{\tau_k}$,
$\vphi_{F, k}$ and
$\psi_{\nu_S \cdot S, \: k}$ (which may not follow the convention in
Notation \ref{notation:potentials}) by
\begin{equation*}
  \vphi_{\tau_k} := \dep\bphi \; , \quad
  \vphi_{F, k} := \paren{\mu - 1} \dep{\bphi} + \phi_B
                \quad\text{and}\quad 
  \psi_{\nu_S \cdot S,\: k} := \dep{\bphi} -\phi_{D_2} -\sm\vphi_\rho \; .
\end{equation*}

It follows from \eqref{eq:wt_properties-sing}
that $\dep\psi \leq 0$ on $X$, so, for any $m \leq m'$, it follows
that
\begin{equation*}
  \mtidlof{\dep\vphi_L + m' \dep\psi} \subset \mtidlof{\dep\vphi_L +
    m \dep\psi} \; ,
\end{equation*}
i.e.~the family $\set{\mtidlof{\dep\vphi_L + m \dep\psi}}_{m \in
  \fieldR}$ is decreasing.
As
\begin{equation} \label{eq:vphi_L+m-psi}
  \begin{aligned}
    \dep\vphi_L + m \dep\psi = \paren{\mu - 1 + \frac{1-m}{\numax}}
    \dep\bphi - \frac{1-m}{\numax} &\paren{\phi_{D_2} + \sm\vphi_\rho} \\
    &+ \phi_S - \frac{1-m}{\numax} \phi_{\nu_S\cdot S} + \phi_B \;
    ,
  \end{aligned} 
\end{equation}
it can be seen that $m = 1$ is a jumping number of the family by
considering the coefficients of $\phi_{S_j}$'s, after
taking \eqref{eq:S-not-in-polar_set} into account.
Since the coefficient of $\dep\bphi$ is decreasing as $m$ grows and
that of $\phi_{D_2}$ is negative as $m$ varies within $[0,1)$,
the decreasing family has to remain unchanged for $m \in [0,1)$.
In the context of Theorem
\ref{thm:ext-from-lc-with-estimate-codim-1-case}, one has $m_0 = 0$
and $m_1 = 1$ for all $k \in \Nnum$.
The ideal $\Ann_{\holo_X} \paren{ \frac{\mtidlof{\dep\vphi_L}}
  {\mtidlof{\dep\vphi_L + \dep\psi}} }$ obviously defines the reduced
subvariety $S$, which is already an snc divisor.

Each $\dep\vphi_L + \dep\psi$ is locally bounded from above uniformly
in $k$ since so is $\dep\bphi$ (see \eqref{eq:wt_on_L}).

Applying $\ibddbar$ to \eqref{eq:vphi_L+m-psi} and putting $m = 1 +\beta
:= 1 + \numax \lambda$, where $\lambda \in [0, \mu - 1]$, it follows from the
\textfr{Poincaré--Lelong} formula $\currInt E = \ibddbar \phi_E$ that
\begin{equation} \label{eq:curvature-for-final-step}
  \begin{aligned}
    \ibddbar \dep\vphi_L + \paren{1+\beta} \ibddbar \dep{\psi}
    &=~\paren{\mu - 1 - \lambda} \ibddbar \dep{\bphi} + \lambda
    \currInt{D_2 +\nu_S \cdot S}
    + \currInt{S +B} \\
    &\overset{\mathclap{\text{by
          \eqref{eq:wt_properties-curv}}}}{\geq} \quad\;\;
    -\frac{1}{k} \omega \quad \text{on }~X \; .
  \end{aligned}
\end{equation}
Setting $\delta_0 := (\mu - 1)\numax$ such that the above inequality
holds true when $\beta$ varies within $[0,\delta_0]$, this gives the
curvature assumption \ref{item:curv-cond-seq} in Theorem
\ref{thm:extension-with-seq-of-potentials}.
As $\delta_0$ is independent of $k$ and $\dep\psi$'s are bounded above
uniformly in $k$, the normalisation assumption
\ref{item:normalisation-seq} in Theorem
\ref{thm:extension-with-seq-of-potentials} can be made satisfied by
adding a suitable constant (independent of $k$) to each $\dep\psi$.

It remains to verify the $L^2$-ness of the given section to be
extended with respect to the $1$-lc-measure under the above choice of
metrics in order to invoke Theorem
\ref{thm:extension-with-seq-of-potentials}.


\subsection{The main technical lemma}
\label{sec:sup-norm-estimate-from-L2-norm-with-denom}


\begin{lemma} \label{lem:sup-norm-bdd-from-L2-with-denom}

  Suppose that $\delta > 0$ is a constant independent of $k$ and $U$
  is a section in $\cohgp 0[X]{k\mu \paren{K_X +S +B} +A}$ such that 
  \begin{equation*}
    \norm \holU_{\setE,s}^{\frac{2}{k\mu}\paren{1+\delta}}
    :=\int_X \frac{\abs U^{\frac
        2{k\mu} \paren{1+\delta}}}{\abs{\dep\psi}^{s}} e^{-\delta \dep\bphi
      -\phi_S-\phi_B -\frac 1{k\mu}\paren{1+\delta} \sm\vphi_A} \leq M
  \end{equation*}
  for some numbers $s > 0$ and $M > 0$.
  Then, one has
  \begin{equation*}
    \int_X \abs U^{\frac 1{k\mu}\paren{1+\delta}}
    e^{-\frac 12\paren{1+\delta} \paren{\sm\bphi +\frac 1{k\mu}
        \sm\vphi_A}} \:d\vol_{X,\omega}
    \lesssim
    \paren{M \int_X \abs{\dep\psi}^s \: d\vol_{X,\omega}}^{\frac 12}
    \; ,
  \end{equation*}
  where the constant involved in $\lesssim$ is independent of $k$.
  This in turn implies that
  \begin{equation*}
    \abs U^{\frac 2{k\mu}\paren{1+\delta}}
    e^{-\paren{1+\delta} \paren{\sm\bphi +\frac 1{k\mu}
        \sm\vphi_A}}
    \lesssim M \qquad\text{on }X \; ,
  \end{equation*}
  where the constant involved in $\lesssim$ is independent of $k$.
\end{lemma}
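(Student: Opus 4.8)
The statement has two parts: an $L^1$-type estimate and a sup-norm estimate, the latter deduced from the former. The plan is to prove the $L^1$ estimate by Hölder's inequality (this is the point of the lemma — replacing the sup-norm estimates of \cite{DHP} by $L^p$ estimates), and then to upgrade to a sup-norm bound using sub-mean-value / Harnack-type reasoning for plurisubharmonic functions.

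\emph{Step 1: the $L^1$ estimate via Hölder.} First I would write the integrand $\abs U^{\frac 1{k\mu}(1+\delta)} e^{-\frac 12(1+\delta)(\sm\bphi+\frac1{k\mu}\sm\vphi_A)}$ as a product of two factors designed so that one of them, raised to an appropriate exponent, reproduces the weighted integrand appearing in $\norm\holU_{\setE,s}^{\frac2{k\mu}(1+\delta)}$, and the other absorbs the extra weight. Concretely, split
\[
  \abs U^{\frac 1{k\mu}(1+\delta)} e^{-\frac12(1+\delta)(\sm\bphi+\frac1{k\mu}\sm\vphi_A)}
  = \left(\frac{\abs U^{\frac2{k\mu}(1+\delta)}}{\abs{\dep\psi}^s}\,e^{-\delta\dep\bphi-\phi_S-\phi_B-\frac1{k\mu}(1+\delta)\sm\vphi_A}\right)^{\!1/2}
  \cdot \left(\abs{\dep\psi}^s\, e^{\delta\dep\bphi+\phi_S+\phi_B}\, e^{-(1+\delta)\sm\bphi}\right)^{\!1/2},
\]
where one checks by collecting exponents of $\abs U$, $\sm\vphi_A$, $\sm\bphi$, $\dep\bphi$, $\phi_S$, $\phi_B$ and $\abs{\dep\psi}$ that the product is exactly the desired integrand. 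Applying the Cauchy--Schwarz inequality, the integral is bounded by $M^{1/2}$ times $\big(\int_X \abs{\dep\psi}^s e^{\delta\dep\bphi+\phi_S+\phi_B-(1+\delta)\sm\bphi}\,d\vol_{X,\omega}\big)^{1/2}$. Now $\phi_S, \phi_B \le 0$ and $\sm\bphi \le 0$ by the normalisation in Section \ref{sec:basic-setup-plt}, while $\dep\bphi$ is locally bounded above \emph{uniformly in $k$} by \eqref{eq:wt_properties-bdd-uniformly-in-k}; hence $e^{\delta\dep\bphi+\phi_S+\phi_B-(1+\delta)\sm\bphi}$ is bounded above by a constant independent of $k$. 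This yields the first displayed inequality, with the implied constant depending only on $\delta$, $X$, $\omega$ and the fixed smooth data — but \emph{not} on $k$.

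\emph{Step 2: from $L^1$ to sup-norm.} The function $g_k := \abs U^{\frac 2{k\mu}(1+\delta)} e^{-(1+\delta)(\sm\bphi+\frac1{k\mu}\sm\vphi_A)}$ is, up to the smooth factor $e^{-(1+\delta)\sm\bphi}$ which is bounded above and below by positive constants, equal to $e^{(1+\delta)\,(\frac 1{k\mu}\log\abs U^2 - \frac1{k\mu}\sm\vphi_A)}$, and $\frac1{k\mu}\log\abs U^2$ is (the local representative of) the potential of a singular metric on $\frac1{k\mu}$ times the divisor, hence $\frac1{k\mu}\log\abs U^2 - \frac1{k\mu}\sm\vphi_A$ differs from a psh function by a smooth function with curvature $\ge -\frac1{k\mu}\omega$; more simply $\abs U^2 e^{-\text{(smooth)}}$ is the exponential of a psh function on each trivialising chart after absorbing the transition functions. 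Thus $g_k^{1/(1+\delta)}$ (equivalently $g_k$, since $t\mapsto t^{1+\delta}$ is increasing and the relevant quantities are controlled) satisfies a sub-mean-value inequality: $g_k(x) \lesssim r^{-2n}\int_{\Delta(x;r)} g_k\, d\lambda$ for a fixed small $r$ and a constant independent of $k$ (the uniform curvature lower bound $-\frac1{k\mu}\omega \ge -\omega$ is what makes the constant $k$-independent, via \cite{Demailly}*{Ch.~I} as in Lemma \ref{lem:bphi-indep-on-ell}). Since by Step 1 $\int_X g_k^{1/(1+\delta)}\,d\vol_{X,\omega}$, hence $\int_X g_k\, d\vol_{X,\omega}$ up to comparing the two via the uniform upper bound on $g_k^{1/(1+\delta)}$ — or more cleanly by running the sub-mean-value argument directly on $g_k^{1/(1+\delta)}$ and then taking the $(1+\delta)$-th power — is bounded by $C\,(M\int_X\abs{\dep\psi}^s)^{1/2}\cdot(\text{const})$, one gets $g_k \lesssim M\cdot(\text{const depending on }\int_X\abs{\dep\psi}^s)$ pointwise on $X$, with constant independent of $k$.

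\emph{Main obstacle.} The delicate point is the \emph{uniformity in $k$} of the constant in the sub-mean-value step: a priori the psh function $\frac1{k\mu}\log\abs U^2$ has no lower bound, and the naive mean-value inequality for $\log$ of an analytic function degrades as the function becomes more singular. What saves the argument — and what I would be careful to spell out — is that one applies the mean-value inequality not to $\log\abs U^2$ but to the \emph{non-negative} function $\abs U^{\frac2{k\mu}(1+\delta)}$ itself (a smooth, in fact real-analytic, non-negative function whose $(1+\delta)$-th root times $e^{-(1+\delta)\sm\bphi/ \cdots}$ is a sub-mean-value function because it is $\exp$ of a quasi-psh function with curvature $\ge -C\omega$, $C$ independent of $k$ by \eqref{eq:wt_properties-curv}), and uses the Harnack inequality for quasi-psh functions exactly as invoked at the end of the proof of Theorem \ref{thm:extension-with-seq-of-potentials}. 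The extra factor $\int_X\abs{\dep\psi}^s\,d\vol_{X,\omega}$ is harmless provided it is uniformly bounded in $k$; this follows because $\dep\psi$ converges in $L^1$ (indeed $\abs{\dep\psi}\le\abs{\dep[\infty]\psi}+C$ after the normalisation), and I would note that $\abs{\dep\psi}^s$ is dominated in $L^1$ accordingly, so the right-hand side is genuinely a $k$-independent multiple of $M$.
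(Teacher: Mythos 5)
Your Steps 1 and 2 follow essentially the same route as the paper: a Cauchy--Schwarz splitting whose first factor squared is the integrand of $\norm{\holU}_{\setE,s}$, with the second factor controlled uniformly in $k$ by the uniform upper bound \eqref{eq:wt_properties-bdd-uniformly-in-k} on $\dep\bphi$ (the paper phrases this as the inequality $\delta\dep\bphi+\phi_S+\phi_B+\tfrac{1+\delta}{k\mu}\sm\vphi_A \lesssim_\tlog (1+\delta)\paren{\sm\bphi+\tfrac1{k\mu}\sm\vphi_A}-\sm\vphi_{K_X}$, the $\sm\vphi_{K_X}$ taking care of the volume-form bookkeeping you left implicit), followed by the Harnack/sub-mean-value inequality for exponentials of uniformly quasi-psh functions, exactly as in the paper's citation of \cite{Demailly}*{Ch.~I, Prop.~4.22(b)}. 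Two minor slips there are harmless: boundedness of $e^{-(1+\delta)\sm\bphi}$ comes from smoothness of $\sm\bphi$ on the compact $X$, not from $\sm\bphi\le 0$; and in Step 2 the sub-mean-value argument must be run on $g_k^{1/2}$ (whose integral is what Step 1 bounds), not on $g_k^{1/(1+\delta)}$ or on $g_k$ itself, and then squared.

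The genuine gap is in your justification that $\int_X\abs{\dep\psi}^s\,d\vol_{X,\omega}$ is bounded uniformly in $k$, which is what the second conclusion ($\lesssim M$ with $k$-independent constant) hinges on. You claim $\abs{\dep\psi}\le\abs{\dep[\infty]\psi}+C$; the inequality goes the other way: by \eqref{eq:Berg_approx-ineq} one has $\dep\bphi\ge\bphi_\tmin=\dep[\infty]\bphi$, so from \eqref{eq:psi} one gets $\abs{\dep\psi}-\abs{\dep[\infty]\psi}=\tfrac1{\numax}\paren{\dep\bphi-\bphi_\tmin}\ge 0$, and this difference is unbounded wherever $\bphi_\tmin$ has poles not shared by the (much less singular) Bergman potential $\dep\bphi$ — which is the typical situation. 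Moreover, $L^1$ convergence of $\dep\psi$ by itself cannot control $\int\abs{\dep\psi}^s$ for $s>1$, and in the application (the norm \eqref{eq:norm-U_setE}) one needs $s=2$. The correct argument, and the one the paper gives, uses the uniform upper bound on $\dep\bphi$ in the other direction: since $\dep\psi\le 0$,
\begin{equation*}
  \abs{\dep\psi}^s
  =\frac1{\numax^s}\paren{\dep\bphi-\phi_{\nu_S\cdot S}-\phi_{D_2}-\sm\vphi_\rho}^s
  \le\frac1{\numax^s}\paren{C+\sm\bphi-\phi_{\nu_S\cdot S}-\phi_{D_2}-\sm\vphi_\rho}^s
\end{equation*}
with $C$ independent of $k$ by \eqref{eq:wt_properties-bdd-uniformly-in-k}, and the right-hand side is a fixed $L^1$ function because $\phi_{\nu_S\cdot S}+\phi_{D_2}$ has only logarithmic poles along an snc divisor, so any power of it is integrable. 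With this replacement your proof closes.
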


\begin{proof}
  Notice that
  \begin{equation*}
    \delta \dep\bphi +\phi_S+\phi_B+\frac 1{k\mu} (1+\delta) \sm\vphi_A
  \lesssim_\tlog \paren{1+\delta} \paren{\sm\bphi +\frac
    1{k\mu}\sm\vphi_A} -\sm\vphi_{K_X}
  \end{equation*}
  on $X$, with the constant involved in
  $\lesssim_\tlog$ being independent of $k$ since $\dep\bphi$ is
  locally bounded above uniformly in $k$ (see
  \eqref{eq:wt_properties-bdd-uniformly-in-k}) and $\delta$ is
  independent of $k$.
  The first claim then follows immediately from this inequality and
  the Cauchy--Schwarz inequality.
  
  An argument with the Harnack inequality for plurisubharmonic
  functions (see, for example, \cite{Demailly}*{Ch.~I, Prop.~4.22(b)})
  then yields
  \begin{align*}
    \abs U^{\frac 1{k\mu}\paren{1+\delta}}
    e^{-\frac 12\paren{1+\delta} \paren{\sm\bphi +\frac 1{k\mu}
    \sm\vphi_A}}
    &\lesssim
    \int_X \abs U^{\frac 1{k\mu}\paren{1+\delta}}
    e^{-\frac 12\paren{1+\delta} \paren{\sm\bphi +\frac 1{k\mu}
        \sm\vphi_A}} \:d\vol_{X,\omega} \\
    &\lesssim
    \paren{M \int_X \abs{\dep\psi}^s \: d\vol_{X,\omega}}^{\frac 12}
  \end{align*}
  on $X$, where the constants in both $\lesssim$'s are
  independent of $k$.
  
  It remains to show that $\int_X \abs{\dep\psi}^s \:d\vol_{X,\omega}$
  is bounded above uniformly in $k$.
  Since $\dep\psi \leq 0$ and $s > 0$, it follows that
  \begin{align*}
    \abs{\dep\psi}^s
    = \paren{-\dep\psi}^s \quad
    &\overset{\mathclap{\text{\eqref{eq:psi}}}}= \;\;\quad
      \frac 1{\numax^s} \paren{\dep\bphi -\phi_{\nu_S\cdot S}
      -\phi_{D_2} -\sm\vphi_\rho}^s \\
    &\leq \frac 1{\numax^s} \paren{C +\sm\bphi -\phi_{\nu_S\cdot S}
      -\phi_{D_2} -\sm\vphi_\rho}^s 
  \end{align*}
  where the constant $C$ is independent of $k$ by
  \eqref{eq:wt_properties-bdd-uniformly-in-k}.
  The far right-hand-side is independent of $k$ and is $L^1$ since
  $\phi_{\nu_S \cdot S} + \phi_{D_2}$ has only logarithmic poles and
  $\nu_S \cdot S + D_2$ has only snc.
  This completes the proof.
\end{proof}


\subsection{A lower bound for $\res{\dep\bphi}_S$}
\label{sec:lower-bound-for-bphi-on-S}


\newcommand{\bphiU}[1][\abs{\Umin}]{\bphi_{#1}}

For any $v_{k,A} \in \cohgp 0[S]{\holo_{S}\paren{k\mu \paren{K_X+S+B} +
    A}}$, consider the set 
\begin{equation} \label{eq:def_setE}
  \setE := \setE\paren{v_{k,A}}
  := \setd{\holU \in \cohgp 0[X]{k\mu\paren{K_X+S+B} + A}} {
    \holU|_{S} = v_{k,A}} \; .
\end{equation}
(The section $v_{k,A}$ will in fact be $\res{\orgu^k \otimes
  s_{A,i}}_{S}$ in application.)
For a given number $\delta >0$, define an $L^{\frac
  2{k\mu}\paren{1+\delta}}$-norm $\norm\cdot_\setE$ on $\setE$ such that
\begin{equation} \label{eq:norm-U_setE}
  \norm \holU_{\setE}^{\frac{2}{k\mu}\paren{1+\delta}}
  :=\int_X \frac{\abs\holU^{\frac 2{k\mu}\paren{1+\delta}}} {\abs{\dep\psi}^2}
  e^{-\delta \dep\bphi -\phi_S-\phi_B-\frac 1{k\mu}\paren{1+\delta} \sm\vphi_A} \; .
\end{equation} 
Note that indeed $\norm \holU_{\setE}^{\frac{2}{k\mu}\paren{1+\delta}} < \infty$ for all
$\holU \in \cohgp 0[X]{k\mu\paren{K_X+S+B} + A}$ since
\begin{itemize}
\item $\abs\holU^{\frac 2{k\mu} \delta} e^{-\delta \dep\bphi} \leq
  \paren{\norm{\holU^\ell}_{\ell k\mu \sm\bphi_{K_X+S+B}+\ell
      \sm\vphi_A}^2}^{\frac{\delta}{\ell k\mu}} <\infty$ as
  $\dep\bphi$ is a Bergman kernel potential (although the bound may
  depend on $k$),
\item $e^{-\phi_B}$ is integrable on the compact $X$,
\item the exponent on $\abs{\dep\psi}$ is $>1$ while
  $S \subset \paren{\dep\psi}^{-1}(-\infty)$ (see \eqref{eq:psi} and
  \eqref{eq:S-not-in-polar_set}), and
\item $S$ and $B$ having only snc.
\end{itemize} 
For convenience, define also 
\begin{equation} \label{eq:norm-v_setE_S}
  \norm{v_{k,A}}_{\setE,S}^{\frac 2{k\mu}\paren{1+\delta}} :=
  \int_{S} \abs{v_{k,A}}_\omega^{\frac 2{k\mu}\paren{1+\delta}}
  e^{-\delta\dep\bphi -\phi_B -\frac 1{k\mu}\paren{1+\delta}
    \sm\vphi_A} \:d\vol_{S, \omega} \; ,
\end{equation}
where $d\vol_{S,\omega} = \sum_{j\in I_S} d\vol_{S_j,\omega}$.
This norm of $v_{k,A}$ is also finite as $\res{e^{-\phi_B}}_S$ is
integrable on $S$, and 
$\parres{\abs{v_{k,A}}^{\frac 2{k\mu} \delta} e^{-\delta \dep\bphi}}_S
=\parres{\abs\holU^{\frac 2{k\mu} \delta} e^{-\delta \dep\bphi}}_S$
for any $\holU \in \setE$ and is thus bounded from above.
In order to have better control on the dependence of its upper bound on
$k$, \cite{DHP}*{Lemma 5.5} (which is due to \textde{Hörmander}
(\cite{Hormander}*{Thm.~4.4.5}) in its original form; the
form in \cite{DHP}*{Lemma 5.5} is due to Tian
(\cite{Tian_KE-on-Fano}*{Prop.~2.1})) is invoked to obtain the
following lemma.

\begin{lemma} \label{lem:uniform-delta}
  There exist constants $\delta > 0$ and $C > 0$ which are independent
  of $k$, $v_{k,A}$ and $\dep\bphi$ such that
  \begin{equation*}
    \norm{v_{k,A}}_{\setE,S}^{\frac 2{k\mu}\paren{1+\delta}}
    \leq C \sup_{S} \paren{\abs{v_{k,A}}_\omega^{\frac 2{k\mu}\paren{1+\delta}}
    e^{-\delta\sm\bphi -\sm\vphi_B -\frac 1{k\mu}\paren{1+\delta}
      \sm\vphi_A}} =:~C \norm{v_{k,A}}_\infty^{\frac{2}{k\mu} \paren{1+\delta}} \; .
  \end{equation*} 
\end{lemma}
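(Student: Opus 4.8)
The plan is to split the integrand defining $\norm{v_{k,A}}_{\setE,S}$ into the factor already controlled by $\norm{v_{k,A}}_\infty$ and a ``remainder'' whose integral over $S$ is bounded independently of $k$; it is this uniform bound on the remainder that forces the use of \cite{DHP}*{Lemma 5.5} (the uniform Tian--\textde{Hörmander} integrability estimate). First I would write, on $S$ and under the fixed trivialisations,
\begin{equation*}
  \abs{v_{k,A}}_\omega^{\frac 2{k\mu}\paren{1+\delta}}
  e^{-\delta\dep\bphi -\phi_B -\frac 1{k\mu}\paren{1+\delta}\sm\vphi_A}
  = \paren{\abs{v_{k,A}}_\omega^{\frac 2{k\mu}\paren{1+\delta}}
    e^{-\delta\sm\bphi -\sm\vphi_B -\frac 1{k\mu}\paren{1+\delta}\sm\vphi_A}}
  \cdot e^{\delta\paren{\sm\bphi -\dep\bphi}}\, e^{\sm\vphi_B -\phi_B} \; ,
\end{equation*}
bound the first parenthesised factor pointwise by $\norm{v_{k,A}}_\infty^{\frac 2{k\mu}\paren{1+\delta}}$, and observe that $e^{\delta\sm\bphi +\sm\vphi_B}$ is smooth, hence bounded on the compact $S$. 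It then suffices to produce $\delta > 0$ and $C > 0$, both independent of $k$, $v_{k,A}$ and $\dep\bphi$, with $\int_S e^{-\delta\dep\bphi -\phi_B}\:d\vol_{S,\omega} \leq C$.

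To prove this I would apply \textde{Hölder's} inequality. Writing $B = \sum_i b_i B_i$ with $b_i \in (0,1)$ (valid since $(X,S+B)$ is plt with $S = \floor{S+B}$, so $B$ and $S$ have no common component and $\max_i b_i < 1$), choose $q \in \bigl(1, \tfrac 1{\max_i b_i}\bigr)$ and let $p$ be its conjugate exponent. Then
\begin{equation*}
  \int_S e^{-\delta\dep\bphi -\phi_B}\:d\vol_{S,\omega}
  \leq \paren{\int_S e^{-p\delta\dep\bphi}\:d\vol_{S,\omega}}^{\frac 1p}
  \paren{\int_S e^{-q\phi_B}\:d\vol_{S,\omega}}^{\frac 1q} \; ,
\end{equation*}
and the second factor is a fixed finite constant, since $\phi_B$ has only logarithmic poles along $B$ with coefficients $q b_i < 1$ and $B$ meets $S$ transversally. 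For the first factor, let $\eps_0 > 0$ and $C_0 > 0$ be the uniform exponent and constant supplied by \cite{DHP}*{Lemma 5.5} for the finitely many compact \textde{Kähler} manifolds $\bigl(S_j, \res{\omega}_{S_j}\bigr)_{j \in I_S}$, so that $\int_{S_j} e^{-\eps_0\varphi}\:d\vol_{S_j,\omega} \leq C_0$ for every $\res{\omega}_{S_j}$-psh $\varphi$ with $\sup_{S_j}\varphi = 0$, and set $\delta := \eps_0/p$. Each $\res{\dep\bphi}_{S_j}$ is quasi-psh on $S_j$: it is not identically $-\infty$ by \eqref{eq:S-not-in-polar_set}, and $\ibddbar \res{\dep\bphi}_{S_j} \geq -\tfrac 1{k\mu}\res{\omega}_{S_j} \geq -\res{\omega}_{S_j}$ by \eqref{eq:wt_properties-curv}. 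Normalising with $c_{k,j} := \sup_{S_j}\dep\bphi$ (so $\res{\dep\bphi}_{S_j} - c_{k,j}$ is $\res{\omega}_{S_j}$-psh with supremum $0$) yields $\int_{S_j} e^{-\eps_0\dep\bphi}\:d\vol_{S_j,\omega} \leq C_0\, e^{-\eps_0 c_{k,j}}$. Finally $c_{k,j}$ is bounded above by $\sup_{S_j}\sm\bphi + C''$, with $C''$ the uniform constant of \eqref{eq:wt_properties-bdd-uniformly-in-k} (coming from \eqref{eq:Berg_approx-ineq}), and bounded below by $\sup_{S_j}\bphi_\tmin$, which is finite because $S_j \not\subset \sBase[-]{K_X+S+B}$ forces $\res{\bphi_\tmin}_{S_j} \not\equiv -\infty$; both bounds are independent of $k$. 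Summing over the finite set $I_S$ then gives the required uniform bound for the first \textde{Hölder} factor.

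The one genuinely delicate point is the uniformity in $k$ of both $\delta$ and $C$: since the Bergman kernel potentials $\dep\bphi$ become \emph{more} singular as $k$ grows (Lemma \ref{lem:bphi-indep-of-A-and-lim-k-exists}), no fixed $\delta$ keeps $\int_S e^{-\delta\dep\bphi}$ bounded by elementary means, and it is precisely the uniform exponent furnished by \cite{DHP}*{Lemma 5.5} --- valid over the whole family of normalised $\res{\omega}_{S_j}$-psh functions --- together with the uniform two-sided control of the normalising constants $c_{k,j}$ (whose lower bound is the one place the hypothesis $S_j \not\subset \sBase[-]{K_X+S+B}$ is used here), that makes the estimate go through.
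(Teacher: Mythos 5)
Your overall route --- pulling out the sup-norm factor pointwise, a \textde{Hölder} split with exponent close to $1$ to absorb the klt poles of $\phi_B$ along $S$, and then the uniform integrability statement \cite{DHP}*{Lemma 5.5} to produce a $\delta$ independent of $k$ --- is the same as the paper's. The genuine gap sits in the step you yourself flag as delicate: the uniform lower bound on the normalising constants $c_{k,j} = \sup_{S_j}\dep\bphi$. You derive it from the claim that $S_j \not\subset \sBase[-]{K_X+S+B}$ forces $\res{\bphi_\tmin}_{S_j} \not\equiv -\infty$. That implication is unjustified and false in general: the diminished stable base locus can be empty while every closed positive current in the class --- in particular the one attached to a metric with minimal singularities --- has poles along a curve, as in \cite{DPS94}*{Example 1.7}, the very example cited elsewhere in this paper. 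Worse, inside this paper the well-defined-ness of $\res{\bphi_\tmin}_{S_j}$ is precisely the content of Theorem \ref{thm:S_j-not-in-polar-bphi}, which is proved \emph{afterwards} and whose proof uses Lemma \ref{lem:uniform-delta} itself (through Lemma \ref{lem:potential_Umin_uniBdd} and the estimate \eqref{eq:Umin-sup-norm-estimate}). So as written your normalisation step is circular at exactly the point where the analytic difficulty of this section is concentrated.

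By contrast, the paper's proof never passes through $\res{\bphi_\tmin}_{S_j}$: after bounding the integrand by $\norm{v_{k,A}}_\infty^{\frac 2{k\mu}\paren{1+\delta}}$ times $e^{-\delta\paren{\dep\bphi-\sm\bphi}-\paren{\phi_B-\sm\vphi_B}}$ (these differences are global functions, so no trivialisation issue arises, unlike your reduction to $\int_S e^{-\delta\dep\bphi-\phi_B}$), it applies \textde{Hölder} with exponents $q$ and $\tfrac q{q-1}$ and then invokes \cite{DHP}*{Lemma 5.5} directly for the family $\tfrac q{q-1}\paren{\dep\bphi-\sm\bphi}$, whose curvature is controlled by a fixed class, asserting constants depending only on the cohomology class of $\tfrac q{q-1}\ibddbar\sm\bphi$. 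If you wish to keep your formulation via an explicit $\sup_{S_j}$-normalisation, you must produce the $k$-uniform lower bound on $\sup_{S_j}\dep\bphi$ by some argument that does not presuppose Theorem \ref{thm:S_j-not-in-polar-bphi} (or Theorem \ref{thm:bphi-bdd-below-by-u-on-S}); otherwise the proof does not close.
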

\begin{proof}
  It follows readily from \textde{Hölder's} inequalities that
  \begin{align*}
    \norm{v_{k,A}}_{\setE,S}^{\frac 2{k\mu}\paren{1+\delta}}
    &\leq \norm{v_{k,A}}_\infty^{\frac{2}{k\mu} \paren{1+\delta}}
    \int_S e^{-\delta \:\paren{\dep\bphi -\sm\bphi} -\paren{\phi_B
      -\sm\vphi_B}} \:d\vol_{S,\omega} \\
    &\leq
      \norm{v_{k,A}}_\infty^{\frac{2}{k\mu} \paren{1+\delta}}
      \paren{\int_S e^{-\delta \frac q{q-1} \paren{\dep\bphi
      -\sm\bphi}} \:d\vol_{S,\omega}}^{1 -\frac 1q}
      \paren{\int_S e^{-q \:\paren{\phi_B -\sm\vphi_B}}
      \:d\vol_{S,\omega}}^{\frac 1q} \; ,
  \end{align*}
  and $q > 1$ is chosen sufficiently close to $1$ such that the last
  integral on the right-hand-side converges.

  \cite{DHP}*{Lemma 5.5} is applied to assure that there exist
  constants $\delta > 0$ and $C' > 0$, which depend only on the
  cohomology class of $\frac{q}{q-1} \ibddbar \sm\bphi$, such that
  \begin{equation*}
    \int_S e^{-\delta \frac q{q-1} \paren{\dep\bphi -\sm\bphi}}
    \:d\vol_{S,\omega}
    \leq C' \; .
  \end{equation*}
  This completes the proof.
\end{proof}

From now on, the constant $\delta$ is chosen to be the one provided by
Lemma \ref{lem:uniform-delta}.

\emph{Assume that $\setE$ is non-empty.}
There exists, by Lemma \ref{lem:sup-norm-bdd-from-L2-with-denom} and
Montel's Theorem, an element $\Umin_{k, A} \in \setE$ whose
$\norm\cdot_{\setE}$-norm attains the minimum on $\setE$.
Define
\begin{equation} \label{eq:def-bphiU}
  \dep\bphiU :=\frac{1}{k\mu} \paren{\log\abs{\Umin_{k,
        A}}^2 - \sm\vphi_A}
\end{equation}
as a potential on $K_X+S+B$, which, of course, depends on the choice
of $v_{k,A}$.


\begin{lemma} \label{lem:potential_Umin_uniBdd}
  Suppose a sequence $\seq{v_{k,A}}_{k\in \Nnum}$ of sections as
  described above is given.
  If $\setE\paren{v_{k,A}}$ is non-empty and $\Umin_{k,A}$ is chosen
  as above for every $k\in\Nnum$, then one has
  \begin{equation*}
    \norm{\Umin_{k,A}}_{\setE}^{\frac{2}{k\mu}\paren{1+\delta}}
    \lesssim \norm{v_{k,A}}_{\setE,S}^{\frac 2{k\mu}\paren{1+\delta}}
  \end{equation*}
  for every $k \in \Nnum$, where the constant involved in $\lesssim$
  is independent of $k$.
\end{lemma}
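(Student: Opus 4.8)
The plan is to exhibit, for each $k$, one holomorphic extension $\widetilde U_k\in\setE\paren{v_{k,A}}$ of $v_{k,A}$ with $\norm{\widetilde U_k}_\setE^{\frac2{k\mu}\paren{1+\delta}}\lesssim\norm{v_{k,A}}_{\setE,S}^{\frac2{k\mu}\paren{1+\delta}}$, the constant being independent of $k$, and then to use minimality: since $\Umin_{k,A}$ realises the infimum of $\norm\cdot_\setE$ over the non-empty affine family $\setE\paren{v_{k,A}}$, one has $\norm{\Umin_{k,A}}_\setE\leq\norm{\widetilde U_k}_\setE$, which finishes.

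To build $\widetilde U_k$ I would run the $L^2$ extension theorem of this paper in the $k\mu$-scaled analogue of Section~\ref{sec:construction-vphi_L-psi}: write $k\mu\paren{K_X+S+B}+A=K_X\otimes L_k$ and equip $L_k$ with the potential $\Phi_k$ defined by $\Phi_k+\dep\psi:=\paren{k\mu-1}\dep\bphi+\phi_S+\phi_B+\sm\vphi_A$, keeping the same $\dep\psi$. The bookkeeping of Section~\ref{sec:construction-vphi_L-psi} with $\mu-1$ replaced throughout by $k\mu-1$ gives $m_0=0$, $m_1=1$, that $\Ann_{\holo_X}\paren{\frac{\mtidlof{\Phi_k}}{\mtidlof{\Phi_k+\dep\psi}}}$ cuts out the reduced snc divisor $S$, and the curvature hypothesis \ref{item:curv-cond-ordinary} with a fixed $\delta$: for $\beta=\numax\lambda$, $\lambda\in[0,\mu-1]$,
\begin{equation*}
  \ibddbar\Phi_k+\paren{1+\beta}\ibddbar\dep\psi
  = \paren{k\mu-1-\lambda}\ibddbar\dep\bphi+\lambda\currInt{D_2+\nu_S\cdot S}+\currInt{S+B}+\omega\geq0
\end{equation*}
by \eqref{eq:wt_properties-curv} and $\ibddbar\sm\vphi_A=\omega$ (the extra $\omega$, absent in Section~\ref{sec:construction-vphi_L-psi}, comes from the $A$-summand of $L_k$ and turns the previous $\geq-\tfrac1k\omega$ into $\geq0$); \ref{item:normalisation-cond} is arranged by a $k$-independent shift of $\dep\psi$. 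Theorem~\ref{thm:extension-sigma=1}, with its \emph{universal} constant $1$, then yields $\widetilde U_k\in\cohgp0[X]{\paren{k\mu\paren{K_X+S+B}+A}\otimes\mtidlof{\Phi_k}}$ with $\widetilde U_k\equiv v_{k,A}\mod\mtidlof{\Phi_k+\dep\psi}$, hence $\res{\widetilde U_k}_S=v_{k,A}$, together with
\begin{equation*}
  \int_X\frac{\abs{\widetilde U_k}^2\,e^{-\paren{k\mu-1}\dep\bphi-\phi_S-\phi_B-\sm\vphi_A}}{\abs{\dep\psi}\paren{\paren{\log\abs{\ell\dep\psi}}^2+1}}\;\leq\;\int_S\abs{v_{k,A}}_\omega^2\,\lcV|1|<\Phi_k>[\dep\psi]\; .
\end{equation*}
Here $\ell$ is a fixed (not $k$-dependent) normalisation parameter, and Proposition~\ref{prop:lc-measure} (with $\sigma_f=\sigma=1$, as $(X,S)$ is log-smooth of codimension $1$ and the $S_j$ are disjoint) identifies the right-hand side with $\sum_{j\in I_S}\tfrac{\pi\numax}{\nu_j}\int_{S_j}\abs{v_{k,A}}_\omega^2\,e^{-\paren{k\mu-1}\dep\bphi-\phi_B-\sm\vphi_A}\,d\vol_{S_j,\omega}$, the prefactors being $k$-independent.

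It then remains to pass from this $L^2$ estimate to the bound on $\norm{\widetilde U_k}_\setE$. Using \eqref{eq:xlogx-estimate} and $\dep\psi<-e/\ell$ one first replaces $\tfrac1{\abs{\dep\psi}^2}$ by the larger $\tfrac1{\abs{\dep\psi}\paren{\paren{\log\abs{\ell\dep\psi}}^2+1}}$, with a $k$-independent constant. Writing $\abs{\widetilde U_k}^{\frac2{k\mu}\paren{1+\delta}}$ as the $(1+\delta)$-power, and $\abs{\widetilde U_k}^2$ as the $(k\mu)$-power, of the single function $\abs{\widetilde U_k}^{2/(k\mu)}e^{-\dep\bphi-\frac1{k\mu}\sm\vphi_A}$, a \textde{Hölder} inequality with exponents $\tfrac2p$ and $\tfrac2{2-p}$ ($p:=\tfrac2{k\mu}\paren{1+\delta}$) matches $\abs{\widetilde U_k}^2$ with the extension-theorem weight (so that the first factor is bounded by the displayed estimate) and leaves $\big(\int_X e^{\dep\bphi-\phi_S-\phi_B}/\big(\abs{\dep\psi}\paren{\paren{\log\abs{\ell\dep\psi}}^2+1}\big)\big)^{\frac{2-p}2}$, which is finite and bounded uniformly in $k$: $e^{\dep\bphi}$ is bounded above uniformly by \eqref{eq:wt_properties-bdd-uniformly-in-k}, the logarithmic pole of $\dep\psi$ along $S_j$ has the $k$-independent coefficient $\nu_j/\numax$, the one-variable model $\int \ibar\,dz\wedge d\conj z/\big(\abs z^2\log\tfrac1{\abs z^2}\paren{\log\log\tfrac1{\abs z^2}}^2\big)$ converges near $0$, and $\phi_B,\phi_{D_2}$ carry only $L^1$ singularities. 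The matching step on $S$ --- bounding $\big(\sum_j\tfrac{\pi\numax}{\nu_j}\int_{S_j}\abs{v_{k,A}}_\omega^2\,e^{-\paren{k\mu-1}\dep\bphi-\phi_B-\sm\vphi_A}\,d\vol_{S_j,\omega}\big)^{\frac{1+\delta}{k\mu}}$ by a $k$-uniform multiple of $\norm{v_{k,A}}_{\setE,S}^{\frac2{k\mu}\paren{1+\delta}}$ --- is a \emph{reverse}-\textde{Hölder} estimate, resting on the sub-mean-value inequality for $\abs{v_{k,A}}_\omega^{2/(k\mu)}$, whose curvature defect on $S$ is $\BigO(1)$ so that its constant is $k$-uniform (this is the \textde{Hörmander}--Tian estimate behind Lemma~\ref{lem:uniform-delta}), together with the uniform upper bound on $\dep\bphi$; Lemma~\ref{lem:sup-norm-bdd-from-L2-with-denom} is the packaged form of this passage.

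I expect the main obstacle to be precisely this \emph{reverse}-\textde{Hölder} step on $S$, and more broadly the $k$-uniformity of every constant: since the bundle degree $k\mu$, the weight $\paren{k\mu-1}\dep\bphi$, the function $\dep\psi$, and the norm exponent $\tfrac2{k\mu}\paren{1+\delta}$ all vary with $k$, one must organise the \textde{Hölder} splittings and the comparisons so that each implicit constant sees only $k$-independent data --- the universal constant $1$ of Theorem~\ref{thm:extension-sigma=1}, the uniform upper bound \eqref{eq:wt_properties-bdd-uniformly-in-k} on $\dep\bphi$, the fixed $\delta$ of Lemma~\ref{lem:uniform-delta}, the $k$-independent singular parts of $\phi_S,\phi_B,\phi_{D_2},\dep\psi$, and the $\BigO(1)$ curvature behind the sub-mean-value estimates on $S$.
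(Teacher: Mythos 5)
You have the right skeleton (produce one competitor extension, then invoke minimality of $\Umin_{k,A}$), and the curvature and jumping-number bookkeeping for your potential $\Phi_k+\dep\psi=\paren{k\mu-1}\dep\bphi+\phi_S+\phi_B+\sm\vphi_A$ is fine as far as it goes, but this choice of weight creates a gap that your later steps cannot close. With it, the boundary term of Theorem \ref{thm:extension-sigma=1} is $\sum_{j\in I_S}\frac{\pi\numax}{\nu_j}\int_{S_j}\abs{v_{k,A}}_\omega^2e^{-\paren{k\mu-1}\dep\bphi-\phi_B-\sm\vphi_A}\,d\vol_{S_j,\omega}$: the \emph{full} square of $v_{k,A}$ against a weight whose poles on $S_j$ (the polar set of $\dep\bphi$ may well meet $S_j$, cf.\ \eqref{eq:S-not-in-polar_set}) are amplified by the factor $k\mu-1$. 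Your plan to dominate the $\frac{1+\delta}{k\mu}$-th power of this by a $k$-uniform multiple of $\norm{v_{k,A}}_{\setE,S}^{\frac{2}{k\mu}\paren{1+\delta}}$, whose weight carries only $e^{-\delta\dep\bphi}$, is a reverse-type inequality that the tools you cite do not provide: Lemma \ref{lem:uniform-delta} and Lemma \ref{lem:sup-norm-bdd-from-L2-with-denom} (the \textde{Hörmander}--Tian sub-mean-value mechanism) pass from small-power integrals with the \emph{fixed-strength} weight $e^{-\delta\dep\bphi}$ to sup-bounds, precisely because the curvature defect there is $\BigO(1)$; the function $\abs{v_{k,A}}^2e^{-\paren{k\mu-1}\dep\bphi}$ has curvature defect of order $k\mu$, so no $k$-uniform sub-mean-value or reverse-\textde{Hölder} estimate is available. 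Moreover, a $k$-uniform lower bound for $\res{\dep\bphi}_S$ in terms of the given section is exactly what Theorem \ref{thm:bphi-bdd-below-by-u-on-S} is later going to \emph{deduce from} this lemma, so nothing of that sort can be fed into its proof.

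The paper's proof avoids the problem by building the extension weight out of the minimizer itself: with $\dep\bphi_{\abs{\Umin}}:=\frac1{k\mu}\paren{\log\abs{\Umin_{k,A}}^2-\sm\vphi_A}$ as in \eqref{eq:def-bphiU}, it takes $\dep\vphi_{L_k}+\dep\psi:=\paren{k\mu-1-\delta}\dep\bphi_{\abs{\Umin}}+\delta\dep\bphi+\phi_S+\phi_B+\sm\vphi_A$, the $\delta$-admixture of $\dep\bphi$ (with $\delta$ from Lemma \ref{lem:uniform-delta}) being what makes the curvature non-negative for $\beta\in[0,\numax\delta]$. Since $\res{\Umin_{k,A}}_S=v_{k,A}$, the $1$-lc-measure boundary term then collapses, on the nose, to $\sum_{j\in I_S}\frac{\pi\numax}{\nu_j}\int_{S_j}\abs{v_{k,A}}_\omega^{\frac2{k\mu}\paren{1+\delta}}e^{-\delta\dep\bphi-\phi_B-\frac{1+\delta}{k\mu}\sm\vphi_A}\,d\vol_{S_j,\omega}\lesssim\norm{v_{k,A}}_{\setE,S}^{\frac2{k\mu}\paren{1+\delta}}$ --- small power, fixed-strength weight, automatically finite and $k$-uniform --- so no reverse inequality on $S$ is ever needed; and on the $X$-side the conversion from the weighted $L^2$ estimate for the resulting extension $V_k$ to $\norm{V_k}_\setE$ is an ordinary \textde{Hölder} inequality with exponents $\frac{k\mu}{1+\delta}$ and $\frac{k\mu}{k\mu-1-\delta}$, whose second factor is exactly $\paren{\norm{\Umin_{k,A}}_\setE^{\frac2{k\mu}\paren{1+\delta}}}^{1-\frac{1+\delta}{k\mu}}$; combining this with the minimality $\norm{\Umin_{k,A}}_\setE\le\norm{V_k}_\setE$ and absorbing that factor yields $\norm{\Umin_{k,A}}_\setE^{\frac2{k\mu}\paren{1+\delta}}\le\norm{V_k}_{\dep\vphi_{L_k}}^2\lesssim\norm{v_{k,A}}_{\setE,S}^{\frac2{k\mu}\paren{1+\delta}}$. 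This self-referential choice of weight, together with the absorption-by-minimality trick, is the idea missing from your proposal; without it neither your boundary comparison on $S$ nor the $k$-uniformity of the constants can be rescued.
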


\begin{proof}
  The strategy is
  to invoke Ohsawa--Takegoshi extension theorem using
  $\dep\bphiU$ as a potential to obtain an extension of
  $v_{k,A}$ with the required estimate, and then argue by
  minimality.
  
  Write
  \begin{equation*}
    k\mu \paren{K_X+S+B} + A = K_X + \underbrace{\paren{k\mu
        -1}\paren{K_X+S+B} + S + B + A}_{=:~L_k}
  \end{equation*}
  and endow the line bundle $L_k$ with the potential $\dep\vphi_{L_k}$
  defined such that
  \begin{equation} \label{eq:def-of-vphi_Lk}
    \dep\vphi_{L_k} +\dep\psi
    :=\paren{k\mu -1 -\delta} \dep\bphiU +\delta \dep\bphi
    +\phi_{S} +\phi_{B} +\sm\vphi_A \; ,
  \end{equation}
  where $\dep\psi$ is the function defined in \eqref{eq:psi}, and
  $\delta > 0$ is the one provided by Lemma \ref{lem:uniform-delta}
  (which can be replaced by a smaller one if $k\mu -1 -\delta$ is
  negative).
  Notice that both $\res{\dep\bphiU}_S$ and
  $\res{\dep\bphi}_S$ are well-defined potentials on $S$.
  It can be seen that the family $\set{\mtidlof{\dep\vphi_{L_k} + m
      \dep\psi}}_{m \in \fieldR}$ jumps along $S$ when $m=1$.
  When $m$ increases, the coefficient of $\dep\bphiU$ in
  $\dep\vphi_{L_k} +m\dep\psi$ is unchanged while that of $\dep\bphi$
  decreases, one can then see that
  $\Ann_{\holo_X} \paren{\frac{\mtidlof{\dep\vphi_{L_k} +m_0
        \dep\psi}} {\mtidlof{\dep\vphi_{L_k} + \dep\psi}} }$ defines
  exactly $S$ (with no other subvarieties in $D_2$ or
  $\divsr{\Umin_{k,A}}$) for some number $m_0 \in [0,1)$.

  It follows from the \textfr{Poincaré--Lelong} formula
  $\currInt E = \ibddbar \phi_E$ that, for $\beta = \numax \lambda$,
  \begin{align*}
    \ibddbar\dep[k]\vphi_{L_k} + \paren{1+ \beta} \ibddbar \dep\psi
    &=
      \begin{aligned}[t]
        &\paren{1-\frac{1+\delta}{k\mu}} \currInt{\Umin_{k,A}}
        +\currInt S +\currInt B
        +\frac{1+\delta}{k\mu} \ibddbar\sm\vphi_A \\
        &+\paren{\delta -\lambda} \ibddbar\dep\bphi +\lambda
        \currInt{D_2} +\lambda \currInt{\nu_S \cdot S}
      \end{aligned}
    \\
    &\geq \frac{1+\delta}{k\mu} \ibddbar\sm\vphi_A -\frac{\delta
      -\lambda}{k\mu} \ibddbar\sm\vphi_A +\lambda \currInt{D_2}
      +\lambda \currInt{\nu_S \cdot S}
    \geq 0
  \end{align*}
  for all $\lambda \in [0,\delta]$.\footnote{This is precisely the
    place where $\delta > 0$ is needed, and thus the use of
    \cite{DHP}*{Lemma 5.5} cannot be avoided.}
  According to Proposition \ref{prop:lc-measure}, as the minimal lc
  centres of $(X,S)$ are of codimension $1$ (as the irreducible
  components $S_j$ of $S$ are disjoint), the norm of $v_{k,A}$
  under the $1$-lc-measure $\lcV|1|<\dep\vphi_{L_k}>[\dep\psi]$ (on
  codimension-$1$ lc centres of $(X,S)$) is
  \begin{align*}
    \int_S \abs{v_{k,A}}_\omega^2 \lcV|1|<\dep\vphi_{L_k}>[\dep\psi]
    &=\sum_{j \in I_S} \frac{\pi \numax}{\nu_j} \int_{S_j}
    \abs{v_{k,A}}_\omega^2 e^{-\paren{k\mu -1-\delta} \dep\bphiU
      -\delta \dep\bphi -\phi_B -\sm\vphi_A} d\vol_{S_j, \omega} \\
    &=\sum_{j\in I_S} \frac{\pi \numax}{\nu_j} \int_{S_j}
    \abs{v_{k,A}}_\omega^{\frac 2{k\mu}\paren{1+\delta}} e^{-\delta
      \dep\bphi -\phi_B -\frac 1{k\mu}\paren{1+\delta} \sm\vphi_A}
      d\vol_{S_j, \omega} \\
    &\lesssim \norm{v_{k,A}}_{\setE,S}^{\frac 2{k\mu}\paren{1+\delta}}
      \qquad\text{(see \eqref{eq:norm-v_setE_S} for the definition)} \; ,
  \end{align*}
  which is therefore finite, and the constant involved in $\lesssim$
  is independent of $k$.

  Since $\delta$ is independent of $k$, by adding a suitable constant
  independent of $k$ to $\dep\psi$, the normalisation assumption
  (\ref{item:normalisation-cond}) in Theorem
  \ref{thm:dbar-eq-with-estimate_sigma=1} (with $\ell = \delta$) can
  also be fulfilled  (see Remark \ref{rem:normalisation-control}).
  
  By the Ohsawa--Takegoshi extension theorem with lc-measure in
  Theorem \ref{thm:extension-sigma=1} (or via Demailly's version in
  \cite{Demailly_extension}*{Thm.~2.8}), one obtains a
  \emph{holomorphic extension} $V_k \in \setE$ of $v_{k,A}$
  on $X$ with estimate
  \begin{equation*}
    \int_X \frac{\abs{V_k}^2 e^{-\dep\vphi_{L_k} -\dep\psi}}
    {\abs{\dep\psi} \paren{\paren{\log\abs{\delta \dep\psi}}^2 +1}}
    \leq
    \int_S \abs{v_{k,A}}_\omega^2 \lcV|1|<\dep\vphi_{L_k}>[\dep\psi]
    \lesssim \norm{v_{k,A}}_{\setE,S}^{\frac 2{k\mu}\paren{1+\delta}}
  \end{equation*}
  where the constant involved in the last $\lesssim$ is independent of
  $k$ (and $\delta$).
  As $\frac 1{\abs{\dep\psi}^2} \lesssim \frac
  1{\abs{\dep\psi} \paren{\paren{\log\abs{\delta \dep\psi}}^2 +1}}$
  with the constant involved in $\lesssim$ being independent of $k$
  via a use of \eqref{eq:xlogx-estimate}, it follows that
  \begin{equation*}
    \norm{V_k}_{\dep\vphi_{L_k}}^2
    :=\int_X \frac{\abs{V_k}^2}{\abs{\dep\psi}^2} e^{-\dep\vphi_{L_k} -\dep\psi}
    \lesssim
    \norm{v_{k,A}}_{\setE,S}^{\frac 2{k\mu}\paren{1+\delta}} \; ,
  \end{equation*}
  where the constant in $\lesssim$ is independent of $k$.
    
  Considering the definitions of the potentials $\dep\bphiU$ in
  \eqref{eq:def-bphiU} and $\dep\vphi_{L_k}$ in
  \eqref{eq:def-of-vphi_Lk}, as well as the definition of the norm
  $\norm\cdot_\setE$ in \eqref{eq:norm-U_setE}, \textde{Hölder}'s
  inequality, together with the minimality of $\Umin_{k,A}$ in the
  norm $\norm\cdot_\setE$, yields
  \begin{equation*}
    \norm{\Umin_{k,A}}_{\setE}^{\frac 2{k\mu}\paren{1+\delta}}
    \leq \norm{V_k}_{\setE}^{\frac 2{k\mu}\paren{1+\delta}}
    \leq \paren{\norm{V_k}_{\dep\vphi_{L_k}}^2}^{\frac 1{k\mu}\paren{1+\delta}}
    \paren{\norm{\Umin_{k,A}}_{\setE}^{\frac
        2{k\mu}\paren{1+\delta}}}^{1 - \frac{1}{k\mu}\paren{1+\delta}}
    \; . 
  \end{equation*}
  Therefore, it follows that
  \begin{equation*}
    \norm{\Umin_{k, A}}_\setE^{\frac{2}{k\mu}\paren{1+\delta}}
    \leq
    \norm{V_k}_{\dep\vphi_{L_k}}^2
    \lesssim
    \norm{v_{k,A}}_{\setE,S}^{\frac 2{k\mu}\paren{1+\delta}} \; ,
  \end{equation*}
  where the constant involved in $\lesssim$ is independent of $k$.
\end{proof}

Combining the Lemmata \ref{lem:sup-norm-bdd-from-L2-with-denom},
\ref{lem:potential_Umin_uniBdd} and \ref{lem:uniform-delta},
one obtains that, if $\setE$ is non-empty, there exists a minimal
element $\Umin_{k,A} \in \setE$ such that
\begin{equation} \label{eq:Umin-sup-norm-estimate}
  \abs{\Umin_{k,A}}^{\frac 2{k\mu}\paren{1+\delta}}
  e^{-\paren{1+\delta} \sm\bphi -\frac 1{k\mu} \paren{1+\delta} \sm\vphi_A}(p) 
  \lesssim
  \norm{v_{k,A}}_\infty^{\frac 2{k\mu}\paren{1+\delta}}
\end{equation}
for any $p \in X$, where the constant involved in $\lesssim$ is
independent of $k$ and $p$.

Now the following improvement to \cite{DHP}*{Thm.~6.1} can be proved.

\begin{thm}[\thmparen{cf.~\cite{DHP}*{Thm.~6.1}}] \label{thm:S_j-not-in-polar-bphi}
  Under the setup in Section \ref{sec:basic-setup-plt} (\emph{without}
  the assumptions $\supp D_2 \subset \supp B$ and the
  existence of $\orgu \in
  \cohgp0[S]{\holo_S\paren{\mu \paren{K_X+S+B}}}$), 
  the potential $\res{\bphi_\tmin}_S$ is well-defined on every
  component of $S$.
\end{thm}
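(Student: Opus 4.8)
The plan is to show that $S_j \not\subset (\bphi_\tmin)^{-1}(-\infty)$ for every $j \in I_S$, which — as $\bphi_\tmin$ is quasi-psh on the whole of $X$ — is precisely what it means for $\res{\bphi_\tmin}_{S_j}$ to be a well-defined (quasi-psh) potential on $S_j$. Fix $j$. Since $S_j \not\subset \sBase[-]{K_X+S+B}$ and $A$ is sufficiently ample, for every $k \gg 0$ there are global sections of $k\mu(K_X+S+B)+A$ not vanishing identically on $S_j$ (this is the input behind \eqref{eq:S_j-not-in-Berg-kernel} and \eqref{eq:S-not-in-polar_set}); restricting to $S$, one gets $v_{k,A}\in\cohgp0[S]{\holo_S\paren{k\mu(K_X+S+B)+A}}$ with $\setE\paren{v_{k,A}}\neq\emptyset$ and $\res{v_{k,A}}_{S_j}\not\equiv0$. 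I would choose $v_{k,A}$ to be the restriction of an Ohsawa--Takegoshi extension from $S_j$, so that $v_{k,A}$ is ``concentrated on $S_j$'', and then normalise it so that $\norm{v_{k,A}}_\infty=1$ in the notation of Lemma \ref{lem:uniform-delta} with the defining supremum attained at a point $p_k\in S_j$; throughout, the Bergman-kernel potentials $\dep\bphi$ of Section \ref{sec:Bergman-potentials} — whose curvature negativity $-\tfrac{1}{k\mu}\omega$ is killed by the positivity of $A$ — serve as the reference metric for the Ohsawa--Takegoshi estimates. Let $\Umin_{k,A}\in\setE\paren{v_{k,A}}$ be the minimal element and $\dep\bphiU$ the associated potential on $K_X+S+B$ as in \eqref{eq:def-bphiU}.

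Two uniform-in-$k$ facts drive the argument. First, since $\res{\Umin_{k,A}}_S=v_{k,A}$ and the supremum in $\norm{v_{k,A}}_\infty=1$ is attained at $p_k$, a direct computation gives $\dep\bphiU(p_k)=\tfrac{1}{1+\delta}\bigl(\delta\,\sm\bphi(p_k)+\sm\vphi_B(p_k)\bigr)$ up to a $\tfrac{1}{k\mu}$-term, so $\dep\bphiU(p_k)$ is bounded below by an absolute constant independent of $k$ because $\sm\bphi$ and $\sm\vphi_B$ are smooth on the compact $X$. Second, the combination of Lemmata \ref{lem:sup-norm-bdd-from-L2-with-denom}, \ref{lem:potential_Umin_uniBdd} and \ref{lem:uniform-delta}, summarised in \eqref{eq:Umin-sup-norm-estimate}, gives $\dep\bphiU\lesssim_\tlog\sm\bphi$ on $X$ with constant independent of $k$; feeding this back into the Bergman-kernel inequality $\abs{\Umin_{k,A}}^2\leq\Berg_{1,k}\cdot\norm{\Umin_{k,A}}^2$ — where now $\norm{\Umin_{k,A}}^2$ (the $L^2$-norm for $k\mu\sm\bphi+\sm\vphi_A$) is bounded uniformly in $k$, again by the same $\lesssim_\tlog$ bound — and invoking Lemma \ref{lem:bphi-indep-on-ell} to replace $\tfrac{1}{k\mu}\log\Berg_{1,k}$ by $\dep\bphi$ up to a uniform constant, one gets that the regularised limit $\reglim_k\dep\bphiU$ is dominated, up to a uniform additive constant, by $\dep[\infty]\bphi=\bphi_\tmin$.

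It remains to pass to the limit. Because each $\dep\bphiU$ is quasi-psh with curvature $\geq-\tfrac{1}{k\mu}\omega$ and bounded above uniformly in $k$, while the sub-mean-value inequality at $p_k$ (together with the uniform lower bound $\dep\bphiU(p_k)\geq-C_0$) gives a uniform lower bound for the mass of $\res{\dep\bphiU}_{S_j}$ on a fixed ball, the family $\set{\res{\dep\bphiU}_{S_j}}_k$ is relatively compact in $L^1(S_j)$; passing to a subsequence (and to $p_k\to p_\infty\in S_j$) one obtains a quasi-psh limit $\phi_\infty$ on $S_j$ with $\phi_\infty\not\equiv-\infty$. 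Finally, using $\bphi_\tmin=\reglim_k\dep\bphi$ and the fact that the regularised limit permits the diagonal direction $\zeta=z$, one has, for a.e.\ $z\in S_j$, $\bphi_\tmin(z)\geq\bigl(\reglim_k\dep\bphiU\bigr)(z)-C\geq\phi_\infty(z)-C$, whence $\res{\bphi_\tmin}_{S_j}\not\equiv-\infty$, as required.

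The main obstacle is keeping every constant independent of $k$: it is precisely the $L^p$-estimate of Lemma \ref{lem:sup-norm-bdd-from-L2-with-denom} (via \textde{Hölder}'s inequality), combined with the uniformly-bounded-above Bergman-kernel potentials of Section \ref{sec:Bergman-potentials}, that replaces the sup-norm argument of \cite{DHP} and lets one discard the superfluous hypothesis $\supp D_2\subset\supp B$. A second, more technical point is the interchange of restriction to $S_j$ with the limit $k\to\infty$: the convergence $\dep\bphi\to\bphi_\tmin$ in $L^1_{\mathrm{loc}}(X)$ does not by itself control $\res{\dep\bphi}_{S_j}$, so the comparison $\res{\bphi_\tmin}_{S_j}\geq\phi_\infty$ must be run through the pointwise/diagonal description of $\reglim$ and the uniform lower bound at the points $p_k$, not through a naive restriction of an $L^1_{\mathrm{loc}}$-limit; one should also check that the general point $p_\infty\in S_j$ chosen at the outset avoids the countably many proper analytic subsets $\bigl(\dep\bphi\bigr)^{-1}(-\infty)\cap S_j$ and $\supp D_2\cap S_j$.
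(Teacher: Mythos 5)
Your overall strategy is the paper's: produce, for each $k$, a section ``concentrated on $S_j$'', take the minimal element of $\setE$, use Lemmata \ref{lem:uniform-delta}, \ref{lem:potential_Umin_uniBdd} and \ref{lem:sup-norm-bdd-from-L2-with-denom} (i.e.\ \eqref{eq:Umin-sup-norm-estimate}) together with the Bergman-kernel domination to get a lower bound on $\dep\bphi$ somewhere on $S_j$ that is uniform in $k$, and then pass to the limit. But there is a genuine gap at the very first step: you assert that for every $k$ there is a global section of $k\mu(K_X+S+B)+A$ (the \emph{untwisted}, $\ell=1$ system) not vanishing identically on $S_j$, so that $v_{k,A}|_{S_j}\not\equiv 0$ and $\setE(v_{k,A})\neq\emptyset$. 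The hypothesis $S_j\not\subset\sBase[-]{K_X+S+B}$ only gives $S_j\not\subset\mathbf{B}\paren{k\mu(K_X+S+B)+A}$, i.e.\ non-containment in the \emph{stable} base locus; this produces a section of $\ell k\mu(K_X+S+B)+\ell A$ nontrivial on $S_j$ only for sufficiently divisible $\ell\gg0$ — which is exactly the content of \eqref{eq:S_j-not-in-Berg-kernel} — and says nothing about the base locus of the single linear system at $\ell=1$. Your proposed repair, taking $v_{k,A}$ to be ``the restriction of an OT extension from $S_j$'', is circular: extension from the divisor $S_j$ presupposes a nonzero holomorphic section on $S_j$ to extend, which is precisely what is missing; and extension from a \emph{point} of $S_j$ within the $\ell=1$ system fails for positivity reasons (the available curvature is only $\tfrac1{k\mu}\omega$ after one copy of $A$, not enough to absorb the $n\log|z|^2$ weight), while using $\bphi_\tmin$ as the metric near $S_j$ would assume the conclusion. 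This is why the paper's proof of Theorem \ref{thm:S_j-not-in-polar-bphi} works throughout with the $\ell$-twisted objects $W_{\ell k,\ell A}$, $v_{\ell k,\ell A}$, $\Umin_{\ell k,\ell A}$ and the Bergman kernel $\Berg_{\ell,k}$ itself (so no comparison between $\Berg_{1,k}$ and $\dep[\ell,k]\bphi$ is needed); note also that your appeal to Lemma \ref{lem:bphi-indep-on-ell} at $\ell=1$ is outside its stated range $\ell\geq\ell_0$, although the mean-value half of its proof, \eqref{eq:pf-Berg-potential}, does give the one inequality you use.

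Two smaller points. The statement that $\norm{\Umin_{k,A}}^2_{k\mu\sm\bphi+\sm\vphi_A}$ is ``bounded uniformly in $k$'' is false as written: \eqref{eq:Umin-sup-norm-estimate} only gives a bound of the form $e^{Ck\mu}$, and what is (and needs to be) uniform is $\tfrac1{k\mu}\log$ of that norm — this is exactly how the paper uses it, so the conclusion $\dep\bphiU\lesssim_\tlog\dep\bphi$ with uniform constant survives. Finally, your limit passage (an $L^1(S_j)$-compactness argument for $\res{\dep\bphiU}_{S_j}$, anchored by the uniform lower bound at the points $p_k$) is sound but heavier than necessary: since $\dep\bphi\to\bphi_\tmin$ in $L^1_{\mathrm{loc}}(X)$ with uniformly bounded curvature negativity, Hartogs-type upper semicontinuity of suprema over the compact set $S_j$ gives $\sup_{S_j}\bphi_\tmin\geq\varlimsup_k\sup_{S_j}\dep\bphi\geq -C>-\infty$ directly, which is the (implicit) final step of the paper; also be aware that without arranging $v$ to vanish on the other components (the paper's device, possible because the $S_j$ are disjoint), the sup-norm normalisation may be attained on some $S_{j'}$ rather than on $S_j$.
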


\begin{proof}

  \newcommand{\setF}{\mathcal{F}}
  \newcommand{\phiPmin}{\boldsymbol{\phi}_\tmin}

  Since $S_j \not\subset \sBase[-]{K_X+S+B}$ for every $j \in I_S$,
  using the Ohsawa--Takegoshi extension theorem, one can find that,
  for every $j \in I_S$
  and for every $k\in\Nnum$, there exist $\ell \gg 0$ and $W_{\ell k,
    \ell A} \in \cohgp 0[X]{\ell k\mu\paren{K_X+S+B} + \ell A}$ such
  that  
  $\res{W_{\ell k, \ell A}}_{S_j} \not\equiv 0$ but
  $\res{W_{\ell k, \ell A}}_{S_{j'}} \equiv 0$ for all $j' \in I_S
  \setminus \set j$.
  Set $v_{\ell k, \ell A} := \res{W_{\ell k,\ell A}}_S$ and
  renormalise it such that
  \begin{equation*}
    \norm{v_{\ell k, \ell A}}_\infty^{\frac
      2{\ell k\mu}\paren{1+\delta}}
    :=\sup_{S} \paren{\abs{v_{\ell k,\ell A}}_\omega^{\frac 2{\ell k\mu}\paren{1+\delta}}
      e^{-\delta\sm\bphi -\sm\vphi_B -\frac 1{k\mu}\paren{1+\delta}
        \sm\vphi_A}}
    = 1 \; .
  \end{equation*}
  Recall that $\delta > 0$ is chosen as in Lemma
  \ref{lem:uniform-delta}, so it is independent of $k$ in particular.

  Define the set $\setE := \setE\paren{v_{\ell k, \ell A}}$ as in
  \eqref{eq:def_setE}.
  This set is obviously non-empty, which implies the existence of the
  minimal element $\Umin_{\ell k, \ell A} \in \setE$ such that the
  estimate \eqref{eq:Umin-sup-norm-estimate} holds. 

  Since $\dep\bphi =\dep[\ell,k]\bphi$ is a Bergman kernel potential
  constructed from holomorphic sections in $\cohgp 0[X]{\ell k\mu \paren{K_X+S+B}
    +\ell A}$, it follows that
  \begin{equation*}
    \frac{1}{\ell k\mu} \paren{\log\abs{\Umin_{\ell
          k,\ell A}}^2 -\ell \sm\vphi_A}
    \leq \dep[\ell,k]\bphi
    +\frac{1}{\ell k\mu} \log \norm{\Umin_{\ell k,\ell A}}_{\ell k\mu
      \sm\bphi +\ell\sm\vphi_A}^2 \; ,
  \end{equation*}
  where
  \begin{align*}
    \paren{\norm{\Umin_{\ell k,\ell A}}_{\ell k\mu \sm\bphi
      +\ell\sm\vphi_A}^2}^{\frac 1{\ell k \mu}}
    &=\paren{\int_X \abs{\Umin_{\ell k,\ell A}}^2
    e^{-\ell k\mu \sm\bphi -\ell\sm\vphi_A} d\vol_{X,\omega}}^{\frac
      1{\ell k\mu}} \\
    &\overset{\mathclap{\text{by
      \eqref{eq:Umin-sup-norm-estimate}}}}\lesssim \quad\;\;
    \norm{v_{\ell k, \ell A}}_\infty^{\frac 2{\ell k\mu}} =1 \; .
  \end{align*}
  Therefore,
  \begin{equation*}
    \frac{1}{\ell k\mu} \paren{\log\abs{\Umin_{\ell
          k,\ell A}}^2 -\ell \sm\vphi_A}
    \lesssim_\tlog \dep[\ell,k]\bphi
  \end{equation*}
  on $X$.
  Notice that $v_{\ell k,\ell A} = \res{\Umin_{\ell k,\ell A}}_{S}$
  and the sup-norm of $v_{\ell k,\ell A}$ on $S$ is the same as the
  one on $S_j$.
  It follows that, after restricting to $S_j$ and adding suitable
  potentials (which are uniformly bounded in $k$) on both sides, one
  has
  \begin{equation*}
    0 =\log \sup_{S_j}
    \paren{\abs{v_{\ell k,\ell A}}_\omega^{\frac 2{\ell k\mu}}
      e^{-\frac\delta{\paren{1+\delta}} \sm\bphi
        -\frac 1{\paren{1+\delta}} \sm\vphi_B -\frac 1{k\mu} 
        \sm\vphi_A}} \;
    \lesssim_\tlog \; \sup_{S_j} \dep\bphi \; ,
  \end{equation*}
  where the constant involved in $\lesssim_\tlog$ is independent of
  $k$.
  Taking the upper regularised limit yields
  \begin{equation*}
    0 \lesssim_\tlog \; \sup_{S_j} \dep[\infty]\bphi \: = \;
    \sup_{S_j}\bphi_\tmin \; .
  \end{equation*}
  This implies that $\bphi_\tmin$ is well-defined on $S_j$.

  Since this holds true for every $j \in I_S$, $\bphi_\tmin$ is
  therefore well-defined on every component of $S$. 
\end{proof}

Recall that $A$ is chosen to be globally generated and
$\set{s_{A,i}}_{i \in I_A}$ is a basis of $\cohgp 0[X]{A}$ such that
$\sm\vphi_A = \log\paren{\sum_{i\in I_A} \abs{s_{A,i}}^2}$.
The following is the key result of this section.

\begin{thm} \label{thm:bphi-bdd-below-by-u-on-S}
  Under the setup given in Section \ref{sec:basic-setup-plt} (\emph{without}
  the assumption $\supp D_2 \subset \supp B$),
  suppose that there is a section $\orgu \in
  \cohgp0[S]{\holo_S\paren{\mu\paren{K_X+S+B}}}$ and that the sets
  $\setE\paren{\orgu^k \otimes \res{s_{A,i}}_{S}}$ constructed
  as in \eqref{eq:def_setE} are non-empty for all $k \in \Nnum$, $i
  \in I_A$.
  Then, one has
  \begin{equation*}
    \log\abs\orgu^{\frac 2\mu} \lesssim_\tlog \res{\dep\bphi}_S
    \quad\text{on }S
  \end{equation*}
  for all $k$ with the constant involved in $\lesssim_\tlog$ being
  independent of $k$, and therefore
  \begin{equation*}
    \log\abs\orgu^{\frac 2\mu} \lesssim_\tlog \res{\bphi_\tmin}_S
    \quad\text{on }S \; .
  \end{equation*} 
\end{thm}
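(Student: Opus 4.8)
The plan is to run the Bergman-kernel comparison argument of \cite{DHP}*{Thm.~6.1}, but now feeding in the \emph{minimal} extensions of powers of $\orgu$ instead of auxiliary sections, and keeping every constant uniform in $k$ by invoking the sup-norm estimate \eqref{eq:Umin-sup-norm-estimate} (which was assembled from Lemmata \ref{lem:sup-norm-bdd-from-L2-with-denom}, \ref{lem:potential_Umin_uniBdd} and \ref{lem:uniform-delta}). The first displayed inequality of the theorem will come from comparing $\Umin_{k,A,i}$ with $\dep\bphi$ via the extremal property of the Bergman kernel $\Berg_{\dep\ell,k}$, and the second from letting $k\tendsto\infty$ and using the semicontinuity of limits of psh potentials together with Theorem \ref{thm:S_j-not-in-polar-bphi}.

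First I would fix $k\in\Nnum$ and $i\in I_A$ and set $v_{k,A,i}:=\res{\orgu^k\otimes s_{A,i}}_S\in\cohgp0[S]{\holo_S\paren{k\mu\paren{K_X+S+B}+A}}$. By hypothesis $\setE\paren{v_{k,A,i}}$ (see \eqref{eq:def_setE}) is non-empty, so \eqref{eq:Umin-sup-norm-estimate} produces a minimal element $\Umin_{k,A,i}\in\setE\paren{v_{k,A,i}}$ with
\[
  \abs{\Umin_{k,A,i}}^{\frac{2}{k\mu}\paren{1+\delta}}e^{-\paren{1+\delta}\sm\bphi-\frac1{k\mu}\paren{1+\delta}\sm\vphi_A}(p)\ \lesssim\ \norm{v_{k,A,i}}_{\infty}^{\frac{2}{k\mu}\paren{1+\delta}}\qquad\text{for all }p\in X,
\]
the constant in $\lesssim$ being independent of $k$, $i$ and $p$. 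Raising to the power $\tfrac{k\mu}{1+\delta}$ gives $\sup_X\paren{\abs{\Umin_{k,A,i}}^2e^{-k\mu\sm\bphi-\sm\vphi_A}}\le C_1^{\,k\mu}\,\norm{v_{k,A,i}}_{\infty}^2$ with $C_1>0$ independent of $k$ and $i$. Moreover, by the definition of $\norm{\cdot}_\infty$ in Lemma \ref{lem:uniform-delta} and the uniform (in $k$) boundedness on the compact $S$ of the smooth factors, in particular of $\abs{s_{A,i}}^{\frac2{k\mu}\paren{1+\delta}}$ and $e^{-\frac1{k\mu}\paren{1+\delta}\sm\vphi_A}$, one has $\norm{v_{k,A,i}}_{\infty}^{\frac2{k\mu}\paren{1+\delta}}\le C_2\,\sup_S\paren{\abs\orgu^{\frac2\mu}e^{-\delta\sm\bphi-\sm\vphi_B}}^{1+\delta}$ with $C_2$ independent of $k$ and $i$, the supremum on the right being finite and independent of $k$ because $\orgu$ is a holomorphic section of a line bundle on the compact $S$ and $\sm\bphi,\sm\vphi_B$ are smooth. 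Hence $\tfrac1{k\mu}\log\norm{v_{k,A,i}}_{\infty}^2$ is bounded above by a constant independent of $k$ and $i$.

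The key step is the Bergman-kernel inequality. By \eqref{eq:definition-bphi_Berg}, $\dep\bphi=\tfrac1{\dep\ell\,k\mu}\log\Berg_{\dep\ell,k}-\tfrac1{k\mu}\sm\vphi_A$, where $\Berg_{\dep\ell,k}$ is the Bergman kernel of $\cohgp0[X]{\dep\ell\,k\mu\paren{K_X+S+B}+\dep\ell A}$ for the weight $\dep\ell\,k\mu\,\sm\bphi+\dep\ell\,\sm\vphi_A$; since $\paren{\Umin_{k,A,i}}^{\dep\ell}$ lies in that space, the extremal property of the Bergman kernel gives, pointwise on $X$,
\[
  \abs{\Umin_{k,A,i}}^{2\dep\ell}\ \le\ \Berg_{\dep\ell,k}\int_X\abs{\Umin_{k,A,i}}^{2\dep\ell}e^{-\dep\ell\,k\mu\,\sm\bphi-\dep\ell\,\sm\vphi_A}\,d\vol_{X,\omega}.
\]
Applying $\tfrac1{\dep\ell\,k\mu}\log$, using the formula for $\dep\bphi$ and bounding the integral by $C_3\,\sup_X\paren{\abs{\Umin_{k,A,i}}^2e^{-k\mu\sm\bphi-\sm\vphi_A}}^{\dep\ell}$ with $C_3$ depending only on $(X,\omega)$, the estimates of the previous paragraph yield $\tfrac1{k\mu}\log\abs{\Umin_{k,A,i}}^2\le\dep\bphi+C$ on $X$ with $C$ independent of $k$ and $i$ (here $\tfrac1{k\mu}\sm\vphi_A$ is absorbed, being uniformly bounded on the compact $X$; the point throughout is that a factor of the shape $C^{k\mu}$ becomes $k$-independent after $\tfrac1{k\mu}\log$). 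Restricting to $S$, where $\res{\Umin_{k,A,i}}_S=\res{\orgu^k\otimes s_{A,i}}_S$ in the fixed trivialisations, the left-hand side equals $\log\abs\orgu^{\frac2\mu}+\tfrac1{k\mu}\log\abs{s_{A,i}}^2$, so $\log\abs\orgu^{\frac2\mu}\le\res{\dep\bphi}_S+C-\tfrac1{k\mu}\log\abs{s_{A,i}}^2$ on $S$. For each $x\in S$ I would now choose $i=i_x$ maximising $\abs{s_{A,i}(x)}^2$; then $\abs{s_{A,i_x}(x)}^2\ge\abs{I_A}^{-1}\sum_{i\in I_A}\abs{s_{A,i}(x)}^2=\abs{I_A}^{-1}e^{\sm\vphi_A(x)}$, which is bounded below by a positive constant since $\sm\vphi_A$ is smooth on the compact $X$, so $-\tfrac1{k\mu}\log\abs{s_{A,i_x}}^2$ is bounded above uniformly in $k$. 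Evaluating at such $i_x$ gives $\log\abs\orgu^{\frac2\mu}\le\res{\dep\bphi}_S+C'$ on $S$ with $C'$ independent of $k$, which is the first assertion.

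Finally I would let $k\tendsto\infty$. By \eqref{eq:wt_properties-curv}, $\ibddbar\dep\bphi\ge-\tfrac1{k\mu}\omega=-\tfrac1{k\mu}\ibddbar\sm\vphi_A$, so each $\dep\bphi+\tfrac1{k\mu}\sm\vphi_A$ is psh; by \eqref{eq:wt_properties-bdd-uniformly-in-k} the family is locally uniformly bounded above, and (along the subsequence used to define $\dep[\infty]\bphi$) it converges in $L^1_{\mathrm{loc}}$ to $\dep[\infty]\bphi=\bphi_\tmin$. By the standard upper-semicontinuity of $L^1_{\mathrm{loc}}$-limits of psh functions (see, e.g., \cite{Demailly}*{Ch.~I}), $\varlimsup_{k\tendsto\infty}\dep\bphi\le\dep[\infty]\bphi=\bphi_\tmin$ at \emph{every} point of $X$, in particular at every point of $S$; and since $\res{\bphi_\tmin}_S$ is well-defined on every component of $S$ by Theorem \ref{thm:S_j-not-in-polar-bphi}, it is the pointwise restriction of the quasi-psh function $\bphi_\tmin$. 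Taking $\varlimsup_{k\tendsto\infty}$ in the inequality of the previous paragraph therefore gives $\log\abs\orgu^{\frac2\mu}\le\res{\bphi_\tmin}_S+C'$ on $S$, the second assertion. The step I expect to be the main obstacle is precisely this passage to the limit on $S$: as $S$ has measure zero in $X$ one cannot use the $L^1_{\mathrm{loc}}$-convergence on $X$ directly, so it is essential both that the $\dep\bphi$ form a locally uniformly bounded-above family of (essentially) psh functions and that Theorem \ref{thm:S_j-not-in-polar-bphi} rules out degeneration of $\bphi_\tmin$ to $-\infty$ along any component of $S$; the $k$-dependence bookkeeping in the earlier paragraphs is routine.
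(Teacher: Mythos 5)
Your proof is correct and follows essentially the same route as the paper's: the minimal extensions of $\orgu^k\otimes s_{A,i}$ with the uniform sup-norm estimate \eqref{eq:Umin-sup-norm-estimate}, the extremal property of the Bergman kernel defining $\dep\bphi$ (applied to $\paren{\Umin_{k,A,i}}^{\ell}$), restriction to $S$, and passage to the upper regularised limit $\dep[\infty]\bphi=\bphi_\tmin$. The only, immaterial, deviation is at the final step of the first claim: you eliminate the $\abs{s_{A,i}}^2$ factor by a pointwise choice of the maximising index $i_x$, whereas the paper sums the exponentiated inequality over $i\in I_A$ and uses $\sum_{i\in I_A}\abs{s_{A,i}}^2e^{-\sm\vphi_A}=1$.
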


\begin{proof}
  The proof goes almost as in the proof of Theorem
  \ref{thm:S_j-not-in-polar-bphi}.

  For fixed $k \in \Nnum$ and $i \in I_A$, the set
  $\setE :=\setE\paren{\orgu^k \otimes \res{s_{A,i}}_S}$ being
  non-empty implies the existence of the minimal element
  $\Umin_{k,A,i} \in \setE$ such that the estimate
  \eqref{eq:Umin-sup-norm-estimate} holds.

  The potential $\dep\bphi =\dep[\ell,k]\bphi$ being a Bergman kernel
  potential implies that, for $\ell \gg 0$,
  \begin{equation*}
    \frac{1}{ k\mu} \paren{\log\abs{\Umin_{k,A,i}}^2 -\sm\vphi_A}
    \leq \dep\bphi
    +\frac{1}{\ell k\mu} \log \norm{\paren{\Umin_{k,A,i}}^\ell}_{\ell k\mu
      \sm\bphi +\ell \sm\vphi_A}^2 \; ,
  \end{equation*}
  where
  \begin{align*}
    \paren{\norm{\paren{\Umin_{k,A,i}}^\ell}_{\ell k\mu \sm\bphi
    +\ell \sm\vphi_A}^2}^{\frac 1{\ell k \mu}}
    &=\paren{\int_X \abs{\Umin_{k,A,i}}^{2\ell}
      e^{-\ell k\mu \sm\bphi -\ell\sm\vphi_A} d\vol_{X,\omega}}^{\frac
      1{\ell k\mu}} \\
    &\overset{\mathclap{\text{by
      \eqref{eq:Umin-sup-norm-estimate}}}}\lesssim \quad\;\;
      \norm{\orgu^k \otimes s_{A,i}}_\infty^{\frac 2{ k\mu}}
    \\
    &=\sup_S \paren{\abs{\orgu^k \otimes s_{A,i}}_\omega^{\frac 2{ k\mu}}
      e^{-\frac{\delta}{1+\delta}\sm\bphi -\frac 1{1+\delta}\sm\vphi_B
      -\frac{1}{k\mu} \sm\vphi_A}} \\
    &\lesssim 1 
  \end{align*}
  with the constants involved in both $\lesssim$'s being
  independent of $k$.
  Therefore,
  \begin{equation*}
    \begin{aligned}
      &&\frac{1}{k\mu} \paren{\log\abs{\Umin_{k,A,i}}^2 -\sm\vphi_A}
      &\lesssim_\tlog \dep\bphi &&\text{on }X \\
      \imply& \quad
      &\log\abs{\orgu}^{\frac 2\mu} +\frac{1}{k\mu}
      \log \parres{\abs{s_{A,i}}^2 e^{ -\sm\vphi_A}}_S
      &\lesssim_\tlog \res{\dep\bphi}_S &&\text{on }S \\
      \imply& \quad
      &\abs{\orgu}^{2k} \parres{\abs{s_{A,i}}^2 e^{
          -\sm\vphi_A}}_S
      &\lesssim \;\; \res{e^{k\mu\dep\bphi}}_S
      &&\text{on }S \; ,
    \end{aligned}
  \end{equation*}
  where the constants involved in $\lesssim_\tlog$'s and $\lesssim$
  are independent of $k$.
  Summing up the last inequality over $i\in I_A$ yields the
  inequality of the first claim.

  The second claim follows from taking the upper regularised limit of
  $\dep\bphi$ as $k \to \infty$ together with the fact that
  $\dep[\infty] \bphi = \bphi_\tmin$.
\end{proof}


\subsection{Proof of the main theorem}
\label{sec:proof-improved-plt}


Let $\pi \colon \rs X \to X$ be a log-resolution of $(X,S+B)$ such
that
\begin{equation*} 
  K_{\rs X} + \rs S + \rs B = \pi^*\paren{K_X + S + B} + \rs E \; ,
\end{equation*}
where $\rs S$, $\rs B$ and $\rs E$ are effective $\fieldQ$-divisors
with no common components such that $\rs S = \floor{\rs S +\rs B}$ and
$\pi(\rs S) = S$. 
Let $\Xi:= \paren{ N_\sigma\paren{\lVert K_{\rs X}+ \rs S+ \rs
    B \rVert_{\rs S}} \wedge \rs B|_{\rs S} }$ be the
\emph{extension obstruction divisor} introduced in \cite{DHP}
(see \cite{DHP}*{\S 2.1} for the definition of $N_\sigma\paren{\lVert
  K_{\rs X}+ \rs S+ \rs B \rVert_{\rs S}}$). 
The corresponding \emph{extension obstruction ideal sheaf} is defined
as
\begin{equation*}
  \obstidlSHM := \setd{w \in \holo_S}{
    \begin{aligned}
      \exists~&\text{log-resolution } \pi \colon \rs X \to X \text{
        s.t.~irred.~comp.~of $\rs B$ }
      \\
      &\text{are disjoint and }
      \divsr{\pi^*w} + \mu \widetilde E|_{\widetilde S} \geq \mu \Xi 
    \end{aligned}
  } \; .
\end{equation*}
Notice that, when $K_X+S+B$ is nef, one has $\Xi = 0$, and thus
$\obstidlSHM = \holo_S$.

\begin{thm} \label{thm:proof-improved-plt}
  Under the setup in Section \ref{sec:basic-setup-plt} (\emph{without}
  the assumption $\supp D_2 \subset \supp B$), every $\orgu
  \in \cohgp0[S]{\holo_S\paren{\mu\paren{K_X+S+B}} \otimes
    \obstidlSHM}$ extends to a holomorphic section in
  $\cohgp0[X]{\mu\paren{K_X+S+B}}$.

  In particular, when $K_X+S+B$ is nef, the restriction map
  $\cohgp0[X]{\mu\paren{K_X+S+B}} \to
  \cohgp0[S]{\holo_S\paren{\mu\paren{K_X+S+B}}}$ is surjective.
\end{thm}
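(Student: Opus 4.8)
The plan is to follow the proof of the plt extension in \cite{DHP}*{\S 5}, with two changes: the inductively constructed auxiliary potentials $\vphi_{\tau_m}$ of \cite{DHP} are replaced throughout by the Bergman kernel potentials $\dep\bphi$ of Section~\ref{sec:Bergman-potentials}, whose a priori uniform upper bound \eqref{eq:wt_properties-bdd-uniformly-in-k} keeps all estimates uniform in $k$; and the sup-norm estimate of \cite{DHP}, the only place where the superfluous hypothesis $\supp D\subset\supp\paren{S+B}$ was needed there, is replaced by Lemma~\ref{lem:sup-norm-bdd-from-L2-with-denom} (together with Lemma~\ref{lem:uniform-delta}), which controls the logarithmic poles in an $L^p$ norm via \textde{H\"older} rather than in sup-norm. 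The endpoint is to apply Theorem~\ref{thm:extension-with-seq-of-potentials} to the sequence $\seq{\dep\bphi}_{k\in\Nnum}$, the associated potentials $\dep\vphi_L$, $\dep\psi$ of Section~\ref{sec:construction-vphi_L-psi}, and the given $\orgu$; this produces $F\in\cohgp0[X]{K_X\otimes L\otimes\bigcap_k\mtidlof{\dep\vphi_L}}$, which is in particular a holomorphic section of $\mu\paren{K_X+S+B}=K_X+L$ with $\res F_S=\orgu$ (the pole of $\dep\psi$ along $S$ forces $\bigcap_k\mtidlof{\dep\vphi_L+\dep\psi}\subset\holo_X\cdot s_S$). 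The hypotheses of Theorem~\ref{thm:extension-with-seq-of-potentials} concerning the sequence itself --- in particular the curvature bound \ref{item:curv-cond-seq} and the normalisation \ref{item:normalisation-seq} --- are already in place, having been arranged in Sections \ref{sec:Bergman-potentials} and \ref{sec:construction-vphi_L-psi} (see \eqref{eq:curvature-for-final-step}). So two things remain: (a) that $\orgu$ defines a section of $K_X\otimes L\otimes\frac{\bigcap_k\mtidlof{\dep\vphi_L}}{\bigcap_k\mtidlof{\dep\vphi_L+\dep\psi}}$, and (b) that $\lim_{k\to\infty}\int_S\abs\orgu_\omega^2\lcV|1,(m_1)|<\dep\vphi_L>[\dep\psi]<\infty$.

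The crux of the argument is to establish (b), and this is won through Theorem~\ref{thm:bphi-bdd-below-by-u-on-S}, whose hypothesis is that the set $\setE\paren{\orgu^k\otimes\res{s_{A,i}}_S}$ of \eqref{eq:def_setE} be non-empty for every $k\in\Nnum$ and $i\in I_A$; establishing this is the heart of the matter, and I would prove it by induction on $k$. For $k=1$: because $A$ is chosen sufficiently ample, $\paren{\mu-1}\paren{K_X+S+B}+A$ carries a smooth metric of curvature $\geq\tfrac12\omega$, so $\mu\paren{K_X+S+B}+A=K_X+\paren{\paren{\mu-1}\paren{K_X+S+B}+S+B+A}$ acquires a semi-positively curved metric against which $\res{\orgu\otimes s_{A,i}}_S$ is trivially $L^2$ (here $\orgu$ and $s_{A,i}$ are genuine sections and $\res{e^{-\phi_B}}_S$ is integrable on the snc divisor $\res B_S$); hence Theorem~\ref{thm:extension-sigma=1}, or Demailly's theorem \cite{Demailly_extension}*{Thm.~2.8}, extends $\orgu\otimes\res{s_{A,i}}_S$, and Lemmata \ref{lem:sup-norm-bdd-from-L2-with-denom}, \ref{lem:potential_Umin_uniBdd} and \ref{lem:uniform-delta} then yield a minimal extension $\Umin_{1,A,i}$ satisfying \eqref{eq:Umin-sup-norm-estimate}. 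For the step from $k-1$ to $k$: one forms the potentials $\frac1{(k-1)\mu}\paren{\log\abs{\Umin_{k-1,A,j}}^2-\sm\vphi_A}$ of the kind \eqref{eq:def-bphiU} out of the level-$(k-1)$ minimal extensions (whose restriction to $S$ carries precisely the lower bound $\gtrsim_\tlog\log\abs\orgu^{2/\mu}$ that is needed), and assembles them with the positivity of $A$ and of $D$ into a metric on $k\mu\paren{K_X+S+B}+A$ fulfilling the hypotheses of Theorem~\ref{thm:extension-sigma=1}, exactly as in the proof of Lemma~\ref{lem:potential_Umin_uniBdd}; this extends $\orgu^k\otimes\res{s_{A,i}}_S$, and Lemmata \ref{lem:sup-norm-bdd-from-L2-with-denom}, \ref{lem:potential_Umin_uniBdd}, \ref{lem:uniform-delta} again produce $\Umin_{k,A,i}$ with \eqref{eq:Umin-sup-norm-estimate}. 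The hardest part of the whole argument will be precisely this step: arranging the curvature to be semi-positive while keeping the restriction to $S$ bounded below, and doing so with all constants --- those in the curvature estimates, in \eqref{eq:Umin-sup-norm-estimate}, and above all the $\delta>0$ of Lemma~\ref{lem:uniform-delta}, which rests on \cite{DHP}*{Lemma~5.5} --- independent of $k$, since only then does the later limit $k\to\infty$ make sense. The uniform upper bound \eqref{eq:wt_properties-bdd-uniformly-in-k} for the Bergman potentials is exactly what makes this manageable, in contrast with the delicate inductive estimates of \cite{DHP}*{\S 5}.

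Once all the $\setE$'s are known to be non-empty, Theorem~\ref{thm:bphi-bdd-below-by-u-on-S} gives $\log\abs\orgu^{2/\mu}\lesssim_\tlog\res{\dep\bphi}_S$ on $S$, with constant independent of $k$. For (b), since the minimal lc centres of $(X,S)$ are the mutually disjoint components $S_j$, which are of codimension $1$, Proposition~\ref{prop:lc-measure} identifies the measure:
\[
  \int_S\abs\orgu_\omega^2\lcV|1,(m_1)|<\dep\vphi_L>[\dep\psi]
  =\sum_{j\in I_S}\frac{\pi\numax}{\nu_j}\int_{S_j}\abs\orgu_\omega^2\,
    e^{-\paren{\mu-1}\dep\bphi-\phi_B}\,d\vol_{S_j,\omega}\,.
\]
Using $e^{-\paren{\mu-1}\res{\dep\bphi}_S}\lesssim\abs\orgu^{-2\paren{\mu-1}/\mu}$ with constant uniform in $k$ (from the preceding inequality), the integrand is bounded, up to a smooth factor, by $\abs\orgu^{2/\mu}\res{e^{-\phi_B}}_S$, which is integrable on $S_j$ since $\res B_S$ has snc support with all coefficients $<1$; hence the limit in (b) is finite. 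For (a): this is the local multiplier-ideal computation of \cite{DHP}. Passing to a log-resolution $\pi\colon\rs X\to X$ as in Section~\ref{sec:proof-improved-plt} and using $\dep[\infty]\bphi=\bphi_\tmin$, the singularities of $\bigcap_k\mtidlof{\dep\vphi_L}$ along $\rs S$ are governed by $N_\sigma\paren{\lVert K_{\rs X}+\rs S+\rs B\rVert_{\rs S}}$, and the inequality $\divsr{\pi^*\orgu}+\mu\,\rs E|_{\rs S}\geq\mu\Xi$ appearing in the definition of $\obstidlSHM$ is precisely the condition guaranteeing that $\orgu$ has local liftings into $\bigcap_k\mtidlof{\dep\vphi_L}$ with the prescribed class modulo $\bigcap_k\mtidlof{\dep\vphi_L+\dep\psi}$. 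With (a) and (b) in hand, Theorem~\ref{thm:extension-with-seq-of-potentials} produces the extension $F$.

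Finally, when $K_X+S+B$ is nef one has $\Xi=0$, hence $\obstidlSHM=\holo_S$; the construction above then applies to every $\orgu\in\cohgp0[S]{\holo_S\paren{\mu\paren{K_X+S+B}}}$, which is exactly the assertion that the restriction map $\cohgp0[X]{\mu\paren{K_X+S+B}}\to\cohgp0[S]{\holo_S\paren{\mu\paren{K_X+S+B}}}$ is surjective.
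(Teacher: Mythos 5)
Your endgame coincides with the paper's: the potentials $\dep\vphi_L$, $\dep\psi$ of Section \ref{sec:construction-vphi_L-psi} built from the Bergman kernel potentials $\dep\bphi$, the curvature and normalisation checks via \eqref{eq:curvature-for-final-step}, the finiteness of the limit of the $1$-lc-measures via Theorem \ref{thm:bphi-bdd-below-by-u-on-S} and Proposition \ref{prop:lc-measure}, and the final application of Theorem \ref{thm:extension-with-seq-of-potentials}. The genuine gap is in the step you yourself single out as the heart of the matter: the non-emptiness of $\setE\paren{\orgu^k\otimes\res{s_{A,i}}_S}$ for all $k$ and $i$. The paper does not prove this by induction at all; it invokes the pluricanonical extension theorem of Hacon--M${}^{\text{c}}$Kernan, \cite{Hacon&Mckernan}*{Thm.~6.3} (see also \cite{Hacon&Mckernan_bddness_pluriK-map}*{Thm.~3.16}), and that citation is exactly where the hypothesis $\orgu\in\cohgp0[S]{\holo_S\paren{\mu\paren{K_X+S+B}}\otimes\obstidlSHM}$ is consumed. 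Your induction never uses this hypothesis: neither the base case nor the inductive step refers to $\Xi$, and you instead assign the role of $\obstidlSHM$ to your step (a) (local liftings into $\bigcap_k\mtidlof{\dep\vphi_L}$), which is a misattribution --- in the paper's scheme the membership/$L^2$ information along $S$ comes from the uniform bound $\log\abs\orgu^{2/\mu}\lesssim_\tlog\res{\dep\bphi}_S$ of Theorem \ref{thm:bphi-bdd-below-by-u-on-S}, which itself presupposes the non-emptiness of the $\setE$'s. Since for non-nef $K_X+S+B$ a section of $\holo_S\paren{\mu\paren{K_X+S+B}}$ that does not vanish along the obstruction divisor $\Xi$ need not extend, no induction in which the $\Xi$-condition never enters can be correct.

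The inductive step also fails quantitatively as sketched. The bookkeeping in Lemma \ref{lem:potential_Umin_uniBdd} works because the potential $\dep\bphiU$ of \eqref{eq:def-bphiU} is built from a section of $k\mu\paren{K_X+S+B}+A$ at the \emph{same} level $k$: the coefficient $k\mu-1-\delta$ divided by $k\mu$ is $<1$, so the loss from $-\sm\vphi_A$ is absorbed by the remaining $\frac{1+\delta}{k\mu}\ibddbar\sm\vphi_A$. In your step from $k-1$ to $k$ the potential $\frac1{(k-1)\mu}\paren{\log\abs{\Umin_{k-1,A,j}}^2-\sm\vphi_A}$ enters with coefficient $k\mu-1-\delta$, and $\frac{k\mu-1-\delta}{(k-1)\mu}>1$ (as $\mu\geq2$ and $\delta$ from Lemma \ref{lem:uniform-delta} is not at your disposal), leaving a curvature deficit of order $\frac{\mu-1-\delta}{(k-1)\mu}\,\omega$ that neither the single copy of $A$ nor the positivity of the effective divisor $D$ (a current supported on $D$) can compensate; so Theorem \ref{thm:extension-sigma=1} does not apply at fixed $k$, and the same-level trick is unavailable because the level-$k$ section is precisely what you are trying to produce. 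Repairing this forces you into the genuine Siu/Hacon--M${}^{\text{c}}$Kernan induction --- adding one copy of $K_X+S+B$ at a time with unscaled potentials, tracking the fractional boundary $B$, the auxiliary base loci, and the vanishing along $\Xi$ at every stage --- i.e.\ into re-proving \cite{Hacon&Mckernan}*{Thm.~6.3}, which is a substantial theorem and not a two-sentence step. Either carry that induction out in full, with the $\obstidlSHM$-condition feeding into it, or cite Hacon--M${}^{\text{c}}$Kernan as the paper does; the rest of your argument then agrees with the paper's proof.
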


\begin{proof}
  Let $\orgu \in \cohgp0[S]{\holo_S\paren{\mu\paren{K_X+S+B}} \otimes
    \obstidlSHM}$ and define $\setE_{k,i} := \setE\paren{u^k \otimes
    \res{s_{A,i}}_S}$ as in \eqref{eq:def_setE}.

  By the result of Hacon and M${}^{\text{c}}$Kernan in
  \cite{Hacon&Mckernan}*{Thm.~6.3} (see also
  \cite{Hacon&Mckernan_bddness_pluriK-map}*{Thm.~3.16}) on the
  extension of pluricanonical sections, in which the technique was
  originated in the work of Siu
  (\nocite{Siu_inv_plurigenera}\cite{Siu_inv_plurigenera2}), it
  follows that, on the plt pair $(X,S+B)$, the set $\setE_{k,i}=
  \setE\paren{u^k \otimes \res{s_{A,i}}_S}$ are non-empty for every
  $k\in\Nnum$ and $i \in I_A$ when $u$ is a section to the ideal
  $\obstidlSHM$.

  Then, given the choice of $\dep\psi$ and $\dep\vphi_L$ in
  \eqref{eq:psi} and \eqref{eq:wt_on_L}, Theorem
  \ref{thm:bphi-bdd-below-by-u-on-S} assures that
  \begin{equation*}
    \int_S \abs \orgu^2 \lcV|1|<\dep\vphi_L>[\dep\psi]
    =\sum_{j\in I_S} \frac{\pi\numax}{\nu_j} \int_{S_j}
    \abs\orgu_\omega^2 e^{-\paren{\mu-1} \dep\bphi -\phi_B}
    d\vol_{S_j,\omega}
  \end{equation*}
  and its limit as $k \to \infty$, which is
  \begin{equation*}
    \sum_{j\in I_S} \frac{\pi\numax}{\nu_j} \int_{S_j}
    \abs\orgu_\omega^2 e^{-\paren{\mu-1} \bphi_\tmin -\phi_B}
    d\vol_{S_j,\omega}
  \end{equation*}
  up to a constant multiple, are \emph{finite} (as $e^{-\phi_B}$ is
  integrable), which verifies the $L^2$ assumption in Theorem
  \ref{thm:extension-with-seq-of-potentials}.
  
  The inequality \eqref{eq:curvature-for-final-step} verifies the
  curvature assumption \ref{item:curv-cond-seq} in Theorem
  \ref{thm:extension-with-seq-of-potentials} and provides a $\delta_0
  := \paren{\mu-1}\numax$ which is independent of $k$.
  Considering the definition of $\dep\psi$ in \eqref{eq:psi} and the
  fact that $\dep[\infty]\bphi = \bphi_\tmin$, which is bounded from
  above, one sees that a uniform constant can be added to $\dep\psi$
  such that the normalisation assumption \ref{item:normalisation-seq}
  in Theorem \ref{thm:extension-with-seq-of-potentials} is satisfied.
  Therefore, it follows from Theorem
  \ref{thm:extension-with-seq-of-potentials} that there exists a
  holomorphic extension $U$ of $\orgu$ with the estimate
  \begin{equation*}
    \int_X \frac{\abs U^2 e^{-\paren{\mu -1}\bphi_\tmin -\phi_S
        -\phi_B }} {\abs{\dep[\infty]\psi} 
      \paren{\paren{\log\abs{\dep[\infty]\psi}}^2 +1}} 
    \leq
    \sum_{j\in I_S} \frac{\pi\numax}{\nu_j} \int_{S_j}
    \abs\orgu_\omega^2 e^{-\paren{\mu-1} \bphi_\tmin -\phi_B}
    d\vol_{S_j,\omega} \; .
  \end{equation*} 
  This completes the proof.
\end{proof}


\ifannalen

  \section*{Declarations}

  On behalf of all authors, the corresponding author states that there
  is no conflict of interest.
  
  \noindent
  Data sharing is not applicable to this article as no datasets were
  generated or analysed during the current study. 






\fi

\begin{bibdiv}
  \begin{biblist}
    \IfFileExists{references.ltb}{
      \bibselect{references}
    }{
      \bib{ASWY-gKingsFormula}{article}{
  author={Andersson, Mats},
  author={Samuelsson Kalm, H\aa kan},
  author={Wulcan, Elizabeth},
  author={Yger, Alain},
  title={Segre numbers, a generalized King formula, and local intersections},
  journal={J. Reine Angew. Math.},
  volume={728},
  date={2017},
  pages={105--136},
  issn={0075-4102},
  review={\MR {3668992}},
  doi={10.1515/crelle-2014-0109},
}

\bib{Berndtsson&Lempert}{article}{
  author={Berndtsson, Bo},
  author={Lempert, L\'{a}szl\'{o}},
  title={A proof of the Ohsawa-Takegoshi theorem with sharp estimates},
  journal={J. Math. Soc. Japan},
  volume={68},
  date={2016},
  number={4},
  pages={1461--1472},
  issn={0025-5645},
  review={\MR {3564439}},
  doi={10.2969/jmsj/06841461},
}

\bib{Bjork&Samuelsson}{article}{
  author={Bj{\"o}rk, Jan-Erik},
  author={Samuelsson, H{\aa }kan},
  title={Regularizations of residue currents},
  journal={J. Reine Angew. Math.},
  volume={649},
  date={2010},
  pages={33--54},
  issn={0075-4102},
  review={\MR {2746465}},
  doi={10.1515/CRELLE.2010.087},
}

\bib{Blocki_Suita-conj}{article}{
  author={B\l ocki, Zbigniew},
  title={Suita conjecture and the Ohsawa-Takegoshi extension theorem},
  journal={Invent. Math.},
  volume={193},
  date={2013},
  number={1},
  pages={149--158},
  issn={0020-9910},
  review={\MR {3069114}},
  doi={10.1007/s00222-012-0423-2},
}

\bib{Cao&Demailly&Matsumura}{article}{
  author={Cao, JunYan},
  author={Demailly, Jean-Pierre},
  author={Matsumura, Shinichi},
  title={A general extension theorem for cohomology classes on non reduced analytic subspaces},
  journal={Sci. China Math.},
  volume={60},
  date={2017},
  number={6},
  pages={949--962},
  issn={1674-7283},
  review={\MR {3647124}},
  doi={10.1007/s11425-017-9066-0},
}

\bib{Cao&Paun_OT-ext}{article}{
  author={Cao, Junyan},
  author={P\u aun, Mihai},
  title={On the Ohsawa--Takegoshi extension theorem},
  eprint={arXiv:2002.04968 [math.CV]},
  pages={36},
  date={2020},
  note={with an appendix by Bo Berndtsson},
}

\bib{Chi_extension}{article}{
  author={Chi, Chen-Yu},
  title={On the extension of holomorphic sections from reduced unions of strata of divisors},
  pages={28},
  eprints={arXiv:1905.06481v3 [math.AG]},
  date={2019},
}

\bib{Demailly_complete-Kahler}{article}{
  author={Demailly, Jean-Pierre},
  title={Estimations $L^{2}$\ pour l'op\'erateur $\bar \partial $\ d'un fibr\'e vectoriel holomorphe semi-positif au-dessus d'une vari\'et\'e k\"ahl\'erienne compl\`ete},
  language={French},
  journal={Ann.~Sci.~\'Ecole Norm.~Sup.~(4)},
  volume={15},
  date={1982},
  number={3},
  pages={457--511},
  issn={0012-9593},
  review={\MR {690650}},
}

\bib{Demailly_regularization}{article}{
  author={Demailly, Jean-Pierre},
  title={Regularization of closed positive currents and intersection theory},
  journal={J. Algebraic Geom.},
  volume={1},
  date={1992},
  number={3},
  pages={361--409},
  issn={1056-3911},
  review={\MR {1158622}},
}

\bib{Demailly_Analytic-methods}{book}{
  author={Demailly, Jean-Pierre},
  title={Analytic methods in algebraic geometry},
  series={Surveys of Modern Mathematics},
  volume={1},
  publisher={International Press, Somerville, MA; Higher Education Press, Beijing},
  date={2012},
  pages={viii+231},
  isbn={978-1-57146-234-3},
  review={\MR {2978333}},
}

\bib{Demailly}{webpage}{
  author={Demailly, Jean-Pierre},
  title={Complex analytic and differential geometry},
  note={OpenContent Book},
  url={http://www-fourier.ujf-grenoble.fr/~demailly/manuscripts/agbook.pdf},
  date={2012},
}

\bib{Demailly_extension}{article}{
  author={Demailly, Jean-Pierre},
  title={Extension of holomorphic functions defined on non reduced analytic subvarieties},
  conference={ title={The legacy of Bernhard Riemann after one hundred and fifty years. Vol. I}, },
  book={ series={Adv. Lect. Math. (ALM)}, volume={35}, publisher={Int. Press, Somerville, MA}, },
  date={2016},
  pages={191--222},
  review={\MR {3525916}},
  eprint={arXiv:1510.05230 [math.CV]},
}

\bib{DPS94}{article}{
  author={Demailly, Jean-Pierre},
  author={Peternell, Thomas},
  author={Schneider, Michael},
  title={Compact complex manifolds with numerically effective tangent bundles},
  journal={J. Algebraic Geom.},
  volume={3},
  date={1994},
  number={2},
  pages={295--345},
  issn={1056-3911},
  review={\MR {1257325}},
}

\bib{DPS01}{article}{
  author={Demailly, Jean-Pierre},
  author={Peternell, Thomas},
  author={Schneider, Michael},
  title={Pseudo-effective line bundles on compact K\"{a}hler manifolds},
  journal={Internat. J. Math.},
  volume={12},
  date={2001},
  number={6},
  pages={689--741},
  issn={0129-167X},
  review={\MR {1875649}},
  doi={10.1142/S0129167X01000861},
}

\bib{DHP}{article}{
  author={Demailly, Jean-Pierre},
  author={Hacon, Christopher D.},
  author={P\u {a}un, Mihai},
  title={Extension theorems, non-vanishing and the existence of good minimal models},
  journal={Acta Math.},
  volume={210},
  date={2013},
  number={2},
  pages={203--259},
  issn={0001-5962},
  review={\MR {3070567}},
  doi={10.1007/s11511-013-0094-x},
}

\bib{Fujino_abundance-3fold}{article}{
  author={Fujino, Osamu},
  title={On abundance theorem for semi log canonical threefolds},
  journal={Proc. Japan Acad. Ser. A Math. Sci.},
  volume={75},
  date={1999},
  number={6},
  pages={80--84},
  issn={0386-2194},
  review={\MR {1712649}},
}

\bib{Fujino&Gongyo_abundance}{article}{
  author={Fujino, Osamu},
  author={Gongyo, Yoshinori},
  title={Log pluricanonical representations and the abundance conjecture},
  journal={Compos.~Math.},
  volume={150},
  date={2014},
  number={4},
  pages={593--620},
  issn={0010-437X},
  review={\MR {3200670}},
  doi={10.1112/S0010437X13007495},
}

\bib{Gongyo&Matsumura}{article}{
  author={Gongyo, Yoshinori},
  author={Matsumura, Shinichi},
  title={Versions of injectivity and extension theorems},
  language={English, with English and French summaries},
  journal={Ann. Sci. \'{E}c. Norm. Sup\'{e}r. (4)},
  volume={50},
  date={2017},
  number={2},
  pages={479--502},
  issn={0012-9593},
  review={\MR {3621435}},
  doi={10.24033/asens.2325},
}

\bib{Grauert&Remmert-CAS}{book}{
  author={Grauert, Hans},
  author={Remmert, Reinhold},
  title={Coherent analytic sheaves},
  series={Grundlehren der Mathematischen Wissenschaften [Fundamental Principles of Mathematical Sciences]},
  volume={265},
  publisher={Springer-Verlag, Berlin},
  date={1984},
  pages={xviii+249},
  isbn={3-540-13178-7},
  review={\MR {755331}},
  doi={10.1007/978-3-642-69582-7},
}

\bib{Grauert&Remmert}{book}{
  author={Grauert, Hans},
  author={Remmert, Reinhold},
  title={Theory of Stein spaces},
  series={Classics in Mathematics},
  note={Translated from the German by Alan Huckleberry; Reprint of the 1979 translation},
  publisher={Springer-Verlag, Berlin},
  date={2004},
  pages={xxii+255},
  isbn={3-540-00373-8},
  review={\MR {2029201}},
  doi={10.1007/978-3-642-18921-0},
}

\bib{Guan&Zhou_optimal-L2-estimate}{article}{
  author={Guan, Qi'an},
  author={Zhou, Xiangyu},
  title={A solution of an $L^2$ extension problem with an optimal estimate and applications},
  journal={Ann.~of Math.~(2)},
  volume={181},
  date={2015},
  number={3},
  pages={1139--1208},
  issn={0003-486X},
  review={\MR {3296822}},
  doi={10.4007/annals.2015.181.3.6},
}

\bib{Guan&Zhou_effective_openness}{article}{
  author={Guan, Qi'an},
  author={Zhou, Xiangyu},
  title={Effectiveness of Demailly's strong openness conjecture and related problems},
  journal={Invent. Math.},
  volume={202},
  date={2015},
  number={2},
  pages={635--676},
  issn={0020-9910},
  review={\MR {3418242}},
  doi={10.1007/s00222-014-0575-3},
}

\bib{Guenancia}{article}{
  author={Guenancia, Henri},
  title={Toric plurisubharmonic functions and analytic adjoint ideal sheaves},
  journal={Math. Z.},
  volume={271},
  date={2012},
  number={3-4},
  pages={1011--1035},
  issn={0025-5874},
  review={\MR {2945594}},
  doi={10.1007/s00209-011-0900-0},
}

\bib{Hacon&Mckernan_bddness_pluriK-map}{article}{
  author={Hacon, Christopher D.},
  author={M${}^{\text {c}}$Kernan, James},
  title={Boundedness of pluricanonical maps of varieties of general type},
  journal={Invent. Math.},
  volume={166},
  date={2006},
  number={1},
  pages={1--25},
  issn={0020-9910},
  review={\MR {2242631}},
  doi={10.1007/s00222-006-0504-1},
}

\bib{Hacon&Mckernan}{article}{
  author={Hacon, Christopher D.},
  author={M${}^{\text {c}}$Kernan, James},
  title={Existence of minimal models for varieties of log general type. II},
  journal={J. Amer. Math. Soc.},
  volume={23},
  date={2010},
  number={2},
  pages={469--490},
  issn={0894-0347},
  review={\MR {2601040}},
  doi={10.1090/S0894-0347-09-00651-1},
}

\bib{Hiep_openness}{article}{
  author={Hiep, Pham Hoang},
  title={The weighted log canonical threshold},
  language={English, with English and French summaries},
  journal={C. R. Math. Acad. Sci. Paris},
  volume={352},
  date={2014},
  number={4},
  pages={283--288},
  issn={1631-073X},
  review={\MR {3186914}},
  doi={10.1016/j.crma.2014.02.010},
}

\bib{Hironaka}{article}{
  author={Hironaka, Heisuke},
  title={Resolution of singularities of an algebraic variety over a field of characteristic zero. I, II},
  journal={Ann. of Math. (2) {\bf 79} (1964), 109--203; ibid. (2)},
  volume={79},
  date={1964},
  pages={205--326},
  issn={0003-486X},
  review={\MR {0199184}},
}

\bib{Hormander}{book}{
  author={H\"{o}rmander, Lars},
  title={An introduction to complex analysis in several variables},
  edition={Second revised edition},
  note={North-Holland Mathematical Library, Vol. 7},
  publisher={North-Holland Publishing Co., Amsterdam-London; American Elsevier Publishing Co., Inc., New York},
  date={1973},
  pages={x+213},
  review={\MR {0344507}},
}

\bib{KimDano_Thesis}{book}{
  author={Kim, Dano},
  title={Extension of pluriadjoint sections from a log-canonical center},
  note={Thesis (Ph.D.)--Princeton University},
  publisher={ProQuest LLC, Ann Arbor, MI},
  date={2007},
  pages={62},
  isbn={978-0549-00791-3},
  review={\MR {2710171}},
}

\bib{KimDano_lc-extension}{article}{
  author={Kim, Dano},
  title={$L^2$ extension of adjoint line bundle sections},
  language={English, with English and French summaries},
  journal={Ann. Inst. Fourier (Grenoble)},
  volume={60},
  date={2010},
  number={4},
  pages={1435--1477},
  issn={0373-0956},
  review={\MR {2722247}},
}

\bib{KimDano-adjIdl}{article}{
  author={Kim, Dano},
  title={Themes on non-analytic singularities of plurisubharmonic functions},
  conference={ title={Complex analysis and geometry}, },
  book={ series={Springer Proc. Math. Stat.}, volume={144}, publisher={Springer, Tokyo}, },
  date={2015},
  pages={197--206},
  review={\MR {3446757}},
  doi={10.1007/978-4-431-55744-9\texttt {\_}14},
}

\bib{Koike15_min-sing-metric_nef}{article}{
  author={Koike, Takayuki},
  title={On the minimality of canonically attached singular Hermitian metrics on certain nef line bundles},
  journal={Kyoto J. Math.},
  volume={55},
  date={2015},
  number={3},
  pages={607--616},
  issn={2156-2261},
  review={\MR {3395981}},
  doi={10.1215/21562261-3089091},
}

\bib{Kollar_Resoln-of-sing}{book}{
  author={Koll\'{a}r, J\'{a}nos},
  title={Lectures on resolution of singularities},
  series={Annals of Mathematics Studies},
  volume={166},
  publisher={Princeton University Press, Princeton, NJ},
  date={2007},
  pages={vi+208},
  isbn={978-0-691-12923-5},
  isbn={0-691-12923-1},
  review={\MR {2289519}},
}

\bib{Kollar_Sing-of-MMP}{book}{
  author={Koll\'{a}r, J\'{a}nos},
  title={Singularities of the minimal model program},
  series={Cambridge Tracts in Mathematics},
  volume={200},
  note={With a collaboration of S\'{a}ndor Kov\'{a}cs},
  publisher={Cambridge University Press, Cambridge},
  date={2013},
  pages={x+370},
  isbn={978-1-107-03534-8},
  review={\MR {3057950}},
  doi={10.1017/CBO9781139547895},
}

\bib{Lempert_openness}{article}{
  author={Lempert, L\'{a}szl\'{o}},
  title={Modules of square integrable holomorphic germs},
  conference={ title={Analysis meets geometry}, },
  book={ series={Trends Math.}, publisher={Birkh\"{a}user/Springer, Cham}, },
  date={2017},
  pages={311--333},
  review={\MR {3773623}},
  eprint={arXiv:1404.0407 [math.CV]},
}

\bib{Matsumura_injectivity}{article}{
  author={Matsumura, Shinichi},
  title={An injectivity theorem with multiplier ideal sheaves of singular metrics with transcendental singularities},
  journal={J. Algebraic Geom.},
  volume={27},
  date={2018},
  number={2},
  pages={305--337},
  issn={1056-3911},
  review={\MR {3764278}},
  doi={10.1090/jag/687},
}

\bib{McNeal&Varolin_adjunction}{article}{
  author={McNeal, Jeffery D.},
  author={Varolin, Dror},
  title={Analytic inversion of adjunction: $L^2$ extension theorems with gain},
  language={English, with English and French summaries},
  journal={Ann. Inst. Fourier (Grenoble)},
  volume={57},
  date={2007},
  number={3},
  pages={703--718},
  issn={0373-0956},
  review={\MR {2336826}},
}

\bib{Mustata_resoln-sing}{article}{
  author={Musta\c {t}\u {a}, Mircea},
  title={Introduction to resolution of singularities},
  conference={ title={Analytic and algebraic geometry}, },
  book={ series={IAS/Park City Math. Ser.}, volume={17}, publisher={Amer. Math. Soc., Providence, RI}, },
  date={2010},
  pages={405--449},
  review={\MR {2743820}},
}

\bib{Ohsawa-V}{article}{
  author={Ohsawa, Takeo},
  title={On the extension of $L^2$ holomorphic functions. V. Effects of generalization},
  journal={Nagoya Math. J.},
  volume={161},
  date={2001},
  pages={1--21},
  issn={0027-7630},
  review={\MR {1820210}},
  doi={10.1017/S0027763000022108},
}

\bib{Ohsawa-example}{article}{
  author={Ohsawa, Takeo},
  title={On a curvature condition that implies a cohomology injectivity theorem of Koll\'{a}r-Skoda type},
  journal={Publ. Res. Inst. Math. Sci.},
  volume={41},
  date={2005},
  number={3},
  pages={565--577},
  issn={0034-5318},
  review={\MR {2153535}},
}

\bib{Ohsawa&Takegoshi-I}{article}{
  author={Ohsawa, Takeo},
  author={Takegoshi, Kensh\={o}},
  title={On the extension of $L^2$ holomorphic functions},
  journal={Math. Z.},
  volume={195},
  date={1987},
  number={2},
  pages={197--204},
  issn={0025-5874},
  review={\MR {892051}},
  doi={10.1007/BF01166457},
}

\bib{Samuelsson_residue}{article}{
  author={Samuelsson, H\aa kan},
  title={Analytic continuation of residue currents},
  journal={Ark. Mat.},
  volume={47},
  date={2009},
  number={1},
  pages={127--141},
  issn={0004-2080},
  review={\MR {2480918}},
  doi={10.1007/s11512-008-0086-9},
}

\bib{Siu}{article}{
  author={Siu, Yum Tong},
  title={Complex-analyticity of harmonic maps, vanishing and Lefschetz theorems},
  journal={J. Differential Geom.},
  volume={17},
  date={1982},
  number={1},
  pages={55--138},
  issn={0022-040X},
  review={\MR {658472}},
}

\bib{Siu_inv_plurigenera}{article}{
  author={Siu, Yum-Tong},
  title={Invariance of plurigenera},
  journal={Invent. Math.},
  volume={134},
  date={1998},
  number={3},
  pages={661--673},
  issn={0020-9910},
  review={\MR {1660941}},
  doi={10.1007/s002220050276},
}

\bib{Siu_inv_plurigenera2}{article}{
  author={Siu, Yum-Tong},
  title={Extension of twisted pluricanonical sections with plurisubharmonic weight and invariance of semipositively twisted plurigenera for manifolds not necessarily of general type},
  conference={ title={Complex geometry}, address={G\"{o}ttingen}, date={2000}, },
  book={ publisher={Springer, Berlin}, },
  date={2002},
  pages={223--277},
  review={\MR {1922108}},
}

\bib{Tian_KE-on-Fano}{article}{
  author={Tian, Gang},
  title={On K\"{a}hler-Einstein metrics on certain K\"{a}hler manifolds with $C_1(M)>0$},
  journal={Invent.~Math.},
  volume={89},
  date={1987},
  number={2},
  pages={225--246},
  issn={0020-9910},
  review={\MR {894378}},
  doi={10.1007/BF01389077},
}

\bib{Varolin_lecture-note-on-L2}{webpage}{
  author={Varolin, Dror},
  title={Minischool on $L^2$ extension},
  note={lecture note distributed in the ``Workshop on $L^2$ Extensions'' at the University of Tokyo from 13 to 17 February, 2016},
  pages={46},
  date={2016},
  url={https://www.math.stonybrook.edu/~dror/L2-minischool.pdf},
}

    }
  \end{biblist}
\end{bibdiv}

\end{document}

